\crefname{section}{section}{sections}
\crefname{subsection}{subsection}{subsections}
\Crefname{section}{Section}{Sections}
\Crefname{subsection}{Subsection}{Subsections}
\crefname{condition}{Condition}{Conditions}
\Crefname{figure}{Figure}{Figures}
\newtheorem {theorem}{Theorem}[section]
\newtheorem {lemma}[theorem]{{\bf Lemma}}
\newtheorem {corollary}[theorem]{{\bf Corollary}}
\newtheorem {proposition}[theorem]{{\bf Proposition}}
\theoremstyle{remark}
\newtheorem {remark}{{\bf Remark}}[section]
\newtheorem {condition}{{\bf Condition}}[section]
\theoremstyle{definition}
\newtheorem {definition}{{\bf Definition}}[section]
\theoremstyle{plain} \numberwithin {equation}{section}
\newtheorem{problem}{Problem}
\numberwithin{assumption}{section}
\numberwithin{problem}{chapter}
\def\deq{\mathop{\buildrel\Delta\over=}}
\begin{document}
\title{Inverse problems for stochastic partial differential equations}
\author{Qi L\"{u} and Yu Wang}

\maketitle

\tableofcontents

\newpage

\pagestyle{mystyle}

\chapter{Introduction}

In this chapter, we discuss the importance and challenges of studying inverse problems for stochastic partial differential equations (SPDEs for short). 
We also present the formulation of the main problems in this book.

\section{Why should we study  inverse problems for SPDEs?}

Physical theories enable us to make predictions: with a complete description of a physical
system, we can predict the outcome of  certain  measurements. This issue of predicting measurement results is commonly referred to as the ``direct problem".  The concept of an ``inverse problem" is defined in contrast to a ``direct problem", involving using the actual  
measurements to deduce the values of the parameters of the system. 
One can give a rough description for inverse problem as follows: 
Assume $ U $ and $ V $ are two topological spaces, and $ F: U \rightarrow V $ is a given mapping.
For a given input $ u \in U $, finding the value of $ F(u) $ is called the direct problem. Conversely, given a $ v \in V $, finding the unknown input $ u \in U $ such that $ F(u) = v $ is called the inverse problem.

The inverse problems typically arise in scenarios where the parameters of a ``physical device'' or system cannot be directly measured. 
Instead, these parameters need to be determined from measurements of the effects they produce. 
A typical example is that in order to determine the resistance of a material, people measure the voltage applied to the material and the flux on its surface. Then, the required resistance can be determined through Ohm's Law.

In recent decades, the theory of inverse problems has evolved significantly, becoming a prominent branch of applied mathematics
and has had stimulating effects on partial differential equations, functional analysis, numerical analysis, 
and other fields.

Recall the definition of the well-posedness by Hadamard:
\begin{definition}
\label{defWellPoseHadmard}
Consider the equation
\begin{equation}\label{defWellPoseHadmard-eq1}
F(u) = v, \qquad u\in U,\; v \in V.
\end{equation}
The equation \eqref{defWellPoseHadmard-eq1} is called well-posed in the sense of Hadamard if the following three conditions are satisfied:
\begin{enumerate}[(i)]
    \item For any $ v \in V $, there exists a solution $ u \in U$ such that $ F(u) = v $.
    \item This solution is unique.
    \item The solution $ v = F^{-1}(u) $ depends continuously on $ u $ in $U$.
\end{enumerate}
If at least one of these conditions is failed, the problem is called ill-posed.
\end{definition}
\begin{remark}
If (i) and (ii) are satisfied, we know there is a unique solution to the equation \eqref{defWellPoseHadmard-eq1}. However, for practical purposes, the necessity of condition (iii) becomes apparent. Indeed, in real-world scenarios, the exact value of $v$ is often unattainable, and we must rely on an approximate value $\tilde{v}$ instead. Failure to meet condition (iii) can lead to a significant deviation between the solution $\tilde{u}$
of the equation $F(\tilde{u}) = \tilde{v}$ and the desired solution $u$. Therefore, the continuity of the inverse mapping $F^{-1}$ plays a critical role in practical applications.
\end{remark}

Many classical initial boundary value problems for partial differential equations (PDEs for short) satisfy the three conditions of \cref{defWellPoseHadmard}, that, there is a unique solution to the PDEs with given initial/boundary value, coefficients and nonhomogeneous terms and the solution depends on these data continuously.  
On the other hand, by virtue of its significant applications, lots of inverse problem for PDEs, i.e., determine the initial/boundary value, or/and coefficients or/and nonhomogeneous terms from suitable measurements of the solution, are introduced (e.g.,\cite{HasanovHasanoglu2021,Lavrentiev2003, Lesnic2022,   Prilepko2000}). Unfortunately, many of these inverse problems  do not satisfy   Conditions (i)--(iii) in Definition \ref{defWellPoseHadmard}.  
This means that most inverse problems are generally ill-posed.


Lots of methods and techniques have been developed to solve inverse problems for deterministic PDEs. 
Of particular note is the Carleman estimate, which stands out as one of the most useful  tools. 
We will not  exhaustively list all literatures on this topic since these are too many. 
However, for the application of Carleman estimates to various inverse problems for PDEs, we refer readers to \cite{Beilina2012,Bellassoued2017,Choulli2016,fu2019carleman,Klibanov2021} and the rich references therein.

Due to the complexity of the real world, many phenomena in fields such as chemistry, biology, microelectronics industry, pharmaceuticals, communication, and transmission require the use of stochastic partial differential equations (SPDEs for short) for better characterization (e.g. \cite{Carmona1999,DaPrato2014,Holden2010,Kotelenez2008,Majda2001}). 
Hence, the study of inverse problems for SPDEs satisfies a natural requirement (e.g., \cite[Section 3.6]{Murray2003}). 
Furthermore, the inverse problems of SPDEs are also related to other important issues, such as data assimilation and Kalman filtering for SPDEs (e.g., \cite{Knopov2013,Law2015}).

Mathematically, inverse problems for SPDEs are relatively new
and quite challenging. 
Compared to the extensive literature on deterministic inverse problems for PDEs, there is relatively less research on  inverse problems for SPDEs.
In our view, the main reasons are as follows:

\begin{itemize}
\item Compared to deterministic PDEs, much less is known about SPDEs, although many progresses have been made there in recent years.
\item The formulation of inverse problems for SPDEs may differ fundamentally from that of PDEs, with some problems being genuinely stochastic and cannot be reduced to any deterministic ones. 
Some inverse problems may even be meaningful only within a stochastic setting (e.g. \cite{Lue2015a} or \cref{rkStoWave}).
\item Many powerful methods and tools applicable to solving inverse problems for deterministic PDEs cannot be directly applied to the study of inverse problems for SPDEs.
For instance, due to the almost surely non-differentiability of sample paths of Brownian motion with respect to the time variable, solutions to SPDEs may not be continuously differentiable with respect to the time variable. 
Furthermore, many regularity estimates and extension theorems valid for deterministic PDEs no longer apply to SPDEs.
Therefore, even for very simple SPDEs, it is required to present new mathematical techniques or even fundamentally new approaches.
\end{itemize}

\section{Formulation of inverse problems for SPDEs}\label{Ch1-sec2}

Similar to the formulation of inverse problems for deterministic PDEs, for inverse problems of SPDEs, we  aim to determine unknown components within the system through some measurements derived from the solution of the equations. 
These inverse problems of SPDEs can be broadly categorized into the following four types based on the specific unknown components to be determined:
\begin{enumerate}[(i)]
\item Inverse coefficient problems, i.e., to determine an unknown coefficient (usually a function) in an SPDE in such a way that the solution of the equation satisfies specific conditions or data. In other words, the aim is to infer the parameters or coefficients that lead to the observed phenomena. These problems have wide applications in science and engineering, such as in medical imaging for recovering tissue properties, in seismology for inferring subsurface structures, in materials science for determining material properties, and more.
\item Inverse source problems, which refer to the task of determining an unknown source or input function within an SPDE based on observed effects or responses.  These types of problems are common in various scientific and engineering fields where understanding the hidden sources or parameters is crucial. Applications include medical imaging, geophysics, environmental monitoring, and many others. 
\item Inverse initial value/state problems, which refer to the mathematical problems where the goal is to determine unknown initial value of the SPDEs   based on observed data at a later time or in a time interval.  These problems are important in various fields of industry, where the whole solution or the initial value of an SPDE  cannot be measured directly.
\item Inverse geometric shape of the domain problems, which refer to the task of determining the shape or geometry of a domain where the SPDE is involved from measured data or observations.  The importance of solving inverse geometric shape of the domain problems lies in their practical applications, such as in medical imaging for diagnosing diseases, in geophysics for mapping subsurface structures, and in material science for characterizing material properties.  
\end{enumerate}

Below, we further explain the aforementioned four inverse problems through a simple stochastic parabolic equation. In order to succinctly clarify the key points of the issue, we will only consider a very basic  equation. The formulation of these problems is similar for more complex equations. 

Let $T > 0$, $G \subset \mathbb{R}^{n}$ ($n \in \mathbb{N}$) be a given bounded domain with a $C^{2}$ boundary $\Gamma$ . 
Denote $ Q = (0,T) \times G $ and $ \Sigma = (0,T) \times \Gamma $.

Consider the following stochastic parabolic equation:
\begin{align}\label{eqSP}
\left\{
\begin{aligned}
    & dy  + \Delta y d t= (a y + f ) dt +(by+g) dW(t)&\mbox{ in }Q,\\
  & \gamma_{1} \frac{\partial y}{\partial \nu}  + \gamma_{2}  y = \varphi  &\mbox{ on }\Sigma,\\
  & y(0)=y_0 &\mbox{ in }G,
\end{aligned}
\right.
\end{align}
where  $a$, $b$, $f$, $g$, $\gamma_1$ and $\gamma_2$ are suitable functions/stochastic processes and
$ W(t) $ denotes the standard  Brownian motion (see \cref{defBrowian}).

The direct problem is to determine the solution $ y  $ of system \cref{eqSP} by specifying  $ (a,b,f, g, \gamma_1, \gamma_1, \varphi, y_{0}) $ along with $Q$. This can be achieved using classical well-posedness results of stochastic evolution equations (\cref{eqWellposed}).  
However, in many practical situation it is not possible to measure experimentally all these data. Instead, it is possible to measure   certain information of the solution and use this information together with other data to recover
the missing, unmeasurable data.  More precisely, the inverse problem for \cref{eqSP} involves certain unknown components in  $ (a,b,f, g, \gamma_1, \gamma_1, \varphi, y_{0}) $ and $Q$, which need to be determined through measuring some information about the solution $y$:
\begin{align*}
\mathcal{M}(y)(t,x) = h(t,x), \quad \quad (t,x) \in \mathcal{O} \subset [0,T] \times \overline{G}.
\end{align*}
Here, $\mathcal{M}(\cdot)$ is a known operator, representing the method of obtaining measurement data, and $\mathcal{O}$ denotes the position of the measurement.

In general, there are three distinct types of measurements:
\begin{itemize}
	\item Distributed measurement involves measuring the solution value on a subset of the domain.
	\item Terminal measurement entails measuring the solution value at the terminal state.
	\item Boundary measurement refers to observing Dirichlet or Neumann data on a portion of the boundary.
\end{itemize}

Determining unknown functions $a$ and/or $b$ are known as inverse coefficient problems.
Reconstructing unknown components in $f$ and/or $g$  is termed as inverse source problems.
When the goal is to reconstruct the initial condition $y_{0}$, it is known as an inverse initial value problem. 
Furthermore, in cases where a section of the boundary $\Gamma$ is both unknown or changing with time, deducing the shape or geometry of the domain from this boundary information can be seen as a type of inverse geometric shape of the domain problem.

For a given inverse problem, generally speaking, three main concerns are the following:
\begin{enumerate}[(i)]
\item Can measurement data determine unknown components, or what information can be derived from it?
\item Is it feasible to reconstruct unknown components from measurement data?
\item Are there efficient algorithms available to construct an approximate solution?
\end{enumerate}

While the importance of SPDEs is increasingly,  inverse problems
governed by these equations arise naturally. Various types of inverse problems for SPDEs have attracted the interest of more and more researchers in many fields. 
In the past decade, some interesting progress has been made (e.g., \cite{Dou2022,Feng2022,Fu2017,Garnier2009,Gong2021,Helin2014,Helin2018,Li2013,Li2022,Li2021,Lue2012,Lue2013a,Lue2013c,Lue2015b,Lue2015a,Ma2021,Niu2020,Wu2020,Wu2022} and the references therein).  This book aims to provide a brief overview of recent advancements in the theory of inverse problems for stochastic partial differential equations. In order to keep the content concise, we will only discuss the inverse problems of two typical classes of stochastic partial differential equations: second-order stochastic parabolic equations and second-order stochastic hyperbolic equations. 

The main tool for studying these inverse problem is Carleman estimate. We do not intend to pursue any general treatment of the Carleman estimates
themselves and choose
direct arguments based on basic stochastic calculus, rather than more general sophisticated
methods.  

As this field is still developing and there are many challenging issues to be addressed, the purpose of this book is not to serve as a comprehensive summary, but rather to spark interest and encourage further exploration in this area among readers. We prefer to present results that, from our perspective, include fresh and promising ideas. In cases where a complete mathematical theory is lacking, we only provide the available results.
We do not intend for the current book to be
encyclopedic in any sense, and the references are limited. 

We aim  to organize the material
in such a way that no prerequisites except a moderate knowledge of partial differential equations
and functional analysis are expected. During the preparation of
this book we found that this goal was too ambitious. For example, it will cost lots of space for introducing the basic knowledge for SPDEs. Therefore, in   \Cref{chApp} we collect  some definitions and theorems concerning SPDEs needed in this book and provide references for further study. 

From an expert's perspective, certain content in various chapters or sections may have slight overlaps, which are intentionally retained to aid the convenience of readers, especially beginners. 

The rest of this book is organized as follows: 
Chapters \ref{ch2} and \ref{ch3} are devoted to inverse problems for stochastic parabolic equations and stochastic hyperbolic equations, respectively. In  \Cref{chApp}, we  recall the mathematical background material for SPDEs needed in this book.

To present the idea and methods, we do not aim for the generality of equations, nor do we intend to present the most general results. Readers interested in these aspects can find suitable references in Sections \ref{secFurPro} and \ref{secFChyper}.

A number of our colleagues, who are listed below in the alphabetical order, have
helped us in our work on this book. Prof. Fangfang Dou and Dr. Peimin L\"u have collaborated with the second author on the topic of reconstruction problem for stochastic parabolic equations (\cite{Dou2024}).  Prof. Zhongqi Yin
has collaborated with the first author on the subject of inverse state problem for stochastic hyperbolic equations (\cite{Lue2020a}). He also read the draft of this book carefully and provided numerous useful suggestions. Prof. Xu Zhang has collaborated
with the first author on the development of the idea of using Carleman estimate to study inverse source problem for stochastic hyperbolic equations (\cite{Lue2015a}). We sincerely appreciate a great help of all these individuals.

\chapter[Inverse problems for stochastic parabolic equations]{Inverse problems for stochastic parabolic equations}
\label{ch2}


This chapter is devoted to   inverse problems for stochastic parabolic equations.

Throughout this chapter, let $T>0$ and $(\Omega, \mathcal{F}, \mathbf{F}, \mathbb{P})$ with $\mathbf F=\{\mathcal{F}_{t}\}_{t \geq 0}$ be a complete filtered probability space on which a one-dimensional standard Brownian motion $\{W(t)\}_{t \geq 0}$ is defined and $\mathbf{F}$ is the natural filtration generated by $W(\cdot)$.
Write $ \mathbb{F} $ for the progressive $\sigma$-field with respect to $\mathbf{F}$. Readers who are not familiar with the notions aforementioned are referred to \cref{chApp}.

Let $T > 0$, $G \subset \mathbb{R}^{n}$ ($n \in
\mathbb{N}$) be a given bounded domain with a
$C^{2}$ boundary $\Gamma$ and $G_0\subset G$ be a nonempty open subset. Denote by $ \mathbf{L}(G) $ all Lebesgue measurable subsets of $G$.  Put
\begin{align*}
Q = (0,T) \times G, \quad \Sigma = (0,T) \times \Gamma, \quad  Q_0=(0,T) \times G_0.
\end{align*}
Also, unless other stated, we denote
by $\nu(x) = (\nu^1(x), \nu^2(x), \cdots, \nu^n(x))$ the unit
outward normal vector of $\Gamma$ at $x\in \Gamma$. Let $(b^{jk})_{1\leq j,k\leq n} \in C^1(G;{\mathbb{R}}^{n\times n})$ satisfying that $b^{jk} = b^{kj}$ ($j,k = 1,2,\cdots, n$), and for some constant $s_0
> 0$,
\begin{align} \label{eq.bijGeq}
\sum_{j,k=1}^nb^{jk}\xi_{j}\xi_{k} \geq s_0 |\xi|^2,
\quad  \forall\, (x,\xi)\buildrel \triangle \over = (x,\xi_{1}, \cdots,
\xi_{n}) \in G \times \mathbb{R}^{n}.
\end{align}
In what follows, we use ${\cal C}$ to denote a generic positive constant, which may vary from
line to line.

\section{Formulation of the problems}

\subsection{Inverse state problem with an internal measurement. I}

Let $F: [0,T] \times \Omega \times G \times \mathbb{R}^n \times \mathbb{R} \to \mathbb{R}$ and $K: [0,T] \times \Omega \times G \times \mathbb{R} \to \mathbb{R}$ be two functions such that, for each $(\varrho,\zeta) \in \mathbb{R}^n \times \mathbb{R}$, the functions $F(\cdot,\cdot,\cdot,\varrho,\zeta): [0,T] \times \Omega \times G \to \mathbb{R}$ and $K(\cdot,\cdot,\cdot,\zeta): [0,T] \times \Omega \times G \to \mathbb{R}$ are $\mathbb{F} \times \mathbf{L}(G)$-measurable, where $ \mathbf{L}(G) $ denotes all Lebesgue measurable sets; and for a.e. $(t,\omega,x)\in [0,T]\times\Omega\times G$ and any $(\varrho_{i},\zeta_{i})\in \mathbb{R}^n \times\mathbb{R}$ ($i=1,2$),
\begin{align}\label{10.5-eq1-1}
\left\{
\begin{aligned}
&|F(t,x,\varrho_1,\zeta_1)-F(t,x,\varrho_2,\zeta_2)|\le
L(|\varrho_1-\varrho_2|_{\mathbb{R}^n}+|\zeta_1-\zeta_2|), \\
&|K(t,x,\zeta_1)-K(t,x,\zeta_2)| \leq
L|\zeta_1-\zeta_2|,\\
&|F(\cdot,\cdot,\cdot,0,0)|\in L^2_{\mathbb{F}}(0,T;L^2(G)), \quad
|K(\cdot,\cdot,\cdot,0)|\in L^2_{\mathbb{F}}(0,T;H^1_0(G))
\end{aligned}
\right.
\end{align}
for some constant $L>0$.

Let us consider the following semilinear stochastic parabolic equation:
\begin{align}\label{sp-eq1}
\left\{
\begin{aligned}
& dy - \sum_{j,k=1}^n(b^{jk}y_{x_j})_{x_k}dt=F(\nabla y, y)dt + K(y) dW(t)&\mbox{ in }Q,\\
& y=0&\mbox{ on }\Sigma,\\
& y(0)=y_0 &\mbox{ in }G,
\end{aligned}
\right.
\end{align}
where $y_0\in L^2_{{\mathcal F}_0}(\Omega; L^2(G))$ is the initial data.

By the classical well-posedness result for stochastic evolution equations (see \cref{eqWellposed}), we know that \cref{sp-eq1}  admits a unique weak solution $y\in
L^2_\mathbb{F}(\Omega;$  $C([0,T];L^2(G)))\times L^2_\mathbb{F}(0,T;H^2(G)\cap H_0^1(G))$.

In order to simplify notation, we will suppress the time variable $t$ ($\in [0,T]$), the sample point $\omega$ ($\in \Omega$), and/or the space variable $x$ ($\in G$) in the functions if there is no risk of confusion.

Define a map  ${\mathcal M}_1: L^2_{{\mathcal F}_0}(\Omega; L^2(G))\to L^2_{{\mathbb{F}}}(0,T;L^2(G_0))$ by
\begin{align*}
\mathcal M_1(y_0)=  y \big |_{(0,T)\times G_0}
\end{align*}
where $y$ solves the equation \cref{sp-eq1}.
The direct problem means that once $y_0$ is given, one can solve the equation \cref{sp-eq1} to obtain ${\mathcal M}_1(y_0)$.
The first inverse problem considered in this chapter  involves determining $y_0$ from the value ${\mathcal M}_1(y_0)$.
More precisely, we consider the following question:
\begin{problem}\label{prob.2.0}
Can we determine $y_0$ from the measurement ${\mathcal M}_1(y_0)$?
\end{problem}
%

The problem described in   \Cref{prob.2.0} involves determining the unknown initial value of the equation \eqref{sp-eq1}. As discussed in Chapter 1, in practical applications, our goal is not only to find $y_0$ from ${\mathcal M}_1(y_0)$, but also to ensure that $y_0$ varies continuously with ${\mathcal M}_1(y_0)$. Specifically, in the presence of measurement errors in ${\mathcal M}_1(y_0)$, we anticipate that the discrepancy between the function derived from this noisy measurement and the true initial value $y_0$ remains bounded by the measurement error in a suitable manner. However, due to the regularization effect of equation \eqref{sp-eq1}, the inverse of the mapping ${\mathcal M}_1$
is not a bounded linear operator. Therefore, it becomes necessary to either relax the constraints or incorporate additional prior information about the initial value to achieve the desired error estimation. We will  address the first approach  and study the following problem (The second one will be introduced to study \cref{prob.2.2} below).

\begin{problem}\label{prob.2.1}
For each $t\in (0,T]$, does there exist a constant
${\mathcal C}(t) > 0$ such that for any initial data $y_0, \hat y_0\in L^2_{{\mathcal F}_0}(\Omega; L^2(G))$,
\begin{equation}\label{sp-eq2}
|y(t) -\hat y(t)|_{L^2_{{\mathcal F}_t}(\Omega;L^2(G))} \leq {\mathcal C}(t)|{\mathcal M}_1(y_0)
-{\mathcal M}_1(\hat y_0)|_{L^2_{{\mathbb{F}}}(0,T;L^2(G_0))}?
\end{equation}
Here $y(\cdot)$ and $\hat y(\cdot)$ are solutions to \cref{sp-eq1} with the
initial data $y_0 $ and $\hat y_0 $, respectively.
\end{problem}
\begin{remark}
\cref{prob.2.1} asks whether the solution $y(\cdot)$ of  system \cref{sp-eq1} at $t\in (0,T]$ depends on the observation $ y|_{(0,T)\times     G_0}$   continuously within the space $L^2_{{\mathcal F}_t}(\Omega;L^2(G))$. As a consequence of the inequality \eqref{sp-eq2}, we can deduce that for any $t\in (0,T]$, $y(t;y_0)$ can be uniquely determined by the measurement ${\mathcal M}_1(y_0)$. This, combined with the continuity of $y(\cdot;y_0)$ with respect to the time variable, implies that ${\mathcal M}_1(y_0)$ can uniquely determine $y_0$. Therefore, if the answer to   \cref{prob.2.1} is affirmative, we promptly obtain a positive solution to  \cref{prob.2.0}. On the contrary,  in the inequality \eqref{sp-eq2}, the constant ${\cal C}(t)$ is time-dependent, and it diverges as $t\to\infty$. Consequently, we lose the continuous dependence of $y_0$ on ${\mathcal M}_1(y_0)$ within the space $L^2_{{\mathcal F}_0}(\Omega;L^2(G))$.
\end{remark}

We will provide a positive answer to this problem in \Cref{secDeterminationProblemdistributing1}.

\subsection{Inverse state problem with an internal measurement. II}

In the above inverse problem, we need some boundary conditions.
Next, we introduce an inverse problem where the boundary condition is unknown.

Consider the following stochastic parabolic equation:
\begin{equation}
\label{system-sp1}
d \tilde y -\sum_{j,k=1}^n (b^{jk}\tilde y_{x_j})_{x_k} dt =
\tilde a_1\cdot \nabla \tilde y dt + \tilde a_2 \tilde y dt + \tilde a_3\tilde y dW(t)\quad \text{in } Q.
\end{equation}
Here, $\tilde a_1 \in L^{\infty}_{{\mathbb{F}}}(0, T; L^{\infty}_{\rm loc}(G; {\mathbb{R}}^n))$, $\tilde a_2\in L^{\infty}_{{\mathbb{F}}}(0, T; L^{\infty}_{\rm loc}(G))$ and $\tilde a_3 \in  L^{\infty}_{{\mathbb{F}}}(0, T$;  $ H^{1}_{\rm loc}(G))$.

We first recall the definition of the solution to the equation \cref{system-sp1}.
\begin{definition}
We call  a stochastic process
\begin{align*}
\tilde y\in L_{{\mathbb{F}}}^2(\Omega;C([0,T];L^2_{\rm loc}(G)))\cap L_{{\mathbb{F}}}^2(0,T;H^1_{\rm loc}(G))
\end{align*}
a solution to the equation \cref{system-sp1} if for any
$t\in[0,T]$, $\widehat G\subset\!\subset G$ and $\rho\in H_0^1(\widehat G)$, it holds that
\begin{align}\label{def eq2} \notag
& \int_{\widehat G}\tilde y(t,x)\rho(x)dx-\int_{\widehat G}\tilde y(0,x)\rho(x)dx
\\ \notag
& = \int_0^t\int_{\widehat G} \!\Big[\!-\!\sum_{j,k=1}^n b^{jk}
\tilde y_{x_j}(s,x)\rho_{x_k}(x) \!+\!\tilde a_1\cdot \nabla \tilde y(s,x)\rho(x)
\!+\! \tilde a_2     \tilde y(s,x)\rho(x)     \Big]dx ds
\\
& \quad +\int_0^t\int_{\widehat G} \tilde a_3 \tilde y(s,x)\rho (x) dx dW(s), \quad ~ \text{${\mathbb{P}}$-a.s.}
\end{align}
\end{definition}

We consider the following problem
\begin{problem}\label{probP8}
Can we determine $\tilde y$ in $(0,T)\times G$ uniquely from
$\tilde y \big|_{(0,T)\times G_0}$?
\end{problem}

\begin{remark}
Please note that the problem described in \cref{probP8} remains an inverse state problem. It arises when  the physical process is governed by a stochastic parabolic equation, but boundary value of the solution is unknown. We can determine $\tilde y(\cdot)$  in $Q$  uniquely. However, unlike the inverse state problem for the equation \cref{sp-eq1}, we will lose the continuous dependence of $\tilde y(t)$ ($t\in (0,T]$) with respect to $\tilde y \big|_{(0,T)\times G_0}$ in $L^2_{\mathcal{F}_t}(\Omega;L^2(G))$. In such case,
instead of whole domain $ G $, we will determine solutions over a subset of whole domain $ G $  (see \cref{main result}).
\end{remark}

\subsection{Inverse state problem with a terminal measurement}

Another important class of inverse state problems is  known as the backward problem, which means determining the solution $ y $ to the equation \cref{sp-eq1} when provided with data $ y(T) $ at time $ t = T $. Let us present it below.

%


\begin{problem}\label{prob.2.2}
Can we  determine $y(t_0)$ in $G$, ${\mathbb{P}}$-a.s. for $t_0\in [0,T)$ from $y(T)$?
\end{problem}

Stochastic parabolic equations are ill-posed backward in time.
Minor errors in measuring the terminal data may cause huge deviations in the final results, i.e., there is no stability for \cref{prob.2.2}.
Nevertheless, if we assume an \emph{a priori} bound for $y(0)$ (an assumption aligned with practicality), then stability can be reestablished in some sense.
Conditional stability is the term used to describe such a kind of stability.

In a general framework, the conditional stability problem (for the solution $y$ to the equation \cref{sp-eq1}) can be formulated as follows:

{\it For given $t_0\in (0,T]$, $\alpha_1\geq 0$, $\alpha_2\geq 0$  and $M>0$, find (if possible) a nonnegative function $\beta\in C[0,+\infty)$ such that $\lim\limits_{\eta\to 0}\beta(\eta)=0$ and that
\begin{align}
\label{eqStableTotal}
|y(t_0)|_{L^2_{\mathcal{F}_{t_0}}(\Omega;H^{\alpha_1}(G))} \leq
\beta\big(|y(T)|_{L^2_{\mathcal{F}_{T}}(\Omega;H^{\alpha_2}(G))}\big),\quad\forall\;y_0\in U_{M,\alpha_1},
\end{align}
where
\begin{align} \label{eqStableM}
U_{M,\alpha_1} \buildrel \triangle \over = \big\{f \in L^2_{\mathcal{F}_0}(\Omega;H^{\alpha_1}(G))\; \big|\;|f|_{L^2_{\mathcal{F}_0}(\Omega;H^{\alpha_1}(G))} \leq M \big\}.
\end{align}
}

\begin{remark}
Here we expect the existence of $\beta$ with the assumption that $y_0$ belongs to a special set $U_{M, \alpha_1}$, which means that $y_0$ enjoys an \emph{a priori} bound in some sense.
In general, $\beta$ depends on $t_0$, $M$, $\alpha_1$, and $\alpha_2$.
Hence, the stability result inferred from $\beta$ depends on the choice of the initial data, leading to its designation as ``conditional stability.''
\end{remark}

\begin{remark}
The requirement that $\lim\limits_{\eta\to 0}\beta(\eta)=0$ is used to ensure the continuous dependence of $y(t_0)$ with respect to $y(T)$. Once $\beta$ exists, it is not unique.
To illustrate this, consider $\tilde \beta(x)=\beta(x)+x$, which is an alternative function that satisfies the desired properties.
\end{remark}

\subsection{Inverse state problem with a boundary measurement}

In this subsection, we consider the inverse state problem with the boundary measurement.

Let $ \Gamma_{0} $ be a given nonempty open subset of $ \partial G $.
Consider the following stochastic parabolic equation:
\begin{align} \label{eqInversPParabolicPartialBoudaryeq}
\left\{\begin{aligned}
&d y -\sum_{j,k=1}^n (b^{j k}  y_{x_j})_{x_k} dt  =\left(a_1 \cdot \nabla y+a_2 y  \right) d t+ a_{3} y  d W(t) & & \text { in } Q, \\
& y = g_{1} & & \hspace{-1.6cm} \text { on } (0,T) \times \Gamma_{0} , \\
& \frac{\partial y}{ \partial \nu} = g_{2} & & \hspace{-1.6cm} \text { on } (0,T) \times \Gamma_{0} ,
\end{aligned}\right.
\end{align}
where
\begin{align}
\label{eqAcases}
\begin{cases}
    \begin{aligned}
        & a_1 \in L^{\infty}_{{\mathbb{F}}}(0,T;W^{1,\infty}(G;{\mathbb{R}}^n)),
        \quad  a_2\in L^{\infty}_{{\mathbb{F}}}(0,T;L^{\infty}(G)),
        \\
        & a_3 \in L^{\infty}_{{\mathbb{F}}}(0,T;W^{1,\infty}(G))
        .
    \end{aligned}
\end{cases}
\end{align}

Let $ \mathbf{G}_{\Gamma_{0}} \deq \{ G' \subset G  \mid \partial G' \cap \partial G \subset \Gamma_{0} \} $ and
\begin{align*}
 H_{\Gamma_{0}}^2(G) \deq   \bigg\{\eta \in H_{l o c}^2(G) & \mid\left.\eta\right|_{G^{\prime}} \in H^2\left(G^{\prime}\right), ~ \forall G^{\prime} \in \mathbf{G}_{\Gamma_{0}},
     \eta |_{\Gamma_{0}}=g_{1},
~   \frac{\partial\eta }{\partial \nu} \bigg|_{\Gamma_{0}}=g_2 \bigg \}.
\end{align*}

We consider the following problem
\begin{problem}\label{probP9}
Can we determine $y$ in $Q$ uniquely by the boundary measurements $y \big|_{(0,T)\times \Gamma_0}$ and $ \dfrac{\partial y}{\partial \nu}  \bigg|_{(0,T)\times \Gamma_0}$?
\end{problem}

The answer to this problem is provided in \Cref{secIllPosedCauchy}.

\begin{remark}
In \cref{probP9}, we only do the measurement on $(0,T) \times \Gamma_{0}$. Hence, similar to \cref{probP8}, we  can determine $ y(\cdot)$  in $(0,T)\times G$  uniquely but we  will lose the continuous dependence of $y(t)$ ($t\in (0,T]$) with respect to $y \big|_{(0,T)\times \Gamma_0}$ and $ \frac{\partial y}{\partial \nu}  \big|_{(0,T)\times \Gamma_0}$. In such case,
instead of whole domain $Q$, we will determine solutions over a subset of whole domain $Q$  (see \cref{thmProb9}).
\end{remark}
\begin{remark}
If we have the full information of $y$ on $(0,T) \times \Gamma$, then we can get a better stability result for \cref{probP9}, which is similar to the one stated in \cref{prob.2.1}. The proof is also very similar. We leave the proof of that result to interested readers.
\end{remark}

\subsection{Inverse source problem with a terminal measurement}\label{ssec-final mea}

Consider the following stochastic parabolic equation:
\begin{align}
\label{eqISPTerEq}
\left\{\begin{aligned}
    &d y -\sum_{j,k=1}^n (b^{j k}  y_{x_j})_{x_k} dt  =\left(a_1 \cdot \nabla y+a_2 y  \right) d t+ (a_{3} y + g)  d W(t) & & \text { in } Q, \\
& y = 0 & &  \text { on } \Sigma , \\
& y(0) = y_{0} & &  \text { in } G,
\end{aligned}\right.
\end{align}
where  $ a_{1}, a_{2} $ and $ a_{3} $ satisfy \cref{eqAcases}.

Consider the following problem:
\begin{problem}
\label{probISPCS}
For any $ t_{0} \in (0,T) $, does $ y(T) $ uniquely determine $ (y(t_{0}), g) $?
\end{problem}
We will provide the answer to this problem in \Cref{secISPTer}.

\subsection{Inverse source problem with a boundary measurement}\label{ssec-boundary mea}


Denote $x=(x_1,x')\in {\mathbb{R}}^n$, where $x' = (x_2,\cdots,x_n)\in {\mathbb{R}}^{n-1}$. Let  $G=(0,l)\times G'$, where $l>0$ and $G'\subset {\mathbb{R}}^{n-1}$ be a bounded domain with a $C^2$ boundary. Consider the following stochastic parabolic equation:
\begin{align}\label{system2bu}
\left\{
\begin{aligned}
& dy -\sum_{j,k=1}^n (b^{j k}  y_{x_j})_{x_k}d t  \!=  \!\big( a_1\cdot \nabla y  \!+ \! \!a_2 y \! + h(t,x')R(t,x)\big) dt
+ g(t, x)dW(t) \!\!\!\!\! &&  \text{ in } Q,\\
& y = 0 && \text{ on } \Sigma,\\
\end{aligned}
\right.
\end{align}
where
$ a_1 \in L^{\infty}_{{\mathbb{F}}}(0,T;W^{1,\infty}(G;{\mathbb{R}}^n))$,
$ a_2\in L^{\infty}_{{\mathbb{F}}}(0,T;W^{1,\infty}(G))$,
$h R \in L^2_{{\mathbb{F}}}(0,T;$  $L^{2}(G))$. Note that $R$ is a deterministic function and $h$ does not depend on $x_1$.

Let us first assume that $ g = a_{3} y $, where $ a_{3} \in L^{\infty}_{\mathbb{F}}(0,T; W^{2, \infty}(G)) $, and $ y(0) = y_{0} $ in $ G $.
Consider the following problem, whose answer will be provided in \Cref{secInverseSourcePre}:
\begin{problem}\label{prob.p7pre}
For a given $R$, determine the
source function $h(\cdot)$  by means of the measurement $\dfrac{\partial y}{\partial \nu}\Big|_{\Sigma}$.
\end{problem}

\subsection{Inverse source problem with a boundary measurement and a terminal measurement}\label{ssec-final-boundary mea}

In  \Cref{ssec-final mea,ssec-boundary mea}, we consider inverse source problems with the final time measurement and the boundary measurement, respectively. In this subsection, we consider the case that we can measure both the final time state and boundary value.

Assume $g \in L^2_{\mathbb{F}}(0,T; H_0^1(G))$ and the value of $y(0)$ is initially unknown.

Consider the following problem:
\begin{problem}\label{prob.p7}
For a given $R$, determine the
source function $h(\cdot)$ and $ g(\cdot, \cdot) $ by means of the measurement $\dfrac{\partial y}{\partial \nu}\Big|_{\Sigma}$ and $ y(T) $.
\end{problem}
We will provide the answer to this problem in \Cref{secInverseSource}.

\subsection{Reconstruct the state with a terminal measurement}
\label{secReconstructTerFormu}

Refer back to \cref{prob.2.2}. If the answer is positive, i.e.,  the state $y$ can be uniquely determined by a terminal measurement $y(T)$, how to reconstruct the state $ y $?

Consider the following stochastic parabolic equation:
\begin{align}\label{eqDou1}
    \left\{
    \begin{aligned}
        & dy - \sum_{j,k=1}^n(b^{jk}y_{x_j})_{x_k}dt=
        (b_{1} \cdot \nabla y + b_{2} y + f) dt
        + (b_{3}y + g) d W(t)
        &\mbox{ in }Q,\\
        & y=0&\mbox{ on }\Sigma,\\
        & y(0)=y_0 &\mbox{ in }G,
    \end{aligned}
    \right.
\end{align}
where
\begin{align*}
    \left\{
    \begin{aligned}
        & b_{1} \in L_{\mathbb{F}}^{\infty}\left(0, T ; W^{1, \infty}\left(G ; \mathbb{R}^n\right)\right), \quad
        b_{2} \in L_{\mathbb{F}}^{\infty}\left(0, T ; L^{\infty}\left(G\right)\right), \quad
        \\
        &
        b_{3} \in L_{\mathbb{F}}^{\infty}\left(0, T ; W^{1, \infty}\left(G\right)\right), \quad
        \\
        &
        f \in L^{2}_{\mathbb{F}}(0, T; L^{2}(G)), \quad
        g \in L^{2}_{\mathbb{F}}(0, T; H_{0}^{1}(G))
        .
    \end{aligned}
    \right.
\end{align*}

\begin{problem}
    \label{probRI}
    Find a function $ y \in L^{2}_{\mathbb{F}}(\Omega; C([0,T]; L^{2}(G))) \cap L^{2}_{\mathbb{F}}(0,T;H^{1}_{0}(G)) $
    that satisfies \cref{eqDou1} with $ y(T) = y_{T} $ which is previously known.
\end{problem}

\subsection{Reconstruct the state with   boundary measurements}
\label{secReconstructProb}

The reconstruction problem mentioned in \Cref{secReconstructTerFormu} employs the terminal measurement to reconstruct the state.
For boundary measurements, i.e., \cref{probP9}, we can similarly consider the reconstruction problem.

This lead to the following problem:
\begin{problem}\label{probICPDou}
Find the unknown solution
\begin{align*}
y \in L^{2}_{\mathbb{F}}(\Omega; C([0,T]; L^{2}_{loc}(G))) \cap L^{2}_{\mathbb{F}}(0,T;H^{2}_{\Gamma_{0}}(G))
\end{align*}
to the equation \cref{eqInversPParabolicPartialBoudaryeq} via $y \big|_{(0,T)\times \Gamma_0}$ and $ \dfrac{\partial y}{\partial \nu}  \bigg|_{(0,T)\times \Gamma_0}$.
\end{problem}

The rest of this chapter is organized as follows. In \Cref{Sp-identity}, we present  a fundamental pointwise weighted identity for second order stochastic para\-bolic-like operators, which is used to derive Carleman estimates for stochastic parabolic equations.  Sections \ref{secDeterminationProblemdistributing1}--\ref{secReconstruct} are devoted to studying Problems \ref{prob.2.0}--\ref{probICPDou}, respectively.
At last, in Section \ref{secFurPro}, we give some comments for the content in this chapter and some related open problems.

\section{A fundamental weighted identity}\label{Sp-identity}

This section presents a fundamental pointwise weighted identity for second order stochastic parabolic-like operators, which is used to derive Carleman estimates for stochastic parabolic equations.

\begin{theorem}
\label{thm.fIForParablic}
Let
\begin{align} \label{eqfIPara1}
    b^{j k}=b^{k j} \in L_{\mathbb{F}}^2\left(\Omega ; C^1\left([0, T] ; W^{2, \infty}(G)\right)\right), \quad j, k=1,2, \cdots, n,
\end{align}
$\ell \in C^{1,3}((0,T)\times G)$ and $\Psi \in C^{1,2}((0,T)\times G)$.
Let $ u $ be an $H^2(G)$-valued continuous Itô process.
Set  $\theta=e^{\ell}$ and $w=\theta u$. Then, for any $t \in[0, T]$ and a.e. $(x, \omega) \in G \times \Omega$,
\begin{align}\label{eq.fIForParablic}\notag
    & 2 \theta \Big [\!-\!\sum_{j, k=1}^n \big(b^{j k} w_{x_j}\big)_{x_k}+\mathcal{A} w \Big ]\Big[d u\!-\!\sum_{j, k=1}^n\big(b^{j k} u_{x_j}\big)_{x_k} d t\Big]
    \!+\! 2 \sum_{j, k=1}^n\big(b^{j k} w_{x_j} d w\big)_{x_k}
    \\\notag
    &
     \quad+2 \sum_{j, k=1}^n\Big[\sum_{j^{\prime}, k^{\prime}=1}^n\big(2 b^{j k} b^{j^{\prime} k^{\prime}} \ell_{x_{j^{\prime}}} w_{x_j} w_{x_{k^{\prime}}}-b^{j k} b^{j^{\prime} k^{\prime}} \ell_{x_j} w_{x_{j^{\prime}}} w_{x_{k^{\prime}}}\big)
    \\
    &
     \quad \quad \quad \quad \quad  ~ +\Psi b^{j k} w_{x_j} w-b^{j k}\Big(\mathcal{A} \ell_{x_j}+\frac{\Psi_{x_j}}{2}\Big) w^2\Big]_{x_k} d t
    \\\notag
    &
     =2 \sum_{j, k=1}^n c^{j k} w_{x_j} w_{x_k} d t+\mathcal{B} w^2 d t+d\Big(\sum_{j, k=1}^n b^{j k} w_{x_j} w_{x_k}+\mathcal{A} w^2\Big)
     -\theta^2 \mathcal{A}(d u)^2,
    \\ \notag
    &
     +2\Big[-\sum_{j, k=1}^n\big(b^{j k} w_{x_j}\big)_{x_k}+\mathcal{A} w\Big]^2 d t
      -\theta^2 \sum_{j, k=1}^n b^{j k}\big(d u_{x_j}+\ell_{x_j} d u\big)\big(d u_{x_k}+\ell_{x_k} d u\big)
\end{align}
where
\begin{align}
    \label{eq.identyDetail}
    \left\{\begin{aligned}
        & \mathcal{A} \triangleq-\sum_{j, k=1}^n\big(b^{j k} \ell_{x_j} \ell_{x_k}-b_{x_k}^{j k} \ell_{x_j}-b^{j k} \ell_{x_j x_k}\big)-\Psi-\ell_t, \\
        & \mathcal{B} \triangleq 2\Big[\mathcal{A} \Psi-\sum_{j, k=1}^n\big(\mathcal{A} b^{j k} \ell_{x_j}\big)_{x_k}\Big]-\mathcal{A}_t-\sum_{j, k=1}^n\big(b^{j k} \Psi_{x_k}\big)_{x_j}, \\
        & c^{j k} \triangleq \sum_{j^{\prime}, k^{\prime}=1}^n\Big[2 b^{j k^{\prime}}\big(b^{j^{\prime} k} \ell_{x_{j^{\prime}}}\big)_{x_{k^{\prime}}}-\big(b^{j k} b^{j^{\prime} k^{\prime}} \ell_{x_{j^{\prime}}}\big)_{x_{k^{\prime}}}\Big]-\frac{b_t^{j k}}{2}+\Psi b^{j k}.
        \end{aligned}\right.
\end{align}
\end{theorem}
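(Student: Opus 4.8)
The plan is to prove the identity \eqref{eq.fIForParablic} by a direct (if lengthy) computation, organized around the operator splitting that is implicit in the statement. Write $\mathcal{P}u \deq du - \sum_{j,k}(b^{jk}u_{x_j})_{x_k}dt$ for the stochastic parabolic operator applied to $u$, and observe that since $w=\theta u$ with $\theta = e^\ell$, conjugation gives $\theta\,\mathcal{P}u = \theta\,d u - \theta\sum_{j,k}(b^{jk}u_{x_j})_{x_k}dt$, which after substituting $u = \theta^{-1}w$ and expanding the spatial derivatives produces a first term that naturally separates into a ``principal symmetric part'' $-\sum_{j,k}(b^{jk}w_{x_j})_{x_k}+\mathcal{A}w$ and a ``remainder part'' involving $dw$, $w_{x_j}dw$-type terms and an explicitly computable It\^o correction. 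The definitions of $\mathcal{A}$, $\mathcal{B}$, $c^{jk}$ in \eqref{eq.identyDetail} are precisely the coefficients one is forced to adopt so that the cross terms match; so the first step is to carry out this conjugation carefully, tracking where $\ell_t$, $\ell_{x_j}$, $\ell_{x_jx_k}$ and $\Psi$ enter, and to record the resulting raw expansion.

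Second, I would multiply the ``principal part'' $-\sum_{j,k}(b^{jk}w_{x_j})_{x_k}+\mathcal{A}w$ by $2\,\theta\,\mathcal{P}u = 2\big(dw - \sum_{j,k}(b^{jk}w_{x_j})_{x_k}dt - (\ell_t + \text{It\^o terms})w\,dt\big)$-type quantity and reorganize. The standard deterministic trick (Fursikov--Imanuvilov style) is to integrate by parts in space the product of the second-order symmetric operator with $w$ and with $w_t$, converting $\big(-\sum(b^{jk}w_{x_j})_{x_k}\big)\cdot\big(-\sum(b^{jk}w_{x_j})_{x_k}\big)$-adjacent terms into a divergence plus the quadratic form $\sum c^{jk}w_{x_j}w_{x_k}$, and converting $\mathcal{A}w \cdot (\text{first-order in }t)$ into $d(\mathcal{A}w^2) + \mathcal{B}w^2\,dt$ plus further divergences. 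The key bookkeeping device is It\^o's formula applied to $\sum_{j,k}b^{jk}w_{x_j}w_{x_k} + \mathcal{A}w^2$: its differential supplies exactly the $d(\cdots)$ term on the right, and the quadratic-variation contributions $(du)^2$ and $(du_{x_j}+\ell_{x_j}du)(du_{x_k}+\ell_{x_k}du)$ are what remain once the first-order-in-time algebra is complete. So the second step is to apply It\^o's formula to this ``energy density'' and match.

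Third, I would collect all divergence terms. The left-hand side of \eqref{eq.fIForParablic} displays one explicit divergence $2\sum_{j,k}(b^{jk}w_{x_j}dw)_{x_k}$ plus a large bracketed $[\cdots]_{x_k}dt$ divergence; these must be shown to equal the sum of all divergences produced in steps one and two. This is pure reorganization: group terms by the structure $(\text{something})_{x_k}$, use symmetry $b^{jk}=b^{kj}$ freely, and verify the coefficients of $w_{x_j}w_{x_{k'}}$, $w_{x_j}w$ and $w^2$ inside the bracket match $2b^{jk}b^{j'k'}\ell_{x_{j'}} - b^{jk}b^{j'k'}\ell_{x_j}$ (symmetrized), $\Psi b^{jk}$, and $-b^{jk}(\mathcal{A}\ell_{x_j}+\Psi_{x_j}/2)$ respectively. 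Finally, verify the leftover ``absolute'' terms assemble into $2\big[-\sum(b^{jk}w_{x_j})_{x_k}+\mathcal{A}w\big]^2dt$ — this is just completing the square on the principal-part multiplier — and that the remaining quadratic-variation pieces are exactly $-\theta^2\mathcal{A}(du)^2 - \theta^2\sum b^{jk}(du_{x_j}+\ell_{x_j}du)(du_{x_k}+\ell_{x_k}du)$.

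The main obstacle is not conceptual but the sheer volume of It\^o bookkeeping: unlike the deterministic case, every product must be expanded with cross-variation terms, and one must be scrupulous that $d(w_{x_j})$ and $(dw)_{x_j}$ are interchanged correctly under the regularity hypotheses \eqref{eqfIPara1} (that $b^{jk}\in C^1$ in $t$ is used here), and that $\ell\in C^{1,3}$, $\Psi\in C^{1,2}$ give enough smoothness to differentiate $\mathcal{A}$ and $c^{jk}$ in $x$ and $t$ as the formulas for $\mathcal{B}$, $c^{jk}$ demand. The cleanest route is to prove the deterministic skeleton identity first — replace $du$ by $u_t\,dt$ and drop all quadratic-variation terms — check that the coefficients \eqref{eq.identyDetail} make it work, and then reinstate the stochastic terms, at each step attaching the correction dictated by It\^o's formula; the stochastic terms decouple from the deterministic algebra precisely because they only contribute through the energy-density differential and the two displayed quadratic-variation expressions.
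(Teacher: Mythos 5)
Your proposal is correct and follows essentially the same route as the paper: conjugate the operator so that $2\theta\bigl[-\sum_{j,k}(b^{jk}w_{x_j})_{x_k}+\mathcal{A}w\bigr]\bigl[du-\sum_{j,k}(b^{jk}u_{x_j})_{x_k}dt\bigr]$ splits into the square of the principal part plus its products with $dw+2\sum_{j,k}b^{jk}\ell_{x_k}w_{x_j}dt$ and with $\Psi w\,dt$, then apply It\^o's formula to the energy density $\sum_{j,k}b^{jk}w_{x_j}w_{x_k}+\mathcal{A}w^2$ and integrate by parts in space to produce the divergences, the quadratic form $\sum_{j,k}c^{jk}w_{x_j}w_{x_k}$, the $\mathcal{B}w^2$ term, and the two quadratic-variation corrections. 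The ``deterministic skeleton first'' suggestion is only an organizational device and does not change the substance of the argument.
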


\medskip

\begin{proof}

The proof is divided into four steps.

\emph{Step 1.}  Since $ \theta = e^{\ell} $ and $  w = \theta u $, we have that
\begin{align*}
\theta d u =   d w - \ell_{t} w d t,
\end{align*}
and that
\begin{align} \label{eqIDParabolic1}
& \theta \sum_{j, k =1}^{n} (b^{j k} u_{x_{j}})_{x_{k}}
\\ \notag
& =
\sum_{j, k =1}^{n} (b^{j k} w_{x_{j}})_{x_{k}}
\!-\! 2 \sum_{j, k =1}^{n} b^{j k} \ell_{x_{k}} w_{x_{j}}
\!+\! \sum_{j, k =1}^{n} (
b^{j k} \ell_{x_{j}} \ell_{x_{k}}
- b^{j k}_{x_{k}} \ell_{x_{j}}
- b^{j k} \ell_{x_{j}x_{k}}
) w.
\end{align}
Put
\begin{align}\label{eqIDParabolic2}
    I   = - \sum_{j, k =1}^{n} (b^{j k} w_{x_{j}})_{x_{k}}  + \mathcal{A} w,
 \quad 
I_{2}   = d w + 2 \sum_{j, k =1}^{n} b^{j k} \ell_{x_{k}} w_{x_{j}} d t,
 \quad 
I_{3}   = \Psi w d t.
\end{align}
Combining \cref{eqIDParabolic1,eqIDParabolic2}, we obtain
\begin{align}\label{eqIDParabolic3}
2 \theta  I  \Big(d u - \sum_{j, k =1}^{n} (b^{j k} u_{x_{j}})_{x_{k}} d t \Big) = 2 I^{2} d t + 2 I I_{2} + 2 I I_{3}
.
\end{align}

\emph{Step 2.} In this step, we handle the term $ I I_{2} $. Using It\^o's formula, we have
\begin{align}\label{eqIDParabolic4}\notag
& 2 \Big[  - \sum_{j, k =1}^{n} (b^{j k} w_{x_{j}})_{x_{k}} + \mathcal{A} w \Big] d w
\\\notag
& =
- 2 \sum_{j, k =1}^{n} (b^{j k} w_{x_{j}})_{x_{k}}
+ d \Big( \sum_{j, k =1}^{n} b^{j k} w_{x_{j}} w_{x_{k}} + \mathcal{A} w^{2} \Big)
- \sum_{j, k =1}^{n} b^{j k}_{t} w_{x_{j}} w_{x_{k}} d t
\\
& \quad
- \mathcal{A}_{t} w^{2} d t
- \sum_{j, k =1}^{n} d w_{x_{j}}  d w_{x_{k}}
- \mathcal{A} (d w)^{2}
.
\end{align}

Thanks to \cref{eqfIPara1}, we obtain
\begin{align*}
& 4 \sum_{j, k, j', k'=1}^{n} b^{j k} b^{j' k'} \ell_{x_{k'}} w_{x_{j}} w_{x_{k} x_{j'}}
\\
& = 2 \sum_{j, k, j', k'=1}^{n} \big[
( b^{j k} b^{j' k'} \ell_{x_{k}} w_{x_{j'}} w_{x_{k'}})_{x_{j}}
- ( b^{j k} b^{j' k'} \ell_{x_{k'}} )_{x_{j'}} w_{x_{j}} w_{x_{k}}
\big]
.
\end{align*}
Hence, it holds that
\begin{align} \label{eqIDParabolic5}\notag
& 4 \sum_{j', k'=1}^{n} \Big[
- \sum_{j, k =1}^{n} (b^{j k} w_{x_{j}})_{x_{k}}
+ \mathcal{A} w
\Big]
b^{j' k'} \ell_{x_{k'}} w_{x_{j'}}
\\\notag
& =
\!- 2 \sum_{j, k=1}^{n} \!\Big[
\! \sum_{j', k'=1}^{n} \!(
    2 b^{j k} b^{j' k'} \ell_{x_{j'}} w_{x_{j}} w_{x_{k'}}
    \!-\! b^{j k} b^{j' k'} \ell_{x_{j}} w_{x_{j'}} w_{x_{k'}}
)
\!- \! \mathcal{A} b^{j k} \ell_{x_{j}} w^{2}
\Big]_{x_{k}} \!
\\ 
& \quad
+ 2 \sum_{j, k, j', k'=1}^{n} \big[
2 b^{j k'} (b^{j' k} \ell_{x_{j'}} )_{x_{k'}}
- (b^{j k}  b^{j' k'} \ell_{x_{k'}})_{x_{j}}
\big] w_{x_{j}} w_{x_{k}}
\\\notag
& \quad
- 2 \sum_{j, k=1}^{n} (\mathcal{A} b^{j k} \ell_{x_{j}} )_{x_{k}} w^{2}
.
\end{align}

\emph{Step 3.} In this step, we deal with $ I I_{3} $.
Thanks to \cref{eqIDParabolic2}, we obtain
\begin{align}\label{eqIDParabolic6}\notag
  2 I I_{3}
& =
2 \Big[
- \sum_{j, k=1}^{n} (b^{j k} w_{x_{j}})_{x_{k}}
+ \mathcal{A} w
\Big] \Psi w d t
\\\notag
& =
- 2 \sum_{j, k=1}^{n} \big(
 b^{j k} \Psi w_{x_{j}} w
-  b^{j k} \Psi_{x_{j}} w^{2}
\big)_{x_{k}}  d t
+ 2 \sum_{j, k=1}^{n} \Psi b^{j k} w_{x_{j}} w_{x_{k}} d t
\\
& \quad  \,
+ \Big(
\mathcal{A} \Psi
- \sum_{j, k=1}^{n} (b^{j k} \Psi_{x_{j}})_{x_{k}}
\Big) w^{2} d t
.
\end{align}

\emph{Step 4.}  Combining \cref{eqIDParabolic3,eqIDParabolic4,eqIDParabolic5,eqIDParabolic6},  and noting that
\begin{align*}
& \sum_{j, k =1}^{n} b^{j k} d w_{x_{j}} d w_{x_{k}} + \mathcal{A} (d w)^{2}
\\
& =
\sum_{j, k =1}^{n} \theta^{2} b^{j k} ( d u_{x_{j}}  + \ell_{x_{j}} d u )( d u_{x_{k}}
+ \ell_{x_{k}} d u )
+ \theta^{2} \mathcal{A} (d u)^{2}
,
\end{align*}
we immediately have the desired equality \cref{eq.fIForParablic}.
\end{proof}

\section{Solution to inverse state problem with an internal measurement I}
\label{secDeterminationProblemdistributing1}

In this section, we solve \cref{prob.2.0} and \cref{prob.2.1}.
The following theorem, which is a kind of observability estimate, provides a straightforward extension of \cite[Theorem 2.4]{Tang2009}, presenting a positive answer to \cref{prob.2.0} and \cref{prob.2.1}.

\begin{theorem}
\label{thm.2.1}
For each $t \in (0,T]$, there exists a constant ${\cal C}(t) >0$ such that the inequality
\cref{sp-eq2} holds for any
$y_0, \hat y_0\in L^2_{\mathcal{F}_0}(\Omega;L^2(G))$.
\end{theorem}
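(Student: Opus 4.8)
The plan is to reduce the desired estimate \cref{sp-eq2} to a Carleman-type observability inequality for the linearized difference equation, exploiting the fact that the map $y_0 \mapsto y$ is affine (up to the nonlinearities $F,K$) and that the difference $z \deq y - \hat y$ solves a linear stochastic parabolic equation with bounded coefficients. First I would set $z = y - \hat y$; by the Lipschitz assumptions \cref{10.5-eq1-1} on $F$ and $K$, the process $z$ satisfies an equation of the form $dz - \sum_{j,k}(b^{jk}z_{x_j})_{x_k}dt = (a_1\cdot\nabla z + a_2 z)dt + a_3 z\, dW(t)$ in $Q$, with $z = 0$ on $\Sigma$ and $z(0) = y_0 - \hat y_0$, where $a_1 \in L^\infty_{\mathbb F}(0,T;L^\infty(G;\mathbb R^n))$, $a_2 \in L^\infty_{\mathbb F}(0,T;L^\infty(G))$, $a_3 \in L^\infty_{\mathbb F}(0,T;L^\infty(G))$ with norms bounded by $L$ (these $a_i$ are random since they are difference quotients of $F,K$ along $y,\hat y$, but this causes no difficulty). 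Thus the theorem is equivalent to: for each $t \in (0,T]$ there is ${\mathcal C}(t) > 0$ with $|z(t)|_{L^2_{\mathcal F_t}(\Omega;L^2(G))} \leq {\mathcal C}(t)\, |z|_{L^2_{\mathbb F}(0,T;L^2(G_0))}$.

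Next I would prove this observability estimate by the standard Carleman/energy method for forward stochastic parabolic equations. Fix $t \in (0,T]$ and choose a weight function $\ell(s,x) = \lambda \frac{e^{\mu\psi(x)} - e^{2\mu\|\psi\|_\infty}}{(s)(t'-s)}$ (or a variant adapted to the interval $(0,t')$ for suitable $t' \geq t$), where $\psi \in C^2(\overline G)$ satisfies $\psi > 0$ in $G$, $|\nabla\psi| > 0$ on $\overline{G\setminus G_0}$, and $\partial_\nu\psi \leq 0$ on $\Gamma$ — such $\psi$ exists by a classical construction since $G_0 \neq \emptyset$. Applying the pointwise weighted identity \cref{eq.fIForParablic} of \cref{thm.fIForParablic} to $u = z$ (with $\Psi$ chosen proportionally to the leading term of $\sum b^{jk}\ell_{x_j}\ell_{x_k}$ in the usual way), then integrating over $Q$ and taking expectation, the divergence terms integrate to boundary contributions that are controlled using $z|_\Sigma = 0$ and the sign of $\partial_\nu\psi$; the martingale term has zero expectation; and for $\lambda,\mu$ large the quadratic form $2\sum c^{jk}w_{x_j}w_{x_k} + \mathcal B w^2$ dominates, absorbing the first-order and zeroth-order perturbations $a_1\cdot\nabla z, a_2 z, a_3 z$ (the last through the $\theta^2\mathcal A(du)^2$ and $\theta^2\sum b^{jk}(du_{x_j}+\ell_{x_j}du)(du_{x_k}+\ell_{x_k}du)$ terms, noting $du$ has the stochastic part $a_3 z\,dW$). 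This yields a Carleman estimate
\begin{equation*}
\mathbb E\int_Q \theta^2\big(\lambda\mu^2\varphi|\nabla z|^2 + \lambda^3\mu^4\varphi^3 z^2\big)\,dx\,ds \;\leq\; {\mathcal C}\,\mathbb E\int_{Q_0}\theta^2\lambda^3\mu^4\varphi^3 z^2\,dx\,ds,
\end{equation*}
where $\varphi$ is the associated singular factor. Combining this with a standard energy estimate for the forward equation — which gives $|z(t)|^2_{L^2(G)} \leq {\mathcal C}(t)\,\mathbb E\int_{I}\int_G z^2\,dx\,ds$ over a small time interval $I$ around $t$ where the weight $\theta$ is bounded below — produces the claimed bound, with ${\mathcal C}(t)$ blowing up as $t \to 0$ (and, after rescaling, as $t\to\infty$ in the infinite-horizon reading, consistent with the remark following \cref{prob.2.1}).

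The main obstacle I anticipate is the treatment of the stochastic second-order terms $-\theta^2\mathcal A(du)^2$ and $-\theta^2\sum_{j,k}b^{jk}(du_{x_j}+\ell_{x_j}du)(du_{x_k}+\ell_{x_k}du)$ in identity \cref{eq.fIForParablic}: since $du$ contains $a_3 z\,dW(t)$, these contribute terms like $\theta^2|a_3|^2 z^2$ and $\theta^2 b^{jk}(a_3 z)_{x_j}(a_3 z)_{x_k}$ after taking expectation, and the latter involves $|\nabla z|^2$ with a coefficient of the same order in $\lambda$ as the quadratic main term — so one must check carefully that, because $a_3$ has no extra $\lambda$-weight while the good term carries $\lambda\mu^2\varphi$, these are genuinely absorbed for $\lambda$ large; this is precisely the point where the regularity $a_3 \in L^\infty$ (rather than something worse) and the choice of $\Psi$ matter. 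A secondary technical point is handling the possible nonzero terminal weight and the cutoff in time near $s=0$ and $s=t'$; since we only need the estimate at the single time $t$ this is routine but must be done to control ${\mathcal C}(t)$. Everything else — the boundary terms, the absorption of the first-order drift, and the passage from the weighted estimate to \cref{sp-eq2} — follows the now-classical template (cf. \cite{Tang2009}), of which the present statement is, as noted, essentially an extension.
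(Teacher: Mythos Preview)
Your proposal is correct and follows essentially the same route as the paper: set $z=y-\hat y$, use the Lipschitz hypotheses \cref{10.5-eq1-1} to reduce to a linear stochastic parabolic equation, apply the global Carleman estimate derived from the pointwise identity \cref{eq.fIForParablic} with the standard Fursikov--Imanuvilov weight $\ell=\lambda(e^{\mu\psi}-e^{2\mu|\psi|_\infty})/[t(T-t)]$, absorb the lower-order drift and diffusion contributions for large $\lambda,\mu$, and finish with an energy inequality relating $|z(t)|_{L^2}$ to the weighted space-time norm. The only cosmetic difference is that the paper keeps the full differences $f=F(\nabla y,y)-F(\nabla\hat y,\hat y)$ and $g=K(y)-K(\hat y)$ as source terms in \cref{h9} rather than rewriting them via difference-quotient coefficients $a_1,a_2,a_3$, and it works on the fixed interval $(0,T)$ (the weight already vanishes at the endpoints, so no time cutoff is needed); your anticipated obstacle concerning the $\theta^2\sum b^{jk}(du_{x_j}+\ell_{x_j}du)(du_{x_k}+\ell_{x_k}du)$ term is handled exactly as you predict, since the resulting $\theta^2|\nabla g|^2$ contribution carries no extra power of $\lambda$ and is swallowed by the $\lambda\mu^2\varphi\theta^2|\nabla z|^2$ term on the left.
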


\begin{remark}
\Cref{thm.2.1} asserts  that the state $y(t)$ of  system \cref{sp-eq1} (at $t\in (0,T]$)
is   uniquely determined from the observation  $ y|_{(0,T)\times     G_0}$, ${\mathbb{P}}$-a.s., and continuously depends on it.
As a result of the arbitrariness of $t\in (0,T]$ in \cref{sp-eq2}, we deduce that the answer to \cref{prob.2.0} is certainly positive.
\end{remark}

In order to prove \cref{thm.2.1}, we present a global Carleman estimate for the following stochastic parabolic equation:
\begin{equation}\label{h9}
\left\{ \begin{aligned}
& dz-\sum_{j,k=1}^n(b^{jk}z_{x_j})_{x_k}dt=fdt+gdW(t) \quad &\hbox{in }Q,\\
& z=0 &\hbox{on }\Sigma,
\\
& z(0)=z_0 &\hbox{in }G,
\end{aligned} \right.
\end{equation}
where $z_0\in L^2_{\mathcal{F}_0}(\Omega; L^2(G))$, $f\in
L_{{\mathbb{F}}}^2(0,T;L^2(G))$ and $g\in L_{{\mathbb{F}}}^2(0,T; H^1(G))$.

Let us first recall the following well-known result.
\begin{lemma}\label{hl1}\cite[p. 4, Lemma 1.1]{Fursikov1996}
For any nonempty open subset $G_1$ of $G$, there is a  $\psi\in C^4({\overline{G}})$ such that $\psi>0$ in $G$, $\psi=0$ on
$\Gamma$, and $| \nabla\psi(x)|>0$ for all $x\in \overline{G \setminus G_1}$.
\end{lemma}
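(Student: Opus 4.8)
The plan is to deduce the statement from standard facts of differential topology. For topological reasons one cannot hope for a $\psi$ with \emph{no} critical points in $G$: since $\psi>0$ in $G$ and $\psi=0$ on $\Gamma$, the maximum of $\psi$ over the compact set $\overline G$ is attained at an interior point, which is a critical point. Thus the real content of the lemma is that all interior critical points can be forced into the prescribed set $G_1$. Accordingly I would proceed in three stages: first produce a function with the correct boundary behaviour and no critical points near $\Gamma$; then perturb it so that its critical points in $G$ are finitely many; finally push those points into $G_1$ by a diffeomorphism of $\overline G$ that is the identity near $\Gamma$, and take $\psi$ to be the composition.

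First I would construct $\psi_0\in C^4(\overline G)$ with $\psi_0>0$ in $G$, $\psi_0=0$ on $\Gamma$, and $|\nabla\psi_0|>0$ on a relatively open neighbourhood $\mathcal N$ of $\Gamma$ in $\overline G$: for instance, solve $-\Delta\psi_0=1$ in $G$ with $\psi_0=0$ on $\Gamma$, so that the maximum principle gives $\psi_0>0$ in $G$ and Hopf's lemma gives $\partial\psi_0/\partial\nu\le -c<0$ on the compact set $\Gamma$, whence $|\nabla\psi_0|\ge c/2$ near $\Gamma$ (when $\Gamma$ is merely $C^2$ one instead patches a cut-off multiple of the boundary-distance function with an interior bump, or works on a slightly smaller smooth domain). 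Since the critical set of $\psi_0$ inside $G$ may be large, I would then replace $\psi_0$ by $\psi_1=\psi_0+\varepsilon\varphi$, where $\varphi\in C^\infty_c(G\setminus\overline{\mathcal N})$ is chosen---using the $C^4$-density of Morse functions---so that $\psi_1$ is Morse on $\overline G\setminus\mathcal N$; for $\varepsilon$ small this keeps $\psi_1>0$ in $G$, $\psi_1=0$ on $\Gamma$ and $|\nabla\psi_1|>0$ on $\mathcal N$. As $\overline G\setminus\mathcal N$ is compact and Morse critical points are isolated, $\psi_1$ has only finitely many critical points in $G$, say $x_1,\dots,x_m\in G\setminus\mathcal N$.

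Finally, because $G$ is connected and $G_1$ is nonempty and open, I would choose distinct points $p_1,\dots,p_m\in G_1$ and a diffeomorphism $\Phi:\overline G\to\overline G$, equal to the identity near $\Gamma$, with $\Phi(x_i)=p_i$ for each $i$---say the time-one flow of a smooth vector field supported in a compact subset of $G$ that drags each $x_i$ to $p_i$ along a path; this uses only the connectedness of $G$. Set $\psi:=\psi_1\circ\Phi^{-1}$. Then $\psi\in C^4(\overline G)$; since $\Phi$ fixes $\Gamma$ and maps $G$ onto $G$, we have $\psi=0$ on $\Gamma$ and $\psi>0$ in $G$; and from $\nabla\psi(y)=\big(D\Phi^{-1}(y)\big)^{\top}\nabla\psi_1\big(\Phi^{-1}(y)\big)$ with $D\Phi^{-1}(y)$ invertible we see that $\nabla\psi(y)=0$ precisely when $\Phi^{-1}(y)\in\{x_1,\dots,x_m\}$, i.e. when $y\in\{p_1,\dots,p_m\}\subset G_1$. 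Near $\Gamma$, where $\Phi$ is the identity, $|\nabla\psi|=|\nabla\psi_1|>0$. Hence $|\nabla\psi|>0$ on all of $\overline{G\setminus G_1}$, as required.

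I expect the crux to be the perturbation step: arranging that $\psi_1$ is genuinely Morse while simultaneously preserving positivity in $G$, vanishing on $\Gamma$, and non-vanishing of the gradient near $\Gamma$ requires controlling the support and magnitude of $\varphi$ and invoking the transversality theorem on the $C^4$-density of Morse functions. The diffeomorphism step is a routine consequence of the homogeneity of connected manifolds, and the construction of $\psi_0$ is elementary apart from the minor boundary-regularity issue mentioned above.
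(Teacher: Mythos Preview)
The paper does not supply a proof of this lemma; it merely cites \cite[p.~4, Lemma~1.1]{Fursikov1996}. Your three-stage argument (boundary-defining function, Morse perturbation, isotopy moving the finitely many critical points into $G_1$) is correct and is precisely the classical proof given in Fursikov--Imanuvilov, so there is nothing to compare against in this paper.

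One remark worth making, since you already touched on it: the lemma as stated asks for $\psi\in C^4(\overline G)$ with $\psi=0$ on $\Gamma$ and $|\nabla\psi|>0$ there, which via the implicit function theorem forces $\Gamma$ to be a $C^4$ hypersurface. The paper, however, only assumes $\Gamma\in C^2$. This is an internal inconsistency of the paper (and of many papers that quote this lemma), not a gap in your argument; in practice the Carleman estimates downstream need at most $C^2$ regularity of $\psi$ up to the boundary, and your construction delivers $C^\infty$ regularity in the interior where it matters. Your alternative in Step~1---patching a cut-off of the signed distance with an interior bump---is the honest fix when $\Gamma$ is genuinely only $C^2$, and it yields $\psi\in C^2(\overline G)\cap C^\infty(G)$, which is all that is actually used later.
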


With the function $\psi$ given in  \Cref{hl1},
we choose the weighted functions in \cref{thm.fIForParablic} as follows:
\begin{equation}\label{alphad}
\theta=e^{\ell },
\quad\ell=\lambda\alpha,\quad\alpha(t,x)=\frac{e^{\mu\psi(x)}-e^{2\mu|\psi|_{C(\overline{G
})}}}{ t(T-t)},\quad \varphi(t,x)=\frac{e^{\mu\psi(x)}}{t(T-t)},
\end{equation}
where $\lambda>1$, $\mu>1$. Further, let
\begin{equation}\label{h5}
\Psi=2\sum_{j,k=1}^nb^{jk}\ell_{x_jx_k}.
\end{equation}

For $j,k=1,2,\cdots,n$, it is straightforward  to verify  that
\begin{equation}\label{heat 1.1 h2}
\ell_t=\lambda\alpha_t,\quad \ell_{x_j}=\lambda\mu\varphi\psi_{x_j},\quad
\ell_{x_jx_k}=\lambda\mu^2\varphi\psi_{x_j}\psi_{x_k}+\lambda\mu\varphi\psi_{x_jx_k}
\end{equation}
and that
\begin{align} \label{h4}
&   \alpha_t
= \varphi^2O\big(e^{2\mu |\psi|_{C(\overline{G})}}\big),
\quad \quad 
\varphi_t
= \varphi^2O\big(e^{\mu |\psi|_{C(\overline{G})}}\big)
.
\end{align}
Here and in what follows, for a positive constant $ r $, we denote $ O(\mu^{r}) $ as a function of order $ \mu^{r} $ for large $ \mu $.
We use $ O\big(e^{\mu |\psi|_{C(\overline{G})}}\big) $ in a similar way.

We state the Carleman estimate for the equation \cref{h9} as follows:

\begin{theorem}
\label{thm.CarlemanForHeat}
There is a constant $\mu_0=\mu_0(G, G_0,(b^{jk})_{n\times n})>0$
such that for all $\mu \ge \mu_0$, one can find two constants
${\cal C}={\cal C}(\mu)>0$ and $\lambda_0=\lambda_0(\mu)>0$ such that for any
$\lambda\ge\lambda_0$,  the
solution $z$ to \cref{h9} satisfies
\begin{align}\label{h7}
& \lambda^3\mu^4\mathbb{E}\int_Q\varphi^3\theta^2z^2dxdt+
\lambda\mu^2\mathbb{E}\int_Q\varphi\theta^2|\nabla
z|^2dxdt
\notag
\\
 & \leq {\cal C}\mathbb{E}\Big[\int_Q \theta^2\big(f^2+|\nabla
g|^2+\lambda^2\mu^2\varphi^2 g^2\big)dxdt+\lambda^3\mu^4\int_{Q_0}\varphi^3
\theta^2z^2dxdt\Big].
\end{align}
\end{theorem}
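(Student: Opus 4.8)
The plan is to apply the fundamental pointwise identity \cref{eq.fIForParablic} of \cref{thm.fIForParablic} to (a smooth approximation of) the solution $z$ of \cref{h9}, with $\ell=\lambda\alpha$ and $\Psi$ as in \cref{alphad,h5}, where the weight $\psi$ of \cref{hl1} is taken relative to a fixed open set $G_1$ with $\overline{G_1}\subset G_0$, so that $|\nabla\psi|>0$ on $\overline{G\setminus G_1}$. Since by well-posedness $z$ is only $L^2_{\mathbb{F}}(0,T;H^2(G)\cap H^1_0(G))$-valued rather than a continuous $H^2(G)$-valued process, one first runs the argument for smooth data $(z_0,f,g)$ — for which \cref{thm.fIForParablic} applies with $u=z$ — and passes to the limit at the end, all terms being continuous in the relevant norms. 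Writing $w=\theta z$, $I\triangleq-\sum_{j,k}(b^{jk}w_{x_j})_{x_k}+\mathcal{A}w$, and using $du-\sum_{j,k}(b^{jk}u_{x_j})_{x_k}dt=f\,dt+g\,dW(t)$ together with the quadratic variations $(du)^2=g^2dt$ and $(du_{x_j}+\ell_{x_j}du)(du_{x_k}+\ell_{x_k}du)=(g_{x_j}+\ell_{x_j}g)(g_{x_k}+\ell_{x_k}g)dt$, one integrates \cref{eq.fIForParablic} over $Q$ and takes $\mathbb{E}$.

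The second step disposes of every ``harmless'' term. The martingale term $\mathbb{E}\int_Q 2\theta Ig\,dW(t)$ vanishes; the exact It\^o differential $\mathbb{E}\int_Q d\big(\sum_{j,k}b^{jk}w_{x_j}w_{x_k}+\mathcal{A}w^2\big)$ vanishes since $\theta$ and its derivatives tend to $0$ as $t\to0^+$ and $t\to T^-$ (there $\alpha\to-\infty$); and the two spatial divergence terms produce only boundary integrals over $\Sigma$ — those carrying a factor $w$, $w^2$, or $dw$ vanishing because $w|_\Sigma=0$, and the one quadratic in $\nabla w$ reducing, via $\nabla w=(\partial_\nu w)\nu$ and $\nabla\psi=(\partial_\nu\psi)\nu$ on $\Gamma$, to $2\lambda\mu\,\mathbb{E}\int_\Sigma\varphi(\partial_\nu\psi)(\partial_\nu w)^2\big(\sum_{j,k}b^{jk}\nu^j\nu^k\big)^2dt\le0$ (as $\psi=0$ on $\Gamma$ and $\psi>0$ in $G$ force $\partial_\nu\psi\le0$, while $\sum_{j,k}b^{jk}\nu^j\nu^k\ge s_0$), hence discarded. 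Bounding the It\^o-correction terms by $\mathcal{C}\,\theta^2\big(|\nabla g|^2+\lambda^2\mu^2\varphi^2g^2\big)$ (using $|\ell_{x_j}|\le\mathcal{C}\lambda\mu\varphi$, $|\mathcal{A}|\le\mathcal{C}\lambda^2\mu^2\varphi^2$) and $2\theta If\le I^2+\theta^2f^2$ so that $I^2$ is absorbed into the $2I^2$ term, what survives is
\begin{align*}
\mathbb{E}\int_Q\Big(2\sum_{j,k}c^{jk}w_{x_j}w_{x_k}+\mathcal{B}w^2\Big)dx\,dt & \le\mathcal{C}\,\mathbb{E}\int_Q\theta^2\big(f^2+|\nabla g|^2+\lambda^2\mu^2\varphi^2g^2\big)dx\,dt.
\end{align*}

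The heart of the matter is the two-parameter pointwise lower bound on the left-hand side. Inserting \cref{heat 1.1 h2,h4} into \cref{eq.identyDetail}, the leading part of $c^{jk}$ is $\lambda\mu^2\varphi\big(2\beta^j\beta^k+b^{jk}\sum_{j',k'}b^{j'k'}\psi_{x_{j'}}\psi_{x_{k'}}\big)$ with $\beta^k=\sum_{j'}b^{j'k}\psi_{x_{j'}}$, and the leading part of $\mathcal{B}$ is $2\lambda^3\mu^4\varphi^3\big(\sum_{j',k'}b^{j'k'}\psi_{x_{j'}}\psi_{x_{k'}}\big)^2$; all remaining contributions are of strictly lower order — lower order in $\mu$ for $c^{jk}$, and lower order in $\lambda$ or in $\mu$ for $\mathcal{B}$, the $\alpha_t$- and $\varphi_t$-terms being controlled by \cref{h4}. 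Hence, choosing $\mu\ge\mu_0=\mu_0(G,G_0,(b^{jk}))$ and then $\lambda\ge\lambda_0(\mu)$ large, one gets, using \cref{eq.bijGeq} and $|\nabla\psi|>0$ on $\overline{G\setminus G_1}$,
\begin{align*}
2\sum_{j,k}c^{jk}w_{x_j}w_{x_k}+\mathcal{B}w^2 & \ge\mathcal{C}^{-1}\big(\lambda\mu^2\varphi|\nabla w|^2+\lambda^3\mu^4\varphi^3w^2\big)\quad\text{on }\overline{G\setminus G_1},
\end{align*}
while on all of $G$ one has the unconditional bounds $|c^{jk}|\le\mathcal{C}\lambda\mu^2\varphi$ and $|\mathcal{B}|\le\mathcal{C}\lambda^3\mu^4\varphi^3$. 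Splitting $Q=(Q\setminus Q_1)\cup Q_1$ with $Q_1=(0,T)\times G_1$, the last display of the previous paragraph therefore yields $\mathcal{C}^{-1}\mathbb{E}\int_Q(\lambda\mu^2\varphi|\nabla w|^2+\lambda^3\mu^4\varphi^3w^2)\le\mathcal{C}\mathbb{E}\int_Q\theta^2(f^2+|\nabla g|^2+\lambda^2\mu^2\varphi^2g^2)+\mathcal{C}\mathbb{E}\int_{Q_1}(\lambda\mu^2\varphi|\nabla w|^2+\lambda^3\mu^4\varphi^3w^2)$.

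It remains to absorb the two local terms on $Q_1$. Since $Q_1\subset Q_0$ and $\varphi^3w^2=\varphi^3\theta^2z^2$, the term $\mathbb{E}\int_{Q_1}\lambda^3\mu^4\varphi^3w^2$ is kept on the right of \cref{h7}. The term $\mathbb{E}\int_{Q_1}\lambda\mu^2\varphi|\nabla w|^2$ is removed by a local energy estimate: with $\zeta\in C^\infty_c(G_0)$, $0\le\zeta\le1$, $\zeta\equiv1$ on $G_1$, one tests the equation for $w$ — which reads $dw-\sum_{j,k}(b^{jk}w_{x_j})_{x_k}dt=\big(-2\sum_{j,k}b^{jk}\ell_{x_k}w_{x_j}+q\,w+\theta f\big)dt+\theta g\,dW(t)$, the explicit zeroth-order coefficient $q$ satisfying $|q|\le\mathcal{C}\lambda^2\mu^2\varphi^2$ — against $\zeta^2\lambda\mu^2\varphi w$, applies It\^o's formula, takes $\mathbb{E}$ (the martingale term vanishing), and integrates by parts in $x$ (no boundary term, $\zeta$ compactly supported), obtaining for every $\varepsilon>0$ and $\lambda$ large
\begin{align*}
\lambda\mu^2\mathbb{E}\int_{Q_1}\varphi|\nabla w|^2dx\,dt & \le\varepsilon\lambda\mu^2\mathbb{E}\int_Q\varphi|\nabla w|^2dx\,dt+\mathcal{C}\lambda^3\mu^4\mathbb{E}\int_{Q_0}\varphi^3\theta^2z^2dx\,dt+\mathcal{C}\,\mathbb{E}\int_Q\theta^2\big(f^2+\lambda^2\mu^2\varphi^2g^2\big)dx\,dt.
\end{align*}
Taking $\varepsilon$ small to absorb $\varepsilon\lambda\mu^2\mathbb{E}\int_Q\varphi|\nabla w|^2$ into the left, and then replacing $|\nabla w|^2$ by $\theta^2|\nabla z|^2$ via $\nabla w=\theta\nabla z+z\nabla\theta$, $|\nabla\theta|=\lambda\mu\varphi\theta|\nabla\psi|$ (whence $\theta^2|\nabla z|^2\le 2|\nabla w|^2+\mathcal{C}\lambda^2\mu^2\varphi^2\theta^2z^2$) and $w^2=\theta^2z^2$, one obtains exactly \cref{h7}. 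The hard part is the pointwise computation of the third paragraph: one must check that, for the specific weights \cref{alphad,h5}, every term besides the two designated leading ones is genuinely of lower order once $\mu$ and then $\lambda$ are large — and this is precisely where the geometry of \cref{hl1} ($|\nabla\psi|>0$ off the observation region) is indispensable. By contrast, the stochastic ingredients (It\^o corrections, martingale terms) are routine.
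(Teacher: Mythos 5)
Your proposal is correct and follows essentially the same route as the paper's proof: the pointwise identity of \cref{thm.fIForParablic} with the weights \cref{alphad,h5}, the sign argument $\frac{\partial\psi}{\partial\nu}\le 0$ for the boundary term, the $\lambda$--$\mu$ order analysis of $c^{jk}$ and $\mathcal{B}$ using $|\nabla\psi|>0$ off $G_1$, and the cut-off/energy argument on $Q_0$ to remove the local gradient term. The only (welcome) addition is your explicit remark on regularizing the data so that \cref{thm.fIForParablic} applies, a point the paper passes over silently.
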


\medskip

\begin{proof}
The proof is divided  into four steps.

\medskip

\emph{Step 1.} In this step, we estimate the order of the terms in the equality \eqref{eq.fIForParablic} with respect to $\lambda$ and $\mu$.

Combining \cref{h5,heat 1.1 h2,eq.identyDetail,eq.bijGeq}, we have
\begin{align*}
\ell_{x_jx_k}=\lambda\mu^2\varphi   \psi_{x_j}\psi_{x_k}+\lambda\varphi O(\mu)
\end{align*}
and
\begin{align}\label{h6-1}
\notag
& \sum_{j,k=1}^n c^{jk}w_{x_j}w_{x_k}
\\ \notag
& = 2 \lambda \mu^2 \varphi \Big (\sum_{j, k=1}^n b^{j k} \psi_{x_j} w_{x_k} \Big )^2+\lambda \mu^2 \varphi \Big (\sum_{j, k=1}^n b^{j k} \psi_{x_j} \psi_{x_k} \Big )  \Big (\sum_{j, k=1}^n b^{j k} w_{x_j} w_{x_k} \Big )
\\ \notag
& \quad
+ \lambda \varphi O(\mu)  |\nabla w|^{2}
\\
&\ge \big(s_0^2\lambda\mu^2\varphi
|\nabla\psi|^2+\lambda\varphi O(\mu)\big)|\nabla w|^2.
\end{align}

Thanks to \cref{eq.identyDetail,h5,heat 1.1 h2,h4}, we obtain that
\begin{align} \label{h6-2}
\notag
\mathcal{A}
& = - \sum_{j, k=1}^n\big(b^{j k} \ell_{x_j} \ell_{x_k}-b_{x_k}^{j k} \ell_{x_j}+ b^{j k} \ell_{x_j x_k}\big) -\ell_t
\\ \quad
& =-\lambda^2\mu^2\varphi^2\sum_{j,k=1}^nb^{jk}
\psi_{x_j}\psi_{x_k}+\lambda\varphi^2O\big(e^{2\mu |\psi|_{C(\overline{G})}}\big).
\end{align}

Next, we estimate $\mathcal{B}$ term by term (referring to the definition of $\mathcal{B}$ in \cref{eq.identyDetail}).

Thanks to \cref{h5,heat 1.1 h2}, we see that
\begin{align}
\label{eq.Thm2-4-1S1T-1}
\Psi = 2 \lambda \mu^{2} \varphi \sum_{j,k=1}^nb^{jk} \psi_{x_j}\psi_{x_k} + \lambda \varphi O(\mu).
\end{align}
This, together  with \cref{h6-2}, implies  that
\begin{align}
\label{eq.Thm2-4-1S1T0}
\mathcal{A} \Psi =
- 2 \lambda^{3} \mu^{4} \varphi^{3} \Bigl(\sum_{j,k=1}^nb^{jk} \psi_{x_j}\psi_{x_k}\Bigr)^{2}
+ \lambda^{3} \varphi^{3} O(\mu^{3})
+ \lambda^{2}\varphi^3 O\big(\mu^{2} e^{2\mu |\psi|_{C(\overline{G})}}\big).
\end{align}

From \cref{h5}, we obtain that
\begin{align}
\label{eq.Thm2-4-1S1T1}
\sum_{j, k=1}^n\big(\mathcal{A} b^{j k} \ell_{x_j}\big)_{x_{k}}
= \sum_{j, k=1}^n b^{j k} \mathcal{A}_{x_{k}} \ell_{x_{j}}
+ \sum_{j, k=1}^n \mathcal{A}  b^{j k}_{x_{k}}  \ell_{x_{j}}
+ \frac{1}{2} \mathcal{A} \Psi
.
\end{align}
Combining \cref{eq.identyDetail,alphad,h6-2}, we get that
\begin{align*}
\mathcal{A}_{x_{k}} =
-2 \lambda^2\mu^3\varphi^2 \sum_{j',k'=1}^n b^{j'k'}\psi_{x_j'}\psi_{x_k'} \psi_{x_{k}}
+ \lambda^{2} \varphi^{2} O(\mu^{2})
+\lambda \varphi^2 O\big( \mu e^{2\mu |\psi|_{C(\overline{G})}}\big).
\end{align*}
This, together with \cref{heat 1.1 h2}, yields that
\begin{align}
\label{eq.Thm2-4-1S1T2}
\notag
\sum_{j, k=1}^n b^{j k} \mathcal{A}_{x_{k}} \ell_{x_{j}}
& =
-2 \lambda^3\mu^4\varphi^3 \Bigl( \sum_{j,k=1}^n b^{jk}\psi_{x_j}\psi_{x_k} \Bigr)^{2}
+ \lambda^{3} \varphi^{3} O(\mu^{3})
\\
& \quad
+\lambda^{2} \varphi^3 O\big( \mu^{2} e^{2\mu |\psi|_{C(\overline{G})}}\big).
\end{align}
From \cref{h6-2,heat 1.1 h2}, we get that
\begin{align}
\label{eq.Thm2-4-1S1T3}
\sum_{j, k=1}^n \mathcal{A}  b^{j k}_{x_{k}}  \ell_{x_{j}}
=
\lambda^{3} \varphi^{3} O(\mu^{3})
+\lambda^{2} \varphi^3 O\big( \mu e^{2\mu |\psi|_{C(\overline{G})}}\big).
\end{align}

By \cref{h4,h6-2}, we find that
\begin{align}
\label{eq.Thm2-4-1S1T4}
\mathcal{A}_{t} =
\lambda^{2} \varphi^{3} O\big( \mu^{2} e^{2\mu |\psi|_{C(\overline{G})}}\big)
+ \lambda \varphi^{3} O\big( e^{2\mu |\psi|_{C(\overline{G})}}\big).
\end{align}
From \cref{eq.Thm2-4-1S1T-1,alphad}, we obtain
\begin{align}
\label{eq.Thm2-4-1S1T5}
\sum_{j, k=1}^n\big(b^{j k} \Psi_{x_k}\big)_{x_j}
& = 2 \lambda \mu^{4} \varphi \Bigl( \sum_{j,k=1}^n b^{jk}\psi_{x_j}\psi_{x_k} \Bigr)^{2} + \lambda \varphi O (\mu^{3}).
\end{align}

Combining \cref{eq.Thm2-4-1S1T0,eq.Thm2-4-1S1T1,eq.Thm2-4-1S1T2,eq.Thm2-4-1S1T3,eq.Thm2-4-1S1T4,eq.Thm2-4-1S1T5}, we have that
\begin{align}\label{h6-3}
\mathcal{B}
& \ge 2s_0^2\lambda^3\mu^4\varphi ^3|\nabla \psi|^4
+\lambda^3\varphi^3O(\mu^3)+\lambda^2\varphi^3 O\big(\mu^2e^{2\mu
|\psi|_{C(\overline{G})}}\big)
+\lambda\varphi^3O\big(e^{2\mu |\psi|_{C(\overline{G})}}\big).
\end{align}

\emph{Step 2.}   In this step, we apply \cref{thm.fIForParablic} to the equation \cref{h9} and handle the boundary terms.

Let us choose  $u=z$ in \cref{thm.fIForParablic}. Integrating the equality \cref{eq.fIForParablic} over $Q$, taking mathematical expectation in both sides, and noting \cref{h6-1,h6-2,h6-3}, we conclude that there is a constant $c_0>0$ such that
\begin{align}\label{h10}\notag
& 2\mathbb{E}\int_Q\theta\Big[-\sum_{j,k=1}^n (b^{jk}w_{x_j})_{x_k}+ {\cal A}
w\Big]\Big[dz-\sum_{j,k=1}^n (b^{jk}z_{x_j})_{x_k}dt
\Big]dx
\\\notag
&+ 2\mathbb{E}\int_Q\sum_{j,k=1}^n\Big[\sum_{j',k'=1}^n\Big (2b^{jk}
b^{j'k'}\ell_{x_{j'}}w_{x_j}w_{x_{k'}}
-b^{jk}b^{j'k'}\ell_{x_j}w_{x_{j'}}w_{x_{k'}}\Big )
+\Psi b^{jk}w_{x_j}w
\\\notag
&\qquad\qquad\qquad ~  
- b^{jk}\Big ({\cal A}\ell_{x_j}+\frac{\Psi_{x_j}}{2}\Big )w^2\Big]_{x_k}dxdt
+2\mathbb{E}\int_Q \sum_{j,k=1}^n (b^{jk}w_{x_j}dw)_{x_k}dx
\\\notag
& \geq 2c_0\mathbb{E}\int_Q\Big[\varphi \big(\lambda\mu^2|\nabla\psi|^2+\lambda
O(\mu)\big)|\nabla w|^2+\varphi^3\Big(\lambda^3\mu^4|\nabla
\psi|^4+\lambda^3O(\mu^3)\\\notag
&\qquad\qquad\quad +\lambda^2O\big(\mu^2e^{2\mu |\psi|_{C(\overline{G})}}\big)+\lambda
O\big(e^{2\mu
|\psi|_{C(\overline{G})}}\big)\Big)w^2\Big] dxdt\\\notag
&\quad +2\mathbb{E} \int_Q \Big|- \sum_{j,k=1}^n
\big(b^{jk}w_{x_j}\big)_{x_k}+ {\cal A}
w\Big|^2dxdt\\
&\quad - \mathbb{E}\int_Q\theta^2 \sum_{j,k=1}^n b^{jk}
\big(dz_{x_j}+\ell_{x_j}dz\big)\big(dz_{x_k}+\ell_{x_k}dz\big)dx-
\mathbb{E} \int_Q \theta^2{\cal A}(dz)^2dx,
\end{align}
where we use the fact that, for all $ x \in G $, $ \lim\limits_{t \rightarrow 0+} \alpha = \lim\limits_{t \rightarrow T-} \alpha = - \infty $.

By \cref{h9}, we obtain
\begin{align}\label{h11} \notag
& 2\mathbb{E}\int_Q\theta\Big[-\sum_{j,k=1}^n
\big(b^{jk}w_{x_j}\big)_{x_k}+{\cal A} w\Big]
\Big[dz-\sum_{j,k=1}^n \big(b^{jk}z_{x_j}\big)_{x_k}dt\Big]dx
\\
& \leq \mathbb{E}\int_Q\Big|-\sum_{j,k=1}^n
\big(b^{jk}w_{x_j}\big)_{x_k}+{\cal A} w\Big|^2 d t d x+\mathbb{E}\int_Q\theta^2
f^2 d t d x.
\end{align}

Next, we deal with  divergence terms.
Since $z=0$ on $ \Sigma $, we get
\begin{align*}
w|_{\Sigma}=0  \text{ and } \;w_{x_j}|_{\Sigma}=\theta z_{x_j}|_{\Sigma}=\theta
\frac{\partial z}{\partial\nu}\nu_{j}\Big|_{\Sigma}, \quad \text{for} \; j=1,\cdots,n.
\end{align*}
%
Similarly, by \cref{hl1}, we get
\begin{align*}
\ell_{x_j}=\lambda\mu\varphi\psi_{x_j} =\lambda\mu\varphi\frac{\partial\psi}{\partial\nu}\nu_{j}\quad
\mbox{for }j=1,\cdots,n, \hbox{ and }\quad \frac{\partial\psi}{\partial\nu}< 0\quad
\hbox{on }\Sigma.
\end{align*}
Therefore, via integration by parts and noting $w=0$ on $ \Sigma $,
we obtain that
\begin{equation}\label{h12}
2\mathbb{E}\int_Q \sum_{j,k=1}^n
(b^{jk}w_{x_j}dw)_{x_k}dx=2\mathbb{E}\int_{\Sigma}\sum_{j,k=1}^n
b^{jk}w_{x_j}\nu_{k} d w d x=0,
\end{equation}
and that
\begin{align}\label{h13}
& 2\mathbb{E}\int_Q\sum_{j,k=1}^n\Big[\sum_{j',k'=1}^n\Big (2b^{jk}
b^{j'k'}\ell_{x_{j'}}w_{x_j}w_{x_{k'}}
-b^{jk}b^{j'k'}\ell_{x_j}w_{x_{j'}}w_{x_{k'}}\Big )\nonumber
\\
&\qquad\qquad\qquad+\Psi b^{jk}w_{x_j}w- b^{jk}\Big ({\cal A}\ell_{x_j}
+\frac{\Psi_{x_j}}{2}\Big )w^2\Big]_{x_k}d t d x
\\
&=2\lambda\mu\mathbb{E}\int_{\Sigma}\varphi\frac{\partial\psi}{\partial\nu}\Big (\frac{\partial
z}{\partial\nu}\Big )^2 \Big (\sum_{j,k=1}^nb^{jk}\nu_{j}\nu_{k}\Big )^2d\Gamma
dt\le0.\nonumber
\end{align}

By \cref{h9}, we have
\begin{align}\label{h15}\notag
& \mathbb{E}\int_Q\theta^2 \sum_{j,k=1}^n b^{jk}
\big(dz_{x_j}+\ell_{x_j}dz\big)\big(dz_{x_k}+\ell_{x_k}dz\big)dx+
\mathbb{E} \int_Q
\theta^2{\cal A}(dz)^2dx\\
& = \mathbb{E}\int_Q \theta^2\sum_{j,k=1}^n
b^{jk}\big(g_{x_j}+\ell_{x_j}g\big) \big(g_{x_k}+\ell_{x_k}g\big)
dxdt+\mathbb{E}\int_Q \theta^2{\cal A} g^2dxdt.
\end{align}

Combining \cref{h10,h11,h12,h13,h15}, we arrive at
\begin{align}\label{h16}
& 2c_0\mathbb{E}\int_Q\Big[\varphi \big(\lambda\mu^2|\nabla\psi|^2+\lambda
O(\mu)\big)|\nabla w|^2+\varphi^3\Big(\lambda^3\mu^4|\nabla
\psi|^4+\lambda^3O(\mu^3)\nonumber\\
&\qquad\qquad +\lambda^2O\big(\mu^2e^{2\mu |\psi|_{C(\overline{G})}}\big)+\lambda
O\big(e^{2\mu
|\psi|_{C(\overline{G})}}\big)\Big)w^2\Big]dxdt\\
&\leq \mathbb{E}\int_Q\theta^2\Big[f^2+\sum_{j,k=1}^n
b^{jk}\big(g_{x_j}+\ell_{x_j}g\big)
\big(g_{x_k}+\ell_{x_k}g\big)+{\cal A} g^2\Big] dxdt.\nonumber
\end{align}

\emph{Step 3.} By \cref{hl1}, we have $\min_{x\in G\setminus G_1}|\nabla\psi|>0$.
Hence, there exists a $\mu_1>0$ such that for all $\mu\ge \mu_1$, one can find a constant $\lambda_1=\lambda_1(\mu)$ such that for any $\lambda\ge \lambda_1$, it holds that
\begin{align}\label{h18}
& 2 \mathbb{E}\int_Q\Big[\varphi \big(\lambda\mu^2|\nabla\psi|^2+\lambda
O(\mu)\big)|\nabla w|^2+\varphi^3\Big(\lambda^3\mu^4|\nabla
\psi|^4+\lambda^3O(\mu^3)\nonumber\\ \notag
&\qquad\quad +\lambda^2O\big(\mu^2e^{2\mu |\psi|_{C(\overline{G})}}\big)+\lambda
O\big(e^{2\mu
|\psi|_{C(\overline{G})}}\big)\Big)w^2\Big]dxdt
\\
&\geq c_1\lambda\mu^2 \mathbb{E}\int_0^T\int_{G\setminus G_1}\varphi\Big(
|\nabla w|^2+\lambda^2\mu^2\varphi^2w^2\Big)dxdt,
\end{align}
where $ c_1 \deq \min\big(\min\limits_{x\in G\setminus G_1}|\nabla\psi|^2,\min\limits_{x\in G\setminus G_1}|\nabla\psi|^4\big) $.

Note that
$$
\frac{1}{{\cal C}}\theta^2\big(|\nabla z|^2+\lambda^2\mu^2\varphi^2z^2\big)\leq|\nabla
w|^2+\lambda^2\mu^2\varphi^2w^2\leq{\cal C}\theta^2\big(|\nabla
z|^2+\lambda^2\mu^2\varphi^2z^2\big).
$$
Combining this with \cref{h18,h16}, implies that
\begin{align}\label{h20}
& \lambda\mu^2\mathbb{E}\int_Q\varphi\theta^2\big(|\nabla
z|^2+\lambda^2\mu^2\varphi^2z^2\big)dxdt\nonumber
\\ \notag
& 
\leq {\cal C}\Big[\mathbb{E}\int_Q\theta^2\big(f^2+|\nabla
g|^2+\lambda^2\mu^2\varphi^2 g^2\big) dxdt
\\  
& \quad \quad ~
+\lambda\mu^2\mathbb{E}\int_0^T\int_{G_1} \varphi\theta^2\big(|\nabla z|^2 +
\lambda^2\mu^2\varphi^2 z^2\big) dxdt
\Big].
\end{align}

\emph{Step 4.} In this step, we get rid of the  term containing $\nabla z$ in the right hand side of \cref{h20} and complete the proof of \cref{h7}.

Choosing a cut-off function $ \zeta \in C_{0}^{\infty}(G_{0}) $ satisfied $ 0 \leq \zeta \leq 1 $ and $ \zeta =1 $ in $ G_{1} $. Noting that
\begin{align*}
d (\theta^{2} \varphi z^{2} ) = z^2\left(\theta^2 \varphi\right)_t d t+2 \theta^2 \varphi z d z+\theta^2 \varphi(d z)^2,
\end{align*}
and $\lim\limits_{t \rightarrow 0^{+}} \varphi(t, \cdot) = \lim\limits_{t \rightarrow T^{-}} \varphi(t, \cdot) = 0 $, utilizing \cref{h9} and Divergence Theorem, we obtain that
\begin{align*}
0 & =\mathbb{E} \int_{Q_0} \zeta^2\big[z^2\left(\theta^2 \varphi\right)_t d t+2 \theta^2 \varphi z d z+\theta^2 \varphi(d z)^2\big] d x 
\\
& = 
\mathbb{E} \int_{Q_0}  \theta^2\Big[\zeta^2 z^2\left(\varphi_t+2 \lambda \varphi \alpha_t\right)-2 \zeta^2 \varphi \sum_{j, k=1}^n b^{j k} z_{x_j} z_{x_k} 
\\
&\quad \quad \quad \quad \quad ~ 
-2 \mu \zeta^2 \varphi(1+2 \lambda \varphi) z \sum_{j, k=1}^n b^{j k} z_{x_j} \psi_{x_k} \\
& \quad \quad \quad \quad \quad ~ -4 \zeta \varphi z \sum_{j, k=1}^n b^{j k} z_{x_j} \zeta_{x_k}+2 \zeta^2 \varphi f z+\zeta^2 \varphi g^2\Big] d x d t .
\end{align*}
This, along with \cref{h4}, yields that there exists a $\mu_2>0$ such that for all $\mu\ge \mu_2$, one can find a constant $\lambda_2=\lambda_2(\mu)$ so that for any $\lambda\ge \lambda_2$ and  $ \varepsilon \ge 0 $, it holds
\begin{align*}
& 2 \mathbb{E} \int_{Q_0} \theta^2 \zeta^2 \varphi \sum_{j, k=1}^n b^{j k} z_{x_j} z_{x_k} d x d t 
\\
& \leq
 \varepsilon \mathbb{E} \int_{Q_0} \theta^2 \zeta^2 \varphi|\nabla z|^2 d x d t+\frac{\mathcal{C}}{\varepsilon} \mathbb{E} \int_{Q_0} \theta^2\Big(\frac{1}{\lambda^2 \mu^2} f^2+\lambda^2 \mu^2 \varphi^3 z^2\Big) d x d t 
 \\
& \quad
+\mathbb{E} \int_{Q_0} \theta^2 \varphi g^2 d x d t .
\end{align*}
By this and \cref{eq.bijGeq}, choosing $ \varepsilon $ sufficiently small, we conclude that
\begin{align}
\label{eq.h20-1}\notag
& \mathbb{E} \int_0^T \int_{G_1} \theta^2 \varphi|\nabla z|^2 d x d t 
\\
& 
\leq \mathcal{C}\Big[\mathbb{E} \int_Q \theta^2\Big(\frac{1}{\lambda^2 \mu^2} f^2+\varphi g^2\Big) d x d t+\lambda^2 \mu^2 \mathbb{E} \int_{Q_0} \theta^2 \varphi^3 z^2 d x d t\Big] .
\end{align}
Finally, combining \cref{h20,eq.h20-1}, choosing $ \mu_{0} = \max \{ \mu_{1}, \mu_{2} \} $ and $ \lambda_{0} = \max \{ \lambda_{1}, \lambda_{2} \} $, we obtain \cref{h7}.
\end{proof}

\begin{remark}
According to the findings in \cite{Liu2014}, the assumption $g\in
L_{{\mathbb{F}}}^2(0,T;$  $ H^1(G))$ in  \cref{thm.CarlemanForHeat} can be relaxed to  $g\in  L_{{\mathbb{F}}}^2(0,T; L^2(G))$.
\end{remark}


\begin{proof}[Proof of \cref{thm.2.1}]
Let $ z=y-\hat y$.
From \cref{sp-eq1}, we obtain that $z$ satisfies
\begin{align*}
\left\{
\begin{aligned}
    & dz \!- \!\sum_{j,k=1}^n(b^{jk}z_{x_j})_{x_k} dt  \!=\!
      \big[ F(\nabla y,
    y)\!-\!F(\nabla \hat y, \hat y) \big]dt +  \big[
    K(y)-K(\hat y) \big]dW(t) \!\!\!\!\!\! 
    && \text{ in } Q,\\
    &  z=0 &&\text{ on }\Sigma,\\
    & z(0)=y_0-\hat y_0 &&\text{ in } G.
    \end{aligned}
\right .
\end{align*}
From \cref{10.5-eq1}, it follows that
$ F(\nabla y, y)-F(\nabla \hat y, \hat y)\in L^2_{{\mathbb{F}}}(0,T;L^2(G)) $
and
$ K(y)-K(\hat y)\in L^2_{{\mathbb{F}}}(0,T;$ $H^1(G))$.
Hence,  $z$ fulfills the equation \cref{h9} with
\begin{align}
\label{eq.thm3.1-1}
f=F(\nabla y, y)-F(\nabla \hat y, \hat y),\quad g=K(y)-K(\hat y).
\end{align}

Thanks to \cref{thm.CarlemanForHeat}, there exist constants $ \mu_{0}, \lambda_{0} > 0 $, such that for all $ \mu \geq \mu_{0} $ and  $ \lambda \geq \lambda_{0} $, it holds that
\begin{align*}
& \lambda^3\mu^4\mathbb{E}\int_Q\varphi^3\theta^2z^2dxdt+
\lambda\mu^2\mathbb{E}\int_Q\varphi\theta^2|\nabla
z|^2dxdt
\notag
\\
& \leq {\cal C}\mathbb{E}\Big[\int_Q \theta^2\big(f^2+|\nabla
g|^2+\lambda^2\mu^2\varphi^2 g^2\big)dxdt+\lambda^3\mu^4\int_{Q_0}\varphi^3
\theta^2z^2dxdt\Big].
\end{align*}
From \cref{eq.thm3.1-1,10.5-eq1}, we have
\begin{align*}
\mathbb{E}\int_Q \theta^2 |f|^2dxdt
& \leq
L^2\mathbb{E}\int_Q \theta^2 \big(|\nabla z|^2+|z|^2\big)dxdt,
\end{align*}
and
\begin{align*}
\mathbb{E}\int_Q \theta^2 \big(|\nabla g|^2 + \lambda^2\mu^2\varphi^2
|g|^2\big)dxdt \leq L^2\mathbb{E}\int_Q \theta^2 \big(|\nabla z|^2 +
\lambda^2\mu^2\varphi^2 |z|^2\big)dxdt.
\end{align*}
Hence,
\begin{align*}
&\mathbb{E}\int_Q \theta^2\big(\lambda^3\mu^4\varphi^3 |z|^2 +
\lambda\mu^2\varphi
|\nabla z|^2\big) dxdt \\
&\leq  {\cal C}\Big\{L^2 \mathbb{E}\int_Q \theta^2 \big[|\nabla
z|^2 + \big(1 + \lambda^2\mu^2\varphi^2 \big) |z|^2 \big]dxdt + \lambda^3\mu^4\int_{Q_0}\varphi^3
\theta^2z^2dxdt \Big\}.
\end{align*}
Choosing $ \mu_{1} \geq \max\{ \mu_{0}, \mathcal{C} L^{2} \} $, for all $ \lambda \geq \lambda_{0} $ and $ \mu \geq \mu_{1} $, we obtain
\begin{align} \label{ch-6-final0}
\mathbb{E}\int_Q \theta^2\big(\lambda^3\mu^4\varphi^3 |z|^2 +
\lambda\mu\varphi |\nabla z|^2\big) dxdt \leq {\cal C} \lambda^3\mu^4\int_{Q_0}\varphi^3
\theta^2z^2dxdt.
\end{align}

From \cref{alphad}, we find that
\begin{align} \label{ch-6-final1} \notag
& \mathbb{E}\int_Q \theta^2\big(\varphi^3 |z|^2 + \varphi
|\nabla z|^2\big) dxdt\\
& \geq  \displaystyle
\min_{x\in\overline{G}}\Big(\varphi\Big(\frac{T}{2},x\Big)
\theta^2\Big(\frac{T}{4},x\Big)\Big)\mathbb{E}
\int_{\frac{T}{4}}^{\frac{3T}{4}}\int_G\big(|z|^2+|\nabla
z|^2\big)dxdt
\end{align}
and that
\begin{align}  \label{ch-6-final2}
{\mathbb{E}}\int_{Q_0}\varphi^3 \theta^2 z^2dxdt\leq \max_{(x,t)\in
\overline{Q}}\big(\varphi(t,x)\theta^2(t,x)\big) \mathbb{E}
\int_{Q_0}  z^2 dx dt.
\end{align}

Using a standard energy estimate, from \cref{h9,eq.thm3.1-1,10.5-eq1}, we can  obtain that for any $ 0 \leq s \leq t \leq T  $,
\begin{align}\label{Ch-6-Eyt2}
\mathbb{E}| z(t)|^2_{ L^2(G)} \leq e^{{\cal C} L^2}\mathbb{E} | z(s)|^2_{
L^2(G)}.
\end{align}
Combining \cref{ch-6-final0,ch-6-final1,ch-6-final2,Ch-6-Eyt2}, we obtain the desired inequality \cref{sp-eq2}.
\end{proof}

\section{Solution to inverse state problem with an internal measurement. II}
\label{secDeterminationProblemdistributing2}

For the situation when boundary conditions are unknown, i.e., Problem \ref{probP8}, we can obtain the following quantitative unique continuation property for the equation \cref{system-sp1}.

\begin{theorem}\label{main result}
For any $\widehat G\subset\!\subset G$ with $G_0\subset \subset
\widehat G$ and $\kappa \in (0, \frac 12)$ being arbitrarily fixed,
there exists a constant ${\cal C}$ and a subdomain $\widetilde G \subset
\subset G$ with $\widehat G\subset \subset \widetilde G$, such that
for any solution $\tilde y$ to the equation \eqref{system-sp1}, it
holds that
\begin{align}\label{eq1}\notag
    &   {\mathbb{E}} \int_{\frac T 2 - \kappa T}^{\frac T 2 + \kappa T}\int_{\widehat G}
    \big(|\nabla \tilde y|^2 + \tilde y^2\big)dx dt\\
    &   \leq {\cal C}\Big[ \exp\Big (\frac {\cal C}
    \varepsilon\Big ) {\mathbb{E}}\! \int_0^T\int_{G_0}\big( \tilde y^2 \!+ |\nabla \tilde y|^2\big)dx dt +
    \varepsilon{\mathbb{E}}\! \int_0^T  \int_{\widetilde G}(\tilde y^2 \!+\! |\nabla
    \tilde y|^2)dxdt\Big],
\end{align}
for all $ \varepsilon > 0 $.
\end{theorem}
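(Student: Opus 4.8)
The plan is to deduce \cref{eq1} from a single‑time interpolation inequality, proved by a \emph{local} Carleman estimate for a spatially localized copy of $\tilde y$ whose weight is singular symmetrically around the recovery time, and then to integrate that inequality over the time window. Concretely: fix nonempty open sets $G_1,\widehat G'$ with $G_1\subset\subset G_0$, $\widehat G\subset\subset\widehat G'$, a domain $\widetilde G$ with $\widehat G'\subset\subset\widetilde G\subset\subset G$, and a function $\psi\in C^4(\overline{\widetilde G})$ (constructed as in the proof of \cref{hl1}, e.g.\ by gluing a paraboloid centred in $G_1$ to $f(\operatorname{dist}(\cdot,\partial\widehat G'))$ near $\partial\widehat G'$) with $\psi>0$ on $\overline{\widetilde G}$, $|\nabla\psi|>0$ on $\overline{\widetilde G\setminus G_1}$, $P:=\max_{\overline{\widetilde G}}\psi$ attained in $G_1$, and — the key extra property — the level sets of $\psi$ separate $\widehat G$ from $\partial\widetilde G$: $\max_{\overline{\widetilde G\setminus\widehat G'}}\psi<\min_{\overline{\widehat G}}\psi$. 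Then fix $a\in(0,\tfrac T2-\kappa T)$, fix $\mu$ large enough for the Carleman machinery, fix $\epsilon\in(0,a)$ so small that
\begin{equation*}
\frac{a^{2}}{a^{2}-\epsilon^{2}}\;\le\;\frac{e^{2\mu P}-e^{\mu\max_{\widetilde G\setminus\widehat G'}\psi}}{e^{2\mu P}-e^{\mu\min_{\widehat G}\psi}}
\end{equation*}
(possible because the right‑hand side exceeds $1$ by the level‑set separation), and fix $\chi\in C_0^\infty(\widetilde G)$ with $0\le\chi\le1$ and $\chi\equiv1$ on $\widehat G'$.

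For each $s\in[\tfrac T2-\kappa T,\tfrac T2+\kappa T]$ I would apply \cref{thm.fIForParablic} to $v:=\chi\tilde y$ on the cylinder $(s-a,s+a)\times G\subset(0,T)\times G$, with weight $\theta_s=e^{\lambda\alpha_s}$ and $\Psi=2\sum_{j,k}b^{jk}(\lambda\alpha_s)_{x_jx_k}$, where
\begin{equation*}
\alpha_s(t,x)=\frac{e^{\mu\psi(x)}-e^{2\mu P}}{(t-s+a)(s+a-t)},\qquad
\varphi_s(t,x)=\frac{e^{\mu\psi(x)}}{(t-s+a)(s+a-t)} .
\end{equation*}
Since $\alpha_s\to-\infty$ as $t\to s\pm a$ and $v$ together with $\nabla v$ vanishes near $\partial G$, all boundary terms disappear, and repeating the proof of \cref{thm.CarlemanForHeat} verbatim with $(0,T)$ replaced by $(s-a,s+a)$ yields, for $\lambda\ge\lambda_0$,
\begin{align*}
&\lambda^{3}\mu^{4}\,\mathbb{E}\!\int_{(s-a,s+a)\times G}\!\!\!\varphi_{s}^{3}\theta_{s}^{2}v^{2}\,dxdt+\lambda\mu^{2}\,\mathbb{E}\!\int_{(s-a,s+a)\times G}\!\!\!\varphi_{s}\theta_{s}^{2}|\nabla v|^{2}\,dxdt\\
&\;\le\;\mathcal{C}\,\mathbb{E}\!\int_{(s-a,s+a)\times G}\!\!\theta_{s}^{2}\big(|F_{v}|^{2}+|\nabla G_{v}|^{2}+\lambda^{2}\mu^{2}\varphi_{s}^{2}G_{v}^{2}\big)dxdt+\mathcal{C}\lambda^{3}\mu^{4}\,\mathbb{E}\!\int_{(s-a,s+a)\times G_{1}}\!\!\!\varphi_{s}^{3}\theta_{s}^{2}v^{2}\,dxdt ,
\end{align*}
where $F_v=\chi(\tilde a_1\cdot\nabla\tilde y+\tilde a_2\tilde y)-\mathcal R$, $G_v=\chi\tilde a_3\tilde y$, and $\mathcal R$ is the commutator $[\sum_{j,k}(b^{jk}\partial_{x_j})_{x_k},\chi]\tilde y$, supported in $\overline{\widetilde G\setminus\widehat G'}$ with $|\mathcal R|\le\mathcal C(|\nabla\chi|\,|\nabla\tilde y|+(|\Delta\chi|+|\nabla b|\,|\nabla\chi|)|\tilde y|)$. (The hypothesis $\tilde a_3\in H^1_{\mathrm{loc}}(G)$, or its relaxation noted after \cref{thm.CarlemanForHeat}, makes the $G_v$‑terms meaningful.)

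Because $|F_v|^2\le\mathcal C\big(|\nabla v|^2+v^2+(|\nabla\chi|^2+|\Delta\chi|^2+|\nabla b|^2|\nabla\chi|^2)(|\nabla\tilde y|^2+\tilde y^2)\big)$, $|\nabla G_v|^2+\lambda^2\mu^2\varphi_s^2G_v^2\le\mathcal C(|\nabla v|^2+\lambda^2\mu^2\varphi_s^2v^2+|\nabla\chi|^2\tilde y^2)$, and $\varphi_s\ge 4a^{-2}$ on the cylinder, all terms carrying $v$ or $\nabla v$ are absorbed into the left side by enlarging $\lambda$; what survives on the right is the commutator term (supported in $\widetilde G\setminus\widehat G'$) and the $G_1$‑correction term. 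Now $\theta_s^2\le e^{-2\lambda B'}$ on $(s-a,s+a)\times(\widetilde G\setminus\widehat G')$ with $B'=(e^{2\mu P}-e^{\mu\max_{\widetilde G\setminus\widehat G'}\psi})/a^2$; $\varphi_s^3\theta_s^2\ge c\,e^{-2\lambda B}$ on $[s-\epsilon,s+\epsilon]\times\widehat G$ with $B=(e^{2\mu P}-e^{\mu\min_{\widehat G}\psi})/(a^2-\epsilon^2)$; and $\sup_{(s-a,s+a)\times G_1}\varphi_s^3\theta_s^2\le\mathcal C\,e^{-2\lambda D}$ with $D=(e^{2\mu P}-e^{\mu P})/a^2$. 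Here $B$ and $B'$ carry the \emph{same} leading exponent $2\lambda e^{2\mu P}/a^2$, so the choice of $\epsilon$ forces $B'\ge B$, while $D<B$ trivially. Using $\chi\equiv1$ on $\widehat G'\supset G_0$ and discarding the left side outside $[s-\epsilon,s+\epsilon]\times\widehat G$, one gets for all $\lambda\ge\lambda_0$, with $\mathcal C,\,B-D>0,\,B'-B\ge0$ independent of $s$,
\begin{equation*}
\mathbb{E}\!\int_{s-\epsilon}^{s+\epsilon}\!\!\int_{\widehat G}(|\nabla\tilde y|^{2}+\tilde y^{2})\,dxdt\le\mathcal{C}\lambda^{3}e^{2\lambda(B-D)}\,\mathbb{E}\!\int_{0}^{T}\!\!\int_{G_{0}}(\tilde y^{2}+|\nabla\tilde y|^{2})\,dxdt+\mathcal{C}e^{-2\lambda(B'-B)}\,\mathbb{E}\!\int_{0}^{T}\!\!\int_{\widetilde G}(\tilde y^{2}+|\nabla\tilde y|^{2})\,dxdt .
\end{equation*}
Covering $[\tfrac T2-\kappa T,\tfrac T2+\kappa T]$ by finitely many intervals $[s_i-\epsilon,s_i+\epsilon]$, summing, and for given $\varepsilon>0$ choosing $\lambda=\lambda(\varepsilon)\ge\lambda_0$ so that the coefficient of the $\widetilde G$‑integral is $\le\varepsilon$ (whence $\mathcal C\lambda^3 e^{2\lambda(B-D)}\le\exp(\mathcal C/\varepsilon)$) gives \cref{eq1}.

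The main obstacle is precisely the coordination of the weight function with the location of its time singularity. The obvious attempt — the weight \cref{alphad} of \cref{thm.CarlemanForHeat}, singular at $t=0,T$ — does not work: on the window the exponent of $\theta$ is controlled by $1/\big(t(T-t)\big)$ evaluated at the window \emph{endpoints}, strictly larger than its value in the interior where the commutator lives, and since $e^{2\mu P}$ overwhelms $e^{\mu\psi}$ this gap cannot be bridged by the spatial oscillation of $\psi$ once $\kappa$ is close to $\tfrac12$. Centring the singularity at each recovery time $s$ cures this, because the dangerous factor $e^{2\mu P}/a^2$ then occurs identically in $B$, $B'$, $D$ and cancels in every comparison, leaving only a purely spatial comparison handled by the level‑set separation of $\psi$ — at the cost of having to run the estimate over a continuum of times $s$ and to localize on the shrinking window $[s-\epsilon,s+\epsilon]$. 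Ensuring the constants are uniform in $s$ and that $\epsilon$ may be fixed independently of $\lambda$ and $\varepsilon$ is where care is needed; by contrast, constructing a $\psi$ that simultaneously has non‑vanishing gradient off a small ball in $G_0$ and separates $\widehat G$ from $\partial\widetilde G$ by its level sets, and checking the absorbability of the source terms above (including the sufficiency of the regularity of $\tilde a_1,\tilde a_2,\tilde a_3$), are routine.
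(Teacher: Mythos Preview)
Your argument is correct (with the harmless caveat that the displayed inequality for $\epsilon$ should be \emph{strict}, so that $B'>B$ strictly; otherwise $e^{-2\lambda(B'-B)}$ cannot be pushed below $\varepsilon$), but the route is genuinely different from the paper's. The paper does not recycle the time-singular weight of \cref{thm.CarlemanForHeat}. Instead it takes the smooth weight $\ell=\lambda\alpha$ with $\alpha(t,x)=e^{\mu(\psi(x)-c(t-t_0)^2)}$, where $\psi$ is the standard function of \cref{hl1} on (a smooth subdomain containing) $\widehat G$, \emph{without} any level-set separation hypothesis; the cutoff $\eta$ is chosen in \emph{space--time}, supported in the super-level set $Q_2=\{\alpha>\beta_2\}$ and equal to $1$ on $Q_3=\{\alpha>\beta_3\}$. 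The commutator is then supported in $Q_2\setminus Q_3$, where $\theta^2\le e^{2\lambda\beta_3}$, while the recovered region is $Q_4=\{\alpha>\beta_4\}\supset(t_0-\iota/\sqrt{N},\,t_0+\iota/\sqrt{N})\times\widehat G$, where $\theta^2\ge e^{2\lambda\beta_4}$; since $\beta_4>\beta_3$ by construction the interpolation inequality at $t_0$ is immediate. The final covering over $t_0$ is the same as yours.

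What each approach buys: the paper's space--time level-set cutoff makes the weight comparison automatic---no need to engineer a $\psi$ with $\max_{\widetilde G\setminus\widehat G'}\psi<\min_{\widehat G}\psi$, and no balancing of the temporal factor $a^2/(a^2-\epsilon^2)$ against the spatial gap---and it is the classical template for local parabolic unique continuation. Your approach has the virtue of literally reusing the global estimate \eqref{h7}, so there is no new Carleman calculus to redo, at the price of a bespoke $\psi$ and the extra bookkeeping of the constants $B,B',D,\epsilon$.
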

\begin{remark}
From the arbitrariness of $\widehat G\subset\!\subset G$, \cref{main result} provide a positive answer to \cref{probP8}. Moreover, it provides the continuous dependence of the solution in $(\frac T 2 - \kappa T,\frac T 2 + \kappa T)\times \widehat G$ with respect to the measurement on $(0,T)\times G_0$.
\end{remark}

\begin{proof}[Proof of \cref{main result}]
We divide the proof into four steps.

\medskip

{\it Step 1}. In this step, we  introduce a cut-off function to convert the   equation \cref{system-sp1} to an equation with the  boundary condition.

Without loss of  generality, we assume that the boundary $\partial G$ of $G$ is     smooth.
Otherwise, we can choose a domain $G''$ with smooth boundary $\partial G''$ such that $\widehat G \subset\!\subset   G''\subset\!\subset G$ and study
the local Carleman estimate on $G''$.

Let $G_0\subset \subset G$ be an open subset.
Thanks to  \cref{hl1}, there exists a function $\psi\in C^4(G)$ such that
\begin{align*}
\psi > 0    \text{ in } G, \quad 
 \psi = 0   \text{ on } \partial G, \quad 
 |\nabla \psi | > 0   \text{ in } G\backslash G_0.
\end{align*}

Since $\widehat G\subset\!\subset G''\subset\!\subset G$, it is clear that we can fix $N$ large enough such that
\begin{equation}\label{G'}
\widehat G \subset \left\{ x \in G \; \Big | \; \psi(x)> \frac
4N|\psi|_{L^{\infty}(G)}\right\}.
\end{equation}
For $\kappa \in (0, \frac 12)$, take $\iota= \frac 1{\sqrt 2}(\frac 12 - \kappa)T> 0$. Then $\sqrt 2 \iota= (\frac 12 -\kappa)T$.
Choose a constant $c>0$ satisfying
\begin{equation}
\label{bound psi} c\delta^2 < |\psi|_{L^{\infty}(G)} < 2 c\delta^2.
\end{equation}
Let $t_0 \in [\sqrt 2 \delta, T -\sqrt 2\delta]$.
Set
\begin{equation*}
\phi(t,x) \deq \psi(x) - c(t - t_0)^2,\quad \quad  \alpha(t,x) \deq e^{\mu \phi} .
\end{equation*}
Put
\begin{equation*}
\beta_k \deq  \exp\Big (\mu \Big (\frac k N |\psi|_{L^{\infty}(G)} - \frac c
N \delta^2\Big )\Big ), \quad k =1,2,3,4
\end{equation*}
and
\begin{equation} \label{eqQkDef}
Q_k \deq \big\{(t,x)\in [0, T]\times\overline{G} \mid  \alpha(t,x) >
\beta_k\big\}.
\end{equation}
It is obvious that $Q_{k+1}\subset Q_{k}, k=1,2,3$.

We claim that
\begin{equation}\label{eqQ4Q1}
\Big (t_0 - \frac \iota{\sqrt N}, t_0 + \frac \iota{\sqrt N}\Big )\times
\widehat G \subset Q_4 \subset Q_1 \subset (t_0 - \sqrt 2 \delta, t_0 +
\sqrt 2 \delta) \times \overline G.
\end{equation}
In fact, thanks to \cref{G'}, for all $ (t,x) \in  \big (t_0 - \frac \iota{\sqrt N}, t_0 + \frac \iota{\sqrt N}\big ) \times \widehat{G}$, it holds that
\begin{align*}
(t - t_{0})^{2} \leq \frac{\delta^{2}}{N}, \quad \quad \psi(x) > \frac{4}{N} |\psi|_{L^{\infty}(G)},
\end{align*}
which implies that
\begin{align*}
\phi(t,x) = \psi(x) - c (t-t_{0})^{2}
> \frac{4}{N} |\psi|_{L^{\infty}(G)} -  \frac{c \delta^{2}}{N}.
\end{align*}
Consequently, we have $ \alpha(t,x) > \beta_{4} $ and $ (t,x) \in Q_{4} $.
On the other hand, for all $ (t,x) \in Q_{1} $, it holds that
$ \frac{1}{N} |\psi|_{L^{\infty}(G)} -  \frac{c \delta^{2}}{N}
< \psi - c (t-t_{0})^{2} $.
From \cref{bound psi}, we obtain
$ 0 < 2 c \delta^{2} - c (t-t_{0})^{2} $.
Hence, it holds that $ (t-t_{0})^{2} \leq 2 \delta^{2} $, which implies $ (t, x) \in (t_0 - \sqrt 2 \delta, t_0 + \sqrt 2 \delta) \times \overline G $ and \cref{eqQ4Q1} is proved.

Let $\eta\in C_0^{\infty}(Q_2)$ be such that $\eta \in [0, 1]$ and $\eta = 1$  in   $Q_3$. Let $z = \eta \tilde y$. Then
\begin{align}\label{re-sys}
\left\{
\begin{aligned}
& dz - \sum_{j,k=1}^n \big(b^{jk} z_{x_j}\big)_{x_k} dt = \tilde a_1\cdot
\nabla z dt + \tilde a_2z dt + f dt +
\tilde a_3 zdW(t)   && \text{ in } Q_1\\
& z = \dfrac{\partial z}{\partial \nu} =  0 && \text{ on } \partial Q_1,
\end{aligned}
\right.
\end{align}
where
\begin{equation} \label{eqP8eq1a1}
f = \eta_t \tilde y - 2\sum_{j,k=1}^n b^{jk}\eta_{x_j} \tilde y_{x_k}
-\tilde y\sum_{j,k=1}^n (b^{jk}\eta_{x_j})_{x_k} - \tilde y \tilde a_1\cdot\nabla \eta.
\end{equation}
Clearly, $f$ is supported in $Q_2 \setminus Q_3$.

\medskip

{\it Step 2}. In this step, we apply \cref{thm.fIForParablic} to the equation \cref{eqP8eq1a1} and handle the boundary terms.

Let us choose  $u=z$ in \cref{thm.fIForParablic}. Integrate \cref{eq.fIForParablic} over $ G \times (0,T) $. Noting that $ z $ is supported in $ Q_{1} $, choosing $ \theta = e^{\ell} $ and $ \ell = \lambda \alpha $, setting $ w = \theta z $,  after taking mathematical expectation in both sides, we have
\begin{align} \notag \label{eqP8eq1}
& 2 \mathbb{E} \int_{Q_{1}} \theta \Big [\!-\!\sum_{j, k=1}^n \big(b^{j k} w_{x_j}\big)_{x_k}+\mathcal{A} w \Big ]\Big[d z\!-\!\sum_{j, k=1}^n\big(b^{j k} z_{x_j}\big)_{x_k} d t\Big] d x
\\\notag
&\quad
+  2 \mathbb{E} \int_{Q_{1}}  \sum_{j, k=1}^n\Big[\sum_{j^{\prime}, k^{\prime}=1}^n\big(2 b^{j k} b^{j^{\prime} k^{\prime}} \ell_{x_{j^{\prime}}} w_{x_j} w_{x_{k^{\prime}}}-b^{j k} b^{j^{\prime} k^{\prime}} \ell_{x_j} w_{x_{j^{\prime}}} w_{x_{k^{\prime}}}\big)
+\Psi b^{j k} w_{x_j} w
\\\notag
& \quad \quad \quad \quad \quad \quad \quad  ~
\quad
-b^{j k}\Big(\mathcal{A} \ell_{x_j}
+\frac{\Psi_{x_j}}{2}\Big) w^2\Big]_{x_k} d xd t
+  2   \mathbb{E} \int_{Q_{1}}  \sum_{j, k=1}^n\big(b^{j k} w_{x_j} d w\big)_{x_k}  d x
\\\notag
&
=2   \mathbb{E} \int_{Q_{1}} \sum_{j, k=1}^n c^{j k} w_{x_j} w_{x_k} d t d x
+ 2 \mathbb{E} \int_{Q_{1}} \mathcal{B} w^2 d t d x
\\\notag
&
\quad+2  \mathbb{E} \int_{Q_{1}}\Big[-\sum_{j, k=1}^n\big(b^{j k} w_{x_j}\big)_{x_k}+\mathcal{A} w\Big]^2 d t d x
- \mathbb{E} \int_{Q_{1}} \theta^2 \mathcal{A}(d z)^2 d x
\\
& \quad
- \mathbb{E} \int_{Q_{1}} \theta^2 \sum_{j, k=1}^n b^{j k}\big(d z_{x_j}+\ell_{x_j} d z\big)\big(d z_{x_k}+\ell_{x_k} d z\big) d x
.
\end{align}
Next, we estimate terms in \cref{eqP8eq1} one by one.

From \cref{re-sys}, the first term in the left hand side of \cref{eqP8eq1} satisfies that
\begin{align} \label{eqP8eq2} 
&  2{\mathbb{E}} \int_{Q_1}\theta
\Big[-\sum_{j,k=1}^n \big(b^{jk}w_{x_j}\big)_{x_k} + {\cal A} w\Big]\Big[dz
-\sum_{j,k=1}^n \big(b^{jk}z_{x_j}\big)_{x_k}
dt\Big]dx\\ \notag
&\leq {\mathbb{E}} \int_{Q_1} \Big|\!-\!\sum_{j,k=1}^n
\big(b^{jk}w_{x_j}\big)_{x_k}\! \!+\! {\cal A} w\Big|^2 dxdt  +  {\mathbb{E}} \int_{Q_1}
\theta^2 \big(\tilde{a}_1 \cdot\nabla z \!+\! \tilde{a}_2 z\!+\! f\big)^2 dxdt.
\end{align}
Thanks to Divergence Theorem, recalling that  $ z $ is supported in $ Q_{1} $, we have
\begin{align}  \label{eqP8eq3} \notag
& 2 \mathbb{E} \int_{Q_{1}}  \sum_{j, k=1}^n\Big[\sum_{j^{\prime}, k^{\prime}=1}^n\big(2 b^{j k} b^{j^{\prime} k^{\prime}} \ell_{x_{j^{\prime}}} w_{x_j} w_{x_{k^{\prime}}}-b^{j k} b^{j^{\prime} k^{\prime}} \ell_{x_j} w_{x_{j^{\prime}}} w_{x_{k^{\prime}}}\big)
\\
& \quad \quad \quad \quad    ~
\quad+\Psi b^{j k} w_{x_j} w-b^{j k}\Big(\mathcal{A} \ell_{x_j}+\frac{\Psi_{x_j}}{2}\Big) w^2\Big]_{x_k} d xd t = 0
.
\end{align}

Combining \cref{eqP8eq3,eqP8eq2,eqP8eq1}, we have
\begin{align}
\label{pr-2}\notag
&   {\mathbb{E}} \int_{Q_1}\theta^2 \big(a_1
\cdot\nabla z + a_2 z + f\big)^2 d t d  x  \\\notag
&\geq 2{\mathbb{E}}\int_{Q_1}\sum_{j,k=1}^n c^{jk} w_{x_j} w_{x_k}
d t d  x  + {\mathbb{E}}\int_{Q_1} {\cal B} w^2 dx dt- {\mathbb{E}}\int_{Q_1} \theta^2 {\cal A}
(dz)^2 dx
\\
&  \quad - {\mathbb{E}} \int_{Q_1}
\theta^2 \sum_{j,k=1}^n b^{jk} \big[\big(dz_{x_j} + \ell_{x_j}
dz\big)\big(dz_{x_k} + \ell_{x_k} dz\big)\big] dx.
\end{align}
From \cref{eqP8eq1a1}, we have
\begin{align} \label{eqP8eq4} \notag
& \ {\mathbb{E}} \int_{Q_1} \theta^2 \big(\tilde a_1
\cdot\nabla z + \tilde a_2 z + f\big)^2 dxdt\\
& \leq  \mathcal{C}
{\mathbb{E}} \int_{Q_1} \theta^2 |\nabla z|^2 dx dt   +
\mathcal{C} {\mathbb{E}}
\int_{Q_1}\theta^2 z^2 dxdt
+ {\cal C} {\mathbb{E}} \int_{Q_2\backslash Q_3} \theta^2 (|\nabla
\tilde y|^2 + \tilde y^2)dxdt.
\end{align}

{\it Step 3}. In this step,  we handle the terms in the right hand side of inequality \cref{pr-2}.
Choosing $ \Psi $ as \cref{h5}, for $j,k=1,2,\cdots,n$, it is straightforward  to verify  that
\begin{equation}\label{eqP8eq5}
\ell_t= \lambda \alpha O(\mu),\quad \ell_{x_j}=\lambda\mu\alpha\psi_{x_j},\quad
\ell_{x_jx_k}=\lambda\mu^2\alpha\psi_{x_j}\psi_{x_k}+\lambda\mu\alpha\psi_{x_jx_k}
.
\end{equation}
Similarly to the proof of \cref{thm.CarlemanForHeat}, combining \cref{h5,eq.identyDetail,eqP8eq5}, we have that
\begin{align}\label{eqP8eq6}
  \sum_{j,k=1}^n c^{jk}w_{x_j}w_{x_k}
  & = 2 \lambda \mu^2 \alpha \Big (\sum_{j, k=1}^n b^{j k} \psi_{x_j} w_{x_k} \Big )^2
  \\
\notag
& \quad 
  +\lambda \mu^2 \alpha \Big (\sum_{j, k=1}^n b^{j k} \psi_{x_j} \psi_{x_k} \Big )  \Big (\sum_{j, k=1}^n b^{j k} w_{x_j} w_{x_k} \Big )
+ \lambda \alpha O(\mu)  |\nabla w|^{2}
.
\end{align}
Thanks to \cref{eq.identyDetail,eqP8eq5,h5}, we obtain that
\begin{align}\label{eqP8eq7}
{\cal A} &\displaystyle   = -\lambda^2\mu^2 \alpha^2\sum_{j,k=1}^n b^{jk}\psi_{x_j}
\psi_{x_k} + \lambda \alpha O(\mu^{2}),
\end{align}
and that
\begin{align}\label{eqP8eq8}
\mathcal{B}
& =  2\lambda^3 \mu^4     \alpha^3\Big (\sum_{j,k=1}^n b^{jk}\psi_{x_j}\psi_{x_k}\Big )^2
+\lambda^3\alpha^3O(\mu^3)
+\lambda^2\alpha^2O(\mu^4)
.
\end{align}
From \cref{eqP8eq1a1}, we get that
\begin{align}\label{eqP8eq9} \notag
&    {\mathbb{E}} \int_Q \theta^2 \sum_{j,k=1}^n b^{jk}(d z_{x_j} +
\ell_{x_j} dz)(dz_{x_k} + \ell_{x_k}
dz)dx\\
&  \leq {\cal C}
\Big[{\mathbb{E}}\!\int_Q \theta^2(z^2\! +\! |\nabla z|^2)dxdt +
\!\lambda^2\mu^2 {\mathbb{E}}\! \int_Q\! \theta^2\alpha^2 |z|^2 dxdt\Big].
\end{align}
Noting that $ w = \theta z $ and $ z = \theta^{-1} w $, we get that
\begin{align*}
\frac 1 {\cal C} \theta^2 \big(|\nabla z|^2 \!+
\lambda^2\mu^2\alpha^2 z^2\big)\leq |\nabla w|^2\! +
\lambda^2\mu^2\alpha^2 |w|^2 \leq {\cal C} \theta^2 \big(|\nabla z|^2\! +
\lambda^2\mu^2\alpha^2 z^2\big).
\end{align*}
Combining it with \cref{pr-2,eqP8eq4,eqP8eq6,eqP8eq7,eqP8eq8,eqP8eq9}, we obtain
\begin{align*}
& \mathcal{C} \mathbb{E} \int_{Q_{2} \setminus Q_{3}} \theta^{2} ( |\nabla \tilde{y} |^{2} + |\tilde{y}|^{2} ) d x d t
+ \mathcal{C} \mathbb{E} \int_{Q_{1}} \theta^{2} (
|\nabla z |^{2} + (\lambda^{2} \mu^{2} \alpha^{2} + 1) z^{2}
) d x d t
\\
& \quad
+ \mathcal{C} \mathbb{E} \int_{Q_{0}} \theta^{2} (
\lambda \mu^{2} \alpha |\nabla z|^{2}
+ \lambda^{3} \mu^{4} \alpha^{3} |  z|^{2}
) d x d t
\\
& \geq
\mathbb{E} \int_{Q_{1} \cup Q_{0}} \theta^{2} (
\lambda \mu^{2} \alpha |\nabla z|^{2}
+ \lambda^{3} \mu^{4} \alpha^{3} |  z|^{2}
) d x d t
\\
& \quad
+ \mathbb{E} \int_{Q_{1}} \theta^{2} \big[
\lambda \alpha O(\mu) |\nabla z |^{2}
+(  \lambda^{3}  \alpha^{3} O(\mu^{3}) + \lambda^2\alpha^2O(\mu^4)) z^{2}
\big] d x d t
\end{align*}
This implies that  there exists a constant $ \mu_{0} > 1 $, such that for all $ \mu > \mu_{0} $, there exists a constant $ \lambda_{0} = \lambda_{0}(\mu) $, so that for any $ \lambda > \lambda_{0}(\mu) $, it holds that
\begin{align} \label{eqP8eq10} \notag
& \mathcal{C} \mathbb{E} \int_{Q_{2} \setminus Q_{3}} \theta^{2} ( |\nabla \tilde{y} |^{2} + |\tilde{y}|^{2} ) d x d t
+ \mathcal{C} \mathbb{E} \int_{Q_{0}} \theta^{2} (
\lambda \mu^{2} \alpha |\nabla z|^{2}
+ \lambda^{3} \mu^{4} \alpha^{3} |  z|^{2}
) d x d t
\\
& \geq
\mathbb{E} \int_{Q_{1}} \theta^{2} (
\lambda \mu^{2} \alpha |\nabla z|^{2}
+ \lambda^{3} \mu^{4} \alpha^{3} |  z|^{2}
) d x d t
\end{align}

Noting that $ z = \tilde{y} $ in $ Q_{3} $ and $ Q_{3}  \subset Q_{1} $, for $ \mu > \mu_{0} $ and $ \lambda > \lambda_{0} $, we deduce from \cref{eqP8eq10} that
\begin{align} \label{eqP8eq11} \notag
& \mathcal{C} \mathbb{E} \int_{Q_{2} \setminus Q_{3}} \theta^{2} ( |\nabla \tilde{y} |^{2} + |\tilde{y}|^{2} ) d x d t
+ \mathcal{C} \mathbb{E} \int_{Q_{0}} \theta^{2} (
\lambda \mu^{2} \alpha |\nabla \tilde{y}|^{2}
+ \lambda^{3} \mu^{4} \alpha^{3} |  \tilde{y}|^{2}
) d x d t
\\
& \geq
\mathbb{E} \int_{Q_{3}} \theta^{2} (
\lambda \mu^{2} \alpha |\nabla \tilde{y}|^{2}
+ \lambda^{3} \mu^{4} \alpha^{3} |  \tilde{y}|^{2}
) d x d t
.
\end{align}
From the definition of $Q_k, k=1,2,3,4$ (see \cref{eqQkDef}) and $\widehat G$, we have that
\begin{align} \label{eqP8eq12} \notag
&  \lambda^3 \mu^4 {\mathbb{E}} \int_{Q_3}\alpha^3 \theta^2 \tilde y^2 dxdt
+
\lambda\mu^2 {\mathbb{E}}\int_{Q_3}\alpha \theta^2 |\nabla \tilde y|^2 dxdt 
\\
&  
\geq {\cal C} e^{2\lambda \beta_4} {\mathbb{E}} \int_{ t_0 - \frac
\iota{\sqrt N}}^{{ t_0 + \frac \iota{\sqrt
N}}}\int_{G'}\big(\lambda^3 \tilde y^2 + \lambda |\nabla \tilde y|^2\big)dxdt.
\end{align}
From the relationship between $Q_2$ and $Q_3$, we find
\begin{equation}  \label{eqP8eq13}
{\mathbb{E}} \int_{Q_2\backslash Q_3} \theta^2 (|\nabla \tilde y|^2 + \tilde y^2)dxdt \leq
e^{2\lambda \beta_3} {\mathbb{E}} \int_{Q_1}\big(\tilde y^2 + |\nabla
\tilde y|^2\big)dxdt.
\end{equation}

Combining \cref{eqP8eq11,eqP8eq12,eqP8eq13}, for $ \mu > \mu_{0} $ and $ \lambda > \lambda_{0} $, we find that
\begin{align*}
&   e^{2\lambda \beta_4} {\mathbb{E}} \int_{ t_0 - \frac
\iota{\sqrt N}}^{{ t_0 + \frac \iota{\sqrt
N}}}\int_{G'}\big(\lambda^3 \tilde y^2 + \lambda |\nabla\tilde  y|^2\big)dxdt\\
& \leq  {\cal C} \lambda^3\mu^4 {\mathbb{E}} \int_{Q_0}\alpha^3 \theta^2
\tilde y^2 dx dt + {\cal C} \lambda\mu^2 {\mathbb{E}} \int_{Q_0}\alpha \theta^2 |\nabla
\tilde y|^2 dxdt
\\
& \quad
+ {\cal C} e^{2\lambda \beta_3}{\mathbb{E}} \int_{Q_1}\big(\tilde y^2 +
|\nabla \tilde y|^2\big)dxdt,
\end{align*}
which yields that
\begin{align} \label{eqP8eq15} \notag
&   {\mathbb{E}} \int_{ t_0 - \frac \iota{\sqrt N}}^{{ t_0 + \frac
\iota{\sqrt
N}}}\int_{\widehat G}(\tilde y^2 + |\nabla \tilde y|^2)dxdt\\
& \leq  {\cal C} e^{{\cal C}\lambda} {\mathbb{E}} \int_{Q_0}\Big (\tilde y^2 + |\nabla
\tilde y|^2\Big )dx dt + {\cal C}  e^{-2\lambda(\beta_4 - \beta_3)} {\mathbb{E}}
\int_{Q_1}(\tilde y^2 + |\nabla \tilde y|^2)dxdt.
\end{align}
Put $\varepsilon _0 = \exp\big (-2 \lambda_1(\beta_4 - \beta_3)\big )$.
Then one finds that for any $0 < \varepsilon < \varepsilon_0$, it holds that
\begin{align} \label{eqP8eq14} \notag
&   {\mathbb{E}} \int_{ t_0 - \frac \iota{\sqrt N}}^{{ t_0 + \frac
\iota{\sqrt N}}}\int_{\widehat G}(\tilde y^2 + |\nabla \tilde y|^2)dxdt\\
& \leq  {\cal C} \exp \Bigl( \frac{\mathcal{C}}{\varepsilon} \Bigr) {\mathbb{E}} \int_{Q_0}\Big (\tilde y^2 + |\nabla
\tilde y|^2\Big )dx dt + {\cal C} \varepsilon {\mathbb{E}}
\int_{Q_1}(\tilde y^2 + |\nabla \tilde y|^2)dxdt.
\end{align}
Clearly, \cref{eqP8eq14} holds for $ \varepsilon > \varepsilon_{0} $.
Hence, it is true for all $ \varepsilon > 0 $.

\medskip

{\it Step 4}. In this step,  we handle  the left hand side of \cref{eqP8eq14} to end the proof.

Let $m\in{\mathbb{N}}$ such that
\begin{equation*}
\sqrt 2 \;\iota+\frac{m \iota}{\sqrt N} \leq T - \sqrt 2
\iota\leq \sqrt 2 \; \iota+ \frac{(m+1)\iota}{\sqrt N}.
\end{equation*}
Recalling that $t_0$ can be any element in $[\sqrt 2 \iota, T - \sqrt 2 \iota]$, we take
\begin{equation*}
t_0 = \sqrt 2 \iota+ \frac{k \iota}{\sqrt N}, \quad k =0, 1,
2, \cdots, m.
\end{equation*}
Then, it follows from \cref{eqP8eq14} that
\begin{align*}
&   {\mathbb{E}} \int_{ \frac{T}{2} - \kappa T}^{ \frac{T}{2} +  \kappa T}\int_{\widehat G}(\tilde y^2 + |\nabla \tilde y|^2)dxdt\\
& \leq  {\cal C} \exp \Bigl( \frac{\mathcal{C}}{\varepsilon} \Bigr) {\mathbb{E}} \int_{Q_0}\Big (\tilde y^2 + |\nabla
\tilde y|^2\Big )dx dt + {\cal C} \varepsilon {\mathbb{E}}
\int_{(0,T)\times \tilde{G}}(\tilde y^2 + |\nabla \tilde y|^2)dxdt.
\end{align*}
This completes the proof of \cref{main result}.
\end{proof}

\section{Solution to inverse state problem with a terminal measurement}
\label{secBackwardProblem}

In this section, we prove two results  for the conditional stability of \cref{prob.2.2}.
The first one  is
\begin{theorem}\label{th consta}
For any $t_0\in (0,T]$, there exists a positive constant $\mathcal{C}=\mathcal{C}(t_0)$ such that for some generic constant $\gamma\in (0,1)$ (which is independent of $t_0$) and for any $M>0$, for all $ y_0, \hat y_0\in U_{M,0} $, it holds that
\begin{equation}\label{10.1-eq1}
| y(t_0)-\hat y(t_0)|_{L^2_{\mathcal{F}_{t_0}}(\Omega;L^{2}(G))} \leq
\mathcal{C} M^{1-\gamma}| y(T)-\hat y(T)|_{L^2_{\mathcal{F}_{T}}(\Omega;H^{1}(G))}^{\gamma},
\end{equation}
where $y(\cdot)$ (resp. $\hat y(\cdot)$) is the solution to \cref{sp-eq1} with respect to the initial datum $y_0$ (resp. $\hat y_0$).
\end{theorem}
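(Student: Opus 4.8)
The plan is to linearize the system and then derive the Hölder-type conditional stability estimate for the resulting linear stochastic parabolic equation by a weighted-energy (logarithmic-convexity / Carleman-type) argument. Step~1 (\emph{Linearization}): I would put $z := y - \hat y$. As in the proof of \cref{thm.2.1}, the Lipschitz bounds \eqref{10.5-eq1-1} show that $z$ solves
\begin{equation*}
dz - \sum_{j,k=1}^n (b^{jk} z_{x_j})_{x_k}\,dt = f\,dt + g\,dW(t)\ \text{ in }Q,\qquad z = 0 \ \text{ on }\Sigma,\qquad z(0) = y_0 - \hat y_0,
\end{equation*}
with $f = F(\nabla y, y) - F(\nabla\hat y,\hat y)$ and $g = K(y) - K(\hat y)$, so that $|f| \le L(|\nabla z| + |z|)$, $|g|\le L|z|$, $g\in L^2_{\mathbb{F}}(0,T;H^1(G))$ with $|\nabla g|$ bounded by $|\nabla z| + |z|$, and $|z(0)|_{L^2_{\mathcal F_0}(\Omega;L^2(G))}\le 2M$ since $y_0,\hat y_0\in U_{M,0}$. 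It is also convenient to dispose of the trivial case first: if $|z(T)|_{L^2_{\mathcal F_T}(\Omega;H^1(G))}\ge M$, then \eqref{10.1-eq1} is immediate from the energy estimate $|z(t_0)|_{L^2(G)}\le e^{\mathcal C L^2}|z(0)|_{L^2(G)}\le \mathcal C M$ (cf.\ \eqref{Ch-6-Eyt2}); hence I may assume $|z(T)|_{L^2_{\mathcal F_T}(\Omega;H^1(G))}\le M$.

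Step~2 (\emph{A weighted estimate interpolating $t=0$ and $t=T$}): Fix $t_0\in(0,T]$. I would apply to the linear equation above a weighted estimate with weight $\theta = e^{\lambda\ell}$ (large parameter $\lambda$, and a profile $\ell$ convex in $t$ and suitably singular at $t = 0$), obtained from weighted energy identities in the spirit of the pointwise identity \cref{thm.fIForParablic}. Using $z = 0$ on $\Sigma$ (so the divergence terms drop), the ellipticity \eqref{eq.bijGeq}, and absorbing the first-order term $a_1\cdot\nabla z$ and the term $\nabla g$ (each controlled by $|z| + |\nabla z|$) into the coercive part by taking $\lambda$ large, one is left, after taking expectations and discarding the martingale part, with an inequality bounding $\mathbb{E}|z(t_0)|^2_{L^2(G)}$ in terms of $\mathbb{E}|z(0)|^2_{L^2(G)}$ and $\mathbb{E}|z(T)|^2_{H^1(G)}$. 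Optimizing over $\lambda$ should yield constants $\mathcal C_1,\mathcal C_2 > 0$ (depending only on $T$, $L$, $G$, $(b^{jk})$) and a fixed exponent $\gamma\in(0,1)$, independent of $t_0$, with
\begin{equation*}
\mathbb{E}|z(t_0)|^2_{L^2(G)}\ \le\ \mathcal C_1\, e^{\mathcal C_2/t_0}\,\big(\mathbb{E}|z(0)|^2_{L^2(G)}\big)^{1-\gamma}\big(\mathbb{E}|z(T)|^2_{H^1(G)}\big)^{\gamma};
\end{equation*}
the $H^1$-norm at $t = T$ is the residue of the gradient contributions of $f$ and $g$, just as the gradient of $g$ entered the proof of \cref{thm.2.1}. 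Inserting $\mathbb{E}|z(0)|^2_{L^2(G)}\le 4M^2$ and taking square roots then gives, with $\mathcal C(t_0) := \mathcal C_1^{1/2} e^{\mathcal C_2/(2t_0)}$ and $z = y - \hat y$, exactly \eqref{10.1-eq1}.

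The heart of the matter is Step~2. In the stochastic setting Itô's formula injects the quadratic-variation term $\mathbb{E}\int_G g^2\,dx$ into the first energy identity, and $t\mapsto z(t)$ is only $L^2(G)$-continuous rather than differentiable, so the successive ``differentiations'' underlying the convexity estimate must be carried out in integrated form, using the $L^2_{\mathbb{F}}(0,T;H^2\cap H^1_0)$-regularity of $z$ and the martingale property to kill the stochastic integrals after taking $\mathbb{E}$. Moreover, a naive logarithmic-convexity inequality on $[0,T]$ produces an interpolation exponent proportional to $t_0/T$, which degenerates as $t_0\to 0^+$; obtaining a $t_0$-independent $\gamma$, with all $t_0$-dependence confined to the prefactor $\mathcal C(t_0)$, is what forces the weight $\theta$ to be singular at $t = 0$, and is the main technical difficulty.
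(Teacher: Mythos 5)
Your overall strategy --- linearize via $z=y-\hat y$ using \eqref{10.5-eq1-1}, then prove a H\"older-type interpolation by a weighted-energy/Carleman argument and insert the a priori bound $M$ --- is the same as the paper's, and your Step~1 (including the reduction to $|z(T)|_{H^1}\le M$) is fine. The paper proves the intermediate estimate (\cref{inv th1})
\begin{equation*}
|z(t_0)|_{L^2_{\mathcal{F}_{t_0}}(\Omega;L^2(G))}\le\mathcal{C}\,|z|^{1-\gamma}_{L^2_{\mathbb{F}}(0,T;L^2(G))}\,|z(T)|^{\gamma}_{L^2_{\mathcal{F}_T}(\Omega;H^1(G))}
\end{equation*}
by cutting off in time with $\rho$ supported in $\{t\ge t_1\}$, $\rho=1$ for $t\ge t_2$, $0<t_1<t_2<t_0$, applying the Carleman estimate \cref{carleman est1} with the smooth, increasing, everywhere-finite weight $\theta=e^{se^{\lambda t}}$ (no boundary term at $t=0$ because $\rho z$ vanishes there; the commutator $\rho_t z$ carries the weight $\theta(t_2)<\theta(t_0)$), and then optimizing in $s$; \cref{th consta} follows because the energy estimate gives $|z|_{L^2_{\mathbb{F}}(0,T;L^2(G))}\le\mathcal{C}M$. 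Your replacement of $|z|_{L^2_{\mathbb{F}}(0,T;L^2(G))}$ by $|z(0)|_{L^2_{\mathcal{F}_0}(\Omega;L^2(G))}$ as the ``large'' factor is harmless in itself.

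The genuine gap is the target inequality of your Step~2: an estimate of the form $\mathbb{E}|z(t_0)|^2\le\mathcal{C}_1e^{\mathcal{C}_2/t_0}\big(\mathbb{E}|z(0)|^2\big)^{1-\gamma}\big(\mathbb{E}|z(T)|^2_{H^1}\big)^{\gamma}$ with $\gamma$ \emph{independent of} $t_0$ is false, no matter how the weight is chosen. Take $F\equiv K\equiv0$ (admissible under \eqref{10.5-eq1-1}), $\hat y\equiv0$, and $y_0=Me_N$ with $e_N$ the $N$-th Dirichlet eigenfunction of $-\sum_{j,k=1}^n\partial_{x_k}(b^{jk}\partial_{x_j})$, eigenvalue $\mu_N\to\infty$. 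Then $y(t)=Me^{-\mu_Nt}e_N$ and the inequality reduces to $e^{\mu_N(\gamma T-t_0)}\le\mathcal{C}(t_0)\,\mu_N^{\gamma/2}$, which fails as $N\to\infty$ whenever $t_0<\gamma T$. Hence the exponent must degenerate at least like $t_0/T$ as $t_0\to0^+$, and no singularity of $\theta$ at $t=0$ can shift this degeneration into the multiplicative prefactor: this is precisely the point you single out as ``the main technical difficulty'' and assert can be overcome, but it cannot. (The same computation shows that the parenthetical claim of $t_0$-independence of $\gamma$ in the statement is not actually delivered by the paper's proof either; there $\gamma=\frac{2(e^{\lambda_2t_0}-e^{\lambda_2t_2})}{\mathcal{C}+2(e^{\lambda_2t_0}-e^{\lambda_2t_2})}$ depends on $t_0$ and $t_2$.) If you drop the independence requirement and accept $\gamma=\gamma(t_0)$, your logarithmic-convexity route does go through and is essentially equivalent to what the paper does.
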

\begin{remark}
If  $G$ is convex, the term
$|y(T)-\hat y(T)|_{L^2_{\mathcal{F}_{T}}(\Omega;H^{1}(G))}$ in the
right hand side of \eqref{10.1-eq1} can be
replaced by
$|y(T)-\hat y(T)|_{L^2_{\mathcal{F}_{T}}(\Omega;H^{1}(G_0))}$ for an
arbitrary nonempty open subset $G_0$ of $G$
(see \cite{Lue2015b}).
\end{remark}

\begin{remark}
From \cref{th consta}, we know the answer to \cref{prob.2.2} is positive. Moreover, the inequality \eqref{10.1-eq1} shows that the function $\beta(\cdot)$ in \eqref{eqStableTotal} can be chosen as $\beta(\eta) = \mathcal{C} M^{1-\gamma}\eta^{\gamma}$ for $\eta\in [0,\infty)$.
\end{remark}

By a standard energy estimate, \cref{th consta} follows from the following result, which is a generalization of \cite[Theorem 1.2]{Lue2012}.

\begin{theorem}\label{inv th1}
There exists a constant $\gamma\in (0,1)$ such that for each $t_0\in [0,T]$, one can find a constant $\mathcal{C}=\mathcal{C}(t_0)>0$ satisfying that
\begin{align}\label{conditional sta1} 
& |y (t_0)-\hat y(t_0)|_{L^2_{\mathcal{F}_{t_0}}(\Omega;L^2(G))}
 \leq
\mathcal{C} |y (\cdot)-\hat y(\cdot) |^{1-\gamma}_{L^2_{{\mathbb{F}}}(0,T;L^2(G))}
|y(T)-\hat y(T)|^{\gamma}_{L^2_{\mathcal{F}_T}(\Omega;H^1(G))},
\end{align}
where $y(\cdot)$ (resp. $\hat y(\cdot)$) solves the equation \cref{sp-eq1} with the initial datum $y_0$ (resp. $\hat y_0$).
\end{theorem}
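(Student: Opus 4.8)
The plan is to reduce the statement to a Carleman estimate for the difference $z = y - \hat y$, which (as in the proof of \cref{thm.2.1}) solves an equation of the form \cref{h9} with $f = F(\nabla y,y)-F(\nabla\hat y,\hat y)$ and $g = K(y)-K(\hat y)$; by the Lipschitz assumptions \cref{10.5-eq1-1}, these satisfy $|f|^2 \le L^2(|\nabla z|^2 + |z|^2)$ pointwise, and $|\nabla g|^2 + \lambda^2\mu^2\varphi^2 g^2 \le L^2(|\nabla z|^2 + \lambda^2\mu^2\varphi^2 z^2)$. The key difference from the internal-observation case is that we now want a bound at a single time $t_0$ in terms of data at $t=T$, so instead of the two-sided weight $\alpha$ used in \cref{thm.CarlemanForHeat} (which blows up at both endpoints), I would choose a weight function that is singular only at $t=0$ and behaves like a fixed smooth function near $t=T$, so that no information at $t=0$ is needed and the terminal value $y(T)-\hat y(T)$ enters through the boundary term of the $d(\cdots)$ term in the pointwise identity \cref{eq.fIForParablic}.

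Concretely, I would first apply \cref{thm.fIForParablic} with $u = z$, choosing $\ell = \lambda\alpha$ with $\alpha(t,x) = \frac{e^{\mu\psi(x)} - e^{2\mu|\psi|_{C(\overline G)}}}{t}$ (or a similar one-sided weight), and $\Psi$ as in \cref{h5}; integrate over $Q$ and take expectations. The same order computations as in Step~1 of the proof of \cref{thm.CarlemanForHeat} show that the leading terms $2\sum c^{jk}w_{x_j}w_{x_k}\,dt + \mathcal{B}w^2\,dt$ dominate $\lambda\mu^2\varphi|\nabla w|^2 + \lambda^3\mu^4\varphi^3 w^2$ for $\mu,\lambda$ large. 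The boundary terms in $x$ vanish because $z=0$ on $\Sigma$ (exactly as in \cref{h12,h13}); the Itô correction terms involving $(dz)^2$ and $(dz_{x_j}+\ell_{x_j}dz)(dz_{x_k}+\ell_{x_k}dz)$ are controlled using \cref{h9} by the $g$-terms, which in turn are absorbed by the Lipschitz bound on $g$ once $\mu$ is chosen with $\mathcal{C}L^2 \le \mu$. The term $d(\sum b^{jk}w_{x_j}w_{x_k} + \mathcal{A}w^2)$ integrates to a difference of values at $t=T$ and $t=0$; the one-sided weight forces $\theta(0,\cdot)=0$ so the $t=0$ contribution drops, while the $t=T$ contribution is bounded by $\mathcal{C}(\lambda,\mu)|z(T)|^2_{L^2_{\mathcal F_T}(\Omega;H^1(G))}$.

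After absorbing the $f$-terms (which cost $L^2\mathbb{E}\int_Q\theta^2(|\nabla z|^2+|z|^2)$ and are swallowed by the left side for large $\mu$), the resulting inequality has the schematic form
\begin{align*}
\lambda\mu^2\mathbb{E}\int_Q \varphi\theta^2\big(|\nabla z|^2 + \lambda^2\mu^2\varphi^2 z^2\big)\,dxdt \le \mathcal{C}(\lambda,\mu)\,\mathbb{E}\,|z(T)|^2_{H^1(G)}.
\end{align*}
To extract the value at $t_0$ I would use the energy estimate \cref{Ch-6-Eyt2}, which gives $\mathbb{E}|z(t_0)|^2_{L^2(G)} \le e^{\mathcal{C}L^2(t_0-s)}\mathbb{E}|z(s)|^2_{L^2(G)}$, integrate in $s$ over a subinterval of $(0,T)$ where $\varphi\theta^2$ is bounded below, and thereby bound $\mathbb{E}|z(t_0)|^2_{L^2(G)}$ by $\mathcal{C}\int_Q\varphi\theta^2 z^2\,dxdt \le \frac{\mathcal{C}}{\lambda^3\mu^4}\,\mathbb{E}|z(T)|^2_{H^1(G)} \cdot (\text{const in }\lambda)$. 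This is not yet the Hölder form: to get it I would not fully absorb the global term $\mathbb{E}\int_Q\theta^2 z^2$ but rather keep it, obtaining an inequality of the shape $\mathbb{E}|z(t_0)|^2 \le e^{\mathcal{C}\lambda}\mathbb{E}|z(T)|^2_{H^1} + e^{-\mathcal{C}\lambda}\mathbb{E}\int_Q z^2$, valid for all large $\lambda$; optimizing over $\lambda$ (balancing the two terms, as is standard in deriving conditional stability from a Carleman estimate with parameter) yields the interpolation inequality \cref{conditional sta1} with an explicit $\gamma\in(0,1)$ independent of $t_0$, and with $\mathcal{C}$ depending on $t_0$ through the lower bound of $\varphi\theta^2$ on the chosen subinterval.

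The main obstacle I anticipate is the careful design of the one-sided weight so that (i) it is still admissible in \cref{thm.fIForParablic} (i.e. $\ell \in C^{1,3}$ away from $t=0$ and the order estimates of Step~1 survive), (ii) the endpoint term at $t=T$ is genuinely controlled by $|z(T)|_{H^1(G)}$ and not by a stronger norm, and (iii) the optimization in $\lambda$ produces a $\gamma$ that does \emph{not} depend on $t_0$ — this last point requires tracking precisely how the constants in front of the two competing terms depend on $\lambda$ versus on $t_0$, and is the delicate bookkeeping step of the argument.
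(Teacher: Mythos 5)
There is a genuine gap at the heart of your argument: the Carleman estimate you propose cannot hold. You take a spatial weight built from the function $\psi$ of \cref{hl1}, which satisfies $\psi>0$ in $G$ and $\psi=0$ on $\Gamma$ and therefore necessarily has an interior critical point; $|\nabla\psi|>0$ is only guaranteed on $\overline{G\setminus G_1}$. The lower bounds \cref{h6-1} and \cref{h6-3} read $\sum c^{jk}w_{x_j}w_{x_k}\ge\big(s_0^2\lambda\mu^2\varphi|\nabla\psi|^2+\lambda\varphi O(\mu)\big)|\nabla w|^2$ and $\mathcal{B}\ge 2s_0^2\lambda^3\mu^4\varphi^3|\nabla\psi|^4+\lambda^3\varphi^3O(\mu^3)+\cdots$, so on $G_1$ the leading coefficients vanish and the remainders have no sign; this is precisely why \cref{h7} carries the internal observation term $\lambda^3\mu^4\int_{Q_0}\varphi^3\theta^2z^2$ on its right-hand side. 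Making the weight one-sided in time (singular only at $t=0$) removes the initial-data term but does nothing to restore positivity at the critical set of $\psi$, and in the backward problem you have no observation over $Q_0$ to compensate. Indeed, if your schematic inequality $\lambda\mu^2\mathbb{E}\int_Q\varphi\theta^2(|\nabla z|^2+\lambda^2\mu^2\varphi^2z^2)\,dxdt\le\mathcal{C}(\lambda,\mu)\,\mathbb{E}|z(T)|^2_{H^1(G)}$ were true, it would yield Lipschitz stability for the backward stochastic heat equation, which is known to be false; the Hölder exponent $\gamma<1$ has to be produced by the estimate itself, and your scheme has no mechanism for it (the factor $e^{-\mathcal{C}\lambda}$ in front of $\mathbb{E}\int_Q z^2$ cannot appear unless some portion of the weight is compared at a time strictly below $t_0$).

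The paper avoids both problems by using a purely time-dependent weight: $\psi=t$, $\varphi=e^{\lambda t}$, $\theta=e^{s\varphi}$ as in \cref{weight1-1}, with $\Psi=-\tfrac14\lambda$ in \cref{identity1}. Then $\ell_{x_j}=0$, so no spatial pseudoconvexity and no internal observation are needed; the positivity comes from $\psi_t\ge1$ (see \cref{buil5.1}), at the price of endpoint terms at $t=\delta$ and $t=T$ in \cref{car1}. The initial data are killed not by a singular weight but by a time cut-off $\rho$ vanishing for $t\le t_1$ and equal to $1$ for $t\ge t_2<t_0$; the commutator term $\rho_t(y-\hat y)$, supported in $(t_1,t_2)$, is bounded by $\theta^2(t_2)|y-\hat y|^2_{L^2_{\mathbb{F}}(0,T;L^2(G))}$, and since $\theta$ is increasing in $t$ the ratio $\theta^2(t_2)\theta^{-2}(t_0)=e^{-2s(e^{\lambda t_0}-e^{\lambda t_2})}$ decays in $s$. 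Optimizing $s$ against the $e^{\mathcal{C}s}$ in front of the terminal term gives \cref{conditional sta1}. Your final optimization step is the right idea, but you need the time cut-off (not a singular weight) to generate the competing term, and you need to abandon the spatially weighted Carleman estimate altogether.
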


In \cref{th consta}, the left hand side of the condition stability estimate \cref{10.1-eq1} is
$| y(t_0)-\hat y(t_0)|_{L^2_{\mathcal{F}_{t_0}}(\Omega;L^{2}(G))}$ and the right hand side is $| y(T)-\hat y(T)|_{L^2_{\mathcal{F}_{T}}(\Omega;H^{1}(G))}^{\gamma}$. This means that  the unknown state is continuously dependent on the measurement  in $L^2_{\mathcal{F}_{t_0}}(\Omega;L^{2}(G))$ norm when the measurement is in $L^2_{\mathcal{F}_{T}}(\Omega;H^{1}(G))$. In other words, the norm for the measured data is stronger than the  unknown state. Generally speaking, the stronger of the norm for the measured data, the more accurate measurement should we do. On the other hand, the weaker the norm for the unknown state, the slower the convergence rate of the numerical approximation for the unknown state will be.
Hence, it is natural to expect to relax the norm in the right hand side of \cref{10.1-eq1} while enforcing a stronger norm in the left hand side of \cref{10.1-eq1}. This is done by the following result.

\begin{theorem} \label{thmDou2022}
For any $t_0\in (0,T)$, there exists a positive constant $\mathcal{C}=\mathcal{C}(t_0)$ such that for some generic constant $\gamma = \gamma(t_{0}, T) \in (0,1)$  and for any $M>0$, for all $ y_0, \hat y_0\in U_{M,1} $,
letting
\begin{align}
    \label{thmDou2022beta}
    \beta(\eta) =
    \mathcal{C} \eta
    +
    \mathcal{C} M^{2} \operatorname{exp}  [-  3^{-\gamma} \ln^{\gamma} (\eta^{-1}) ]
\end{align}
it holds that
\begin{equation}\label{eqDou2022thm}
| y(t_0)-\hat y(t_0)|^{2}_{L^2_{\mathcal{F}_{t_0}}(\Omega;H^{1}(G))} \leq \beta\Big( | y(T) - \hat{y}(T) |_{L^{2}_{\mathcal{F}_{T}}(\Omega; L^{2}(G))} \Big),
\end{equation}
where $y(\cdot)$ (resp. $\hat y(\cdot)$) is the solution to \cref{sp-eq1} with respect to the initial datum $y_0$ (resp. $\hat y_0$).
\end{theorem}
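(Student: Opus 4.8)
The plan is to pass to the difference $z := y-\hat y$, to extract from a Carleman estimate for the backward stochastic parabolic operator a single Hölder-type interpolation inequality in which the terminal state is measured in $H^1(G)$, and then to trade this $H^1$-measurement against the \emph{a priori} bound $M$ through an iteration; the accumulated loss in that iteration is exactly what produces the modulus $\exp[-3^{-\gamma}\ln^{\gamma}(\eta^{-1})]$ of \cref{thmDou2022beta}.

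First I would set $z = y-\hat y$. By the Lipschitz bounds \cref{10.5-eq1-1}, $z$ solves an equation of the form \cref{h9} with homogeneous Dirichlet data, $|f|\le L(|\nabla z|+|z|)$, and $g=K(y)-K(\hat y)\in L^2_{\mathbb{F}}(0,T;H^1(G))$, and with $z(0)=y_0-\hat y_0$ obeying $|z(0)|_{L^2_{\mathcal F_0}(\Omega;H^1(G))}\le 2M$ since $y_0,\hat y_0\in U_{M,1}$. The forward energy inequality (as in \cref{Ch-6-Eyt2}), together with its $H^1$ analogue and standard parabolic regularity, gives for $0\le s\le t\le T$
\begin{equation}\label{eq:prop-en}
\mathbb{E}|z(t)|^2_{L^2(G)}\le e^{\mathcal C L^2}\mathbb{E}|z(s)|^2_{L^2(G)},\qquad \mathbb{E}|z(t_0)|^2_{H^1(G)}\le \mathcal C(t_0)\,\mathbb{E}\bigl|z(t_0/2)\bigr|^2_{L^2(G)}\le \mathcal C(t_0)M^2 .
\end{equation}
Hence it suffices to estimate $\mathbb{E}|z(s)|^2_{L^2(G)}$ at the fixed interior time $s=t_0/2$ in terms of $\mathcal E:=|z(T)|_{L^2_{\mathcal F_T}(\Omega;L^2(G))}$, by a bound of the form $\beta(\mathcal E)$.

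The engine is a global Carleman estimate for \cref{h9} read backward in time, obtained from the pointwise identity of \cref{thm.fIForParablic} with a weight $\ell=\lambda\alpha$ whose singularity is placed at $t=T$ (no interior cutoff region is needed here since the whole terminal state is observed); after absorbing the $f$-term into the left side for $\lambda$ large and treating the $g$-term as in the proof of \cref{thm.CarlemanForHeat}, one arrives at an inequality that, combined with \cref{eq:prop-en}, yields the baseline Hölder bound
\begin{equation}\label{eq:prop-holder}
\mathbb{E}|z(s)|^2_{L^2(G)}\le \mathcal C\, M^{2(1-\gamma_0)}\,|z(T)|^{2\gamma_0}_{L^2_{\mathcal F_T}(\Omega;H^1(G))}
\end{equation}
for some $\gamma_0\in(0,1)$ --- this is essentially \cref{th consta}. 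The obstruction is that the terminal datum still enters through its $H^1$ norm, and one cannot simply interpolate $|z(T)|_{H^1}$ between $|z(T)|_{L^2}$ and $|z(T)|_{H^2}$: the hypotheses \cref{10.5-eq1-1} on $K$ give only $g\in H^1$, so no $H^2$ \emph{a priori} bound on $z(T)$ is at hand and a one-shot interpolation fails.

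To get around this I would partition $[s,T]$ into subintervals and apply the backward Carleman estimate on each one, at every step using \cref{eq:prop-en} to re-express an $H^1$ norm at an interior time by an $L^2$ norm slightly earlier and Young's inequality to trade a power of the standing $H^1$ bound $M$ against a power of the observation. Iterating the resulting recursion $N$ times and then choosing $N\approx c\,\ln^{1-\gamma}(\mathcal E^{-1})$ so as to balance the exponential growth of constants (coming from the Carleman weight) against the geometric decay of the exponent turns \cref{eq:prop-holder} into a bound of the shape $\mathcal C\,\mathcal E+\mathcal C\,M^{2}\exp[-3^{-\gamma}\ln^{\gamma}(\mathcal E^{-1})]$, the first summand being the contribution already linear in the observation; feeding this back through \cref{eq:prop-en} gives \cref{eqDou2022thm}. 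The main difficulty is the bookkeeping in this iteration --- keeping $\gamma$ independent of $\mathcal E$, making sure the number of steps is admissible so that the nested subintervals stay inside $[s,T]$, and pinning down the precise constant in the exponent --- and the fact that the whole scheme lives in the stochastic world: the Carleman identity and every energy inequality must be produced by It\^o's formula, and one only has $z\in L^2_{\mathbb{F}}(\Omega;C([0,T];L^2(G)))\cap L^2_{\mathbb{F}}(0,T;H^2(G)\cap H^1_0(G))$-type regularity instead of the classical time-smoothness, so the smoothing and trace estimates invoked at each step have to be carried out with this limited regularity.
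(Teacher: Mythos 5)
Your reduction to $z=y-\hat y$, your observation that a one-shot interpolation of $|z(T)|_{H^1}$ between $L^2$ and $H^2$ is unavailable, and the final shape of the modulus are all on target, but the mechanism you propose to bridge the gap --- iterating the H\"older estimate \eqref{eq:prop-holder} over a chain of subintervals --- does not close, and it is not how the theorem is proved. The recursion you describe needs, on each subinterval $[\tau_k,\tau_{k+1}]$, a backward stability estimate whose right-hand side involves only the \emph{$L^2$} norm of the state at the later time; but the only estimate you have assembled (essentially \cref{th consta}/\cref{inv th1}) always carries $|z(T)|_{H^1}$ on the right, and the tools you invoke to remove it go the wrong way: forward energy/smoothing bounds such as $\mathbb{E}|z(T)|^2_{H^1}\lesssim \epsilon^{-1}\mathbb{E}|z(T-\epsilon)|^2_{L^2}$ replace the terminal $H^1$ norm by an $L^2$ norm at an \emph{earlier, interior} time, which is itself an unknown quantity rather than the measurement, so the chain of times recedes from $T$ instead of terminating at the datum $|z(T)|_{L^2}$. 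Conversely, if you did possess a subinterval estimate with $L^2$-only terminal data, a single application of it on $[t_0,T]$ would already yield H\"older (not merely logarithmic) stability, so the iteration would be superfluous. In short, the central difficulty --- trading $|z(T)|_{H^1}$ for $|z(T)|_{L^2}$ --- is named but not resolved.

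The paper's proof uses no iteration. It establishes a \emph{new} Carleman estimate (\cref{carleman est1-1}) with the weight $\theta=e^{(t+1)^{\lambda}}$ of \cref{eqDouCarleman1}, whose distinguishing feature is the sign structure of the terminal boundary term (see \cref{eqDouCarlemanPf5}): the contribution of $|\nabla w(T)|^2$ appears with a favorable sign and is discarded, so only $\lambda(T+1)^{\lambda-1}\theta^2(T)|z(T)|^2_{L^2}$ survives on the right, while the a priori bound $M$ enters through $\theta^2(0)|\nabla z(0)|^2_{L^2}$. Writing $s=(T+1)^{\lambda}$ one has $(t_0+1)^{\lambda}=s^{\gamma}$ with $\gamma=\ln(t_0+1)/\ln(T+1)$, so a single application gives
\begin{equation*}
|v|^{2}_{L^{2}_{\mathbb{F}}(t_{0},T;H^{1}(G))}\le C_{1}e^{3s}\,\mathbb{E}|v(T)|^{2}_{L^{2}(G)}+C_{2}e^{-2s^{\gamma}}\,\mathbb{E}|\nabla v(0)|^{2}_{L^{2}(G)},
\end{equation*}
and optimizing in $s$ (taking $s\sim\tfrac13\ln(\eta^{-1})$ with $\eta=|v(T)|_{L^2_{\mathcal F_T}(\Omega;L^2(G))}$) produces exactly the modulus $\mathcal C\eta+\mathcal C M^{2}\exp[-3^{-\gamma}\ln^{\gamma}(\eta^{-1})]$; a final mean-value/energy step converts the space-time $H^1$ norm into $|v(t_0)|_{H^1}$. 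To repair your argument you would need to supply this $L^2$-terminal-data Carleman estimate (or an equivalent device); the iteration scheme cannot substitute for it.
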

%

\subsection{Proof of the conditional stability I}

In this subsection, we will prove the first result of conditional stability, i.e., \cref{inv th1}.

Let $s\in (0,+\infty)$, $t\in (0,+\infty)$, and $\psi \in C^{\infty}({\mathbb{R}})$
with $\psi_t\geq 1$.
Put
\begin{equation}\label{weight1-1} \varphi=e^{\lambda
\psi},\qquad\theta = e^{s\varphi}.
\end{equation}

As a consequence of \cref{thm.fIForParablic}, we have the following result.

\begin{proposition}\label{identity1}
Assume that $u$ is an $H^2(\mathbb{R}^n)$-valued
continuous It\^o process. Put $v=\theta u$ (with $\theta$ given by \cref{weight1-1}). Then,
\begin{align}\label{bu1}
\notag
& \quad-\theta\Big[ \sum_{j,k=1}^n (b^{jk}v_{x_j})_{x_k}+s\lambda\varphi \psi_t v
\Big]\Big[ du - \sum_{j,k=1}^n (b^{jk}u_{x_j})_{x_k}dt \Big]
\\ \notag
& \quad+
\frac{1}{4}\lambda\theta v \Big[du - \sum_{j,k=1}^n
(b^{jk}u_{x_j})_{x_k}dt\Big]
\\ \notag
& = - \!\sum_{j,k=1}^n\! \Big(b^{jk}v_{x_j} dv\! +\!
\frac{1}{4}b^{jk}v_{x_j} vdt\Big)_{x_k}  \!
+ \frac{1}{2}d\Big(
\sum_{j,k=1}^n b^{jk}v_{x_j} v_{x_k}\! -\!
s\lambda\varphi\psi_t v^2\! + \!\frac{1}{8}\lambda v^2 \Big)
\\ \notag
& \quad + \Big(\frac{1}{4}\lambda
\sum_{j,k=1}^n b^{jk}v_{x_j} v_{x_k} dt - \frac{1}{2}\sum_{j,k=1}^n b^{jk}dv_{x_j}
dv_{x_k}
\Big) + \frac{1}{2}s\lambda^2\varphi \psi_t^2 v^2 dt
\\ \notag
& \quad
+ \frac{1}{2}   s\lambda\varphi\psi_{tt} v^2dt
- \frac{1}{4}s\lambda^2\psi_t \varphi v^2 dt
+ \frac{1}{2}s\lambda\varphi\psi_t (dv)^2
- \frac{1}{8}\lambda (dv)^2
\\
&  \quad
+ \Big[ \sum_{j,k=1}^n (b^{jk}v_{x_j})_{x_k}
+        s\lambda\varphi \psi_t v \Big]^2 dt.
\end{align}
\end{proposition}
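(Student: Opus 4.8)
The plan is to deduce \eqref{bu1} from the pointwise identity \eqref{eq.fIForParablic} of \cref{thm.fIForParablic} by a judicious choice of weights, followed by elementary bookkeeping. Concretely, I would apply \cref{thm.fIForParablic} with $\ell=s\varphi=s e^{\lambda\psi}$ — so that $\theta=e^{\ell}$ is exactly the $\theta$ of \eqref{weight1-1} and $w=\theta u$ is the process $v$ of the proposition — and with the choice $\Psi=-\tfrac14\lambda$, and then divide the resulting identity by $2$. The feature that makes everything collapse is that here $\psi=\psi(t)$ depends on $t$ only, so $\ell_{x_j}=\theta_{x_j}=\Psi_{x_j}=0$ for every $j$; combined with the time-independence of $(b^{jk})$, this annihilates almost all the terms in \eqref{eq.identyDetail}.

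\textbf{Specialized coefficients and It\^o identities.} With these weights the quantities in \eqref{eq.identyDetail} reduce, since all their $\ell_{x_j}$- and $\Psi_{x_j}$-dependent parts vanish, to
\[
\mathcal A=\tfrac14\lambda-s\lambda\varphi\psi_t,\qquad c^{jk}=-\tfrac14\lambda\, b^{jk},\qquad \mathcal B=2\mathcal A\Psi-\mathcal A_t,
\]
and from $\ell_t=s\lambda\varphi\psi_t$ one gets $\mathcal A_t=-s\lambda^2\varphi\psi_t^2-s\lambda\varphi\psi_{tt}$; these are what will produce the $s\lambda^2\varphi\psi_t^2 v^2$, $s\lambda\varphi\psi_{tt}v^2$ and $-\tfrac14 s\lambda^2\varphi\psi_t v^2$ contributions in \eqref{bu1}. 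I would also use that $\theta$ is deterministic and of bounded variation in $t$, so that $(dv)^2=\theta^2(du)^2$, $dv_{x_j}=\theta\,du_{x_j}+u_{x_j}\theta_t\,dt$ with $(dv_{x_j})(dv_{x_k})=\theta^2(du_{x_j})(du_{x_k})$, and $w_{x_j}=\theta u_{x_j}$. Substituting these into the last two lines of \eqref{eq.fIForParablic} gives the $(dv)^2$- and $(dv_{x_j})(dv_{x_k})$-terms of \eqref{bu1}; the divergence $2\sum_{j,k}(b^{jk}w_{x_j}\,dw)_{x_k}$ becomes $2\sum_{j,k}(b^{jk}v_{x_j}\,dv)_{x_k}$; and, because $\ell_{x_j}=\Psi_{x_j}=0$, the long bracketed divergence of \eqref{eq.fIForParablic} collapses to $2\sum_{j,k}(\Psi b^{jk}w_{x_j}w)_{x_k}\,dt$.

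\textbf{The one nonroutine step, and the obstacle.} The only manipulation that is not a triviality is re-casting the perfect square in the shape it has in \eqref{bu1}. Putting $P:=\sum_{j,k}(b^{jk}v_{x_j})_{x_k}+s\lambda\varphi\psi_t v$ one has $-\sum_{j,k}(b^{jk}v_{x_j})_{x_k}+\mathcal A v=-P+\tfrac14\lambda v$, whence
\[
\Bigl[-\sum_{j,k}(b^{jk}v_{x_j})_{x_k}+\mathcal A v\Bigr]^{2}=P^{2}-\tfrac12\lambda v P+\tfrac1{16}\lambda^{2}v^{2},
\]
and the integration by parts $v\sum_{j,k}(b^{jk}v_{x_j})_{x_k}=\sum_{j,k}(b^{jk}v_{x_j}v)_{x_k}-\sum_{j,k}b^{jk}v_{x_j}v_{x_k}$ splits the cross term $-\tfrac12\lambda vP$ into a $\sum b^{jk}v_{x_j}v_{x_k}\,dt$ piece (which, combined with $\sum c^{jk}v_{x_j}v_{x_k}\,dt$, yields the $\tfrac14\lambda\sum b^{jk}v_{x_j}v_{x_k}\,dt$ of \eqref{bu1}), a further divergence piece, and lower order $v^{2}\,dt$ terms. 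Finally I would move all divergences and the exact differential to the right, absorb $\mathcal A v^{2}$ inside the $d(\,\cdot\,)$, and collect the coefficients of $v^{2}\,dt$, of $\sum b^{jk}v_{x_j}v_{x_k}\,dt$, of $(dv)^{2}$, of $(dv_{x_j})(dv_{x_k})$, of $d(\,\cdot\,)$ and of the divergence terms; term by term this reproduces \eqref{bu1}. I do not anticipate any genuine obstacle: after the substitution the computation is deterministic algebra, and the only points demanding care are (i) the vanishing of all $x$-derivatives of $\ell$, $\theta$ and $\Psi$ because $\psi=\psi(t)$, (ii) the quadratic-variation rules for $v=\theta u$ with $\theta$ of finite variation, and (iii) the single integration by parts above. (Equivalently one may run \cref{thm.fIForParablic} with $\Psi=0$, halve the identity, and add to it the elementary It\^o identity for $\tfrac14\lambda\theta v\,[du-\sum_{j,k}(b^{jk}u_{x_j})_{x_k}\,dt]$ obtained by applying It\^o's formula to $v^{2}$; this yields the same result.)
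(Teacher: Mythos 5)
Your proposal is correct and is precisely the paper's proof: the paper's entire argument is the single observation that one should take $\Psi=-\tfrac{1}{4}\lambda$ (with $\ell=s\varphi$) in \cref{thm.fIForParablic} and exploit that $\psi$ depends only on $t$, so that all spatial derivatives of $\ell$ and $\Psi$ vanish and the identity reduces to routine algebra. Your expanded bookkeeping — in particular rewriting the perfect square around $P=\sum_{j,k}(b^{jk}v_{x_j})_{x_k}+s\lambda\varphi\psi_t v$ and the resulting cancellations — correctly supplies the details the paper declares ``straightforward.''
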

\begin{proof}
Noting that $ \psi $ is independent of the $ x $-variable, choosing $ \Psi = - \frac{1}{4} \lambda $ in \cref{thm.fIForParablic}, the proof is straightforward.
\end{proof}

We have the following Carleman estimate for \cref{h9}, which is a generalization of Theorem 2.1 in \cite{Yuan2021}:

\begin{theorem}\label{carleman est1}
Let $\delta\in[0,T)$, and let $\varphi$ and $\theta$ be given in
\eqref{weight1-1}. There exist constants $ \mathcal{C} > 0 $ and  $\lambda_1>0$ such that for all
$\lambda\geq\lambda_1$, there exists an $s_0(\lambda)>0$ so that for all $s\geq
s_0(\lambda)$, for any solution $ z $ to the equation \cref{h9}, it holds that
\begin{align}\label{car1}
\notag
&\lambda \mathbb{E}\int_\delta^T\int_G \theta^2|\nabla z|^2 dxdt + s\lambda^2
\mathbb{E}\int_\delta^T\int_G
\varphi \theta^2 z^2 dxdt
+ s \lambda \mathbb{E} \int_{\delta}^{T} \int_{G} \varphi \theta^{2} g^{2} d x d t
\\
\notag
&  \leq \mathcal{C}\,\mathbb{E}\Big[ \theta^2(T)|\nabla z(T)|_{L^2(G)}^2 +
\theta^2(\delta)|\nabla
z(\delta)|_{L^2(G)}^2 + s\lambda\varphi(T)\theta^2(T)|z(T)|_{L^2(G)}^2
\\
&  \qquad \quad
+ s\lambda\varphi(\delta)\theta^2(\delta)
|z(\delta)|_{L^2(G)}^2
+\int_\delta^T\int_G  \theta^2\big( f^2
+ |\nabla g|^2\big)dxdt \Big].
\end{align}
\end{theorem}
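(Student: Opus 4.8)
The plan is to read \cref{car1} off from the pointwise identity \cref{bu1} of \cref{identity1}, applied with $u=z$ the solution of \cref{h9}, $v=\theta z$, and $\theta$ as in \cref{weight1-1} (the identity being pointwise, it applies to the $H^2(G)$-valued process $z$ just as well, cf.\ \cref{thm.fIForParablic} with $\Psi=-\tfrac14\lambda$). Since $z$ solves \cref{h9}, the Itô differential $du-\sum_{j,k=1}^n(b^{jk}u_{x_j})_{x_k}dt$ occurring in \cref{bu1} equals $f\,dt+g\,dW(t)$. One then integrates the resulting identity over $(\delta,T)\times G$ and takes the expectation, discarding the terms of indefinite sign as follows. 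The Itô integrals $\int_\delta^T(\cdot)\,dW(t)$ have zero mean. The spatial divergences $-\sum_{j,k}\big(b^{jk}v_{x_j}\,dv+\tfrac14 b^{jk}v_{x_j}v\,dt\big)_{x_k}$ integrate over $G$ to surface integrals on $\Gamma$; since $z=0$ on $\Sigma$ one has $v|_\Sigma=0$ and $dv|_\Sigma=0$, so they vanish. The exact differential $\tfrac12 d\big(\sum_{j,k}b^{jk}v_{x_j}v_{x_k}-s\lambda\varphi\psi_t v^2+\tfrac18\lambda v^2\big)$ integrates in $t$ to the difference of its values at $t=T$ and $t=\delta$; as $\psi$, hence $\varphi$ and $\theta$, depends only on $t$, one has $v_{x_j}(\tau)=\theta(\tau)z_{x_j}(\tau)$ and $v(\tau)=\theta(\tau)z(\tau)$ for $\tau\in\{\delta,T\}$, producing exactly the four data terms on the right-hand side of \cref{car1}.

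Next I would isolate the dominant terms. Because $\theta$ has no martingale part and $dz=f\,dt+g\,dW(t)$, one has $(dv)^2=\theta^2 g^2\,dt$ and $dv_{x_j}dv_{x_k}=\theta^2 g_{x_j}g_{x_k}\,dt$; hence $\tfrac12 s\lambda\varphi\psi_t(dv)^2-\tfrac18\lambda(dv)^2=\big(\tfrac12 s\lambda\varphi\psi_t-\tfrac18\lambda\big)\theta^2 g^2\,dt$, which for $s$ large (depending on $\lambda$) is bounded below by $\tfrac14 s\lambda\varphi\theta^2 g^2$, using $\psi_t\ge1$ and $\varphi>0$, while $-\tfrac12\sum_{j,k}b^{jk}dv_{x_j}dv_{x_k}$ is controlled in absolute value by $\mathcal C\,\theta^2|\nabla g|^2\,dt$. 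From $\psi_t\ge1$ (so $\psi_t^2\ge\psi_t$) and the boundedness of $\psi_{tt}$ on $[\delta,T]$, the combination $\tfrac12 s\lambda^2\varphi\psi_t^2 v^2-\tfrac14 s\lambda^2\varphi\psi_t v^2+\tfrac12 s\lambda\varphi\psi_{tt}v^2$ dominates $\tfrac18 s\lambda^2\varphi v^2=\tfrac18 s\lambda^2\varphi\theta^2 z^2$ once $\lambda$ is large; by the ellipticity \cref{eq.bijGeq}, $\tfrac14\lambda\sum_{j,k}b^{jk}v_{x_j}v_{x_k}$ dominates a positive multiple of $\lambda|\nabla v|^2=\lambda\theta^2|\nabla z|^2$; and the square $\big[\sum_{j,k}(b^{jk}v_{x_j})_{x_k}+s\lambda\varphi\psi_t v\big]^2\,dt$ is kept nonnegative for absorption.

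It then remains to handle the two source couplings on the left of \cref{bu1}, namely $-\theta\big[\sum_{j,k}(b^{jk}v_{x_j})_{x_k}+s\lambda\varphi\psi_t v\big]f$ and $\tfrac14\lambda\theta^2 z f$. Young's inequality splits the first into $\tfrac12\big[\sum_{j,k}(b^{jk}v_{x_j})_{x_k}+s\lambda\varphi\psi_t v\big]^2$ (absorbed by the square above) plus $\tfrac12\theta^2 f^2$, and the second into a small multiple of $s\lambda^2\varphi\theta^2 z^2$ (absorbed by the good zeroth-order term) plus $\mathcal C\,\theta^2 f^2$. Collecting all contributions, fixing $\lambda_1$ large and then $s_0(\lambda)$ large, and rewriting the $v$-quantities as $z$-quantities, one obtains \cref{car1}.

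The main obstacle is the absorption bookkeeping: checking that every lower-order or indefinite term — the $\psi_{tt}$ term, the two $f$-cross terms, and the $|\nabla g|^2$ contribution from the quadratic variation — is genuinely dominated once $\lambda$ and then $s$ are taken large, in that order. The one structural point to use carefully is that $\theta$ (equivalently $\ell=s\varphi$) is a function of $t$ alone: this forces $\ell_{x_j}\equiv 0$, so that $\nabla v=\theta\nabla z$ holds identically and $\theta$ contributes nothing to the quadratic variations, yielding the clean identities $(dv)^2=\theta^2 g^2\,dt$ and $dv_{x_j}dv_{x_k}=\theta^2 g_{x_j}g_{x_k}\,dt$ on which the $g$-terms rest. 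With \cref{identity1} in hand, everything else is routine.
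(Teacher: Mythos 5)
Your proposal is correct and follows essentially the same route as the paper's proof: apply the weighted identity of \cref{identity1} with $u=z$, integrate over $[\delta,T]\times G$ and take expectations, kill the divergence terms via $z|_\Sigma=0$, read the four data terms off the exact differential, use $\psi_t\ge 1$ (together with the fact that $\theta$ depends on $t$ alone, so $(dv)^2=\theta^2g^2\,dt$ and $dv_{x_j}dv_{x_k}=\theta^2g_{x_j}g_{x_k}\,dt$) to extract the dominant $s\lambda^2\varphi v^2$, $\lambda|\nabla v|^2$ and $s\lambda\varphi\theta^2g^2$ terms, and absorb the $f$-couplings by Young's inequality against the nonnegative square and the zeroth-order term. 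The absorption bookkeeping you flag (the $\psi_{tt}$ term for $\lambda$ large, then the remaining terms for $s\ge s_0(\lambda)$) is exactly how the paper closes the argument.
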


\begin{remark}
\label{rkCarleman}
If we replace the assumption of $ \psi_{t} $ with $ |\psi_{t}| \geq 1 $, then one can follow the proof of \cref{car1} step by step to get
\begin{align}\label{car1Lue}
    \notag
    &\lambda \mathbb{E}\int_\delta^T\int_G \theta^2|\nabla z|^2 dxdt + s\lambda^2
    \mathbb{E}\int_\delta^T\int_G
    \varphi \theta^2 z^2 dxdt
    \\
    \notag
    &  \leq \mathcal{C}\,\mathbb{E}\Big[\theta^2(T)|\nabla z(T)|_{L^2(G)}^2 +
    \theta^2(\delta)|\nabla
    z(\delta)|_{L^2(G)}^2 + s\lambda\varphi(T)\theta^2(T)|z(T)|_{L^2(G)}^2
    \\
    &  \qquad \quad
    + s\lambda\varphi(\delta)\theta^2(\delta)
    |z(\delta)|_{L^2(G)}^2
    \!+\!\int_\delta^T\!\!\int_G  \theta^2\big( f^2
    \!+ \!|\nabla g|^2 \!+ \!s \lambda \varphi g^{2} \big)dxdt \Big].
\end{align}
\end{remark}
\begin{proof}


Applying \cref{identity1} to the equation \cref{h9} with $u=z$, integrating the equality \cref{bu1} on $[\delta,T]\times G$, and taking mathematical expectation on  both sides, we obtain that
\begin{align}\label{buil1}
&\quad-\mathbb{E}\int_\delta^T \int_G \theta\Big[ \sum_{j,k=1}^n
        (b^{jk}v_{x_j})_{x_k} + s\lambda\varphi\psi_t v \Big] \Big[  dz  -
        \sum_{j,k=1}^n
        (b^{jk}z_{x_j})_{x_k}dt  \Big]dx \nonumber \\ \notag
& \quad + \frac{1}{4}\lambda\mathbb{E}\int_\delta^T \int_G \theta v  \Big[
        dz -  \sum_{j,k=1}^n
        (b^{jk}z_{x_j})_{x_k}dt \Big]dx \\ \notag
&= \! - \mathbb{E} \int_\delta^T  \int_G\sum_{j,k=1}^n
        \Big( b^{jk}v_{x_j} dv  +  \frac{1}{4}\lambda b^{jk}v_{x_j}
        vdt \Big)_{x_k} dx
\\ \notag
& \quad
+  \frac{1}{2}\mathbb{E}  \int_\delta^T  \int_G  d\Big(
\sum_{j,k=1}^n b^{jk}v_{x_j} v_{x_k}  -
s\lambda\varphi\psi_t v^2  +  \frac{1}{8}\lambda v^2  \Big)dx\nonumber \\
&\quad +\mathbb{E}\int_\delta^T\int_G\Big(\frac{1}{4}\lambda
\sum_{j,k=1}^n b^{jk}v_{x_j} v_{x_k} dt - \frac{1}{2}\sum_{j,k=1}^n
b^{jk}dv_{x_j}
dv_{x_k}   \Big)dx \nonumber\\ \notag
&\quad +
\mathbb{E}\int_\delta^T\int_G\Big[\frac{1}{2}s\lambda^2\varphi\psi_t^2 v^2 dt
+  \frac{1}{2} s\lambda\varphi\psi_{tt} v^2dt -
\frac{1}{4}s\lambda^2\varphi\psi_t v^2 dt
\\ \notag
& \quad \quad \quad \quad \quad \quad ~
+ \frac{1}{2}s\lambda\varphi\psi_t(dv)^2 - \frac{1}{8}\lambda (dv)^2\Big]dx \nonumber\\
&\quad + \mathbb{E}\int_\delta^T\int_G\Big[ \sum_{j,k=1}^n
(b^{jk}v_{x_j})_{x_k} + s\lambda\varphi \psi_t v \Big]^2 dxdt.
\end{align}

Now we estimate the terms in the right  hand side of \cref{buil1}  one by one.
Noting that $v|_\Sigma  = z|_\Sigma = 0$, thanks to Divergence Theorem, we have
\begin{align}\label{buil2} \notag
&\quad- \mathbb{E}\int_\delta^T\int_G\sum_{i,j}^n \Big(b^{jk}v_{x_i} dv +
\frac{1}{4}\lambda
b^{jk}v_{x_i} vdt\Big)_{x_j}dx\\
& = - \mathbb{E}\int_\delta^T\int_{\Gamma}\sum_{j,k=1}^n b^{jk}\Big(
v_{x_i} dv + \frac{1}{4}\lambda v_{x_i} vdt\Big)\nu^j d\,\Gamma = 0.
\end{align}
The second one reads as follows:
\begin{align}\label{buil3}  
& \frac{1}{2}\mathbb{E}\int_\delta^T\int_G d\Big( \sum_{j,k=1}^n
b^{jk}v_{x_j} v_{x_k} - s\lambda\varphi \psi_t v^2 + \frac{1}{8}\lambda v^2
\Big)dx \\ \notag
& \geq -\mathcal{C}\mathbb{E}\Big(|\nabla v(T)|_{L^2(G)}^2 + |\nabla
v(\delta)|_{L^2(G)}^2 + s\lambda\varphi(T)|v(T)|_{L^2(G)}^2
+ s\lambda\varphi(\delta) |v(\delta)|_{L^2(G)}^2\Big).
\end{align}
For the third one, thanks to \cref{eq.bijGeq,h9}, it holds that
\begin{align}\label{buil4}  \notag
& \mathbb{E}\int_\delta^T\int_G\Big(\frac{1}{4}\lambda \sum_{j,k=1}^n
b^{jk}v_{x_j} v_{x_k} dt - \frac{1}{2}\sum_{j,k=1}^n b^{jk}dv_{x_j} dv_{x_k}
\Big)dx \\
& \geq \frac{1}{4}\lambda \mathbb{E}\int_\delta^T\int_G s_{0} |\nabla v|^2 dxdt
- \mathcal{C}\mathbb{E}\int_\delta^T\int_G \theta^2 |\nabla g|^2
dxdt.
\end{align}

Noting that  $\psi_t \geq 1$, 
there exists a $\lambda_0>0$ such that for all $\lambda\geq\lambda_0$, it holds that
\begin{align}\label{buil5.1} \notag
    &\mathbb{E}\!\int_\delta^T \! \! \int_G\Big[\frac{1}{2}s\lambda^2\varphi\psi_t^2 v^2
    dt \!+\!  \frac{1}{2} s\lambda\varphi\psi_{tt} v^2dt \!-\!
    \frac{1}{4}s\lambda^2\varphi\psi_t v^2 dt
    \!+\!
    \frac{1}{2}s\lambda\varphi\psi_t(dv)^2 \!-\! \frac{1}{8}\lambda (dv)^2\Big]dx
\\
&\geq \frac{1}{8}s\lambda^2\mathbb{E}\int_\delta^T\int_G \varphi v^2
dxdt
+ \frac{1}{4}  s \lambda \mathbb{E}\int_\delta^T\int_G \varphi \theta^{2} g ^2 dxdt.
\end{align}

Now, we estimate the terms in the left hand side of \cref{buil1}.
Thanks to \cref{h9}, we obtain
\begin{align}\label{buir1}\notag
& -\mathbb{E}\int_\delta^T\int_G\theta\Big[ \sum_{j,k=1}^n
(b^{jk}v_{x_j})_{x_k}+s\lambda\varphi  \psi_t v \Big]\Big[ dz -
\sum_{j,k=1}^n
(b^{jk}z_{x_j})_{x_k}dt \Big]dx \\
& \leq \mathbb{E}\int_\delta^T\int_G \Big[ \sum_{j,k=1}^n
(b^{jk}v_{x_j})_{x_k}+s\lambda\varphi \psi_t v \Big]^2 dxdt +
\mathbb{E}\int_\delta^T\int_G
\theta^{2} f^2 dxdt,
\end{align}
and
\begin{align}\label{buir2}\notag
& \frac{1}{4}\lambda \mathbb{E}\int_\delta^T\int_G \theta v \Big[ dz -
\sum_{j,k=1}^n (b^{jk}z_{x_j})_{x_k}dt \Big]dx 
\\
&  
\leq \frac{1}{64}\lambda^2 \mathbb{E}\int_\delta^T\int_G v^2 dxdt +
\mathbb{E}\int_\delta^T\int_G \theta^{2}
f^2 dxdt.
\end{align}
From (\ref{buil1})--(\ref{buir2}), we find
\begin{align}\label{bui1} \notag
&\frac{1}{4}\lambda\mathbb{E}\int_\delta^T \int_G   |\nabla v|^2 dxdt
\!+\!  \frac{1}{4}  s \lambda \mathbb{E}\int_\delta^T   \int_G \varphi\theta^{2} g ^2 dxdt
\nonumber
\!-\!
\mathcal{C} \mathbb{E}\int_\delta^T   \int_G
\theta^{2} \big(|\nabla  g|^2  + |f|^2 \big)dxdt
\\
&\quad + \Big(\frac{1}{8}s\lambda^2-
\frac{1}{64}\lambda^2\Big)\mathbb{E}\int_\delta^T\int_G \varphi v^2 dxdt \nonumber\\ \notag
& \leq  \mathcal{C}\mathbb{E}\Big(|\nabla v(T)|_{L^2(G)}^2 + |\nabla
v(\delta)|_{L^2(G)}^2 + s\lambda\varphi(T)|v(T)|_{L^2(G)}^2
\\
& \quad \quad \quad  + s\lambda\varphi(\delta)
|v(\delta)|_{L^2(G)}^2 \Big).
\end{align}
Hence, there exists a $\lambda_1\geq \lambda_0$ such that for all $\lambda \geq\lambda_1$, one can find an $s_0(\lambda)>0$ so that for all $s\geq s_0(\lambda)$, it holds
\begin{align*}  \notag
& \lambda\mathbb{E}\int_\delta^T\! \int_G  \!|\nabla v|^2 dxdt
+   s \lambda \mathbb{E}\int_\delta^T\int_G \varphi \theta^{2} g ^2 dxdt
+  s\lambda^2 \mathbb{E}\int_\delta^T\int_G \varphi v^2 dxdt \nonumber
\\ \notag
& \leq  \mathcal{C}\mathbb{E}\Big[|\nabla v(T)|_{L^2(G)}^2 + |\nabla
v(\delta)|_{L^2(G)}^2 + s\lambda\varphi(T)|v(T)|_{L^2(G)}^2
\\
& \quad \quad \quad  + s\lambda\varphi(\delta)
|v(\delta)|_{L^2(G)}^2 \Big]
+ \mathcal{C} \mathbb{E}\int_\delta^T\! \int_G \! \theta^{2} \big(|\nabla
g|^2  + |f|^2 \big)dxdt,
\end{align*}
which implies inequality \cref{car1} immediately.
\end{proof}

Now, we are in a position to prove  \cref{inv th1}.

\begin{proof}[Proof of \cref{inv th1}]

Choose $t_1$ and $t_2$ such that $0< t_1 < t_2 < t_0$.
Set $\rho\in C^\infty(\mathbb{R})$ such that $0\leq\rho\leq 1$ and
\begin{equation}\label{chi}
\rho = \left\{
\begin{array}{ll}
    1, & t\geq t_2,\\
    0, & t \leq t_1.
\end{array}
\right.
\end{equation}

Let $z=\rho (y -\hat y)$.  Then, $z$ solves
\begin{equation}\label{inv equ1}
\left\{
\begin{aligned}
     &dz - \sum\limits_{j,k=1}^n (b^{jk}z_{x_j})_{x_k} dt= \big[\rho \big(F(\nabla y, y)- F(\nabla \hat y,\hat y)\big) + \rho_t\big(y -\hat y\big) \big]dt\\
    &\qquad\qquad\qquad\qquad\qquad ~ +
    \rho \big(K(y)-K(\hat y)\big) dW(t) &\mbox{ in } Q,\\
     &z=0 &\mbox{ on } \Sigma,\\
     &z(0) = 0 &\mbox{  in  }G.
\end{aligned}
\right.
\end{equation}
Applying \cref{carleman est1} with $\psi=t$ and $\delta=0$ to the
equation \cref{inv equ1}, for  $\lambda\geq\lambda_1$ and $s\geq s_0(\lambda)$,
we have
\begin{align}\label{inv eq1}\notag
& \lambda\mathbb{E}\int_Q \theta^2|\nabla z|^2dxdt + s\lambda^2\mathbb{E}\int_Q
\theta^2\varphi
|z|^2dxdt\\\notag
&\leq \mathcal{C}\mathbb{E}\Big[\theta^2(T)\big|\nabla  z(T)\big|^2_{L^2(G)}
+ s\lambda\varphi(T)\theta^2(T)\big|z(T)\big|^2_{L^2(G)} \\\notag
&\qquad\quad
+ \int_Q \theta^2|\rho_t(y -\hat y)|^2 dxdt
+ \int_Q \theta^2 |\rho \big(F(\nabla y, y)-  F(\nabla \hat y,\hat y)\big)|^2 dxdt\\
&\qquad\quad
+ \int_Q \theta^2  \big|\rho \nabla \big(K(y)-K(\hat  y)\big)\big|^2    dxdt
\Big].
\end{align}
From the choice of $\rho$ and the fact that $ \theta(t) \leq \theta(s) $ for $ t \leq s $, we get that
\begin{align}\label{inv eq2} 
\mathbb{E}\int_Q \theta^2|\rho_t (y -\hat y)|^2 dxdt 
& \leq \mathcal{C}
\theta(t_2)^2|y -\hat y|^2_{L^2_{{\mathbb{F}}}(0,T;L^2(G))}.
\end{align}
From \cref{10.5-eq1}, we have
\begin{align}\label{inv eq2xxx}\notag
& \int_Q \theta^2 \big|\rho \big(F(\nabla y, y)-  F(\nabla \hat y,\hat y)\big)\big|^2 dxdt
+ \int_Q \theta^2  \big|\rho \nabla \big(K(y)-K(\hat y)\big)\big|^2  dxdt\\
&\leq \mathcal{C}
{\mathbb{E}}\int_Q \theta^2 \big(z^2 + |\nabla z|^2\big) dxdt.
\end{align}
This, together with the inequality \cref{inv eq1} and  the fact that $\theta(t)\leq\theta(s)$  for $t\leq s$, implies that there exists a $\lambda_2\geq \lambda_1$ such that for all $ \lambda \geq \lambda_{2} $, we have
\begin{align}\label{inv eq3} \notag
&
 \lambda\theta^2(t_0)\mathbb{E}\int_{t_0}^T\int_G
|\nabla y-\nabla \hat y|^2dxdt +
s\lambda^2\theta^2(t_0)\mathbb{E}\int_{t_0}^T\int_G \varphi
|y- \hat y|^2dxdt \nonumber\\ \notag
& \leq \lambda\mathbb{E}\int_Q \theta^2|\nabla z|^2dxdt +
s\lambda^2\mathbb{E}\int_Q \theta^2\varphi
|z|^2dxdt\\
& \leq \mathcal{C}\theta^2(t_2)|y- \hat y|^2_{L^2_{{\mathbb{F}}}(0,T;L^2(G))}   \\ \notag
&\quad+
\mathcal{C}\mathbb{E}\Big( \theta^2(T)\big|\nabla y(T) - \nabla \hat y(T)\big|^2_{L^2(G)}+
s\lambda\varphi(T)\theta^2(T)\big|y(T) -\hat y(T)\big|^2_{L^2(G)} \Big).
\end{align}


Thanks to  It\^o's formula, \cref{sp-eq1,10.5-eq1-1}, we obtain that
\begin{align}\label{inv eq4} 
    & \mathbb{E}\int_G |y(t_0)-\hat y(t_0)|^2dx
    \\\notag
    & =
    \mathbb{E}\int_G |y(T) \!- \! \hat y(T)|^2dx
    - \mathbb{E} \int_{t_{0}}^{T} \int_{G} \{ 2 (y - \hat{y}) d (y - \hat{y}) + [d(y-\hat{y})]^{2} \} d x
    \\ 
    \notag
    & \leq
    \mathbb{E}\int_G |y(T) \!- \! \hat y(T)|^2dx +
    \mathcal{C}\mathbb{E}\int_{t_0}^T\int_G |\nabla y - \!\nabla \hat y |^2 dxdt
    + \mathcal{C}
    \mathbb{E}\int_{t_0}^T\int_G |y-\hat y|^2 dxdt.
\end{align}
Recalling that $\varphi\geq 1$, combining  the inequalities \cref{inv eq3,inv eq4}, for any   $\lambda\geq  \lambda_2 $ and $s\geq s_0(\lambda)$,
we have
\begin{align}\label{inv eq5} \notag
&\mathbb{E}\int_G |y(t_0)-\hat y(t_0)|^2dx \\
 &\leq \mathcal{C}
\theta^2(t_2)\theta^{-2}(t_0)\big|y-\hat y\big|^2_{L^2_{\mathbb{F}}(0,T;L^2(G))} \\ \notag
 &\quad +
\mathcal{C}\mathbb{E}\left( \theta^2(T)\big|\nabla y(T)\!- \! \nabla \hat y(T)\big|^2_{L^2(G)} +
s\lambda\varphi(T)\theta^2(T)\big|y(T)  -  \hat y(T)\big|^2_{L^2(G)} \right).
\end{align}
Now we fix $\lambda =\lambda_2 $. It follows from the
inequality \cref{inv eq5} that
\begin{align}\label{inv eq6} \notag
&    \mathbb{E}\int_G |y(t_0)-\hat y(t_0)|^2dx \\ &  \leq \mathcal{C}\big(
\theta^2(t_2)\theta^{-2}(t_0)\big|y - \hat y\big|^2_{L^2_{{\mathbb{F}}}(0,T;L^2(G))}
+ \theta^2(T)\mathbb{E}\big|y(T)-\hat y(T)\big|^2_{H^1(G)}\big).
\end{align}
Replacing $\mathcal{C}$ by $\mathcal{C}e^{s e^{\lambda_2 T}}$, from inequality \eqref{inv eq6}, for any $s \geq 0$, it holds that
\begin{align}\label{inv eq7} \notag
\mathbb{E}\int_G |y(t_0)-\hat y(t_0)|^2dx
& \leq \mathcal{C}e^{-2s(e^{\lambda_2 t_2}-e^{\lambda_2
t_0})}\big|y - \hat y\big|^2_{L^2_{{\mathbb{F}}}(0,T;L^2(G))}
\\
& \quad 
 + \mathcal{C}e^{\mathcal{C}s}\mathbb{E}\big|y(T)-\hat y(T)\big|^2_{H^1(G)}.
\end{align}
Choosing $s \geq 0$ which minimize
the right-hand side of inequality \cref{inv eq7}, we obtain that
\begin{equation}\label{inv eq8}
\mathbb{E}\big|y(t_0)-\hat y(t_0)\big|_{L^2(G)} \leq \mathcal{C}
\big|y - \hat y\big|^{1-\gamma}_{L^2_{{\mathbb{F}}}(0,T;L^2(G))}
\mathbb{E}\big|y(T)-\hat y(T)\big|^{\gamma}_{H^1(G)},
\end{equation}
with $ \gamma = \frac{2(e^{\lambda_2 t_0}-e^{\lambda_2 t_2})}{\mathcal{C}+2(e^{\lambda_2 t_0}-e^{\lambda_2 t_2})} $.
This completes the proof of  \cref{inv th1}.
\end{proof}

\subsection{Proof of the conditional stability II}

In this subsection, we will prove the second  conditional stability result, i.e., \cref{thmDou2022}.
To this end, we need to establish the following Carleman weight function, which is different from \cref{weight1-1}.

Let $ \lambda \in (0,+\infty)$ and  $t\in (0,+\infty)$. Put
\begin{equation}\label{eqDouCarleman1}
\psi = (t+1)^{\lambda},\qquad \theta = e^{\psi}.
\end{equation}

We have the following Carleman estimate for \cref{h9}:

\begin{theorem}\label{carleman est1-1}
Let $\delta \in[0,T)$, and  $\psi$ and $\theta$ be given in
\eqref{eqDouCarleman1}. There exist constants $ \mathcal{C} > 0 $ and  $\lambda_0>0$ such that for all
$\lambda \geq \lambda_0$ and  for any solution $ z $ to the equation \cref{h9}, it holds that
\begin{align}\label{car1-1}
\notag
& \mathbb{E}  \int_{\delta}^{T} \int_{G} \lambda^{2} (t+1)^{(\lambda-2)} \theta^{2} z^{2} d x d t
+ \mathbb{E}  \int_{\delta}^{T} \int_{G} \lambda  (t+1)^{-1} \theta^{2} |\nabla z|^{2} d x d t
\\ \notag
&  \leq \mathcal{C}\,\mathbb{E}\Big[
    \lambda (T+1)^{\lambda -1 } \theta^{2}(T) | z (T)|_{L^{2}(G)}^{2}
    + \theta^{2}(\delta) |  \nabla  z (\delta)|_{L^{2}(G)}^{2}
    \\
    & \quad \quad \quad
    + \int_{\delta}^{T} \int_{G} \theta^{2} ( f^{2} + g^{2} + |\nabla g|^{2} ) d x d t
\Big].
\end{align}
\end{theorem}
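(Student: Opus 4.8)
The plan is to imitate the proof of \cref{carleman est1}: record a pointwise weighted identity specialized to the new weight, apply it to \cref{h9}, integrate over $(\delta,T)\times G$, take expectations, and estimate term by term. For the identity I will apply \cref{thm.fIForParablic} with $u=z$, $\ell=\psi=(t+1)^{\lambda}$ and a time-only Carleman profile $\Psi$; the natural choice is $\Psi=\tfrac{\lambda}{4(t+1)}$ (as in \cref{identity1} one may in addition use an auxiliary multiplier $\text{const}\cdot\lambda(t+1)^{-1}\theta v\,[\,\cdots\,]$, which is what produces the $\theta^{2}g^{2}$ term on the right of \cref{car1-1}, though it is not strictly necessary). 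Because $\ell$ and $\Psi$ depend on $t$ only, all their spatial derivatives vanish, $\theta=e^{\psi}$ is $x$-independent so $w=\theta z$ obeys $w_{x_j}=\theta z_{x_j}$, and the coefficients \cref{eq.identyDetail} collapse to $\mathcal{A}=-\Psi-\ell_t$, $c^{jk}=\Psi b^{jk}-\tfrac12 b^{jk}_t$, $\mathcal{B}=2\mathcal{A}\Psi-\mathcal{A}_t$, while every surviving boundary term has the form $\big(b^{jk}w_{x_j}\cdots\big)_{x_k}$.

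Since $z$ solves \cref{h9} we have $dz-\sum_{j,k}(b^{jk}z_{x_j})_{x_k}\,dt=f\,dt+g\,dW$. Substituting this into the identity, integrating over $(\delta,T)\times G$ and taking $\mathbb{E}$, the boundary terms integrate to $0$ (as $w|_{\Sigma}=0$) and the $dW$-term has zero expectation by the usual localization (it is a martingale, using $z\in L^2_{\mathbb{F}}(0,T;H^2\cap H^1_0)$, $g\in L^2_{\mathbb{F}}(0,T;H^1)$). Applying Cauchy--Schwarz to the equation term, $2\theta\big[-\sum(b^{jk}w_{x_j})_{x_k}+\mathcal{A}w\big]f\le\big[-\sum(b^{jk}w_{x_j})_{x_k}+\mathcal{A}w\big]^2+\theta^{2}f^{2}$, makes the square cancel the $2[\,\cdots\,]^2$ already present in \cref{eq.fIForParablic}, leaving the harmless $\mathbb{E}\int\theta^{2}f^{2}$.

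It remains to estimate the surviving terms. By \cref{eq.bijGeq} and $|b^{jk}_t|_{L^{\infty}}\le\mathcal{C}$, one gets $\sum_{j,k}c^{jk}w_{x_j}w_{x_k}\ge\big(\tfrac{\lambda s_0}{4(t+1)}-\mathcal{C}\big)\theta^{2}|\nabla z|^{2}\ge\tfrac{\lambda s_0}{8(t+1)}\theta^{2}|\nabla z|^{2}$ once $\lambda\ge\lambda_0:=8\mathcal{C}(T+1)/s_0$; and a direct computation gives $\mathcal{B}=\big(\tfrac12\lambda^{2}-\lambda\big)(t+1)^{\lambda-2}-\big(\tfrac18\lambda^{2}+\tfrac14\lambda\big)(t+1)^{-2}$, so, using $(t+1)^{-2}\le(t+1)^{\lambda-2}$ on $[\delta,T]$ and enlarging $\lambda_0$, $\mathcal{B}\ge\tfrac1{16}\lambda^{2}(t+1)^{\lambda-2}$ and hence $\mathcal{B}w^{2}\ge\tfrac1{16}\lambda^{2}(t+1)^{\lambda-2}\theta^{2}z^{2}$. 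The term $d\big(\sum b^{jk}w_{x_j}w_{x_k}+\mathcal{A}w^{2}\big)$ integrates to $\mathbb{E}\int_G[\mathcal{E}(T)-\mathcal{E}(\delta)]\,dx$ with $\mathcal{E}=\sum b^{jk}w_{x_j}w_{x_k}+\mathcal{A}w^{2}$; since $\mathcal{A}<0$, $\mathcal{E}(T)\ge-|\mathcal{A}(T)|\theta^{2}(T)z(T)^{2}$, $-\mathcal{E}(\delta)\ge-\mathcal{C}\theta^{2}(\delta)|\nabla z(\delta)|^{2}$ (the discarded pieces being $\ge0$), and $|\mathcal{A}(T)|\le\mathcal{C}\lambda(T+1)^{\lambda-1}$, which after transposition yields precisely the $\lambda(T+1)^{\lambda-1}\theta^{2}(T)|z(T)|^{2}_{L^2(G)}$ and $\theta^{2}(\delta)|\nabla z(\delta)|^{2}_{L^2(G)}$ terms of \cref{car1-1}. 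Finally the Itô corrections: $-\theta^{2}\mathcal{A}(du)^{2}=-\theta^{2}\mathcal{A}g^{2}\,dt\ge0$ is dropped, while $-\theta^{2}\sum b^{jk}(du_{x_j})(du_{x_k})=-\theta^{2}\sum b^{jk}g_{x_j}g_{x_k}\,dt\ge-\mathcal{C}\theta^{2}|\nabla g|^{2}\,dt$ gives the $\mathbb{E}\int\theta^{2}|\nabla g|^{2}$ term, and the nonnegative $\mathbb{E}\int\theta^{2}g^{2}$ in \cref{car1-1} may be added freely. Collecting these bounds and absorbing the universal constants $s_0/8$ and $1/16$ into $\mathcal{C}$ gives \cref{car1-1}.

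The only genuinely delicate point is the calibration of the weight — here $\ell=(t+1)^{\lambda}$ together with $\Psi=\tfrac{\lambda}{4(t+1)}$ — so that the $\lambda$- and $(t+1)$-orders of $\mathcal{B}$ and of $\sum c^{jk}w_{x_j}w_{x_k}$ reproduce exactly the target weights $\lambda^{2}(t+1)^{\lambda-2}$ and $\lambda(t+1)^{-1}$; the positivity of $\mathcal{B}$ rests on the elementary observation $(t+1)^{-2}\le(t+1)^{\lambda-2}$ valid on $[\delta,T]$ plus a suitable $\lambda_0$. Everything else is bookkeeping entirely parallel to the proof of \cref{carleman est1}.
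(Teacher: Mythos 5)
Your proposal is correct and follows essentially the same route as the paper: the same weight $\ell=(t+1)^{\lambda}$ with $\Psi=\tfrac{1}{4}\lambda(t+1)^{-1}$ in \cref{thm.fIForParablic}, the same collapse of $\mathcal{A}$, $\mathcal{B}$, $c^{jk}$ for a time-only weight, the same use of $(t+1)^{-2}\le(t+1)^{\lambda-2}$ to get $\mathcal{B}\gtrsim\lambda^{2}(t+1)^{\lambda-2}$, and the same treatment of the endpoint, boundary and It\^o-correction terms. Your explicit formula for $\mathcal{B}$ and the bound $|\mathcal{A}(T)|\le\mathcal{C}\lambda(T+1)^{\lambda-1}$ check out, and the remaining discrepancies (e.g.\ keeping half of the $2[\cdots]^2$ term after Cauchy--Schwarz rather than all of it, or simply discarding the nonnegative $-\theta^{2}\mathcal{A}(dz)^{2}$ term) are cosmetic.
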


\begin{proof}
Let $ \ell = \psi $ and $ \Psi = \frac{1}{4} \lambda (t+1)^{-1} $. Apply \cref{thm.fIForParablic} to the equation \cref{h9} with $u=z$.

For $ j =1 , \cdots  n $, it is easy to verify that
$ \ell_{x_{j}} = 0 $ and  $  \ell_{t} = \lambda (t+1)^{\lambda-1} $.
Integrating the equality \cref{eq.fIForParablic} on $[\delta,T]\times G$ and taking mathematical expectation in both sides, we obtain that
\begin{align} \label{eqDouCarlemanPf1} \notag
&- 2 \mathbb{E}\int_\delta^T \int_G \theta\bigg\{ \sum_{j,k=1}^n
        (b^{jk}w_{x_j})_{x_k}
        + \lambda (t+1)^{-1} \Big[\frac{1}{4} +(t+1)^{\lambda}\Big]w
        \bigg\}
        \\
        &\quad \quad \quad \quad \quad \quad \quad
        \times \Big[d z  -  \sum_{j,k=1}^n
        (b^{jk} z_{x_j})_{x_k}dt  \Big]dx   \\ \notag
&= \! -  2  \mathbb{E} \int_\delta^T  \int_G \sum_{j,k=1}^n
        \Big[ b^{jk}w_{x_j} d w  +  \frac{1}{4}\lambda (t+1)^{-1} b^{j k}w_{x_j}
        w d t \Big]_{x_k} dx
\\ \notag
& \quad
+ 2    \mathbb{E} \int_\delta^T  \int_G    \sum_{j,k=1}^n
\Big[\frac{1}{4} \lambda (t+1)^{-1} b^{j k} - \frac{b^{j k}_{t}}{2}  \Big] w_{x_{j}} w_{x_{k}} d x d t
\\ \notag
& \quad
+ 2    \mathbb{E} \int_\delta^T  \int_G   \Big[
    - \frac{1}{8} \lambda^{2} (t+1)^{-2}
    - \frac{1}{2} \lambda^{2} (t+1)^{\lambda - 2}
    - \frac{1}{4} \lambda (t+1)^{-2}
    \\ \notag
    & \quad \quad \quad \quad \quad \quad \quad
    + \lambda (\lambda - 1) (t+1)^{\lambda - 2}
\Big]  w^{2} d x d t
\\ \notag
& \quad
+   \mathbb{E}  \int_\delta^T  \int_G  d\bigg\{
\sum_{j,k=1}^n b^{jk}w_{x_j} w_{x_k}
- \lambda (t+1)^{-1} \Big[\frac{1}{4} +(t+1)^{\lambda}\Big]w^{2} \bigg\}dx\nonumber
\\  \notag
& \quad
+ 2  \mathbb{E} \int_\delta^T  \int_G \bigg\{
    \sum_{j,k=1}^n (b^{j k}w_{x_j})_{x_k}
+ \lambda (t+1)^{-1} \Big[ \frac{1}{4} +(t+1)^{\lambda}\Big]w
\bigg\}^{2} d x d t
\\ \notag
& \quad
+   \mathbb{E} \int_\delta^T  \int_G \theta^{2} \bigg[
    - \sum_{j,k=1}^n b^{jk} d z_{x_j} d z_{x_k}
    + \lambda (t+1)^{-1} \Big[\frac{1}{4} +(t+1)^{\lambda}\Big] (d z)^{2}
\bigg] d x.
\end{align}

Now we handle the terms in \cref{eqDouCarlemanPf1}.
Since $ z = w = 0 $ on $ \Sigma $, by Divergence Theorem, we have
\begin{align}\label{eqDouCarlemanPf2}
-  2  \mathbb{E} \int_\delta^T  \int_G \sum_{j,k=1}^n
\Big[ b^{jk}w_{x_j} d w  +  \frac{1}{4}\lambda (t+1)^{-1} b^{j k}w_{x_j}
w d t \Big]_{x_k} dx = 0.
\end{align}
Noting \cref{eq.bijGeq}, we get that
\begin{align}\label{eqDouCarlemanPf3} \notag
& 2    \mathbb{E} \int_\delta^T  \int_G    \sum_{j,k=1}^n
\Big[ \frac{1}{4} \lambda (t+1)^{-1} b^{j k} - \frac{b^{j k}_{t}}{2}  \Big] w_{x_{j}} w_{x_{k}} d x d t
\\
&
\geq
\frac{s_{0}}{2}  \mathbb{E} \int_\delta^T  \int_G  \lambda (t+1)^{-1} |\nabla w|^{2} d x d t
- \mathcal{C} \mathbb{E} \int_\delta^T  \int_G  |\nabla w|^{2} d x d t
.
\end{align}
Since $ \lambda > 0  $, for  $ t \in (\delta, T] $, we have that $ (t+1)^{-2} \leq (t+1)^{\lambda - 2} $ and that
\begin{align}\label{eqDouCarlemanPf4} \notag
&  2    \mathbb{E} \int_\delta^T  \int_G   \bigg[
    - \frac{1}{8} \lambda^{2} (t+1)^{-2}
    - \frac{1}{2} \lambda^{2} (t+1)^{\lambda - 2}
    - \frac{1}{4} \lambda (t+1)^{-2}
    \\ \notag
    & \quad \quad \quad \quad  \quad 
    + \lambda (\lambda - 1) (t+1)^{\lambda - 2}
\bigg]  w^{2} d x d t
\\
& \geq   \mathbb{E} \int_\delta^T  \int_G \bigg[
    \frac{1}{3} \lambda^{2} (t+1)^{\lambda - 2}
    + O(\lambda)
\bigg] w ^{2} d x d t
.
\end{align}
Thanks to \cref{eq.bijGeq} and $ \lambda > 0 $, we obtain that
\begin{align}\label{eqDouCarlemanPf5} \notag
& \mathbb{E}  \int_\delta^T  \int_G  d\bigg\{
\sum_{j,k=1}^n b^{jk}w_{x_j} w_{x_k}
- \lambda (t+1)^{-1} \Big[\frac{1}{4} +(t+1)^{\lambda}\Big] w^{2} \bigg\} dx
\\
& \geq
s_{0} \mathbb{E} \int_{G} |\nabla w(T)|^{2} d x
- \frac{5}{4}  \mathbb{E} \int_{G}  \lambda (T+1)^{\lambda} |  w(T)|^{2} d x
- \mathcal{C}  \mathbb{E} \int_{G}   |\nabla w(\delta)|^{2} d x
.
\end{align}
Noting \cref{h9}, we have that
\begin{align} \label{eqDouCarlemanPf6}
& \mathbb{E} \int_\delta^T  \int_G \theta^{2} \bigg(
    - \sum_{j,k=1}^n b^{jk} d z_{x_j} d z_{x_k}
\bigg) d x
\geq - \mathcal{C} \mathbb{E} \int_\delta^T  \int_G \theta^{2} |\nabla   g  |^{2} d x d t
.
\end{align}
Choosing $ \lambda_{1} \geq \mathcal{C} (T + 1) $, for $ \lambda \geq \lambda_{1} $, it holds that  $ \mathcal{C} \lambda (t+1)^{\lambda - 1} \leq \lambda^{2} (t+1)^{\lambda-2} $. Hence,
\begin{align} \label{eqDouCarlemanPf7} \notag
& \mathbb{E} \int_\delta^T  \int_G \theta^{2}
\lambda (t+1)^{-1} \Big[\frac{1}{4} +(t+1)^{\lambda}\Big] (d z)^{2}
d x
\\
& \geq
- \frac{1}{4} \mathbb{E} \int_\delta^T  \int_G \theta^{2} \lambda^{2} (t+1)^{\lambda-2} z^{2} d x d t
- \mathcal{C} \mathbb{E} \int_\delta^T  \int_G \theta^{2} \lambda(t+1)^{\lambda-1} g^{2} d x d t
.
\end{align}

As for the left hand side of \cref{eqDouCarlemanPf1}, from \cref{h9}, we have
\begin{align} \label{eqDouCarlemanPf8} \notag
& - 2 \mathbb{E}\int_\delta^T \int_G \theta\bigg\{ \sum_{j,k=1}^n
    (b^{jk}w_{x_j})_{x_k}
    + \lambda (t+1)^{-1} \Big[\frac{1}{4} +(t+1)^{\lambda}\Big]w
    \bigg\}
\\ \notag
&\quad \quad \quad \quad \quad \quad \quad
\times \Big[  d z  -    \sum_{j,k=1}^n
(b^{jk} z_{x_j})_{x_k}dt  \Big]dx
\\ \notag
& \leq \mathbb{E} \int_\delta^T  \int_G \bigg\{
\sum_{j,k=1}^n (b^{j k}w_{x_j})_{x_k}
+ \lambda (t+1)^{-1} \Big[\frac{1}{4} +(t+1)^{\lambda}\Big]w
\bigg\}^{2} d x d t
\\
& \quad
+ \mathcal{C} \mathbb{E} \int_\delta^T  \int_G  \theta^{2}   f^{2} d x d t.
\end{align}

Combining \cref{eqDouCarlemanPf1,eqDouCarlemanPf2,eqDouCarlemanPf3,eqDouCarlemanPf4,eqDouCarlemanPf5,eqDouCarlemanPf6,eqDouCarlemanPf7,eqDouCarlemanPf8}, and noting that $ w = \theta z $ and $ \nabla w = \theta \nabla z $, we arrive that
\begin{align}\label{eqDouCarlemanPf9} \notag
& \mathcal{C} \mathbb{E} \int_\delta^T  \int_G  \theta^{2} \big(f^{2} + |\nabla g|^{2} + \lambda (t+1)^{\lambda-1} |g|^{2} \big) d x d t
\\ \notag
& \geq
\mathbb{E} \int_\delta^T  \int_G  \theta^{2} \big[
 \lambda (t+1)^{-1} |\nabla z|^{2}
 + \lambda^{2} (t+1)^{\lambda-2} z^{2}
\big]  d x d t
\\ \notag
& \quad
- \mathcal{C}   \mathbb{E}\big[
\lambda (T+1)^{\lambda} \theta^{2}(T) |  z(T)|_{L^{2}(G)}^{2}
+    \theta^{2}(\delta)|\nabla z(\delta)|_{L^{2}(G)}^{2}
\big]
\\
& \quad
- \mathcal{C} \mathbb{E} \int_\delta^T  \int_G  \theta^{2} \big(|\nabla z|^{2} + \lambda |z|^{2} \big) d x d t.
\end{align}
From \cref{eqDouCarlemanPf9}, we know there exists   $ \lambda_{2} \geq \max\{\mathcal{C}, \lambda_{1}\} $, such that for all $ \lambda \geq \lambda_{2} $, it holds that
\begin{align*} \notag
& \mathcal{C} \mathbb{E} \int_\delta^T  \int_G  \theta^{2} \big[f^{2} + |\nabla g|^{2} + \lambda (t+1)^{\lambda-1} |g|^{2} \big] d x d t
\\ \notag
& \geq
\mathbb{E} \int_\delta^T  \int_G  \theta^{2} \Big[
 \lambda (t+1)^{-1} |\nabla z|^{2}
 + \lambda^{2} (t+1)^{\lambda-2} z^{2}
\Big]  d x d t
\\ \notag
& \quad
- \mathcal{C}   \mathbb{E} \big[
\lambda (T+1)^{\lambda} \theta^{2}(T) |  z(T)|_{L^{2}(G)}^{2}
+    \theta^{2}(\delta)|\nabla z(\delta)|_{L^{2}(G)}^{2}
\big],
\end{align*}
which completes the proof.
\end{proof}

Now, we are in a position to prove \cref{thmDou2022}.

\begin{proof}[Proof of \cref{thmDou2022}]

Let $ v(x,t) = y(x,t) - \hat{y}(x, t) $. From \cref{sp-eq1}, we have
\begin{align*}
\left\{
\begin{aligned}
& d v \!-\! \sum_{j,k=1}^n(b^{jk} v_{x_j})_{x_k}dt=
( F(\nabla y , y) \!-\! F(\nabla \hat{y}, \hat{y}) ) dt 
+ (K(y)\!- \!K (\hat{y}))   d W(t) \!\!\!\!\!
&  \mbox{ in }Q,\\
& v=0&\mbox{ on }\Sigma .
\end{aligned}
\right.
\end{align*}
From \cref{carleman est1}, \cref{eqDouCarleman1,10.5-eq1}, for $ \lambda \geq \lambda_{0} $, we obtain that
\begin{align}\notag \label{eqDouStablePf1}
& \mathcal{C}\,\mathbb{E}\Big[
\lambda (T+1)^{\lambda -1 } \theta^{2}(T) | v (T)|_{L^{2}(G)}^{2}
+  \theta^{2}(0) |\nabla v (0)|_{L^{2}(G)}^{2}
\Big]
\\
& \geq
e^{2(t_{0}+1)^{\lambda}} \mathbb{E}  \int_{t_{0}}^{T} \int_{G} ( v^{2} + |\nabla v|^{2})  d x d t
,
\end{align}
where we use
\begin{align*}
& \int_Q \theta^2 | F(\nabla y, y)-  F(\nabla \hat y,\hat y) |^2 dxdt
+ \int_Q \theta^2  \big| \nabla \big(K(y)-K(\hat y)\big)\big|^2  dxdt\\
& \quad
+ \int_Q \theta^2  \big|  K(y)-K(\hat y) \big|^2  dxdt
\\
&\leq \mathcal{C}
{\mathbb{E}}\int_Q \theta^2 \big[  v^2 + |\nabla v|^2\big] dxdt
.
\end{align*}
It is clear that there exists $ \lambda_{1} \geq \lambda_{0}  $ such that for all $ \lambda \geq \lambda_{1} $, it holds that
\begin{align} \label{eqDouStablePf2}
\lambda (T+1)^{\lambda - 1} e^{2 (T+1)^{\lambda} - 2 (t_{0}+1)^{\lambda}} \leq e^{3 (T+1)^{\lambda}}.
\end{align}
Combining \cref{eqDouStablePf1,eqDouStablePf2}, for all $ \lambda \geq \lambda_{1} $, we get that
\begin{align} \label{eqDouStablePf3}
|v|_{L^{2}_{\mathbb{F}}(t_{0}, T; H^{1}(G))}^{2}
\leq
C_{1} e^{3(T+1)^{\lambda}} \mathbb{E} |v(T)|^{2}_{L^{2}(G)}
+ C_{2} e^{- 2(t_{0}+1)^{\lambda}} \mathbb{E} | \nabla v(0)|^{2}_{L^{2}(G)}
.
\end{align}

Letting $ s = (T+1)^{\lambda} $, we have $ (t_{0}+1)^{\lambda} = s ^{\gamma} $ for $ \gamma = \frac{\ln(t_{0}+1)}{\ln(T +1 )} $.
It follows from \cref{eqDouStablePf3} that there exists $ s_{1} > 0 $ such that for all $ s \geq s_{1} $,
\begin{align}
\label{eqDouStablePf4}
|v|_{L^{2}_{\mathbb{F}}(t_{0}, T; H^{1}(G))}^{2}
\leq
C_{1} e^{3 s } \mathbb{E} |v(T)|^{2}_{L^{2}(G)}
+ C_{2} e^{- 2 s^{\gamma}} \mathbb{E} | \nabla v(0)|^{2}_{L^{2}(G)}
.
\end{align}
Choosing $ s_{2} = \frac{1}{6} \ln (\mathbb{E}|v(T)|^{2}_{L^{2}(G)}) $,
then it holds that
\begin{align*}
e^{3s} \mathbb{E}|v(T)|^{2}_{L^{2}(G)}
\leq   |v(T)|_{L^{2}_{\mathcal{F}_{T}}(\Omega; L^{2}(G))}
.
\end{align*}
This, together  with \cref{eqDouStablePf4,thmDou2022beta}, implies that
\begin{align}
\label{eqEnergyPar0}
|v|_{L^{2}_{\mathbb{F}}(t_{0}, T; H^{1}(G))}^{2}
\leq
\beta( |v(T)|_{L^{2}_{\mathcal{F}_{T}}(\Omega; L^{2}(G))}  ).
\end{align}

For each $ t_{1} \in (0,T] $ such that  $ t_{0} < t_{1} $,
since
\begin{align}
    \label{eqEnergyPar1}
    \mathbb{E} | v(t_{1}) |^{2}_{H^{1}(G)} \leq \mathcal{C} \mathbb{E} |v(t)|^{2}_{H^{1}(G)}, \quad \quad \forall ~ t_{0} \leq t \leq t_{1},
\end{align}
integrating \cref{eqEnergyPar1} on $ [t_{0}, t_{1}] $, we have
\begin{align*}
    (t_{1}- t_{0}) \mathbb{E} | v(t_{1})|^{2}_{H^{1}(G)}
    \leq
    \mathcal{C} \mathbb{E} \int_{t_{0}}^{t_{1}} | v(t)|^{2}_{H^{1}(G)} d t
    .
\end{align*}
This, together with \cref{eqEnergyPar0}, yields
\begin{align*}
    | v(t_{1})|^{2}_{L^{2}_{\mathcal{F}_{t_{1}}}(\Omega;H^{1}(G))}
    \leq
    \mathcal{C}(t_{1}- t_{0})^{-1}
    \beta(|v(T)|_{L^{2}_{\mathcal{F}_{T}}(\Omega; L^{2}(G))})
    ,
\end{align*}
which completes the proof.
\end{proof}

\section{Solution to inverse state problem with a boundary measurement}
\label{secIllPosedCauchy}

In this section, we study \cref{probP9}.

Consider the following stochastic parabolic equation:
\begin{align} \label{eqInversPParabolicPartialBoudaryeqNew}
\left\{\begin{aligned}
    &d y -\sum_{j,k=1}^n (b^{j k}  y_{x_j})_{x_k} dt  =\left(a_1 \cdot \nabla y+a_2 y + f  \right) d t+ a_{3} y  d W(t) & & \text { in } Q, \\
& y = g_{1} & & \hspace{-1.6cm} \text { on } (0,T) \times \Gamma_{0} , \\
& \frac{\partial y}{ \partial \nu} = g_{2} & & \hspace{-1.6cm} \text { on } (0,T) \times \Gamma_{0} ,
\end{aligned}\right.
\end{align}
where  $ a_{1}, a_{2}, a_{3} $ satisfy \cref{eqAcases} and $ f \in  L^{2}_{\mathbb{F}}(0,T; L^{2}(G))$.

For the solution of \cref{eqInversPParabolicPartialBoudaryeqNew}, we have the following conditional stability.

\begin{theorem} \label{thmProb9}
For any given $\widehat{G} \subset \subset G$ and $\varepsilon \in (0, \frac{T}{2}) $, there exists a constant $\mathcal{C}>0$ such that for some generic constant $ \gamma \in (0,1) $,
letting
\begin{align}
    \notag
    \label{eqK}
    \mathcal{K}(f, g_{1}, g_{2})
    & =
        |f|_{ L^{2}_{\mathbb{F}}(0,T; L^{2}(G)) }^{2} +
        |g_{1}|_{  L^{2}_{\mathbb{F}}(0,T; H^{1}(\Gamma_{0}))}^{2} +
        |g_{1, t}|_{  L^{2}_{\mathbb{F}}(0,T; L^{2}(\Gamma_{0}))}^{2}
    \\
    & \quad
        + \left|g_{2}\right|_{L_{\mathbb{F}}^2\left(0, T ; L^2(\Gamma_{0})\right)}^{2}
        ,
\end{align}
it holds that
\begin{align} \label{eqThmProb9}
    \mathbb{E} \int_{\varepsilon}^{T-\varepsilon} \int_{\widehat{G}}\left(y^2+|\nabla y|^2\right) d x d t \leq \mathcal{C}
        |y|_{L_{\mathbb{F}}^2\left((0, T) ; H^1(G)\right)}^{2 (1- \gamma)}
        (\mathcal{K}(f, g_{1}, g_{2}))^{\gamma}
    ,
\end{align}
where $ y(\cdot) $ is the solution to  \cref{eqInversPParabolicPartialBoudaryeqNew}.
\end{theorem}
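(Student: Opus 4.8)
The plan is to obtain \cref{eqThmProb9} from an interior Carleman estimate combined with a standard Carleman-to-three-balls interpolation argument, in the same spirit as the proof of \cref{main result}. First I would reduce the boundary data to zero: introduce an auxiliary function $Y$ that extends $g_1$, $g_2$ from $(0,T)\times\Gamma_0$ into a neighborhood of $\Gamma_0$ inside $G$ (using a lifting that matches the Dirichlet and Neumann traces), and set $\bar y = y - Y$ near $\Gamma_0$; then $\bar y$ vanishes together with its normal derivative on $\Gamma_0$, and $Y$ contributes to the source only a term controlled by $|g_1|_{L^2_{\mathbb{F}}(0,T;H^1(\Gamma_0))}$, $|g_{1,t}|_{L^2_{\mathbb{F}}(0,T;L^2(\Gamma_0))}$ and $|g_2|_{L^2_{\mathbb{F}}(0,T;L^2(\Gamma_0))}$, i.e.\ by $\mathcal{K}(f,g_1,g_2)$. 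Since the measurement region touches only $\Gamma_0$, I would fix a subdomain $G''$ with $\widehat G \subset\!\subset G''$ and $\partial G'' \cap \partial G \subset \Gamma_0$, and work with a weight function $\psi\in C^4(\overline{G''})$ that is positive inside, vanishes on the part of $\partial G''$ interior to $G$, and has non-vanishing gradient away from a small neighborhood of $\Gamma_0$; this is the analogue of \cref{hl1} adapted to $\mathbf{G}_{\Gamma_0}$.

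Next I would apply the fundamental identity \cref{thm.fIForParablic} with $u = \zeta\bar y$, where $\zeta$ is a cut-off supported in $G''$, time-localized to $(\varepsilon/2, T-\varepsilon/2)$, and equal to $1$ on $(\varepsilon,T-\varepsilon)\times\widehat G$; the weight is $\theta = e^{\lambda\alpha}$, $\alpha(t,x) = e^{\mu\phi}$, $\phi(t,x) = \psi(x) - c(t-t_0)^2$, exactly as in \Cref{secDeterminationProblemdistributing2}. Integrating over $Q$, taking expectation, and estimating the orders of $\mathcal{A}$, $\mathcal{B}$, $c^{jk}$ in $\lambda$ and $\mu$ (reusing \cref{eqP8eq5,eqP8eq6,eqP8eq7,eqP8eq8} verbatim), the terms supported where $\zeta$ is cut off, together with the boundary terms from $\Gamma_0$ (which vanish because $\bar y = \partial\bar y/\partial\nu = 0$ there), and the source $a_1\cdot\nabla\bar y + a_2\bar y + f + (\text{contribution of }Y)$, I would arrive at a weighted energy inequality of the form
\begin{align*}
& \mathbb{E}\int_{Q_3}\theta^2\big(\lambda\mu^2\alpha|\nabla\bar y|^2 + \lambda^3\mu^4\alpha^3\bar y^2\big)\,dx\,dt
\\
& \leq \mathcal{C}\,e^{\mathcal{C}\lambda}\Big[\mathcal{K}(f,g_1,g_2) + |y|_{L^2_{\mathbb{F}}(0,T;H^1(G))}^2\Big]
+ \mathcal{C}\,e^{-2\lambda(\beta_4-\beta_3)}|y|_{L^2_{\mathbb{F}}(0,T;H^1(G))}^2,
\end{align*}
after using that on the region where $\zeta$ is being differentiated one has $\theta$ bounded by $e^{\mathcal{C}\lambda}$, and that in a neighborhood of $\Gamma_0$ (where $|\nabla\psi|$ may vanish) one exploits the vanishing of the traces. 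Restricting the left-hand side to $(\varepsilon,T-\varepsilon)\times\widehat G$, where $\theta$ is bounded below, gives
\begin{align*}
\mathbb{E}\int_\varepsilon^{T-\varepsilon}\int_{\widehat G}\big(|\nabla y|^2 + y^2\big)\,dx\,dt
\leq \mathcal{C}\,e^{\mathcal{C}\lambda}\big(\mathcal{K}+|y|_{L^2_{\mathbb{F}}(0,T;H^1(G))}^2\big) + \mathcal{C}\,e^{-2\lambda(\beta_4-\beta_3)}|y|_{L^2_{\mathbb{F}}(0,T;H^1(G))}^2,
\end{align*}
valid for all $\lambda\geq\lambda_0$. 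If $\mathcal{K} \geq |y|^2_{L^2_{\mathbb{F}}(0,T;H^1(G))}$ the estimate \cref{eqThmProb9} is trivial (with $\gamma$ anything in $(0,1)$); otherwise optimizing over $\lambda$ — choosing $e^{\mathcal{C}\lambda}$ comparable to $\big(|y|^2/\mathcal{K}\big)^{\text{power}}$ — balances the two terms and yields the interpolation inequality $\mathbb{E}\int_\varepsilon^{T-\varepsilon}\int_{\widehat G}(\cdots) \leq \mathcal{C}\,|y|_{L^2_{\mathbb{F}}(0,T;H^1(G))}^{2(1-\gamma)}\mathcal{K}^\gamma$ for an explicit $\gamma = \gamma(\varepsilon,T,G,\widehat G)\in(0,1)$.

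The main obstacle I anticipate is the same one that appears near $\Gamma_0$: the weight $\psi$ cannot have non-vanishing gradient right up to $\Gamma_0$ without producing uncontrolled boundary terms there, so the Carleman weight degenerates in a collar of $\Gamma_0$; handling this requires choosing the geometry of $G''$, the level sets $Q_k$, and the cut-off $\zeta$ so that the ``bad'' region is absorbed into the measurement term $y|_{(0,T)\times\Gamma_0}$, $\partial y/\partial\nu|_{(0,T)\times\Gamma_0}$ — i.e.\ into $\mathcal{K}$ via the lifting $Y$ — rather than into the left-hand side. A secondary technical point is that, unlike in \cref{main result}, the source term $f$ is genuinely present (not merely a commutator with a cut-off), so one must keep $|f|^2_{L^2_{\mathbb{F}}(0,T;L^2(G))}$ with the right weight throughout; this is routine but must be tracked. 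Once the weighted estimate is in hand, the propagation in $t_0$ over a chain of overlapping time-slices (as in Step 4 of the proof of \cref{main result}) upgrades the local-in-time estimate to the full interval $(\varepsilon,T-\varepsilon)$, and the energy inequality for stochastic parabolic equations closes the argument.
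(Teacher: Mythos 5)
Your overall architecture (Carleman identity with the weight of \Cref{secDeterminationProblemdistributing2}, cut-off in the level sets $Q_k$, chaining over $t_0$, then optimizing in $\lambda$) matches the paper, but the way you feed the boundary measurement into the estimate has a genuine gap. The paper does not homogenize the Cauchy data by a lifting; it keeps $g_1,g_2$ and handles them at the level of the boundary terms of the weighted identity. Concretely, it extends the domain across $\Gamma_0$ to $\widetilde G=G_2\cup\Gamma_0\cup G$ and places the critical set $G_0$ of the Fursikov--Imanuvilov weight $\psi$ \emph{outside} $G$, so that $|\nabla\psi|>0$ on all of $\overline G$ and $\partial Q_1$ decomposes as $\Sigma_0\cup\Sigma_1$ with $\Sigma_0\subset[0,T]\times\Gamma_0$; the divergence terms of \cref{eq.fIForParablic} then integrate to boundary integrals over $\Sigma_0$ involving exactly $g_2$, $\nabla_\Gamma g_1$, $g_{1,t}$ and $g_1$, which is where $\mathcal K$ comes from. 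Your weight, by contrast, lives on a subdomain $G''\subset G$ and degenerates on an \emph{interior} collar of $\Gamma_0$. The Carleman estimate then produces an interior volume observation term $\lambda^3\mu^4\int_{Q_0}\varphi^3\theta^2\bar y^2\,dx\,dt$ over that collar, and knowing the Cauchy data on $\Gamma_0$ (even after subtracting a lifting) does not control $\bar y$ on an interior neighborhood of $\Gamma_0$ --- that is precisely the ill-posed Cauchy problem you are trying to solve, so the argument is circular at that point. Absorbing this region ``into $\mathcal K$ via the lifting'' does not work: the lifting only kills the traces, not the solution in a neighborhood.

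Two further points. First, the lifting itself is not compatible with the stated regularity: matching the Neumann trace $g_2\in L^2_{\mathbb F}(0,T;L^2(\Gamma_0))$ gives $Y$ of Sobolev order at most $3/2$ in space, so $\sum(b^{jk}Y_{x_j})_{x_k}\notin L^2$ and the $Y$-contribution to the source cannot be inserted into a Carleman estimate for $L^2$ right-hand sides; this is another reason the paper estimates the boundary terms directly rather than lifting. Second, your intermediate inequality carries $\mathcal C e^{\mathcal C\lambda}\bigl(\mathcal K+|y|^2_{L^2_{\mathbb F}(0,T;H^1(G))}\bigr)$ on the right; with $e^{\mathcal C\lambda}|y|^2$ present the optimization in $\lambda$ is vacuous (the right-hand side never drops below $\mathcal C|y|^2$). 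The cut-off commutator terms must be tracked with their actual weight $e^{2\lambda\beta_3}$, supported in $Q_2\setminus Q_3$, and compared against the lower bound $e^{2\lambda\beta_4}$ on the left, which is what yields the decaying factor $e^{-2\lambda(\beta_4-\beta_3)}$ in \cref{eqISPpar} and makes the interpolation with exponent $\gamma=\tfrac{2(\beta_4-\beta_3)}{\mathcal C+2(\beta_4-\beta_3)}$ possible.
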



\begin{proof}
{\it Step 1}. In this step, we  introduce a cut-off function to convert the   equation \cref{system-sp1} to an equation with boundary condition.

Choose a bounded domain $ G_{2} \subset \mathbb{R}^{n} $ such that $ \partial G_{2} \cap \overline{G} = \Gamma_{0} $ and that $ \widetilde{G} = G_{2} \cup \Gamma_{0} \cup G $ enjoys a $ C^{4} $ boundary $ \partial \widetilde{G} $.
Then we obtain
\begin{align*}
G \subset \widetilde{G}, \quad
\overline{\partial G \cap \widetilde{G}} \subset \Gamma_{0}, \quad
\text{and} ~ \partial G \setminus \Gamma_{0} \subset \partial \widetilde{G}
.
\end{align*}

Let $ G_{0} \subset \! \subset \widetilde{G} \setminus G $ be an open subset. Thanks to \cref{hl1}, there exists a function $\psi\in C^4(G)$ such that
\begin{align}\label{eqThmProb9e1}
\psi > 0   \text{ in } \widetilde{G}, \quad 
 \psi = 0   \text{ on } \partial \widetilde{G}, \text{ and } 
 |\nabla \psi | > 0   \text{ in } \widetilde{G}\backslash G_0.
\end{align}
Let $ \rho = \frac{\varepsilon}{\sqrt{2}}  $. Choose $ \iota, t_{0}, N, c, \phi, \alpha $ and $\beta_{k},  Q_{k} $ for $ k=1,2,3,4 $ to be the same as those in the proof of \cref{main result}.

Next, we claim that
\begin{align} \label{eqThmProb9e2}
\partial Q_{1}  = \Sigma_{0} \cup \Sigma_{1},
\end{align}
where
$ \Sigma_{0} \subset [ 0,T] \times \Gamma_{0} $ and $ \Sigma_{1}  = \{ (t,x) \in [0,T] \times G \mid \alpha(t,x) = \beta_{1} \}  $.

In fact, for any $ (t, x) \in \partial Q_{1} $, thanks to \cref{eqQkDef},  we know $ x \in \overline{G} $ and $ \alpha(t, x) \geq \beta_{1} $. If $ x \in G $, we have $ \alpha(t,x) = \beta_{1} $. If $ x \in \partial G $, it must hold $ x \in \Gamma_{0} $.
Indeed, supposing that $ x \in \partial G \setminus \Gamma_{0} $, from $ \partial G \setminus \Gamma_{0} \subset \partial \widetilde{G} $ and \cref{eqThmProb9e1}, we get $ \psi (x) = 0 $. On the other hand, since $ \alpha(t,x) \geq \beta_{1} $, we obtain
\begin{align*}
\psi(x)-c\left(t-t_0\right)^2=-c\left(t-t_0\right)^2 \geq \frac{1}{N}|\psi|_{L^{\infty}(\widetilde{G})}-\frac{c \rho^2}{N} ,
\end{align*}
which implies
\begin{align*}
0 \leq c\left(t-t_0\right)^2 \leq \frac{1}{N}\big(c \rho^2-|\psi|_{L^{\infty}(\widetilde{G})}\big),
\end{align*}
contrary to \cref{bound psi}. Hence, \cref{eqThmProb9e2} is proved.

Let $\eta\in C_0^{\infty}(Q_2)$ be such that $ 0 \leq \eta  \leq 1 $ and $\eta = 1$  in   $Q_3$.
Let $z = \eta  y$, then
$z$ solves
\begin{equation}\label{system2}
\begin{cases}
\begin{aligned}
    &dz - \sum\limits^{n}_{j,k=1}(b^{jk}z_{x_{j}})_{x_{k}} dt = \big( \langle a_1, \nabla z \rangle + a_2 z + \tilde{f} + \eta f\big)dt + a_3 z dW(t) \!\!\!\!\!  &&\text{ in } Q_1,\\
&z=\dfrac{\partial z}{\partial\nu}=0 &&\text{ on } \Sigma_1.
\end{aligned}
\end{cases}
\end{equation}
Here $ \tilde{f} = -\sum^{n}\limits_{j,k=1} (b^{jk}_{x_{j}}\eta_{x_{k}} y + b^{jk}\eta_{x_{j}x_{k}}y + 2b^{j k}\eta_{x_{j}} y_{x_{k}}) - \langle a_1,\nabla \eta\rangle y + \eta_t y  $ and $\tilde{f}$ is supported in
$Q_2\setminus Q_3$.

\medskip

{\it Step 2}. In this step, we apply \cref{thm.fIForParablic} to the equation \cref{system2} to get the desired estimate \cref{eqThmProb9}.

Let us choose  $u=z$ in \cref{thm.fIForParablic}. Integrate \cref{eq.fIForParablic} over $ G \times (0,T) $. Noting that $ z $ is supported in $ Q_{1} $, choosing $ \theta = e^{\ell} $ and $ \ell = \lambda \alpha $, setting $ w = \theta z $,  after taking mathematical expectation in both sides, we have  \cref{eqP8eq1}.
The rest of the proof is similar to that of \cref{main result} with some minor differences, which are stated below.

It is clear that $\overline{Q_{2}} \cap \overline{\Sigma}_{1} = \emptyset$. Hence, $\eta = 0$ on $\Sigma_{1}$.
Recall that $ y = g_{1} $ on $ \Sigma_{0} $ and $ g_{1} \in L^{2}_{\mathbb{F}}(\Omega; H^{1}(0,T; L^{2}(\Gamma_{0}))) \cap L^{2}_{\mathbb{F}}(0,T; H^{1}(\Gamma_{0})) $.
Then $ z = 0 $ on $ \partial Q_{1} $, and it holds that
\begin{align*}
& \mathbb{E} \int_{Q_1} \sum_{i, j=1}^n\left(b^{i j} w_{x_{i}} d w\right)_{x_{j}} d x
= \mathbb{E} \int_{\Sigma_{0}} \sum_{j,k=1}^{n} b^{jk} w_{x_{j}} \nu^{k} d w d \Gamma
\\
& \leq
\mathcal{C} \lambda  \mu \mathbb{E} \int_{0}^{T} \int_{\Gamma_{0}} \alpha  \theta^{2} (|g_{2}|^{2} + |\nabla_{\Gamma} g_{1}|^{2} + |g_{1,t}|^{2}+ \lambda^{2} \mu^{2} \alpha^{2} |g_{1}|^{2}) d\Gamma d t
,
\end{align*}
where $ \nabla_{\Gamma} $ denotes the tangential gradient on $ \Gamma_{0} $.

Noting that $ z = \frac{\partial z}{ \partial \nu} = 0$ on $ \Sigma_{1} $, we find
\begin{align*}
& \mathbb{E} \int_{Q_{1}}  \sum_{j, k=1}^n\Big[\sum_{j^{\prime}, k^{\prime}=1}^n\big(2 b^{j k} b^{j^{\prime} k^{\prime}} \ell_{x_{j^{\prime}}} w_{x_j} w_{x_{k^{\prime}}}-b^{j k} b^{j^{\prime} k^{\prime}} \ell_{x_j} w_{x_{j^{\prime}}} w_{x_{k^{\prime}}}\big)
\\\notag
&       ~ \quad \quad \quad  ~
    \quad+\Psi b^{j k} w_{x_j} w-b^{j k}\Big(\mathcal{A} \ell_{x_j}+\frac{\Psi_{x_j}}{2}\Big) w^2\Big]_{x_k} d xd t
\\
&
\leq \mathcal{C} \lambda \mu \mathbb{E} \int_{0}^{T} \int_{\Gamma_{0}} \alpha \theta^{2} (|g_{2}|^{2} + |\nabla_{\Gamma} g_{1}|^{2} + \lambda^{2} \mu^{2} \alpha^{2} |g_{1}|^{2}) d \Gamma d t.
\end{align*}

Similar to \cref{eqP8eq15}, there exists a $ \mu_{1} > 0 $, such that for all $ \mu > \mu_{1} $, there is a $ \lambda_{1}(\mu) > 0 $, such that for all $ \lambda > \lambda_{1} $, it holds that
\begin{align} \label{eqThmProb9e3} \notag
&   {\mathbb{E}} \int_{ t_0 - \frac \iota{\sqrt N}}^{{ t_0 + \frac
\iota{\sqrt
N}}}\int_{\widehat{G}}( y^2 + |\nabla  y|^2)dxdt
\\ \notag
& \leq  {\cal C} e^{{\cal C}\lambda}
\big (
|f|_{ L^{2}_{\mathbb{F}}(0,T; L^{2}(G)) }^{2} +
|g_{1}|_{  L^{2}_{\mathbb{F}}(0,T; H^{1}(\Gamma_{0}))}^{2} +
|g_{1, t}|_{  L^{2}_{\mathbb{F}}(0,T; L^{2}(\Gamma_{0}))}^{2}
+
\left|g_{2}\right|_{L_{\mathbb{F}}^2\left(0, T ; L^2(\Gamma_{0})\right)}^{2}
\big)
\\
& \quad
 + {\cal C}  e^{-2\lambda(\beta_4 - \beta_3)} {\mathbb{E}}
\int_{Q_1}( y^2 + |\nabla  y|^2)dxdt.
\end{align}
Let $m\in{\mathbb{N}}$ such that
\begin{equation*}
\sqrt 2 \;\iota +\frac{m \iota}{\sqrt N} \leq T - \sqrt 2
\iota \leq \sqrt 2 \; \iota + \frac{(m+1)\iota}{\sqrt N}.
\end{equation*}
Recalling that $t_0$ can be any element in $[\sqrt 2 \iota, T - \sqrt 2 \iota]$, we take
\begin{equation*}
t_0 = \sqrt 2 \iota + \frac{k \iota}{\sqrt N}, \quad k =0, 1,
2, \cdots, m.
\end{equation*}
Then, it follows from \cref{eqThmProb9e3} that
\begin{align}
    \notag
    \label{eqISPpar}
&   {\mathbb{E}} \int_{\varepsilon}^{T-\varepsilon}\int_{\widehat{G} }( y^2 + |\nabla  y|^2)dxdt\\ \notag
& \leq  {\cal C} e^{{\cal C}\lambda}
\big (
|f|_{ L^{2}_{\mathbb{F}}(0,T; L^{2}(G)) }^{2} +
|g_{1}|_{  L^{2}_{\mathbb{F}}(0,T; H^{1}(\Gamma_{0}))}^{2} +
|g_{1, t}|_{  L^{2}_{\mathbb{F}}(0,T; L^{2}(\Gamma_{0}))}^{2}
+
\left|g_{2}\right|_{L_{\mathbb{F}}^2\left(0, T ; L^2(\Gamma_{0})\right)}^{2}
\big)
\\
& \quad
+ {\cal C}  e^{-2\lambda(\beta_4 - \beta_3)} {\mathbb{E}}
\int_{Q_1}( y^2 + |\nabla  y|^2)dxdt.
\end{align}
Choosing $\lambda \geq 0$ which minimize
the right-hand side of inequality \cref{eqISPpar}, from \cref{eqK}, we obtain that
\begin{equation*}
    \mathbb{E} \int_{\varepsilon}^{T-\varepsilon} \int_{\widehat{G}}\left(y^2+|\nabla y|^2\right) d x d t \leq \mathcal{C}
        |y|_{L_{\mathbb{F}}^2\left((0, T) ; H^1(G)\right)}^{2 (1- \gamma)}
        (\mathcal{K}(f, g_{1}, g_{2}))^{\gamma}
    ,
\end{equation*}
with
$ \gamma = \frac{2(\beta_{4}-\beta_{3})}{\mathcal{C}+2(\beta_{4}-\beta_{3})} $.
\end{proof}

\section{Solution to the inverse source problem with a terminal measurement}
\label{secISPTer}

In this section, we give a positive answer to \cref{probISPCS} by establishing the following result for the conditional stability of \cref{probISPCS}:

\begin{theorem}
\label{thmISPC}
Let $ g(t,x) = g_{1}(t) g_{2}(x) $, where $ g_{2} \in H^{1}(G) $ is nonzero.
For any $t_0\in (0,T]$, there exists a positive constant $\mathcal{C}=\mathcal{C}(t_0)$ such that for some generic constant $\gamma\in (0,1)$ (which is independent of $t_0$), it holds that
\begin{equation}\label{eqISPCSeq1}
| y(t_0) |_{L^2_{\mathcal{F}_{t_0}}(\Omega;L^{2}(G))}
+ | g_{1} |_{L^{2}_{\mathbb{F}}(t_{0},T)}
\leq
\mathcal{C}(g_2) |y|_{L^{2}_{\mathbb{F}}(0,T; L^{2}(G))}^{1-\gamma}
| y(T) |_{L^2_{\mathcal{F}_{T}}(\Omega;H^{1}(G))}^{\gamma},
\end{equation}
where $y(\cdot)$   is the solution to \cref{eqISPTerEq} and $\mathcal{C}(g_2)$ is a constant depending on $|\nabla g_2|_{L^{2}(G)}$.
\end{theorem}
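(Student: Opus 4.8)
The plan is to reduce the inverse source problem to an inverse state problem of the type already solved in \cref{thmISPC}'s hyperbolic--parabolic toolbox, exploiting the special separated structure $g(t,x)=g_1(t)g_2(x)$. First I would differentiate the equation \cref{eqISPTerEq} in a suitable sense to eliminate the unknown time factor $g_1(t)$. More precisely, because $g_2$ is a fixed nonzero element of $H^1(G)$, I would test the equation against $g_2$ (or, better, use the nondegeneracy of $g_2$ to extract $g_1$ pointwise in $t$): writing the Itô integral $\int_0^t(a_3 y+g_1(s)g_2)\,dW(s)$ and comparing two solutions, or applying a suitable projection, one expresses $g_1(t)$ in terms of the stochastic and deterministic increments of $y$. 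The key algebraic observation is that, since $g_2\neq 0$, there is a constant $c_0=c_0(g_2)>0$ with $|g_1(t)|\le c_0^{-1}|g_1(t)g_2|_{L^2(G)}$, so any $L^2_{\mathbb F}(t_0,T)$-bound on $g_1$ follows from an $L^2_{\mathbb F}(t_0,T;L^2(G))$-bound on the full source $g_1(t)g_2(x)$.

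Next I would set up the Carleman machinery exactly as in \Cref{secBackwardProblem}. Let $z=y$ (no difference of two solutions is needed here since the equation is linear in $y$ and the unknowns are $(y(t_0),g_1)$), introduce a cut-off $\rho\in C^\infty(\mathbb R)$ with $\rho\equiv 0$ near $0$ and $\rho\equiv 1$ on $[t_2,T]$ for some $0<t_1<t_2<t_0$, and apply \cref{carleman est1} with $\psi=t$, $\delta=0$ to the equation satisfied by $\rho y$. The source terms $f=\rho(a_1\cdot\nabla y+a_2 y)+\rho_t y$ and $g=\rho(a_3 y+g_1 g_2)$ are controlled exactly as in the proof of \cref{inv th1}: the terms $\theta^2 f^2$, $\theta^2|\nabla g|^2$ are absorbed using \cref{eqAcases}, $g_2\in H^1(G)$, and the left-hand side of \cref{car1}. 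The only genuinely new feature compared with \cref{inv th1} is the extra term $s\lambda\mathbb E\int_\delta^T\int_G\varphi\theta^2 g^2\,dx\,dt$ on the \emph{left} of \cref{car1}: this is precisely what lets us keep $s\lambda\mathbb E\int_{t_0}^T\int_G\varphi\theta^2 g_1^2 g_2^2\,dx\,dt$ on the good side after absorbing the $a_3 y$ contribution, and then convert it (via $c_0$ and $\varphi\ge 1$) into a bound on $|g_1|^2_{L^2_{\mathbb F}(t_0,T)}$.

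From the resulting inequality one obtains, for fixed large $\lambda$ and all $s\ge s_0(\lambda)$,
\begin{align*}
\theta^2(t_0)\Big(\mathbb E\!\int_{t_0}^{T}\!\!\int_G(|\nabla y|^2+|y|^2)dxdt+\mathbb E\!\int_{t_0}^{T}\!g_1^2\,dt\Big)
\le \mathcal C\theta^2(t_2)|y|^2_{L^2_{\mathbb F}(0,T;L^2(G))}+\mathcal Ce^{\mathcal C s}\mathbb E|y(T)|^2_{H^1(G)},
\end{align*}
and then a standard energy estimate (Itô's formula applied to $|y(t)|^2_{L^2(G)}$ between $t_0$ and $T$, together with Gronwall) bounds $\mathbb E|y(t_0)|^2_{L^2(G)}$ by the left side. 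Replacing $\mathcal C$ by $\mathcal Ce^{s e^{\lambda_2 T}}$ and optimizing the free parameter $s\ge0$ in the inequality $A\le \mathcal C e^{-2s(e^{\lambda_2 t_2}-e^{\lambda_2 t_0})}B+\mathcal C e^{\mathcal C s}D$ yields $A\le \mathcal C B^{1-\gamma}D^{\gamma}$ with $\gamma=\frac{2(e^{\lambda_2 t_0}-e^{\lambda_2 t_2})}{\mathcal C+2(e^{\lambda_2 t_0}-e^{\lambda_2 t_2})}\in(0,1)$, exactly as at the end of the proof of \cref{inv th1}; this gives \cref{eqISPCSeq1}, with the $g_2$-dependence of the constant entering through $c_0$ and through $|\nabla g_2|_{L^2(G)}$ when the $|\nabla g|^2$ term is estimated.

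The main obstacle I anticipate is the bookkeeping of the source term $g=\rho(a_3 y+g_1 g_2)$ in the Carleman estimate: one must verify that after using \cref{car1Lue}/\cref{car1} the contribution $s\lambda\varphi\theta^2 a_3^2 y^2$ can be absorbed into $s\lambda^2\varphi\theta^2 y^2$ (fine for $\lambda$ large), while the genuinely useful piece $s\lambda\varphi\theta^2 g_1^2 g_2^2$ survives with a favorable sign; simultaneously the term $|\nabla g|^2\lesssim |\nabla y|^2+g_1^2|\nabla g_2|^2$ forces either an additional smallness/largeness argument in $\lambda$ or an a priori bound, and it is here that the hypothesis $g_2\in H^1(G)$ (rather than merely $L^2$) is essential. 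Everything else is a direct transcription of \cref{inv th1}'s argument.
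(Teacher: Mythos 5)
Your proposal is correct and follows essentially the same route as the paper: cut off near $t=0$, apply the Carleman estimate \cref{carleman est1} with $\psi=t$, $\delta=0$ to $\rho y$, use the separated structure $g=g_1(t)g_2(x)$ so that the $|\nabla g|^2=g_1^2|\nabla g_2|^2$ term on the right is absorbed into the $s\lambda^2\varphi\, g_1^2|g_2|^2$ term on the left by taking $\lambda\geq \mathcal{C}|\nabla g_2|^2_{L^2(G)}/|g_2|^2_{L^2(G)}$, then close with the energy estimate and optimize over $s$. The "main obstacle" you flag at the end is precisely the step the paper handles by that choice of $\lambda_2$, so no gap remains.
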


\begin{proof}

Choose $t_1$ and $t_2$ such that $0< t_1 < t_2 < t_0$.
Set $\rho\in C^\infty(\mathbb{R})$ such that $0\leq\rho\leq 1$ and
\begin{equation}\label{eqISPCSe1}
\rho = \left\{
\begin{array}{ll}
    1, & \mbox{ if } t\geq t_2,\\
    0, &  \mbox{ if } t \leq t_1.
\end{array}
\right.
\end{equation}

Let $z=\rho y$. From \cref{eqISPTerEq}, we know that $z$ solves
\begin{equation}\label{eqISPCSe2}
\left\{
\begin{aligned}
        &dz -\! \sum\limits_{j,k=1}^n\! (b^{jk}z_{x_j})_{x_k} dt=\! (
        a_{1} \!\cdot\! \nabla z \!+ a_{2} z
         \!+\rho_t y )dt
    +( a_{3} z\!+\!\rho g) dW(t) \!\!\!\!\!\!& \mbox{ in } Q,\\
        &z=0 &\mbox{ on } \Sigma,\\
        &z(0) = 0 &\mbox{  in  }G.
\end{aligned}
\right.
\end{equation}
Applying \cref{carleman est1} with $\psi=t$ and $\delta=0$ to the
equation \cref{eqISPCSe2}, for  $\lambda\geq\lambda_1$ and $s\geq s_0(\lambda)$,
we have
\begin{align}\label{eqISPCSe3}\notag
& \lambda\mathbb{E}\int_Q \theta^2|\nabla z|^2dxdt + s\lambda^2\mathbb{E}\int_Q
\theta^2\varphi
|z|^2dxdt
+ s \lambda^{2} \mathbb{E} \int_{Q} \theta^{2} \varphi \rho^{2} g^{2}dxdt   \\\notag
&\leq \mathcal{C}\mathbb{E}\Big\{\theta^2(T)\big|\nabla  z(T)\big|^2_{L^2(G)}
+ s\lambda\varphi(T)\theta^2(T)\big|z(T)\big|^2_{L^2(G)}
\\
&\qquad\quad
+ \mathbb{E} \int_{Q} \theta^{2} |\rho_{t} y|^{2} dx d t
+ \mathbb{E} \int_{Q} \theta^{2} \rho^{2} | \nabla   g|^{2} dx d t
\Big\}
\end{align}
From the choice of $\rho$ and the fact that $ \theta(t) \leq \theta(s) $ for $ t \leq s $, we get that
\begin{align}\label{eqISPCSe4}
\mathbb{E}\int_Q \theta^2|\rho_t y|^2 dxdt
\leq \mathcal{C}
\int_{t_1}^{t_2}\int_G \theta^2 |y |^2  dxdt
\leq \mathcal{C}
\theta^2(t_2)|y  |^2_{L^2_{{\mathbb{F}}}(0,T;L^2(G))}
.
\end{align}
Since $ g = g_{1}(t) g_{2}(x) $, combining \cref{eqISPCSe4,eqISPCSe3}, for  $\lambda\geq\lambda_1$ and $s\geq s_0(\lambda)$, we get
\begin{align*}
& \lambda\mathbb{E}\int_Q \theta^2|\nabla z|^2dxdt + s\lambda^2\mathbb{E}\int_Q
\theta^2\varphi
|z|^2dxdt
+ s \lambda^{2} \mathbb{E} \int_{0}^{T} \theta^{2} \varphi \rho^{2} g_{1}^{2}dt  \int_{G} |g_{2}|^{2} d x   \\\notag
&\leq \mathcal{C}\Big\{ \theta^2(T) \mathbb{E} \big |\nabla  z(T)\big|^2_{L^2(G)}
+ s\lambda\varphi(T)\theta^2(T) \mathbb{E}\big|z(T)\big|^2_{L^2(G)}
\\
&\quad\quad
+ \theta^2(t_2)|y  |^2_{L^2_{{\mathbb{F}}}(0,T;L^2(G))}
+ \mathbb{E} \int_{0}^{T} \theta^{2} \rho^{2} g_{1}^{2} d t \int_{G} | \nabla   g_{2}|^{2} dx
\Big\}
\end{align*}
This, together with $ g_{2} \in H^{1}(G)$ is nonzero  and the fact that $\theta(t)\leq\theta(s)$  for $t\leq s$, implies that there exists a
$$\lambda_2\geq \max\bigg\{\lambda_1, \mathcal{C}\frac{| \nabla   g_{2}|_{L^{2}(G)}^{2}}{|g_{2}|_{L^{2}(G)}^{2}}\bigg\}$$
such that for all $ \lambda \geq \lambda_{2} $, we have
\begin{align}\label{eqISPCSe5} 
&
\lambda\theta^2(t_0)\mathbb{E}\int_{t_0}^T\int_G
(|\nabla y |^2
+ s \lambda \varphi y^{2} ) d x d t
+ s\lambda^2\theta^2(t_0)\mathbb{E}\int_{t_0}^T \varphi
|g_{1} |^2 dt
\\ \notag
& \leq
\mathcal{C}\mathbb{E}\Big( \theta^2(T)\big|\nabla y(T) \big|^2_{L^2(G)}+
s\lambda\varphi(T)\theta^2(T)\big|y(T)  \big|^2_{L^2(G)} \Big)
+
\mathcal{C}\theta^2(t_2)|y |^2_{L^2_{{\mathbb{F}}}(0,T;L^2(G))}
.
\end{align}

Thanks to  It\^o's formula, we obtain that
\begin{align*}
    & \mathbb{E}\int_G |y(t_0) |^2dx
    =
    \mathbb{E}\int_G |y(T) |^2dx
    - \mathbb{E} \int_{t_{0}}^{T} \int_{G} ( 2  y  d y   + |d y|^{2} ) d x.
\end{align*}
This, together with \cref{system2bu}, implies that
\begin{align}\label{eqISPCSe6} 
& \mathbb{E}\int_G |y(t_0) )|^2dx   \leq \mathbb{E}\int_G |y(T)  |^2dx +
C\mathbb{E}\int_{t_0}^T\int_G |\nabla y  |^2 dxdt
+ \mathcal{C}
\mathbb{E}\int_{t_0}^T\int_G |y |^2 dxdt.
\end{align}
Recalling that $\varphi\geq 1$, combining  the inequalities \cref{eqISPCSe5,eqISPCSe6}, for any   $\lambda\geq  \lambda_2 $ and $s\geq s_0(\lambda)$,
we have
\begin{align}\label{eqISPCSe7} \notag
&\mathbb{E}\int_G |y(t_0) |^2dx
+ \mathbb{E} \int_{t_{0}}^{T} |g_{1}|^{2} d t
\\\notag
    &\leq \mathcal{C}
\theta^2(t_2)\theta^{-2}(t_0)\big|y \big|^2_{L^2_{\mathbb{F}}(0,T;L^2(G))} \\
    &\quad +
\mathcal{C}\mathbb{E}\left( \theta^2(T)\big|\nabla y(T) \big|^2_{L^2(G)} +
s\lambda\varphi(T)\theta^2(T)\big|y(T)   \big|^2_{L^2(G)} \right).
\end{align}
Now we fix $\lambda =\lambda_2 $. From the
inequality \cref{eqISPCSe7}, we get
\begin{align}\label{eqISPCSe8} \notag
&\mathbb{E}\int_G |y(t_0) |^2dx
+ \mathbb{E} \int_{t_{0}}^{T} |g_{1}|^{2} d t \\&  \leq \mathcal{C}
\theta^2(t_2)\theta^{-2}(t_0)\big|y  \big|^2_{L^2_{{\mathbb{F}}}(0,T;L^2(G))}
+
\mathcal{C}\,\theta^2(T)\mathbb{E}\big|y(T) \big|^2_{H^1(G)}.
\end{align}
Replacing $\mathcal{C}$ by $\mathcal{C}e^{s e^{\lambda_2 T}}$, from inequality \eqref{eqISPCSe8}, for any $s \geq 0$, it holds that
\begin{align}\label{eqISPCSe9} \notag
&\mathbb{E}\int_G |y(t_0) |^2dx
+ \mathbb{E} \int_{t_{0}}^{T} |g_{1}|^{2} d t\\
& \leq \mathcal{C}e^{-2s(e^{\lambda_2 t_2}-e^{\lambda_2
t_0})}\big|y  \big|^2_{L^2_{{\mathbb{F}}}(0,T;L^2(G))}
+ \mathcal{C}e^{\mathcal{C}s}\mathbb{E}\big|y(T) \big|^2_{H^1(G)}.
\end{align}
Choosing $s \geq 0$ which minimize
the right-hand side of inequality \cref{eqISPCSe9}, we obtain that
\begin{equation*}
| y(t_0) |_{L^2_{\mathcal{F}_{t_0}}(\Omega;L^{2}(G))}
+ | g_{1} |_{L^{2}_{\mathbb{F}}(t_{0},T)}  \leq \mathcal{C}
\big|y  \big|^{1-\gamma}_{L^2_{{\mathbb{F}}}(0,T;L^2(G))}
 \big|y(T) \big|^{\gamma}_{L^{2}_{\mathcal{F}_{T}}(\Omega; H^{1}(G))},
\end{equation*}
with $ \gamma = \frac{2(e^{\lambda_2 t_0}-e^{\lambda_2 t_2})}{\mathcal{C}+2(e^{\lambda_2 t_0}-e^{\lambda_2 t_2})} $.
This completes the proof of  \cref{thmISPC}.
\end{proof}

\section{Solution to the inverse source problem with a boundary measurement}
\label{secInverseSourcePre}

In this section, we study the inverse source problem with the boundary measurements, i.e.,
\cref{prob.p7pre}, which answered by the following uniqueness result.

\begin{theorem}\label{inv th2Pre}
Let $h \in L_{\mathbb{F}}^2 (0, T ; H^1 (G^{\prime} ) )$ and $R \in C^{1,3}(\overline{Q})$ satisfy
\begin{align}\label{eq.p7RPre}
    |R(t, x)| \neq 0 \quad \text { for all }(t, x) \in[0, T] \times \overline{G}.
\end{align}
Assume that $ g= a_{3} y $, where $a_{3} \in L^{\infty}_{\mathbb{F}}(0,T; W^{2, \infty}(G))$.
If
\begin{align*}
    y \in L_{\mathbb{F}}^2 (0, T ; H^2(G) \cap H_0^1(G) )  \cap L_{\mathbb{F}}^2 (\Omega ; C ([0, T] ; H_0^1(G) ) )
\end{align*}
satisfies \cref{system2bu} and
\begin{align*}
    & \frac{\partial y}{\partial \nu}=0 \quad \text { on }(0, T) \times \partial G, ~ \text{${\mathbb{P}}$-a.s.},
\end{align*}
then
\begin{align*}
h (t, x^{\prime} )=0 & \text { for all } (t, x^{\prime} ) \in(0, T) \times G^{\prime},~ \mbox{${\mathbb{P}}$-a.s.}
\end{align*}
\end{theorem}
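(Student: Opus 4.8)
The strategy is to strip off the known deterministic factor $R$, use that the reduced source is independent of $x_1$ to make it disappear by an $x_1$-differentiation, and then run a Carleman estimate tailored to the cylinder $G=(0,l)\times G'$ together with the over-determined Cauchy data $y=\partial_\nu y=0$ on $\Sigma$.

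First I would set $\tilde y=y/R$. Since $R\in C^{1,3}(\overline Q)$ is deterministic and nowhere zero, $\tilde y$ lies in the same solution class as $y$, inherits $\tilde y=0$ and $\partial\tilde y/\partial\nu=0$ on $\Sigma$, and solves a stochastic parabolic equation with the same principal part, new lower-order coefficients in the same functional classes as $a_1,a_2$, diffusion $a_3\tilde y$, and driving source exactly $h=h(t,x')$, which does not depend on $x_1$. Next, differentiating this equation in $x_1$ (legitimate because $\tilde y\in L^2_{\mathbb{F}}(0,T;H^2(G))$) and putting $w=\tilde y_{x_1}$, the unknown source disappears since $h_{x_1}=0$: $w$ solves a stochastic parabolic equation whose drift source and diffusion term $a_3w+a_{3,x_1}\tilde y$ are built only from $\tilde y,w$ and their derivatives, and here $a_3\in W^{2,\infty}$ together with $\tilde y\in H^2$ is exactly what keeps the diffusion in $L^2_{\mathbb{F}}(0,T;H^1(G))$. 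One checks that $w=0$ on all of $\Sigma$, and $\partial w/\partial\nu=0$ on the lateral part $(0,l)\times\partial G'$ (differentiate $\tilde y=0$, $\partial\tilde y/\partial\nu=0$ tangentially in $x_1$).

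I would then apply the weighted identity \cref{thm.fIForParablic} with $\theta=e^{\lambda\alpha}$, $\alpha=(e^{\mu\psi(x_1)}-e^{2\mu|\psi|_{C(\overline G)}})/(t(T-t))$, $\varphi=e^{\mu\psi(x_1)}/(t(T-t))$, $\Psi$ as in \cref{h5}, where $\psi=\psi(x_1)$ is smooth with $\psi'>0$ on $[0,l]$, so that $|\nabla\psi|=\psi'>0$ on $\overline G$ and no interior observation is needed. Applied to $\tilde y$, whose full Cauchy data annihilates every boundary term, this gives
\begin{align*}
\mathbb{E}\int_Q\theta^2\big(\lambda^3\mu^4\varphi^3\tilde y^2+\lambda\mu^2\varphi|\nabla\tilde y|^2\big)\,dx\,dt\le\mathcal{C}\,\mathbb{E}\int_Q\theta^2 h^2\,dx\,dt ;
\end{align*}
applied to $w$ (using $w=0$ on $\Sigma$), and absorbing the $w,\nabla w$-parts of the source and diffusion for $\lambda,\mu$ large, it gives a companion estimate
\begin{align*}
\mathbb{E}\int_Q\theta^2\big(\lambda^3\mu^4\varphi^3w^2+\lambda\mu^2\varphi|\nabla w|^2\big)\,dx\,dt\le\mathcal{C}\,\mathbb{E}\int_Q\theta^2\big(\tilde y^2+|\nabla\tilde y|^2\big)\,dx\,dt+\mathcal{R},
\end{align*}
where $\mathcal{R}$ collects the trace terms on the two faces $\{x_1=0\},\{x_1=l\}$. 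Since $\tilde y$ vanishes on $\{x_1=0\}$, writing $\tilde y(\cdot,x_1,\cdot)=-\int_{x_1}^{l}w(\cdot,s,\cdot)\,ds$ and using the monotonicity of $\theta,\varphi$ in $x_1$ yields a weighted Poincaré bound $\mathbb{E}\int_Q\theta^2(\tilde y^2+|\nabla\tilde y|^2)\lesssim\mathbb{E}\int_Q\theta^2(w^2+|\nabla w|^2)$; feeding this in and letting $\lambda,\mu$ be large forces $\mathbb{E}\int_Q\theta^2 w^2=0$, hence $w\equiv 0$, hence (with $\tilde y|_{\{x_1=0\}}=0$) $\tilde y\equiv 0$, hence $y\equiv 0$, hence by the original equation $hR\equiv 0$, i.e. $h\equiv 0$ on $(0,T)\times G'$ because $R$ never vanishes.

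The step I expect to be hardest is controlling $\mathcal{R}$. Unlike for $\tilde y$, the normal derivative of $w=\tilde y_{x_1}$ does not vanish on the faces $\{x_1=0\},\{x_1=l\}$: evaluating the equation there (where $\tilde y$ and $\nabla\tilde y$ vanish) forces $b^{11}\tilde y_{x_1x_1}\sim h$, so these trace terms are of size $\sim\lambda\mu\,\mathbb{E}\int_{(0,T)\times\{x_1=0,\,l\}}\varphi\theta^2 h^2$. They must be reabsorbed by exploiting once more that $h$ is independent of $x_1$: the trace of $h^2$ on a face equals $l^{-1}$ times its integral over $G$, which is precisely the right-hand side $\mathbb{E}\int_Q\theta^2 h^2$ of the Carleman estimate for $\tilde y$, so one couples the two estimates and tunes the parameters so the growing weights dominate. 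A secondary technical point: when the $b^{jk}$ are not independent of $x_1$, differentiating the principal part produces a genuinely second-order term $\sum_{j,k}(b^{jk}_{x_1}\tilde y_{x_j})_{x_k}$ in the source of the $w$-equation, which is absorbed by retaining (rather than discarding) the $H^2$-part $\mathbb{E}\int_Q\theta^2|{-\sum_{j,k}(b^{jk}\tilde y_{x_j})_{x_k}}+\mathcal{A}\tilde y|^2$ of the identity for $\tilde y$.
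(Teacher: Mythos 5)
Your reduction ($\tilde y=y/R$, differentiate in $x_1$ to kill the $x_1$-independent source, represent $\tilde y$ as $\int_0^{x_1}w\,d\eta$) is exactly the paper's first half. But the Carleman machinery you then invoke is genuinely different from the paper's, and it is precisely at the step you flag as hardest that the argument breaks. With a spatial weight $\psi=\psi(x_1)$, $\psi'>0$, the boundary term in the weighted identity for the $w$-equation on the face $\{x_1=l\}$ has the form $\lambda\mu\int\varphi\,\frac{\partial\psi}{\partial\nu}\bigl(\frac{\partial w}{\partial\nu}\bigr)^2(\cdots)$ with $\frac{\partial\psi}{\partial\nu}=\psi'(l)>0$, i.e.\ the \emph{wrong} sign: it is an observation term that must be dominated, not discarded. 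There $\frac{\partial w}{\partial\nu}=\tilde y_{x_1x_1}\sim h/b^{11}$, so $\mathcal R$ contains $\lambda\mu\int_{(0,T)\times\{x_1=l\}}\varphi(t,l)\theta^2(t,l)h^2$, which carries the \emph{maximal} value of the weight over $x_1\in[0,l]$. Your proposed reabsorption via ``trace of $h^2$ equals $l^{-1}$ times its integral over $G$'' fails here because $\theta$ depends on $x_1$: $\theta^2(t,l)/\theta^2(t,x_1)=e^{2\lambda(\alpha(t,l)-\alpha(t,x_1))}\to\infty$ as $\lambda\to\infty$ for $x_1<l$, so the face term is not controlled by $\mathcal C\int_Q\theta^2h^2$ uniformly in the parameters. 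Worse, even granting such a bound, the quantity $\int_Q\theta^2h^2$ never appears on the \emph{left} of any of your inequalities — it sits only on right-hand sides (of the $\tilde y$-estimate and of $\mathcal R$) — so the coupled system cannot be closed to force $w\equiv0$: there is no mechanism that makes the $h$-dependent terms vanish or be absorbed. Bounding $\partial w/\partial\nu$ on the face by interior quantities instead would require second derivatives of $w$, which the first-order Carleman left-hand side does not control.

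The paper avoids all of this by using the purely time-dependent weight of \cref{car1Lue} ($\varphi=e^{\lambda\psi}$ with $\psi=-t$, $\theta=e^{s\varphi}$) applied only to the $u=z_{x_1}$ equation. Since the weight is independent of $x$, the spatial divergence boundary terms vanish as soon as $u=0$ on $\Sigma$ — which holds on \emph{all} of $\partial G$, including the two faces, because $z=\partial z/\partial\nu=0$ on $\Sigma$ forces $\nabla z=0$ there — so no Neumann trace of $u$ ever enters and no face term arises. The $z$-terms are handled by the unweighted Poincar\'e bound $|\int_0^{x_1}w\,d\eta|^2\le l\int_0^l|w|^2\,d\eta$, which passes under $\theta^2$ trivially because $\theta$ does not depend on $x$. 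A time cut-off near $t=T$ then localizes the observation term to a window where $\varphi$ is strictly smaller than on $(0,T-\varepsilon)$, and letting $s\to\infty$ gives $u\equiv0$, hence $z\equiv0$, hence $h\equiv0$. If you want to salvage a spatial-weight approach you would need either genuine Neumann data for $y_{x_1x_1}$ on the face where $\psi$ is maximal, or a weight that is constant in $x_1$ — which is essentially the paper's choice.
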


Using the Carleman estimate \cref{car1Lue}, we can prove \cref{inv th2Pre}.

\begin{proof}[Proof of \cref{inv th2Pre}]

For the simplicity of notations, we only deal with the case  $(b^{jk})_{1\leq j,k \leq n}$ is the identity matrix.
For general variable coefficients, thanks to Carleman estimate \cref{car1Lue} and  $(b^{jk})_{1\leq j,k\leq n} \in C^1(G;\mathbb{R}^{n\times n})$, we can also obtain \cref{eq.s45-4Pre}.

Set $ y = R z $. From  \cref{system2bu}, we have
\begin{align}\label{eq.s45-1Pre}
    \left\{\begin{aligned}
    & d z-\Delta z d t  =  \big[ 2 R^{-1} \nabla R \cdot \nabla z + R^{-1} \big(a_2 R + \Delta R - R_t + \nabla R \cdot  q_1 \big) z\big] d t \!\!\!\!\!\!\!\!\!\!\!\!\!\!\!\!\!\!\!\!  &&  \\
    & \quad \quad \quad \quad \quad  ~ + [ (a_1, \nabla z ) + h (t, x^{\prime} ) ] d t+ a_{3} z d W(t)  &&  \text {in } Q, \\
    & z=\frac{\partial z}{\partial \nu}  =0  &&\text {on } \Sigma \\
    & z(0)  =0 &&  \text {on } G .
    \end{aligned}\right.
\end{align}
Differentiate both side of \cref{eq.s45-1Pre} with respect to $ x_{1} $, and set $ u = z_{x_{1}} $.
Noting that $ \nabla z = \dfrac{\partial z}{ \partial \nu} = 0 $ on $ \Sigma $, we obtain that
\begin{align} \label{eq.s45-2Pre}
    \left\{\begin{aligned}
    & d u-\Delta u d t=\big \{  \left(a_1\right)_{x_1} \cdot \nabla z  \!+\!\left(a_1,\! \nabla u\right)\!
    +\! \big(2R^{-1} \nabla R\big)_{x_1} \cdot \nabla z  
        \\
    & \quad \quad \quad \quad \quad \quad
    + 2R^{-1} \nabla R \cdot \nabla u 
    +\big[ R^{-1} ( a_2 R + \Delta R - R_t + \nabla R \cdot  a_1 )\big]_{x_1} z   \!\!\!\!\!\!\! \\
    &\quad \quad \quad \quad \quad   \quad+R^{-1} ( a_2 R + \Delta R - R_t + \nabla R \cdot  a_1 ) u\big\} d t     +( a_{3} z)_{x_1} d W(t)
        &&  \text { in } Q, \\
    & u=0    && \text { on } \Sigma, \\
    & u(0)=0    &&
    \text { on } G .
    \end{aligned}\right.
\end{align}
Since $ u = z_{x_{1}} $, and $ z(t, 0, x') = y (t, 0, x') = 0 $ for $ (t, x') \in (0,T) \times G' $, we have
\begin{align}\label{eq.s45-2-1Pre}
z (t, x_{1}, x') = \int_{0}^{x_{1}} u (t, \eta, x') d \eta.
\end{align}
This, together with \cref{eq.s45-2Pre}, implies that $ u $ satisfies the following equation:
\begin{align} \label{eq.s45-3Pre}
\left\{\begin{aligned}
& d u-\Delta u d t=\bigg \{
 \left(a_1\right)_{x_1} \cdot  \nabla \int_{0}^{x_{1}} u (t, \eta, x') d \eta \!+ a_1 \cdot \nabla u 
\\
& \quad \quad \quad \quad \quad \quad
+ \ \big( 2 R^{-1} \nabla R \big)_{x_1} \cdot \nabla \int_{0}^{x_{1}} u (t, \eta, x') d \eta 
+ 2 R^{-1} \nabla R  \cdot \! \nabla u 
    \\
& \quad \quad \quad \quad \quad \quad
+\big[ R^{-1} ( a_2 R + \Delta R - R_t + \nabla R \cdot  a_1 )\big]_{x_1} \int_{0}^{x_{1}} u (t, \eta, x') d \eta    \!\!\!\!\!\!\! \\
&\quad \quad \quad \quad \quad   \quad+R^{-1} ( a_2 R + \Delta R - R_t + \nabla R \cdot  a_1 )  u\bigg\} d t    \\
&\quad \quad \quad~ \quad \quad  \quad+ (a_{3} )_{x_1} z  d W(t)
+ a_{3} u d W(t)
    &&  \text { in } Q, \\
& u=0    && \text { on } \Sigma, \\
& u(0)=0    &&
\text { on } G .
\end{aligned}\right.
\end{align}
For any $ \varepsilon > 0 $, we choose $ t_{1} $ and $ t_{2} $ such that $ 0 < T- \varepsilon < t_{1} < t_{2} < T$.
Let $ \chi \in C^{\infty}(\mathbb{R}) $  be a cut-off function such that $ 0 \leq \chi \leq 1 $ and that
\begin{align} \label{eq.s45-3-1Pre}
\chi= \begin{cases}1, & \mbox{ if }t \leq t_1, \\ 0, & \mbox{ if }t \geq t_2.\end{cases}
\end{align}
Set $ w = \chi u $. Then we have
\begin{align*}
\left\{\begin{aligned}
& d w-\Delta w d t=\bigg \{
    \left(a_1\right)_{x_1} \cdot  \nabla \int_{0}^{x_{1}} w (t, \eta, x') d \eta \!+ a_1 \cdot \nabla w 
   \\
   & \quad \quad \quad \quad \quad \quad
   + \ \big( 2 R^{-1} \nabla R \big)_{x_1} \cdot \nabla \int_{0}^{x_{1}} w (t, \eta, x') d \eta 
   + 2 R^{-1} \nabla R  \cdot \! \nabla w 
       \\
   & \quad \quad \quad \quad \quad \quad
   +\big[ R^{-1} ( a_2 R + \Delta R - R_t + \nabla R \cdot  a_1 )\big]_{x_1} \int_{0}^{x_{1}} w (t, \eta, x') d \eta    \!\!\!\!\!\!\! \\
   &\quad \quad \quad \quad \quad   \quad+R^{-1} ( a_2 R + \Delta R - R_t + \nabla R \cdot  a_1 )  w\bigg\} d t    \\
&\quad \quad \quad~ \quad \quad  \quad+ \chi  (a_{3} )_{x_1}   z d W(t)
+ a_{3} \chi w  d W(t)
+ \chi_{t} u d t
    &&   \text { in } Q, \\
& w=0    &&  \text { on } \Sigma, \\
& w(0)=0    &&
\text { on } G .
\end{aligned}\right.
\end{align*}

Let us choose $ \psi = - t $ and $ \delta = 0 $ in \cref{car1Lue}. Noting that $ w(0) = w(T) = 0 $, we know that, for all $ \lambda \geq \lambda_{1} $ and $ s \geq s_{0}(\lambda) $, it holds that
\begin{align}\label{eq.s45-4Pre}\notag
& \mathbb{E} \int_Q \big( \lambda |\nabla w|^2+s \lambda^{2} \varphi w^2\big) \theta^{2}  d x d t \\\notag
& \leq \mathcal{C} \Big(s \lambda \mathbb{E} \int_0^T \int_G \theta^{2} \bigg|\int_0^{x_1} w\left(t, \eta, x^{\prime}\right) d \eta\bigg|^2 d x d t \\
&\quad  + \mathbb{E} \int_0^T \int_G \theta^{2} \bigg|\nabla \int_0^{x_1} w\left(t, \eta, x^{\prime}\right) d \eta\bigg|^2 d x d t  +  \mathbb{E} \int_Q \theta^{2} \chi_t^2 u^2   d x d t\Big).
\end{align}

Noting that
\begin{align*}
\bigg |  \int_{0}^{x_{1}} w (t, \eta, x') d \eta \bigg |^{2} \leq l \int_{0}^{l} | w ( t, \eta, x') |^{2} d \eta
\end{align*}
for all $ (t, x_{1}, x') \in Q $, we can obtain that
\begin{align}\label{eq.s45-5Pre} \notag
 \int_0^T \int_G \theta^{2} \bigg|\int_0^{x_1} w\left(t, \eta, x^{\prime}\right) d \eta\bigg|^2 d x d t
& \leq
l \int_{0}^{l} d x_{1} \int_0^T \int_{G'}  \int_{0}^{l} \theta^{2} |  w\left(t, \eta, x^{\prime}\right)  |^2 d \eta d x' d t
\\
& \leq
l^{2}   \int_0^T \int_{G}   \theta^{2} |  w\left(t, x_{1}, x^{\prime}\right)  |^2 d x_{1} d x' d t.
\end{align}
By virtue of
\begin{align*}
  \nabla \int_0^{x_1} w\left(t, \eta, x^{\prime}\right) d \eta
& =\left(w\left(t, x_1, x^{\prime}\right), \int_0^{x_1} \nabla_{x^{\prime}} w\left(t, \eta, x^{\prime}\right) d \eta\right) \\
& =\left(w\left(t, x_1, x^{\prime}\right)-w\left(t, 0, x^{\prime}\right), \int_0^{x_1} \nabla_{x^{\prime}} w\left(t, \eta, x^{\prime}\right) d \eta\right)
\\
& =\int_0^{x_1} \nabla w\left(t, \eta, x^{\prime}\right) d \eta,
\end{align*}
we have
\begin{align} \label{eq.s45-6Pre} \notag
  \int_0^T \int_G \theta^{2} \bigg|\nabla \int_0^{x_1} w\left(t, \eta, x^{\prime}\right) d \eta\bigg|^2 d x d t
& =\int_0^T \int_G \theta^{2} \bigg|\int_0^{x_1} \nabla w\left(t, \eta, x^{\prime}\right) d \eta\bigg|^2 d x d t \\
& \leq l^{2} \int_0^T \int_G \theta^{2} \left|\nabla w\left(t, x_1, x^{\prime}\right)\right|^2 d x d t .
\end{align}
Combining  \cref{eq.s45-4Pre,eq.s45-5Pre,eq.s45-6Pre}, for all $ \lambda \geq \lambda_{1} $ and $ s \geq s_{0} $, we have
\begin{align*}
& \mathbb{E} \int_Q  \big(\lambda |\nabla w|^2+s \lambda^{2} \varphi w^2 \big)\theta^{2} d x d t
\\
&\leq \mathcal{C}  \mathbb{E} \int_Q \big(s \lambda w^2+|\nabla w|^2 \big) \theta^{2} d x d t   +\mathcal{C}  \mathbb{E} \int_Q \theta^{2} \chi_t^2 u^2  d x d t
.
\end{align*}
Hence, there exists a $ \lambda_{2} \geq \lambda_{1} $ such that for all $ \lambda \geq \lambda_{2} $ and $ s \geq s_{0} $, it holds that
\begin{align}
\label{eq.s45-7Pre}
& \mathbb{E} \int_Q  \big(\lambda |\nabla w|^2+s \lambda^{2} \varphi w^2 \big)\theta^{2} d x d t
\leq \mathcal{C}    \mathbb{E} \int_Q \theta^{2} \chi_t^2 u^2  d x d t.
\end{align}

It follows from \cref{eq.s45-3-1Pre} that $ \chi_{t} \neq 0 $ only holds in $ (t_{1}, t_{2}) $.
Fix $ \lambda = \lambda_{2} $.
Thanks to \cref{eq.s45-7Pre}, for $ s \geq s_{0} $, we have

\begin{align*}
\mathbb{E} \int_{0}^{T-\varepsilon} \int_G\left( |\nabla w|^2+s   w^2\right) \theta^{2} d x d t \leq \mathcal{C} \mathbb{E} \int_{t_{1}}^{t_{2}} \int_G \chi_t^2 u^2 \theta^{2} d x d t
,
\end{align*}
which implies
\begin{align*}
e^{2 s \varphi(T- \varepsilon)} \mathbb{E} \int_{0}^{T-\varepsilon} \int_G\left( |\nabla w|^2+s   w^2\right)  d x d t \leq \mathcal{C} e^{2 s \varphi(t_{1})} \mathbb{E} \int_{t_{1}}^{t_{2}} \int_G \chi_t^2 u^2   d x d t
.
\end{align*}
Hence, for $ s \geq s_{3} $, it holds that
\begin{align} \label{eq.s45-9-1Pre}
\mathbb{E} \int_{0}^{T-\varepsilon} \int_G\left( |\nabla w|^2+s   w^2\right)  d x d t \leq \mathcal{C} e^{2 s (\varphi(t_{1}) - \varphi(T- \varepsilon))} \mathbb{E} \int_{t_{1}}^{t_{2}} \int_G \chi_t^2 u^2   d x d t.
\end{align}
Note that $ \varphi(t_{1}) - \varphi(T- \varepsilon) < 0 $. By letting $ s \rightarrow + \infty $ in \cref{eq.s45-9-1Pre}, we have
\begin{align*}
w = 0 \text{ in } (0, T- \varepsilon) \times G, ~ \text{${\mathbb{P}}$-a.s.}
\end{align*}
This, together with \cref{eq.s45-2-1Pre}, implies that
\begin{align*}
z = 0 \text{ in } (0, T- \varepsilon) \times G, ~ \text{${\mathbb{P}}$-a.s.,}
\end{align*}
which means that
\begin{align}\label{eq.s45-10Pre}
y = 0 \text{ in } (0, T- \varepsilon) \times G, ~ \text{${\mathbb{P}}$-a.s.}
\end{align}
Combining \cref{eq.s45-10Pre,system2bu}, we deduce that
\begin{align*}
h(t, x')  =0, \text{ for } (t, x') \in  (0, T-\varepsilon) \times G', ~ \text{${\mathbb{P}}$-a.s.}
\end{align*}
Since $ \varepsilon > 0 $ is arbitrary, we complete the proof.
\end{proof}

\section[Solution to the inverse source problem with boundary/terminal measurement]{Solution to the inverse source problem with a boundary measurement and a terminal measurement}
\label{secInverseSource}

In this section, we study the inverse source problem with a boundary measurement  and a terminal measurement.
\cref{prob.p7} is answered by the following uniqueness result.

\begin{theorem}\label{inv th2}
Let $h \in L_{\mathbb{F}}^2 (0, T ; H^1 (G^{\prime} ) )$ and $R \in C^{1,3}(\overline{Q})$ satisfy
\begin{align}\label{eq.p7R}
|R(t, x)| \neq 0 \quad \text { for all }(t, x) \in[0, T] \times \bar{G} .
\end{align}
Assume that $g=g_1(t) g_2(x) \in L_{\mathbb{F}}^2 (0, T ; H^1(G) )$, where $g_1 \in L_{\mathbb{F}}^2 (0, T)$,  $ g_2 \in H^1(G)$ and $R=R_1(t) R_2(x)$. If
\begin{align*}
y \in L_{\mathbb{F}}^2 (0, T ; H^2(G) \cap H_0^1(G) )  \cap L_{\mathbb{F}}^2 (\Omega ; C ([0, T] ; H_0^1(G) ) )
\end{align*}
satisfies \cref{system2bu} and
\begin{align*}
& \frac{\partial y}{\partial v}=0 \quad \text { on }(0, T) \times \partial G, ~ \text{${\mathbb{P}}$-a.s.},  \\
& y(T)=0 \text { in } G, ~ \text{${\mathbb{P}}$-a.s.},  \\
&
\end{align*}
then
\begin{align*}
h (t, x^{\prime} )=0 & \text { for all } (t, x^{\prime} ) \in(0, T) \times G^{\prime},~ \mbox{${\mathbb{P}}$-a.s.}
\end{align*}
and
\begin{align*}
g(t, x)=0 & \text { for all }(t, x) \in(0, T) \times G, ~ \mbox{${\mathbb{P}}$-a.s.}
\end{align*}
\end{theorem}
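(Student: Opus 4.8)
The plan is to reduce the problem to two applications of the Carleman machinery already developed: first determine $h$ by the same argument as in \cref{inv th2Pre}, then use the terminal condition $y(T)=0$ together with a \emph{second} Carleman estimate (the forward-in-time one, \cref{car1} or \cref{car1Lue}) to determine $g$. The key structural difference from \cref{inv th2Pre} is that here $g$ is an independent unknown of the separated form $g_1(t)g_2(x)$, not slaved to $y$, so we cannot absorb the $g\,dW$ term into lower-order terms; instead we must extract information about $g_1$ from the stochastic integral.

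First I would set $y=Rz$ as in the proof of \cref{inv th2Pre}, and since $R=R_1(t)R_2(x)$ and $g=g_1(t)g_2(x)$, differentiate in $x_1$ and set $u=z_{x_1}$, exactly reproducing the derivation of \cref{eq.s45-3Pre}. The only new feature is that the diffusion term now contains a genuinely inhomogeneous piece coming from $g$; but after the substitution $y=Rz$ the equation for $z$ has the structure $dz-\Delta z\,dt=[\text{lower order}+h(t,x')]dt+[\text{lower order}+R^{-1}g_1g_2]dW(t)$. Differentiating in $x_1$ and using $z(t,0,x')=0$ to write $z(t,x_1,x')=\int_0^{x_1}u(t,\eta,x')d\eta$, I obtain a closed equation for $u$ with an extra forcing term $(R^{-1}g_1g_2)_{x_1}dW(t)$. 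I would then cut off near $t=T$ with $\chi$ supported in $[0,t_2]$, $\chi\equiv1$ on $[0,t_1]$, set $w=\chi u$, and apply the backward Carleman estimate \cref{car1Lue} with $\psi=-t$, $\delta=0$. The new $dW$-forcing contributes a term $\mathbb{E}\int_Q\theta^2\big(|g_1|^2|g_{2,x_1}|^2+\dots\big)dxdt$ on the right; because $\psi=-t$ is decreasing, $\theta$ is \emph{largest} near $t=0$, so this term is \emph{not} small and blocks the vanishing argument. The resolution is that in the region $(0,T-\varepsilon)$ we do not yet know $g_1$ vanishes; so for this first step I would instead argue as follows: the $H^1(G')$-regularity of $h$ and the boundary overdetermination $\partial y/\partial\nu=0$ are exactly the hypotheses of \cref{inv th2Pre}, and \emph{that theorem already gives} $h\equiv0$ in $(0,T)\times G'$, provided we can verify its hypothesis $g=a_3y$. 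Here $g=g_1g_2$ is not of that form, so \cref{inv th2Pre} does not apply verbatim; I would instead re-run its proof, now carrying the $g$-term along, and show that the Carleman estimate still forces $h\equiv0$ because the $g$-contribution, being $x_1$-independent after differentiation only through $g_{2,x_1}$, can be grouped with the principal unknowns $w,\nabla w$ after one more absorption — the crucial point being that $g_1(t)g_2(x)$ does not depend on $x_1$ in the way that matters, so $(R^{-1}g)_{x_1}$ involves only $g_1\cdot(\text{known function of }x)$ and this is handled by the same $l^2\int\theta^2 w^2$ estimates as in \cref{eq.s45-5Pre,eq.s45-6Pre} once we also close an equation for the scalar time-function $g_1$.

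Having obtained $h\equiv0$, the equation for $y$ becomes $dy-\sum(b^{jk}y_{x_j})_{x_k}dt=(a_1\cdot\nabla y+a_2y)dt+g_1(t)g_2(x)dW(t)$ with $y(0)=y_0$, $y(T)=0$, $\partial y/\partial\nu|_\Sigma=0$, $y|_\Sigma=0$. Now I would exploit the terminal condition. Cut off near $t=0$: let $\rho\in C^\infty$ with $\rho\equiv1$ for $t\ge t_2$, $\rho\equiv0$ for $t\le t_1$ (where $0<t_1<t_2<T$), and set $\tilde z=\rho y$, which satisfies $\tilde z(0)=0$ and $\tilde z(T)=0$, with forcing $\rho_t y\,dt+\rho g_1g_2\,dW(t)$. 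Apply the forward Carleman estimate \cref{car1Lue} with $\psi=t$, $\delta=0$; since $\tilde z(T)=0$ and $\tilde z(0)=0$, all boundary terms drop, leaving
\begin{align*}
\lambda\mathbb{E}\int_Q\theta^2|\nabla\tilde z|^2dxdt+s\lambda^2\mathbb{E}\int_Q\theta^2\varphi\tilde z^2dxdt
\le\mathcal{C}\,\mathbb{E}\int_Q\theta^2\big(|\rho_ty|^2+s\lambda\varphi\rho^2g_1^2g_2^2\big)dxdt.
\end{align*}
The term $|\rho_t y|^2$ is supported in $(t_1,t_2)$ where $\theta$ is bounded, so it contributes $\mathcal{C}\theta^2(t_2)|y|_{L^2_{\mathbb F}(0,T;L^2(G))}^2$; but here $\theta$ is \emph{increasing}, so this is controlled by the left side on $(t_2,T)$ only after we also handle the $g$-term, which sits on the right with a favorable power of $s$ — the standard trick: the $s\lambda\varphi\rho^2g_1^2g_2^2$ term cannot be absorbed by the left side directly, so instead I would apply It\^o's formula to $\theta^2\varphi\tilde z^2$ (as in Step~4 of \cref{thm.CarlemanForHeat}) to generate a $+\int\theta^2\varphi\rho^2g_1^2g_2^2$ term on the \emph{left}, cancelling it, and thereby reach a Carleman estimate in which the only surviving right-hand term is $\mathcal{C}\theta^2(t_2)|y|^2_{L^2_{\mathbb F}(0,T;L^2(G))}$. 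Then, fixing $\lambda$ and letting $s\to+\infty$ while noting $\theta(t_2)<\theta(t)$ for $t>t_2$ forces $y\equiv0$ on $(t_2,T)\times G$; since $t_2<T$ is arbitrary and $y$ is continuous in $t$, we get $y\equiv0$ on $(t_1,T)\times G$, then on all of $(0,T)\times G$ by iterating or by a unique-continuation/energy argument backward. Finally, substituting $y\equiv0$ into the equation \cref{system2bu} gives $g_1(t)g_2(x)dW(t)=0$, hence, since $g_2\not\equiv0$, $g_1\equiv0$ in $L^2_{\mathbb F}(0,T)$, so $g\equiv0$.

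The main obstacle, as flagged above, is the treatment of the stochastic forcing $g\,dW(t)$: unlike the setting of \cref{inv th2Pre} where $g=a_3y$ is subordinate, here $g_1g_2$ is a free unknown, so the $dW$-term cannot be absorbed by Gronwall and must instead be neutralized by the It\^o-correction trick that produces a matching $+\int\theta^2\varphi\rho^2g^2$ term on the left of the Carleman estimate. A secondary subtlety is verifying that the first step (vanishing of $h$) survives the presence of a nonzero $g$: one must check that in the equation for $w=\chi u$, the extra term $(R^{-1}g_1g_2)_{x_1}dW(t)$ — which equals $g_1(t)\cdot(R^{-1}R_2^{-1}g_2)_{x_1}(x)\cdot R_1^{-1}(t)\,dW(t)$ after the $R=R_1R_2$ splitting — contributes only to the quadratic variation, and there it appears as $\theta^2 g_1^2\cdot(\text{bounded function of }x)$, which is \emph{not} small; hence the cut-off must be arranged so that this term lands where $\theta$ is controlled, or one must run the two steps jointly, carrying a coupled estimate for $(w,g_1)$. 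I expect the cleanest route is to prove $g\equiv0$ first (Step~2 above, using $y(T)=0$ and forward Carleman, which needs only $h$ appearing as an $L^2$ source, not its vanishing), and only \emph{then} invoke \cref{inv th2Pre}-type reasoning for $h$, at which point $g$ is already known to be zero and the argument of \cref{inv th2Pre} applies with no modification.
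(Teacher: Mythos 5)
There is a genuine gap: your decomposition into two sequential steps fails in both orderings, and your final recommendation rests on a false claim. You assert that the ``$g$ first'' route ``needs only $h$ appearing as an $L^2$ source, not its vanishing.'' This is not so. If you apply the forward Carleman estimate to $\rho y$ while $hR$ is still an \emph{unknown} drift source, the right-hand side carries a term $\mathcal{C}\,\mathbb{E}\int_Q\theta^2|hR|^2\,dx\,dt$; with the increasing weight $\psi=t$ this term lives where $\theta$ is largest and grows like $\theta^2(T)$ as $s\to\infty$, so it can never be absorbed or rendered negligible relative to the left-hand side, and you cannot conclude $y\equiv0$. Conversely, your ``$h$ first'' route (re-running \cref{inv th2Pre} with the backward weight $\psi=-t$) is blocked by exactly the obstruction you yourself flag --- the quadratic-variation contribution of $(R^{-1}g)_{x_1}\,dW$ sits where $\theta$ is largest and is a free unknown --- and your suggestion to ``close an equation for the scalar time-function $g_1$'' is never carried out. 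So neither step of your scheme goes through as written.

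The paper's proof avoids both problems by doing everything in a \emph{single} pass that combines the ingredients you keep separate. One sets $y=Rz$, differentiates in $x_1$ (which annihilates $h$, since $h$ is $x_1$-independent, so no ``determine $h$ first'' step is needed), cuts off near $t=0$ with $\chi$ supported in $\{t\geq t_1\}$, and applies the \emph{forward} Carleman estimate \cref{carleman est1} to $w=\chi z_{x_1}$, using $w(T)=0$ (from $y(T)=0$) to kill the terminal boundary terms. The decisive structural feature is that \cref{car1} already carries $s\lambda\,\mathbb{E}\int\varphi\theta^2 g^2$ on its \emph{left}-hand side, so the unknown diffusion source $\chi^2|(g/R)_{x_1}|^2$ appears with a favorable factor $s$, while its gradient $|\nabla(g/R)_{x_1}|^2$ appears on the right without that factor; the separated forms $g=g_1(t)g_2(x)$ and $R=R_1(t)R_2(x)$ let the spatial integrals factor out, so that for large $s$ either $\int_G|(g_2/R_2)_{x_1}|^2\,dx=0$ or the left term dominates, and in both cases the $g$-contribution is harmless. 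Only the $\chi_t u$ term survives on the right, supported near $t=0$ where $\theta$ is small, and $s\to\infty$ yields $y\equiv0$ on $(\varepsilon,T)\times G$. Both unknowns are then recovered \emph{simultaneously} from the resulting identity $\int_\varepsilon^t hR\,ds+\int_\varepsilon^t g\,dW=0$: absolute continuity of the drift integral forces the quadratic variation of the martingale part to vanish, giving $g\equiv0$, and then $h\equiv0$. Your proposal contains most of these pieces (the $x_1$-differentiation, the It\^o-generated $g^2$ term on the left, the cutoff, the quadratic-variation extraction), but assembling them into two decoupled stages is precisely what makes the argument fail.
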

%


\begin{proof}[Proof of \cref{inv th2}]

For the sake of simplicity in statement, we only prove the case where $(b^{jk})_{1\leq j,k \leq n}$ is the identity matrix.
For general variable coefficients, thanks to Carleman estimate \cref{carleman est1} and  $(b^{jk})_{1\leq j,k\leq n} \in C^1(G;\mathbb{R}^{n\times n})$, we can also obtain \cref{eq.s45-4}.

Set $ y = R z $. From  \cref{system2bu} and $ y(T) = 0 $ in $ G $, we have that
\begin{align}\label{eq.s45-1}
\left\{\begin{aligned}
& d z-\Delta z d t  =  \big[  2R^{-1} \nabla R  \cdot \nabla z + R^{-1} \big(a_2 R + \Delta R  - R_t +  \nabla R \cdot q_1 \big) z \big] d t \!\!\!\!\!\! &&  \\
& \quad \quad \quad \quad \quad  ~ + [ (a_1, \nabla z ) + h (t, x^{\prime} ) ] d t+ R^{-1} g  d W(t)  &&  \text {in } Q, \\
& z=\frac{\partial z}{\partial \nu}  =0  &&\text {on } \Sigma \\
& z(T)  =0 &&  \text {on } G .
\end{aligned}\right.
\end{align}
Differentiate both side of \cref{eq.s45-1} with respect to $ x_{1} $, and set $ u = z_{x_{1}} $.
Noting that $ \nabla z = \dfrac{\partial z}{ \partial \nu} = 0 $ on $ \Sigma $, we obtain that
\begin{align} \label{eq.s45-2}
\left\{\begin{aligned}
& d u-\Delta u d t=\big[  (a_1 )_{x_1} \cdot \nabla z  +  a_1 \cdot  \nabla u  +   ( 2R^{-1} \nabla R )_{x_1} \cdot \nabla z  
    \\
& \quad \quad \quad \quad \quad \quad
+ 2 R^{-1} \nabla R  \cdot  \nabla u 
+ \big[R^{-1} \big(a_2 R + \Delta R - R_t +  \nabla R \cdot a_1 \big) \big]_{x_1} z   \!\!\!\!\!\!\! \\
&\quad \quad \quad \quad \quad   \quad+ R^{-1} \big(a_2 R + \Delta R - R_t + \nabla R \cdot a_1 \big) u\big] d t  + (R^{-1} g  )_{x_1} d W(t) \!\!\!\!\!\!\! 
 &&  \text { in } Q, \\
& u=0    && \text { on } \Sigma, \\
& u(T)=0    &&
\text { on } G .
\end{aligned}\right.
\end{align}
Since $ u = z_{x_{1}} $, and $ z(t, 0, x') = y (t, 0, x') = 0 $ for $ (t, x') \in (0,T) \times G' $, we have
\begin{align}\label{eq.s45-2-1}
z (t, x_{1}, x') = \int_{0}^{x_{1}} u (t, \eta, x') d \eta.
\end{align}
This, together with \cref{eq.s45-2}, implies that $ u $ satisfies the following equation:
\begin{align} \label{eq.s45-3}
\left\{\begin{aligned}
& d u-\Delta u d t=\Big [
  (a_1 )_{x_1} \cdot \nabla \int_{0}^{x_{1}} u (t, \eta, x') d \eta  +\!  a_1 \cdot \nabla u 
\\
& \quad \quad \quad \quad \quad \quad
+   ( 2 R^{-1} \nabla R  )_{x_1} \cdot \nabla \int_{0}^{x_{1}} u (t, \eta, x') d \eta  
+   2  R^{-1} \nabla R \cdot  \nabla u 
 \\
& \quad \quad \quad \quad \quad \quad
+\big[R^{-1} \big(a_2 R + \Delta R - R_t +  \nabla R \cdot a_1 \big) \big]_{x_1} \int_{0}^{x_{1}} u (t, \eta, x') d \eta    \!\!\!\!\!\!\! \\
&\quad \quad \quad \quad \quad   \quad+R^{-1} \big(a_2 R + \Delta R - R_t + \nabla R \cdot a_1 \big) u\Big ] d t  + ( g R^{-1} )_{x_1} d W(t) 
\!\!\!\!&&    \text { in } Q, \\
& u=0    &&  \text { on } \Sigma, \\
& u(T)=0    && 
\text { on } G .
\end{aligned}\right.
\end{align}
For any $ \varepsilon > 0 $, we choose $ t_{1} $ and $ t_{2} $ such that $ 0 < t_{1} < t_{2} < \varepsilon $.
Let $ \chi \in C^{\infty}(\mathbb{R}) $  be a cut-off function such that $ 0 \leq \chi \leq 1 $ and that
\begin{align} \label{eq.s45-3-1}
\chi= \begin{cases}  1, & \mbox{ if }t \geq t_2, \\ 0, & \mbox{ if }t \leq t_1.\end{cases}
\end{align}
Set $ w = \chi u $. Then we have
\begin{align*}
\left\{\begin{aligned}
& d w-\Delta w d t=\Big [
  (a_1 )_{x_1} \cdot \nabla \int_{0}^{x_{1}} w (t, \eta, x') d \eta   +  a_1 \cdot  \nabla w 
\\
& \quad \quad \quad \quad \quad \quad
+   ( 2 R^{-1} \nabla R  )_{x_1} \cdot \nabla \int_{0}^{x_{1}} w (t, \eta, x') d \eta  
+  2 R^{-1} \nabla R \cdot \nabla w 
 \\
& \quad \quad \quad \quad \quad \quad
+\big[R^{-1} \big(a_2 R + \Delta R - R_t +  \nabla R \cdot a_1 \big) \big]_{x_1} \int_{0}^{x_{1}} w (t, \eta, x') d \eta    \!\!\!\!\!\!\! \\
&\quad \quad \quad \quad \quad   \quad+R^{-1} \big(a_2 R + \Delta R - R_t +  \nabla R \cdot a_1 \big) w \Big] d t    \\
&\quad \quad \quad~ \quad \quad  \quad+ \chi  ( g R^{-1} )_{x_1} d W(t)
+ \chi_{t} u d t
 &&   \text { in } Q, \\
& w=0    &&  \text { on } \Sigma, \\
& w(T)=0    &&
\text { on } G .
\end{aligned}\right.
\end{align*}

Let us choose  $ \psi = t $ and $ \delta = 0 $
in \cref{carleman est1}. Noting that $ w(0) = w(T) = 0 $, we know that, for all $ \lambda \geq \lambda_{1} $ and $ s \geq s_{0}(\lambda) $, it holds that
\begin{align}\label{eq.s45-4}\notag
& \mathbb{E} \int_Q\Big[s \lambda \varphi \chi^2\Big|\Big(\frac{g}{R}\Big)_{x_1}\Big|^2+ \lambda |\nabla w|^2+s \lambda^{2} \varphi w^2\Big] \theta^{2}  d x d t \\\notag
& \leq \mathcal{C} \varphi(T) \mathbb{E} \int_0^T \int_G \theta^{2} \bigg|\int_0^{x_1} w\left(t, \eta, x^{\prime}\right) d \eta\bigg|^2 d x d t \\\notag
&\quad  +\mathcal{C} \varphi(T) \mathbb{E} \int_0^T \int_G \theta^{2} \bigg|\nabla \int_0^{x_1} w\left(t, \eta, x^{\prime}\right) d \eta\bigg|^2 d x d t \\
&\quad  +\mathcal{C} \varphi(T) \mathbb{E} \int_Q  \theta^{2} \chi^2\bigg|\nabla\bigg(\frac{g}{R}\bigg)_{x_1}\bigg|^2  d x d t+C \varphi(T) \mathbb{E} \int_Q \theta^{2} \chi_t^2 u^2   d x d t .
\end{align}

Noting that
\begin{align*}
\bigg |  \int_{0}^{x_{1}} w (t, \eta, x') d \eta \bigg |^{2} \leq l \int_{0}^{l} | w ( t, \eta, x') |^{2} d \eta
\end{align*}
for all $ (t, x_{1}, x') \in Q $, we can obtain that
\begin{align}\label{eq.s45-5} \notag
 \int_0^T \int_G \theta^{2} \bigg|\int_0^{x_1} w\left(t, \eta, x^{\prime}\right) d \eta\bigg|^2 d x d t
& \leq
l \int_{0}^{l} d x_{1} \int_0^T \int_{G'}  \int_{0}^{l} \theta^{2} |  w\left(t, \eta, x^{\prime}\right)  |^2 d \eta d x' d t
\\
& \leq
l^{2}   \int_0^T \int_{G}   \theta^{2} |  w\left(t, x_{1}, x^{\prime}\right)  |^2 d x_{1} d x' d t.
\end{align}
By virtue of
\begin{align*}
\nabla \int_0^{x_1} w\left(t, \eta, x^{\prime}\right) d \eta
& =\left(w\left(t, x_1, x^{\prime}\right), \int_0^{x_1} \nabla_{x^{\prime}} w\left(t, \eta, x^{\prime}\right) d \eta\right) \\
& =\left(w\left(t, x_1, x^{\prime}\right)-w\left(t, 0, x^{\prime}\right), \int_0^{x_1} \nabla_{x^{\prime}} w\left(t, \eta, x^{\prime}\right) d \eta\right)
\\
& =\int_0^{x_1} \nabla w\left(t, \eta, x^{\prime}\right) d \eta,
\end{align*}
we have
\begin{align} \label{eq.s45-6} \notag
 \int_0^T \int_G \theta^{2} \bigg|\nabla \int_0^{x_1} w\left(t, \eta, x^{\prime}\right) d \eta\bigg|^2 d x d t
& =\int_0^T \int_G \theta^{2} \bigg|\int_0^{x_1} \nabla w\left(t, \eta, x^{\prime}\right) d \eta\bigg|^2 d x d t \\
& \leq l^{2} \int_0^T \int_G \theta^{2} \left|\nabla w\left(t, x_1, x^{\prime}\right)\right|^2 d x d t .
\end{align}
Combining  \cref{eq.s45-4,eq.s45-5,eq.s45-6}, for all $ \lambda \geq \lambda_{1} $ and $ s \geq s_{0} $, we have
\begin{align*}
& \mathbb{E} \int_Q\theta^{2} \Big\{s \lambda \varphi \chi^2\Big[\Big(\frac{g}{R}\Big)_{x_1}\Big]^2+ \lambda |\nabla w|^2+s \lambda^{2} \varphi w^2\Big\} d x d t
\\
&\leq \mathcal{C} \varphi(T)\Big\{ \mathbb{E}\int_Q\theta^{2}\left[w^2+|\nabla w|^2\right]  d x d t 
+  \mathbb{E} \int_Q \chi^2\theta^{2}\left|\nabla\left(\frac{g}{R}\right)_{x_1}\right|^2  d x d t
\\
& \quad \quad \quad \quad \quad 
 + \mathbb{E} \int_Q \theta^{2} \chi_t^2 u^2  d x d t\Big\}
.
\end{align*}
Hence, there exists a $ \lambda_{2} \geq \lambda_{1} $ such that for all $ \lambda \geq \lambda_{2} $ and $ s \geq s_{0} $, it holds that
\begin{align*}
& s\bigg[\mathbb{E} \int_Q\bigg|\bigg(\chi \frac{g}{R}\bigg)_{x_1}\bigg|^2 \theta^{2} d x d t-\frac{\mathcal{C}}{s} \mathbb{E} \int_Q\bigg|\nabla\bigg(\chi \frac{g}{R}\bigg)_{x_1}\bigg|^2 \theta^{2} d x d t\bigg] \\
& +\mathbb{E} \int_Q\left(\chi^2|\nabla u|^2+s \chi^2 u^2\right) \theta^{2} d x d t \leq \mathcal{C} \mathbb{E} \int_Q \chi_t^2 u^2 \theta^{2} d x d t
.
\end{align*}

Noting that $ g = g_{1}(t) g_{2}(x) $ and $ R = R_{1}(t) R_{2}(x) $, we have
\begin{align*}
& \mathbb{E}\bigg[\int_G s\bigg(\bigg|\bigg(\frac{g_2}{R_2}\bigg)_{x_1}\bigg|^2-\frac{1}{s}\bigg|\nabla\bigg(\frac{g_2}{R_2}\bigg)_{x_1}\bigg|^2\bigg) d x \int_0^T\bigg(\frac{g_1}{R_1}\bigg)^2 \theta^{2} d t\bigg]\\
& +\mathbb{E} \int_Q\left(\chi^2|\nabla u|^2+s \chi^2 u^2\right) \theta^{2} d x d t \leq \mathcal{C} \mathbb{E} \int_Q \chi_t^2 u^2 \theta^{2} d x d t.
\end{align*}
Since $ g_{1} \in L^{2}_{\mathbb{F}}(0,T)$, $ g_{2} \in H^{1}(G) $, we obtain that
\begin{align}\label{eq.s45-8} \notag
&  s\bigg[ \int_G \bigg|\bigg(\frac{g_2}{R_2}\bigg)_{x_1}\bigg|^2 dx -\frac{1}{s} \int_G\bigg|\nabla\bigg(\frac{g_2}{R_2}\bigg)_{x_1}\bigg|^2 d x \bigg] \mathbb{E} \int_0^T\bigg(\frac{g_1}{R_1}\bigg)^2 \theta^{2} d t \\
& +\mathbb{E} \int_Q\left(\chi^2|\nabla u|^2+s \chi^2 u^2\right) \theta^{2} d x d t \leq \mathcal{C} \mathbb{E} \int_Q \chi_t^2 u^2 \theta^{2} d x d t
.
\end{align}
If $  \int_G \big|\big(\frac{g_2}{R_2}\big)_{x_1}\big|^2 dx = 0$, then $ \nabla \big(\frac{g_2}{R_2}\big)_{x_1} =0 $. If $  \int_G \big|\big(\frac{g_2}{R_2}\big)_{x_1}\big|^2 dx > 0$, then there exists an $s_2>0$ such that for all $s\geq s_2$,
$$
\int_G \bigg|\bigg(\frac{g_2}{R_2}\bigg)_{x_1}\bigg|^2 dx -\frac{1}{s} \int_G\bigg|\nabla\bigg(\frac{g_2}{R_2}\bigg)_{x_1}\bigg|^2 d x>0.
$$
Consequently,   for all $ s \geq s_{3} = \max \{s_{1},  s_{2} \} $, we have that
\begin{align}\label{eq.s45-7}
    \mathbb{E} \int_Q\left(\chi^2|\nabla u|^2+s \chi^2 u^2\right) \theta^{2} d x d t \leq \mathcal{C} \mathbb{E} \int_Q \chi_t^2 u^2 \theta^{2} d x d t
    .
\end{align}

From \cref{eq.s45-3-1}, we know that $ \chi_{t} \neq 0 $ only holds in $ (t_{1}, t_{2}) $. Thanks to \cref{eq.s45-7}, for $ s \geq s_{3} $, we obtain
\begin{align*}
\mathbb{E} \int_{\varepsilon}^{T} \int_G\left( |\nabla u|^2+s   u^2\right) \theta^{2} d x d t \leq \mathcal{C} \mathbb{E} \int_{t_{1}}^{t_{2}} \int_G \chi_t^2 u^2 \theta^{2} d x d t
,
\end{align*}
which implies
\begin{align*}
e^{2 s \varphi(\varepsilon)} \mathbb{E} \int_{\varepsilon}^{T} \int_G\left( |\nabla u|^2+s   u^2\right)  d x d t \leq \mathcal{C} e^{2 s \varphi(t_{2})} \mathbb{E} \int_{t_{1}}^{t_{2}} \int_G \chi_t^2 u^2   d x d t
.
\end{align*}
Hence, for $ s \geq s_{3} $, it holds that
\begin{align} \label{eq.s45-9-1}
\mathbb{E} \int_{\varepsilon}^{T} \int_G\left( |\nabla u|^2+s   u^2\right)  d x d t \leq \mathcal{C} e^{2 s (\varphi(t_{2}) - \varphi(\varepsilon))} \mathbb{E} \int_{t_{1}}^{t_{2}} \int_G \chi_t^2 u^2   d x d t.
\end{align}
Noting that $ \varphi(t_{2}) - \varphi(\varepsilon) < 0 $, letting $ s \rightarrow + \infty $ in \cref{eq.s45-9-1}, we have
\begin{align*}
u = 0 \text{ in } (\varepsilon, T) \times G, ~ \text{${\mathbb{P}}$-a.s.}
\end{align*}
This together with \cref{eq.s45-2-1} implies that
\begin{align*}
z = 0 \text{ in } (\varepsilon, T) \times G, ~ \text{${\mathbb{P}}$-a.s.}
\end{align*}
which means that
\begin{align}\label{eq.s45-10}
y = 0 \text{ in } (\varepsilon, T) \times G, ~ \text{${\mathbb{P}}$-a.s.}
\end{align}
Combining \cref{eq.s45-10,system2bu}, we deduce that
\begin{align}\label{eq.s45-10-1}
\int_\varepsilon^t h (s, x^{\prime} ) R(s, x) d s+\int_\varepsilon^t g(s, x) d W(s)=0 \quad \text { for all } t \in(\varepsilon, T), ~ \text{${\mathbb{P}}$-a.s.}
\end{align}
Noting that $\int_\varepsilon^t h (s, x^{\prime} ) R(s, x) d s$ is absolutely continuous with respect to $t$, we get from \cref{eq.s45-10-1} that $\int_\varepsilon^t g(s, x) d W(s)$ is also absolutely continuous with respect to $t$. Consequently, its quadratic variation is zero, which concludes  that
\begin{align*}
g(t,x)= 0 , \text{ for } (t, x) \in  (\varepsilon, T) \times G, ~ \text{${\mathbb{P}}$-a.s.}
\end{align*}
This, together with \cref{eq.s45-10-1}, implies that
\begin{align*}
    \int_\varepsilon^t h (s, x^{\prime} ) R(s, x) d s, \text{ for } (t, x) \in  (\varepsilon, T) \times G, ~ \text{${\mathbb{P}}$-a.s.}
\end{align*}
Therefore,
\begin{align*}
h(t, x')  = 0 , \text{ for } (t, x) \in  (\varepsilon, T) \times G, ~ \text{${\mathbb{P}}$-a.s.}
\end{align*}
Since $ \varepsilon > 0 $ is arbitrary, we complete the proof.
\end{proof}

\section{Reconstruct the unknown state with a terminal measurement}
\label{secReconstructTer}

In this section, we focus on \cref{probRI}. In the previous
sections, we study whether the unknown state/source could be
uniquely determined through suitable measurements. In such a
context, we do not take into account the errors in the measurements.
When addressing the problem of reconstructing the unknown
state/source from the measurements, the measurement errors must be
considered. Various methods of ``regularization" have been developed
for inverse problems of deterministic PDEs to tackle this issue, all
of which, in essence, utilize additional information about the
unknown function. For instance, the method introduced by Tikhonov
aims to minimize a quadratic functional that contains higher
derivatives to reproduce the measured data. In this section, we
employ this method to investigate the reconstruction problem for the
unknown state with a terminal measurement.

Denote $ \mathcal{H}_{0} \deq L^{2}_{\mathcal{F}_{0}}(\Omega; H^{2}(G) \cap H_{0}^{1}(G)) $.
Suppose $ y^{*}_{T} \in L^{2}_{\mathcal{F}_{T}}(\Omega;L^{2}(G))$ is the exact terminal value of the system \cref{eqDou1} with the initial datum $ y^{*}_{0} \in \mathcal{H}_{0}$.
Let $ y_{T}^{\delta} \in  L^{2}_{\mathcal{F}_{T}}(\Omega;L^{2}(G)) $ be the measured data satisfying
\begin{align}\label{eqTikhonovDistance}
    |y^{*}_{T} - y_{T}^{\delta}|_{L^{2}_{\mathcal{F}_{T}}(\Omega;L^{2}(G)) } \leq \delta,
\end{align}
where $ \delta > 0 $  is the noise level of data.

Consider the following Tikhonov type functional:
\begin{align} \label{eqTikhonovFucntional} \notag
    \mathcal{J}_\tau\left(y_0\right)=
    & \mathbb{E} \int_G \Big \{y_0+\int_{0}^T\Big [\sum_{i, j=1}^n\big(b^{j k} y_{x_{j}}\big)_{x_{k}}+b_1 \cdot \nabla y+b_2 y+f\Big] d s
    \\
    &  \quad \quad ~
    +\int_{0}^T\left(b_3 y+g\right) d W(s)- y_T^\delta\Big\}^2 d x   +\tau \mathbb{E}|y_0|_{H^2(G)}^2
    ,
\end{align}
where $ y_0 \in \mathcal{H}_{0} $  and $ y $ solves \cref{eqDou1} with the initial datum $ y_{0} $.

Since   $\mathcal{J}_\tau(\cdot) $ is coercive, convex and lower semi-continuous, we have the following result.

\begin{proposition}
    For every $ \tau\in (0,1) $, there exists a unique minimizer $ \bar{y}_0 \in \mathcal{H}_{0} $ of the functional \cref{eqTikhonovFucntional}.
\end{proposition}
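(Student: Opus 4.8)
## Proof proposal

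The plan is to verify the three standard hypotheses of the direct method of the calculus of variations for $\mathcal{J}_\tau$ on the Hilbert space $\mathcal{H}_0 = L^2_{\mathcal{F}_0}(\Omega; H^2(G)\cap H^1_0(G))$: strict convexity, coercivity, and weak lower semicontinuity. Existence will follow from coercivity plus weak lower semicontinuity (a coercive, weakly lsc functional on a reflexive space attains its infimum on any closed convex set, here the whole space), and uniqueness will follow from strict convexity. The key structural observation is that the solution map $y_0 \mapsto y$ of the linear equation \cref{eqDou1} is \emph{affine}: writing $y = y^{\mathrm{lin}}(y_0) + y^{\mathrm{part}}$, where $y^{\mathrm{lin}}$ is the solution with data $y_0$ and zero source ($f=g=0$) and $y^{\mathrm{part}}$ is the solution with zero initial datum and the given sources $f,g$, linearity and well-posedness (\cref{eqWellposed}) give that $y_0 \mapsto y$ is continuous and affine from $\mathcal{H}_0$ into $L^2_{\mathbb{F}}(\Omega; C([0,T];L^2(G)))\cap L^2_{\mathbb{F}}(0,T;H^1_0(G))$.

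First I would rewrite the first term of $\mathcal{J}_\tau$ as $\mathbb{E}\int_G \big(\Phi(y_0) - y_T^\delta\big)^2\,dx$, where $\Phi(y_0) := y(T)$ is exactly the terminal value of the solution to \cref{eqDou1} with initial datum $y_0$; by the Itô-form representation written in \cref{eqTikhonovFucntional} this $\Phi$ is precisely the affine map $y_0 \mapsto y(T) = y^{\mathrm{lin}}(T)(y_0) + y^{\mathrm{part}}(T)$, so $\Phi: \mathcal{H}_0 \to L^2_{\mathcal{F}_T}(\Omega;L^2(G))$ is affine and bounded. Hence $y_0 \mapsto |\Phi(y_0) - y_T^\delta|^2_{L^2_{\mathcal{F}_T}(\Omega;L^2(G))}$ is a convex continuous functional (composition of a bounded affine map with a continuous convex quadratic), and the Tikhonov term $\tau\, \mathbb{E}|y_0|^2_{H^2(G)} = \tau|y_0|^2_{\mathcal{H}_0}$ is strictly convex, continuous, and coercive. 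Therefore $\mathcal{J}_\tau$ is strictly convex and continuous on $\mathcal{H}_0$, and $\mathcal{J}_\tau(y_0) \geq \tau|y_0|^2_{\mathcal{H}_0} \to +\infty$ as $|y_0|_{\mathcal{H}_0}\to\infty$, giving coercivity.

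For weak lower semicontinuity: a convex, continuous (hence norm-lsc) functional on a Banach space is weakly lsc, so this is immediate from the previous step; alternatively one invokes that the quadratic part is the sum of a convex continuous term and a norm-squared term, both weakly lsc, the latter by weak lower semicontinuity of the Hilbert norm. Combining coercivity and weak lsc on the reflexive space $\mathcal{H}_0$: take a minimizing sequence $\{y_0^{(m)}\}$, which is bounded by coercivity, extract a weakly convergent subsequence $y_0^{(m_k)}\rightharpoonup \bar y_0$, and conclude $\mathcal{J}_\tau(\bar y_0) \leq \liminf_k \mathcal{J}_\tau(y_0^{(m_k)}) = \inf \mathcal{J}_\tau$, so $\bar y_0$ is a minimizer. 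Uniqueness is then forced by strict convexity: if $\bar y_0 \neq \tilde y_0$ were two minimizers, the midpoint would have strictly smaller value.

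The only mildly delicate point — the step I would treat most carefully — is confirming that the map $y_0 \mapsto \Phi(y_0)$ is genuinely affine and bounded as a map $\mathcal{H}_0 \to L^2_{\mathcal{F}_T}(\Omega;L^2(G))$, which rests on the linear well-posedness estimate for \cref{eqDou1} (an energy estimate of the form $|y(T)|_{L^2_{\mathcal{F}_T}(\Omega;L^2(G))} \leq \mathcal{C}\big(|y_0|_{L^2_{\mathcal{F}_0}(\Omega;L^2(G))} + |f|_{L^2_{\mathbb{F}}(0,T;L^2(G))} + |g|_{L^2_{\mathbb{F}}(0,T;L^2(G))}\big)$, see \cref{eqWellposed}); given the coefficient regularity in \cref{eqDou1} this is standard, and the decomposition $y = y^{\mathrm{lin}}(y_0) + y^{\mathrm{part}}$ makes affineness transparent. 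Everything else is the routine direct-method machinery, exactly as the paragraph preceding the statement already asserts ("coercive, convex and lower semi-continuous").
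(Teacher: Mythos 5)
Your proposal is correct and follows essentially the same route the paper takes: the text justifies the Proposition with the single observation that $\mathcal{J}_\tau$ is coercive, convex and lower semi-continuous, and your argument is precisely the direct-method elaboration of that remark (affine continuous dependence of $y(T)$ on $y_0$, strict convexity and coercivity from the Tikhonov term, weak lower semicontinuity from convexity plus continuity). No gaps; you simply supply the details the paper leaves implicit.
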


We have the following error estimate.

\begin{theorem}
    For any $ t_{0} \in (0,T) $,
    there exists a positive constant $ \mathcal{C} = \mathcal{C}(t_{0})$ such that for  $ \gamma = \gamma(t_{0},T) \in (0,1)$ as stated in \cref{thmDou2022}, for  all $ \delta \in (0,1) $ and $ \tau = \delta^{2} $,
    putting $ \bar{y}_{0}^{\delta} $  as the minimizer of functional  $\mathcal{J}_\tau(\cdot) $, then
    we have
    \begin{align*}
        &|y(t_{0}; \bar{y}_{0}^{\delta})  -  y(t_{0}; y^{*}_{0})|^{2}_{L_{\mathcal{F}_{t_{0}}}^2\left(\Omega ;   H ^1(G)\right)}
        \leq
        \mathcal{C}  M  \operatorname{exp}  [-  3^{-\gamma} \ln^{\gamma} (M^{-1/2} \delta^{-1}) ]
        ,
    \end{align*}
    where $ M = 1 + |y_{0}^{*}|_{\mathcal{H}_{0}}^{2}  $ and  $y(\cdot; \bar{y}_{0}^{\delta})$ (resp. $  y(\cdot; y^{*}_{0})$) is the solution to \cref{eqDou1} with respect to the initial datum $\bar{y}_{0}^{\delta}$  (resp. $   y^{*}_{0}$).
\end{theorem}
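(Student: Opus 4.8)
The plan is to combine the minimizing property of $\bar y_0^\delta$ for the Tikhonov functional with the conditional stability estimate of \cref{thmDou2022}. First I would exploit that $\mathcal{J}_\tau(\bar y_0^\delta)\le\mathcal{J}_\tau(y_0^*)$. Since $y(\cdot;y_0^*)$ solves \cref{eqDou1} with terminal value $y_T^*$, the first (fidelity) term in $\mathcal{J}_\tau(y_0^*)$ equals $\mathbb{E}\int_G|y_T^*-y_T^\delta|^2\,dx\le\delta^2$ by \cref{eqTikhonovDistance}, and the regularization term is $\tau\,\mathbb{E}|y_0^*|_{H^2(G)}^2\le\tau(1+|y_0^*|_{\mathcal{H}_0}^2)=\tau M$ (after absorbing the constant $1$). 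With $\tau=\delta^2$ this gives $\mathcal{J}_\tau(\bar y_0^\delta)\le\delta^2 M+\delta^2$, hence both $\mathbb{E}\int_G|y(T;\bar y_0^\delta)-y_T^\delta|^2\,dx\le \mathcal{C} M\delta^2$ and $\tau\,\mathbb{E}|\bar y_0^\delta|_{H^2(G)}^2\le \mathcal{C} M\delta^2$, so that $\mathbb{E}|\bar y_0^\delta|_{H^2(G)}^2\le \mathcal{C} M$.

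**Key steps.** Let $v=y(\cdot;\bar y_0^\delta)-y(\cdot;y_0^*)$, which solves \cref{eqDou1} with zero source and with initial datum $\bar y_0^\delta-y_0^*$. From the two bounds above, a triangle inequality together with $|y(T;y_0^*)-y_T^\delta|=|y_T^*-y_T^\delta|\le\delta$ yields
\begin{align*}
|v(T)|_{L^2_{\mathcal{F}_T}(\Omega;L^2(G))}\le |y(T;\bar y_0^\delta)-y_T^\delta|_{L^2_{\mathcal{F}_T}(\Omega;L^2(G))}+\delta\le \mathcal{C} M^{1/2}\delta,
\end{align*}
and $|\bar y_0^\delta-y_0^*|_{L^2_{\mathcal{F}_0}(\Omega;H^1(G))}\le \mathcal{C}(|\bar y_0^\delta|_{H^2(G)}+|y_0^*|_{H^2(G)})_{L^2}\le \mathcal{C} M^{1/2}$, so $\bar y_0^\delta-y_0^*\in U_{\mathcal{C}M^{1/2},1}$. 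Next I would apply \cref{thmDou2022} to the pair $(\bar y_0^\delta,y_0^*)$ (strictly speaking one applies the linear analogue; since $v$ solves the homogeneous equation \cref{eqDou1}, the same Carleman estimate \cref{carleman est1-1} and the energy arguments in the proof of \cref{thmDou2022} apply verbatim), obtaining with the bound $\beta$ from \cref{thmDou2022beta}:
\begin{align*}
|v(t_0)|^2_{L^2_{\mathcal{F}_{t_0}}(\Omega;H^1(G))}\le \mathcal{C}\,|v(T)|_{L^2_{\mathcal{F}_T}(\Omega;L^2(G))}+\mathcal{C}(M^{1/2})^2\exp\!\big[-3^{-\gamma}\ln^\gamma(|v(T)|^{-1})\big].
\end{align*}
Substituting $|v(T)|\le \mathcal{C} M^{1/2}\delta$, the first term is $\mathcal{C} M^{1/2}\delta$ and the second is $\mathcal{C} M\exp[-3^{-\gamma}\ln^\gamma(M^{-1/2}\delta^{-1})]$ (monotonicity of $\eta\mapsto\eta$ and of $\eta\mapsto\exp[-3^{-\gamma}\ln^\gamma(\eta^{-1})]$ in the regime $\eta$ small). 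Since for $M\ge1$, $\delta$ small, the second term dominates the first up to a constant, the claimed estimate follows.

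**Main obstacle.** The principal technical point is the clean bookkeeping of the constant $M=1+|y_0^*|_{\mathcal{H}_0}^2$ through the stability estimate: \cref{thmDou2022} is stated with an \emph{a priori} bound $M$ on the initial data, and here $M^{1/2}$ (not $M$) is the relevant radius for $U_{M,1}$, so one must track how $\beta$ in \cref{thmDou2022beta} scales — specifically that the coefficient $M^2$ there becomes $(\mathcal{C}M^{1/2})^2=\mathcal{C}M$, which is exactly what produces the factor $M$ in the final estimate. A secondary nuisance is justifying that \cref{thmDou2022}, proven for the semilinear equation \cref{sp-eq1} under the Lipschitz conditions \cref{10.5-eq1-1}, transfers to the linear equation \cref{eqDou1} with bounded coefficients $b_1,b_2,b_3$; this is routine because the Carleman estimate \cref{carleman est1-1} is for the linear operator and the only use of the structure was to absorb the lower-order terms, which works identically here. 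Everything else — coercivity/convexity giving the minimizer, the standard energy estimate comparing $|v(t_0)|_{H^1}$ across times, and the elementary monotonicity comparisons — is routine.
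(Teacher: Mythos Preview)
Your argument is correct and leads to the same estimate as the paper, but you arrive at the key \emph{a priori} bounds by a different mechanism. You use the raw minimizing property $\mathcal{J}_\tau(\bar y_0^\delta)\le\mathcal{J}_\tau(y_0^*)$ and read off both the fidelity bound and the regularization bound from the two nonnegative summands of $\mathcal{J}_\tau$. The paper instead writes down the Euler--Lagrange equation $\mathcal{J}_\tau'(\bar y_0^\delta)\phi=0$, subtracts from it the trivially satisfied identity for $y_0^*$, and tests with $\phi=\bar y_0^\delta-y_0^*$; after Cauchy--Schwarz this yields directly $\mathbb{E}|\bar y_0^\delta-y_0^*|_{H^2(G)}^2\le M$ and $\mathbb{E}|y(T;\bar y_0^\delta)-y_T^*|_{L^2(G)}^2\le M\delta^2$, i.e.\ bounds on the \emph{differences} rather than on $\bar y_0^\delta$ itself. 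Your route is more elementary (no derivative computation, no introduction of the auxiliary operators $\mathcal{A},\mathcal{B}$) and is the standard ``source-condition--free'' argument in Tikhonov theory; the paper's variational-equation route is slightly sharper in that it produces the difference bounds in one step without a triangle inequality, and generalizes more readily when the forward map is not evaluated at the exact data. After these bounds, both proofs invoke \cref{thmDou2022} in the same way, and your bookkeeping of how the $M^2$ in \eqref{thmDou2022beta} becomes $\mathcal{C}M$ (because the effective radius is $\mathcal{C}M^{1/2}$) is exactly right.
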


\begin{proof}
    For simplicity of notations, we denote
    \begin{align*}
        & \mathcal{A} y_{0} \deq  y_0+\int_{0}^T\Big [\sum_{i, j=1}^n\big(b^{j k} y_{x_{j}}(s;y_{0})\big)_{x_{k}}+b_1 \cdot \nabla y(s;y_{0})+b_2 y(s;y_{0})+f\Big] d s
        \\
        & \quad ~ \quad
        +\int_{0}^T\left(b_3 y(s;y_{0})+g\right) d W(s), \quad \text{${\mathbb{P}}$-a.s.}
        ,
    \end{align*}
    and
    \begin{align*}
        & \mathcal{B} y_{0} \deq  y_0+\int_{0}^T\Big [\sum_{i, j=1}^n\big(b^{j k} y_{x_{j}}(s;y_{0})\big)_{x_{k}}+b_1 \cdot \nabla y(s;y_{0})+b_2 y(s;y_{0}) \Big] d s
        \\
        & \quad ~ \quad
        +\int_{0}^T\left(b_3 y(s;y_{0}) \right) d W(s), \quad \text{${\mathbb{P}}$-a.s.}
    \end{align*}

    The G\^{a}teaux derivation of $\mathcal{J}_\tau(\cdot) $ at $y_{0}$ reads
    \begin{align*}
        \mathcal{J}'_{\tau}(y_{0}) \phi =
        2 \mathbb{E} \int_{G} \mathcal{A} y_{0} \mathcal{B} \phi d x
        - 2 \mathbb{E} \int_{G} y_{T}^{\delta} \mathcal{B} \phi d x
        + 2 \tau \mathbb{E} \langle y_{0}, \phi \rangle_{H^{2}(G)},
        \quad
        \forall \, \phi \in \mathcal{H}_{0}
        .
    \end{align*}
    Particularly, for the minimizer $ \bar{y}_{0}^{\delta} $, we obtain
    \begin{align} \label{eqTikhonovConvergencePf1}
        \mathbb{E} \int_{G} \mathcal{A} \bar{y}^{\delta}_{0} \mathcal{B} \phi d x
        +   \tau \mathbb{E} \langle \bar{y}^{\delta}_{0}, \phi \rangle_{H^{2}(G)}
        = \mathbb{E} \int_{G} y_{T}^{\delta} \mathcal{B} \phi d x ,
        \quad
        \forall \, \phi \in \mathcal{H}_{0}
        .
    \end{align}
    Recalling that $ y^{*}_{0} $ is the exact initial value, thanks to \cref{eqDou1}, we obtain
    \begin{align} \label{eqTikhonovConvergencePf2}
        \mathbb{E} \int_{G} \mathcal{A} y^{*}_{0} \mathcal{B} \phi d x
        \!+\!   \tau \mathbb{E} \langle y^{*}_{0}, \phi \rangle_{H^{2}(G)}
        \!=\! \mathbb{E} \int_{G} y^{*}_{T}  \mathcal{B} \phi d x
        \!+\!   \tau \mathbb{E} \langle y^{*}_{0}, \phi \rangle_{H^{2}(G)}
        ,
        ~
        \forall \, \phi \in \mathcal{H}_{0}
        .
    \end{align}
    From \cref{eqTikhonovConvergencePf1,eqTikhonovConvergencePf2}, we have
    \begin{align} \label{eqTikhonovConvergencePf3} \notag
        & \mathbb{E} \int_{G} \mathcal{A} (\bar{y}_{0}^{\delta} - y^{*}_{0} ) \mathcal{B} \phi d x
        +   \tau \mathbb{E} \langle (\bar{y}_{0}^{\delta} - y^{*}_{0}), \phi \rangle_{H^{2}(G)}
        \\
        & =
        \mathbb{E} \int_{G} (y_{T}^{\delta} - y^{*}_{T} ) \mathcal{B} \phi d x
        -   \tau \mathbb{E} \langle y^{*}_{0}, \phi \rangle_{H^{2}(G)}
        ,
        \quad
        \forall \, \phi \in \mathcal{H}_{0}
        .
    \end{align}

    Choose $ \phi = \bar{y}_{0}^{\delta} - y^{*}_{0}  $ in \cref{eqTikhonovConvergencePf3}.
    Noting that $ \mathcal{B} \phi = \mathcal{A} (\bar{y}_{0}^{\delta} - y^{*}_{0} ) $, using Cauchy-Schwarz inequality, we obtain that
    \begin{align} \label{eqTikhonovConvergencePf4} \notag
        & \mathbb{E} \int_{G}  ( \mathcal{A} \bar{y}_{0}^{\delta} - \mathcal{A} y^{*}_{0} )^{2}   d x
        +   \tau \mathbb{E} |\bar{y}_{0}^{\delta} - y^{*}_{0}|^{2}_{H^{2}(G)}
        \\ \notag
        & =
        \frac{1}{2} \mathbb{E} |y_{T}^{\delta} - y^{*}_{T} |_{H^{2}(G)}^{2}
        + \frac{1}{2} \mathbb{E} \int_{G}  ( \mathcal{A} \bar{y}_{0}^{\delta} - \mathcal{A} y^{*}_{0} )^{2}   d x
        +\frac{1}{2} \tau   \mathbb{E}  |y^{*}_{0}|_{H^{2}(G)}^{2}
        \\
        & \quad
        +\frac{1}{2} \tau   \mathbb{E}  |\bar{y}_{0}^{\delta} - y^{*}_{0}|_{H^{2}(G)}^{2}
        .
    \end{align}
    Combining \cref{eqTikhonovConvergencePf4,eqTikhonovDistance}, recalling that  $ \tau = \delta^{2} $, we have
    \begin{align*}
        \mathbb{E} \int_{G}  ( \mathcal{A} \bar{y}_{0}^{\delta} - \mathcal{A} y^{*}_{0} )^{2}   d x
        +   \delta^{2} \mathbb{E} |\bar{y}_{0}^{\delta} - y^{*}_{0}|^{2}_{H^{2}(G)}
        \leq
        \delta^{2} (1+ \mathbb{E}  |y^{*}_{0}|_{H^{2}(G)}^{2})
        .
    \end{align*}
    Consequently,
    \begin{align} \label{eqTikhonovConvergencePf5}
        \mathbb{E} |\bar{y}_{0}^{\delta} - y^{*}_{0}|^{2}_{H^{2}(G)} \leq 1+ \mathbb{E}  |y^{*}_{0}|_{H^{2}(G)}^{2}
        ,
    \end{align}
    and
    \begin{align} \label{eqTikhonovConvergencePf6}
        \mathbb{E} |y(T;\bar{y}_{0}^{\delta})-y^{*}_{T}|_{L^{2}(G)}^{2} \leq \mathbb{E} \int_{G}  ( \mathcal{A} \bar{y}_{0}^{\delta} - \mathcal{A} y^{*}_{0} )^{2}   d x
        \leq \delta^{2} (1+ \mathbb{E}  |y^{*}_{0}|_{H^{2}(G)}^{2}).
    \end{align}
    From \cref{eqTikhonovConvergencePf5}, \cref{eqStableM} and \cref{thmDou2022}, we get the desired result.
\end{proof}

\section{Reconstruct the  unknown state with  boundary measurements}
\label{secReconstruct}

In this section, we consider \Cref{probICPDou}.

To solve this problem, we use Tikhonov regularization method.
We first construct a Tikhonov functional and prove the uniqueness of the minimizer.
Then, we  prove the convergence rate for the optimization problem.

Define  an operator $\mathcal{P}:  L^{2}_{\mathbb{F}}(0,T;H^{2}(G))\to L^{2}_{\mathbb{F}}(0,T;L^{2}(G))$ as follows:
\begin{align*}
(\mathcal{P} u)(t,x)
\deq & u(t, x)-u(0, x)
\\
&
- \int_0^t  \sum_{j, k=1}^n\big(b^{j k}(s, x) u_{x_{j}}(s, x)\big)_{x_{k}}  d s
\\
&
- \int_0^t\left[\left\langle a_1(s, x), \nabla u(s, x)\right\rangle+a_2(s, x) u(s, x)\right] ds
\\
&
-\int_0^t a_3(s, x) u(s, x) d W(s), \quad \text{${\mathbb{P}}$-a.s.},\quad \forall u\in \mathcal{H}, \;  \mbox{a.e. }(t,x) \in Q.
\end{align*}

Let
\begin{align*}
\mathcal{H} \deq \{ u \in L^{2}_{\mathbb{F}}(0,T;H^{2}(G))
& \mid
\mathcal{P} u  \in L^{2}_{\mathbb{F}}(\Omega; H^{1}(0,T;L^{2}(G))),
\\
& \quad
u|_{\Gamma_{0}} = g_{1}, ~ \partial_{\nu} u |_{\Gamma_{0}} = g_{2} \}.
\end{align*}
If $u$ is a solution to the equation \cref{eqInversPParabolicPartialBoudaryeq}, then
$\mathcal{P} u(t)=\int_{0}^{t} f(s) d s$, which yields this $u$ belong to $\mathcal{H}$. Consequently, $\mathcal{H}\neq\emptyset$.

Denote $ \mathbf{I}f(t) = \int_{0}^{t} f(s) d s $ for $ t \in [0,T] $.
Given a function $ F \in \mathcal{H}$, we construct the Tikhonov functional as follows:
\begin{align}
\label{eqDou2023Fucntional}
\mathcal{J}_{\tau}(u) = | \mathcal{P} u - \mathbf{I}f |^{2}_{L^{2}_{\mathbb{F}}(\Omega;H^{1}(0,T;L^{2}(G)))} + \tau | u - F |^{2}_{L^{2}_{\mathbb{F}}(0,T;H^{2}(G))},
\end{align}
where $ u \in \mathcal{H} $ and $ \tau \in (0,1) $.

We have the following result.

\begin{proposition}
For $ \tau \in (0,1) $, there exists a unique minimizer $ u_{\tau} \in \mathcal{H} $ of the functional $\mathcal{J}_{\tau}(\cdot) $.
\end{proposition}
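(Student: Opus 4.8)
The plan is to use the direct method of the calculus of variations, exploiting that $\mathcal{J}_{\tau}$ is strictly convex and coercive on the affine set $\mathcal{H}$; this yields existence and uniqueness simultaneously. First I would fix the functional-analytic setting. Equip the space
\[
\mathcal{X} \deq \big\{ u \in L^{2}_{\mathbb{F}}(0,T;H^{2}(G)) \;\big|\; \mathcal{P}u \in L^{2}_{\mathbb{F}}(\Omega;H^{1}(0,T;L^{2}(G))) \big\}
\]
with the graph norm $|u|_{\mathcal{X}}^{2} \deq |u|^{2}_{L^{2}_{\mathbb{F}}(0,T;H^{2}(G))} + |\mathcal{P}u|^{2}_{L^{2}_{\mathbb{F}}(\Omega;H^{1}(0,T;L^{2}(G)))}$. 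Since $\mathcal{P}$ is given by an explicit formula built only from bounded operations (an It\^o integral and deterministic integrations against the bounded coefficients $b^{jk}, a_{1}, a_{2}, a_{3}$), it is a closed operator, so $\mathcal{X}$ is a Hilbert space and $\mathcal{P}\colon \mathcal{X} \to L^{2}_{\mathbb{F}}(\Omega;H^{1}(0,T;L^{2}(G)))$ is bounded. The constraints $u|_{\Gamma_{0}} = g_{1}$ and $\partial_{\nu}u|_{\Gamma_{0}} = g_{2}$ cut out a closed affine subspace of $\mathcal{X}$, because the trace maps $u \mapsto u|_{\Gamma_{0}}$ and $u \mapsto \partial_{\nu}u|_{\Gamma_{0}}$ are continuous on $H^{2}(G)$, hence on $\mathcal{X}$; thus $\mathcal{H}$ is a closed convex subset of $\mathcal{X}$, and it is nonempty by the observation preceding the statement.

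Next I would record the three relevant properties of $\mathcal{J}_{\tau}$ on $\mathcal{X}$. First, continuity: $\mathcal{J}_{\tau}$ is a finite sum of squared norms of the bounded affine maps $u \mapsto \mathcal{P}u - \mathbf{I}f$ and $u \mapsto u - F$, the data $\mathbf{I}f$ and $F$ being fixed. Second, strict convexity: the map $u \mapsto |u - F|^{2}_{L^{2}_{\mathbb{F}}(0,T;H^{2}(G))}$ is strictly convex on $\mathcal{X}$ since the inclusion $\mathcal{X} \hookrightarrow L^{2}_{\mathbb{F}}(0,T;H^{2}(G))$ is injective, and therefore (here $\tau>0$ is essential) $\mathcal{J}_{\tau}$ is strictly convex. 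Third, coercivity: from $\mathcal{J}_{\tau}(u) \ge \tau|u - F|^{2}_{L^{2}_{\mathbb{F}}(0,T;H^{2}(G))}$ together with $\mathcal{J}_{\tau}(u) \ge \tfrac12|\mathcal{P}u|^{2}_{L^{2}_{\mathbb{F}}(\Omega;H^{1}(0,T;L^{2}(G)))} - |\mathbf{I}f|^{2}_{L^{2}_{\mathbb{F}}(\Omega;H^{1}(0,T;L^{2}(G)))}$ one gets $\mathcal{J}_{\tau}(u) + \mathcal{C} \ge c\,|u|^{2}_{\mathcal{X}}$ for constants $c>0$, $\mathcal{C}>0$, so $\mathcal{J}_{\tau}(u) \to \infty$ as $|u|_{\mathcal{X}} \to \infty$.

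With these in hand, existence follows by the standard argument: set $d = \inf_{\mathcal{H}}\mathcal{J}_{\tau} \in [0,\infty)$ and take a minimizing sequence $\{u_{m}\} \subset \mathcal{H}$; by coercivity it is bounded in $\mathcal{X}$, so by reflexivity of the Hilbert space $\mathcal{X}$ a subsequence satisfies $u_{m_{j}} \rightharpoonup u_{\tau}$ weakly; since $\mathcal{H}$ is convex and closed it is weakly closed, hence $u_{\tau} \in \mathcal{H}$; and since $\mathcal{J}_{\tau}$ is convex and continuous it is weakly lower semicontinuous, so $\mathcal{J}_{\tau}(u_{\tau}) \le \liminf_{j}\mathcal{J}_{\tau}(u_{m_{j}}) = d$, i.e. $u_{\tau}$ is a minimizer. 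Uniqueness is immediate from strict convexity: if $u_{1} \ne u_{2}$ were both minimizers, then $\tfrac12(u_{1}+u_{2}) \in \mathcal{H}$ and $\mathcal{J}_{\tau}\big(\tfrac12(u_{1}+u_{2})\big) < d$, a contradiction.

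The main obstacle is not the variational argument itself but the functional-analytic preliminaries: verifying that the graph norm makes $\mathcal{X}$ complete (equivalently, that $\mathcal{P}$ is closed, which reduces to the continuity in the appropriate topologies of the stochastic and deterministic integrals in its definition) and that $\mathcal{H}$ is weakly closed (the trace constraints pass to weak limits). Once the correct Hilbert-space structure is pinned down, the remainder is the textbook Tikhonov existence-and-uniqueness argument, and the coerciveness, convexity and lower semicontinuity already asserted before the statement are exactly what make it run.
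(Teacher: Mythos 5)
Your proof is correct, but it takes a genuinely different route from the paper. The paper first passes to the homogeneous space $\mathcal{H}_{0}$, equips it with the $\tau$-dependent inner product $\langle \varphi,\psi\rangle_{\mathcal{H}_{0}} = \langle \mathcal{P}\varphi,\mathcal{P}\psi\rangle + \tau\langle\varphi,\psi\rangle$, takes the completion, derives the Euler--Lagrange equation for $\overline{\mathcal{J}}_{\tau}$, checks via the exact quadratic identity that solving this equation is equivalent to minimizing, and then produces the solution by the Riesz representation theorem applied to the bounded functional $h \mapsto \langle \mathbf{I}f - \mathcal{P}F, \mathcal{P}h\rangle$. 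You instead run the direct method: coercivity and strict convexity of $\mathcal{J}_{\tau}$ on the graph-norm Hilbert space $\mathcal{X}$, weak compactness of a minimizing sequence, weak closedness of the affine constraint set, and weak lower semicontinuity. The two arguments are essentially dual faces of the same quadratic-minimization fact (Riesz/Lax--Milgram versus projection onto a closed convex set), and the technical burden lands in the same place in both: the paper handles the possible incompleteness of $\mathcal{H}_{0}$ under its natural norm by passing to an abstract completion (which it then silently identifies with $\mathcal{H}_{0}$), whereas you must verify that $\mathcal{X}$ is complete, i.e.\ that $\mathcal{P}$ is closed from $L^{2}_{\mathbb{F}}(0,T;H^{2}(G))$ into $L^{2}_{\mathbb{F}}(\Omega;H^{1}(0,T;L^{2}(G)))$ --- which you correctly isolate as the main point, and which follows from the continuity of $\mathcal{P}$ into the weaker space $L^{2}_{\mathbb{F}}(0,T;L^{2}(G))$ together with uniqueness of limits. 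Your approach has the advantage of avoiding the slightly awkward enlargement of the function space; the paper's has the advantage of directly exhibiting the minimizer as a Riesz representative and of producing the variational equation \cref{eqDou20233}, which it needs anyway for the subsequent error estimate.
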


\begin{proof}
The uniqueness of the minimizer follows from the strong convexity of the functional $\mathcal{J}_{\tau}(\cdot) $. We only need to show the existence of a minimizer.

Let
\begin{align*}
\mathcal{H}_{0} \deq \{ u \in L^{2}_{\mathbb{F}}(0,T;H^{2}(G))
& \mid
\mathcal{P} u  \in L^{2}_{\mathbb{F}}(\Omega; H^{1}(0,T;L^{2}(G))),
\\
& \quad
u|_{\Gamma_{0}} =  \partial u |_{\Gamma_{0}} = 0 \}
.
\end{align*}
Fix $ \tau \in (0,1) $.
Define the inner products as follows:
\begin{align}
\label{eqDou20231}
\langle  \varphi, \psi \rangle_{\mathcal{H}_{0}} =
\langle \mathcal{P} \varphi, \mathcal{P} \psi \rangle_{L^{2}_{\mathbb{F}}(\Omega; H^{1}(0,T;L^{2}(G)))}
+
\tau \langle \varphi, \psi \rangle_{L^{2}_{\mathbb{F}}(0,T;H^{2}(G))}
\end{align}
where $ \varphi, \psi \in \mathcal{H}_{0} $.
Let $\overline{\mathcal{H}}_{0}$ be the completion of $\mathcal{H}_{0}$ with respect to the inner product $\langle \cdot, \cdot \rangle_{\mathcal{H}_{0}}$, and still denoted by $\mathcal{H}_{0}$ for the sake of simplicity.

For $ u \in \mathcal{H} $, let $ v = u - F $. Then $ v \in \mathcal{H}_{0} $.
Consider the following functional
\begin{align*}
\overline{\mathcal{J}}_{\tau}(v) = | \mathcal{P} v + \mathcal{P} F - \mathbf{I}f |^{2}_{L^{2}_{\mathbb{F}}(\Omega;H^{1}(0,T;L^{2}(G)))} + \tau | v |^{2}_{L^{2}_{\mathbb{F}}(0,T;H^{2}(G))},
\end{align*}
where $ v \in \mathcal{H}_{0} $ and $ \tau \in (0,1) $.
Clearly, $ u_{\tau} $ minimizes $\mathcal{J}_{\tau}(u) $ if and only if $ v_{\tau} = u_{\tau} - F $ is the minimizer of the functional $ \overline{\mathcal{J}}_{\tau}(v) $.

By the variational principle, the minimizer $ v_{\tau} $ of the functional $ \overline{\mathcal{J}}_{\tau}(v) $ satisfy the following equation
\begin{align}\notag
\label{eqDou20233}
\langle \mathcal{P} v_{\tau} + \mathcal{P} F - \mathbf{I}f, \mathcal{P} h   \rangle _{L^{2}_{\mathbb{F}}(\Omega;H^{1}(0,T;L^{2}(G)))} \\
+ \tau \langle v_{\tau}, h \rangle _{L^{2}_{\mathbb{F}}(0,T;H^{2}(G))}
=0
,
\quad
\forall ~ h \in \mathcal{H}_{0} .
\end{align}
Conversely, the solution $ v_{\tau} $ of equation \cref{eqDou20233} is the minimizer of the functional $ \overline{\mathcal{J}}_{\tau}(v) $.
In fact, if $ v_{\tau} $ solves \cref{eqDou20233}, then for each $ v \in \mathcal{H}_{0} $, it holds that
\begin{align*}
    & \overline{\mathcal{J}}_{\tau}(v) - \overline{\mathcal{J}}_{\tau}(v_{\tau})
    \\
    & =
    | \mathcal{P} v + \mathcal{P} F - \mathbf{I}f |^{2}_{L^{2}_{\mathbb{F}}(\Omega;H^{1}(0,T;L^{2}(G)))} + \tau | v |^{2}_{L^{2}_{\mathbb{F}}(0,T;H^{2}(G))}
    \\
    & \quad
    - | \mathcal{P} v_{\tau} + \mathcal{P} F - \mathbf{I}f |^{2}_{L^{2}_{\mathbb{F}}(\Omega;H^{1}(0,T;L^{2}(G)))}
    - \tau | v_{\tau} |^{2}_{L^{2}_{\mathbb{F}}(0,T;H^{2}(G))}
    \\
    & =
    | \mathcal{P} v_{\tau} + \mathcal{P} F - \mathbf{I}f + \mathcal{P}(v- v_{\tau})|^{2}_{L^{2}_{\mathbb{F}}(\Omega;H^{1}(0,T;L^{2}(G)))}
    \\
    & \quad
    + \tau | v_{\tau} + (v-v_{\tau}) |^{2}_{L^{2}_{\mathbb{F}}(0,T;H^{2}(G))}
    - | \mathcal{P} v_{\tau} + \mathcal{P} F - \mathbf{I}f |^{2}_{L^{2}_{\mathbb{F}}(\Omega;H^{1}(0,T;L^{2}(G)))}
    \\
    & \quad
    - \tau | v_{\tau} |^{2}_{L^{2}_{\mathbb{F}}(0,T;H^{2}(G))}
    \\
    & =
    2 \langle \mathcal{P} v_{\tau} + \mathcal{P} F - \mathbf{I}f, \mathcal{P} (v-v_{\tau})   \rangle _{L^{2}_{\mathbb{F}}(\Omega;H^{1}(0,T;L^{2}(G)))}
    \\
    & \quad
    + 2 \tau \langle v_{\tau}, v-v_{\tau} \rangle _{L^{2}_{\mathbb{F}}(0,T;H^{2}(G))}
    + | \mathcal{P} (v - v_{\tau}) |^{2}_{L^{2}_{\mathbb{F}}(\Omega;H^{1}(0,T;L^{2}(G)))}
    \\
    & \quad
    + \tau | v-v_{\tau} |^{2}_{L^{2}_{\mathbb{F}}(0,T;H^{2}(G))}
    \\
    & =
    | \mathcal{P} (v - v_{\tau}) |^{2}_{L^{2}_{\mathbb{F}}(\Omega;H^{1}(0,T;L^{2}(G)))}
    + \tau | v-v_{\tau} |^{2}_{L^{2}_{\mathbb{F}}(0,T;H^{2}(G))}
    \\
    & \geq
    0
    ,
\end{align*}
which means $ v_{\tau} $   is a minimizer of the functional $ \overline{\mathcal{J}}_{\tau}(v) $.

From \cref{eqDou20231}, equation \cref{eqDou20231} is equivalent to
\begin{align}
\label{eqDou202333}
\langle v_{\tau}, h \rangle _{\mathcal{H}_{0}} = \langle \mathbf{I}f - \mathcal{P} F , \mathcal{P} h \rangle _{L^{2}_{\mathbb{F}}(\Omega;H^{1}(0,T;L^{2}(G)))}
, \quad
\forall ~ h \in \mathcal{H}_{0}
.
\end{align}

Thanks to Cauchy-Schwarz inequality and \cref{eqDou20231}, for all $ h \in \mathcal{H}_{0} $, we have
\begin{align*}
& \big| \langle \mathbf{I}f - \mathcal{P} F , \mathcal{P} h \rangle _{L^{2}_{\mathbb{F}}(\Omega;H^{1}(0,T;L^{2}(G)))} \big|
\\
&
\leq (| \mathcal{P} F|_{L^{2}_{\mathbb{F}}(\Omega; H^{1}(0,T;L^{2}(G)))} + |f|_{L^{2}_{\mathbb{F}}(0,T;L^{2}(G))} ) |h|_{\mathcal{H}_{0}}
.
\end{align*}
By Riesz representation theorem, for all $ h \in \mathcal{H}_{0} $, there exists $ w_{\tau} \in \mathcal{H}_{0} $ such that
\begin{align*}
\langle w_{\tau}, h \rangle _{\mathcal{H}_{0}} = \langle \mathbf{I}f- \mathcal{P} F , \mathcal{P} h \rangle _{L^{2}_{\mathbb{F}}(\Omega;H^{1}(0,T;L^{2}(G)))}
.
\end{align*}
Hence $ w_{\tau} $ solves \cref{eqDou202333}, which means $ w_{\tau} $ is the minimizer of the functional $ \overline{\mathcal{J}}_{\tau}(\cdot) $.
\end{proof}

Assume that $ y^{*} $ is the true solution to the equation \cref{eqInversPParabolicPartialBoudaryeq} with the exact data
\begin{align*}
\left\{
\begin{aligned}
& f^{*} \in L^{2}_{\mathbb{F}}(0,T;L^{2}(G)),
\\
& y^{\ast }| _{\Gamma_{0}}=g_{1}^{\ast }\in
L^{2}_{\mathbb{F}}(\Omega; H^{1}(0,T; L^{2}(\Gamma_{0}))) \cap L^{2}_{\mathbb{F}}(0,T; H^{1}(\Gamma_{0})),
\\
& \partial
_{\nu}y^{\ast }|_{\Gamma_{0}}=g_{2}^{\ast
}\in L_{\mathbb{F}}^{2}(0,T;L^2(\Gamma_{0})).
\end{aligned}
\right.
\end{align*}
From \cref{thmProb9}, the solution $ y^{*} $ is unique.
Clearly, there exists $ F^{*} \in \mathcal{H} $ such that
\begin{align}
\label{eqDou20237}
| F^{*} |_{L^{2}_{\mathbb{F}}(0,T;H^{2}(G))} \leq \mathcal{C} |y^{*}|_{L^{2}_{\mathbb{F}}(0,T;H^{2}(G))}
.
\end{align}
Assume that the measurement satisfies that
\begin{align}
\label{eqDou2023Assumtion1}
|f^{*} - f|_{L^{2}_{\mathbb{F}}(0,T;L^{2}(G))} \leq \delta, ~
|g_{1}^{\ast
}-g_{1}|_{ H^1(\Gamma) }\leq \delta, ~ |g_{2}^{\ast
}-g_{2}|_{L^2_\mathbb{F}(0,T; L^2(\Gamma))}\leq \delta,
\end{align}
and
\begin{align}
\label{eqDou2023Assumtion2}
| \mathcal{P} F^{*} - \mathcal{P} F |_{L^{2}_{\mathbb{F}}(\Omega;H^{1}(0,T;L^{2}(G)))} +  | F^{*} -F  |_{L^{2}_{\mathbb{F}}(0,T;H^{2}(G))}
\leq
\delta.
\end{align}
We now establish the estimate for the error between the  minimizer $ y_{\tau} $ for the functional $\mathcal{J}_{\tau}(\cdot) $ and the exact solution $ y^{*} $.

\begin{theorem}
Assume \cref{eqDou2023Assumtion1}, \cref{eqDou2023Assumtion2}, and that $ \alpha \in (0,1] $ is a constant.
For any given $\widehat{G} \subset \subset G$ and $\varepsilon \in (0, \frac{T}{2}) $, let $ \gamma \in (0,1) $ as stated in \cref{thmProb9}.
Then there exists a constant $ \mathcal{C} > 0 $  such that  for all $ \delta \in (0,1) $,
\begin{align*}
|y_{\tau} - y^{*}|_{L^{2}_{\mathbb{F}}(\varepsilon,T-\varepsilon;H^{1}(\widehat{G}))}
\leq
\mathcal{C} (1 + |y^{*}|_{L^{2}_{\mathbb{F}}(0,T;H^{2}(G))}) \delta^{\alpha \gamma}
,
\end{align*}
where $ y_{\tau} $ is the minimizer of the functional \cref{eqDou2023Fucntional} with the regularization parameter  $ \tau = \delta^{2 \alpha} $.
\end{theorem}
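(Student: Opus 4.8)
The plan is to run the same three moves as in \Cref{secReconstructTer}: turn the minimality of $y_\tau$ into a quantitative data-misfit bound, observe that the difference $w\deq y_\tau-y^*$ solves a stochastic parabolic equation of the form \cref{eqInversPParabolicPartialBoudaryeqNew} with small source and small boundary data, and then invoke the conditional stability estimate \cref{thmProb9}.

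The first and only delicate move is the \emph{a posteriori} bound. Since $y^*$ carries the exact traces $(g_1^*,g_2^*)$ rather than the measured ones $(g_1,g_2)$ built into $\mathcal{H}$, I would correct it by a trace lifting $\zeta$ with $\zeta|_{\Gamma_0}=g_1-g_1^*$ and $\partial_\nu\zeta|_{\Gamma_0}=g_2-g_2^*$, so that $u^*\deq y^*+\zeta\in\mathcal{H}$ and, by \cref{eqDou2023Assumtion1}, both $|u^*-y^*|_{L^2_{\mathbb{F}}(0,T;H^2(G))}$ and $|\mathcal{P} u^*-\mathcal{P} y^*|_{L^2_{\mathbb{F}}(\Omega;H^1(0,T;L^2(G)))}$ are $O(\delta)$. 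Using $\mathcal{P} y^*=\mathbf{I}f^*$, $\tau=\delta^{2\alpha}$ with $\alpha\in(0,1]$, the minimality of $y_\tau$ against $u^*$, and \cref{eqDou2023Assumtion2,eqDou20237}, I would obtain
\[
\mathcal{J}_\tau(y_\tau)\le\mathcal{J}_\tau(u^*)\le\mathcal{C}\,\delta^{2\alpha}\big(1+|y^*|^2_{L^2_{\mathbb{F}}(0,T;H^2(G))}\big).
\]
Reading off the two summands of \cref{eqDou2023Fucntional} gives $|\mathcal{P} y_\tau-\mathbf{I}f|^2_{L^2_{\mathbb{F}}(\Omega;H^1(0,T;L^2(G)))}\le\mathcal{C}\delta^{2\alpha}(1+|y^*|^2_{H^2})$ and, after dividing by $\tau$, the crude bound $|y_\tau-F|^2_{L^2_{\mathbb{F}}(0,T;H^2(G))}\le\mathcal{C}(1+|y^*|^2_{H^2})$; combined again with \cref{eqDou2023Assumtion2,eqDou20237} this yields $|w|_{L^2_{\mathbb{F}}((0,T);H^1(G))}\le\mathcal{C}(1+|y^*|_{L^2_{\mathbb{F}}(0,T;H^2(G))})$. (Equivalently, one can run this through the Euler--Lagrange identity \cref{eqDou20233} with the admissible test function $h=y_\tau-u^*\in\mathcal{H}_0$, exactly as in the reconstruction proof of \Cref{secReconstructTer}.)

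Next, since $y_\tau\in\mathcal{H}$ we have $\mathcal{P} y_\tau\in L^2_{\mathbb{F}}(\Omega;H^1(0,T;L^2(G)))$, so $\mathcal{P} w=\mathcal{P} y_\tau-\mathbf{I}f^*$ lies in the same space and $\tilde f\deq\partial_t(\mathcal{P} w)\in L^2_{\mathbb{F}}(0,T;L^2(G))$; unwinding the definition of $\mathcal{P}$ shows that $w$ solves \cref{eqInversPParabolicPartialBoudaryeqNew} with source $\tilde f$ and boundary data $g_1-g_1^*$, $g_2-g_2^*$ on $(0,T)\times\Gamma_0$. From $|\tilde f|_{L^2_{\mathbb{F}}(0,T;L^2(G))}\le|\mathcal{P} y_\tau-\mathbf{I}f|_{H^1(0,T;L^2(G))}+|\mathbf{I}(f-f^*)|_{H^1(0,T;L^2(G))}$, the previous step, and \cref{eqDou2023Assumtion1}, the quantity $\mathcal{K}$ of \cref{eqK} satisfies $\mathcal{K}(\tilde f,g_1-g_1^*,g_2-g_2^*)\le\mathcal{C}\delta^{2\alpha}(1+|y^*|^2_{L^2_{\mathbb{F}}(0,T;H^2(G))})$. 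Applying \cref{thmProb9} to $w$ then gives
\[
\mathbb{E}\int_\varepsilon^{T-\varepsilon}\!\!\int_{\widehat G}\big(w^2+|\nabla w|^2\big)dx\,dt\le\mathcal{C}\,|w|^{2(1-\gamma)}_{L^2_{\mathbb{F}}((0,T);H^1(G))}\big(\mathcal{K}(\tilde f,g_1-g_1^*,g_2-g_2^*)\big)^\gamma\le\mathcal{C}(1+|y^*|_{H^2})^2\delta^{2\alpha\gamma},
\]
and taking square roots produces the asserted estimate.

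The main obstacle is the very first move: constructing $u^*\in\mathcal{H}$ out of $y^*$ by a boundary lifting that is $O(\delta)$ small in \emph{precisely} the norms in which \cref{eqDou2023Assumtion1} measures the noise and in which $\mathcal{P}$ is continuous, and verifying the mapping property $\mathcal{P}\colon\mathcal{H}\to L^2_{\mathbb{F}}(\Omega;H^1(0,T;L^2(G)))$ so that $\tilde f$ is a legitimate $L^2_{\mathbb{F}}(0,T;L^2(G))$ source for \cref{eqInversPParabolicPartialBoudaryeqNew}. Everything after that is bookkeeping together with a direct appeal to \cref{thmProb9}.
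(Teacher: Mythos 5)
Your proposal is correct and follows essentially the same route as the paper: an a posteriori bound extracted from the Tikhonov functional (the paper derives it from the Euler--Lagrange identity tested against $\tilde v_{\tau}=v^{*}-v_{\tau}$, which, as you note, is equivalent to your direct minimality comparison against a competitor), followed by the conditional stability estimate \cref{thmProb9} applied to the normalized difference. The ``main obstacle'' you flag --- constructing an $O(\delta)$ boundary lifting $\zeta$ --- dissolves under the paper's hypotheses: the lifting is simply $F-F^{*}$, whose smallness in exactly the norms you need (both in $L^{2}_{\mathbb{F}}(0,T;H^{2}(G))$ and under $\mathcal{P}$) is precisely \cref{eqDou2023Assumtion2}, so no trace-lifting construction from the weaker $H^{1}(\Gamma_0)\times L^{2}(\Gamma_0)$ noise bounds of \cref{eqDou2023Assumtion1} is required.
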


\begin{proof}

Let $ v^{*} = y^{*} - F^{*} $. Then $ v^{*}  \in \mathcal{H}_{0} $ and $ \mathcal{P} v^{*} = - \mathcal{P}  F^{*} $.
Hence, for all $ h \in \mathcal{H}_{0} $, it holds that
\begin{align}
\label{eqDou20232}
\langle \mathcal{P} v^{*} + \mathcal{P} F^{*} - \mathbf{I}f, \mathcal{P} h   \rangle _{L^{2}_{\mathbb{F}}(\Omega;H^{1}(0,T;L^{2}(G)))}
=  0
.
\end{align}
Subtracting  \cref{eqDou20232} from \cref{eqDou20233} and denoting $ \tilde{v}_{\tau} = v^{*} - v_{\tau} $, $ \tilde{f} = \mathbf{I}f^{*} - \mathbf{I}f $ and $ \tilde{F} = F^{*} - F $, for all $ h \in \mathcal{H}_{0} $, we have that
\begin{align}\label{eqDou20234-2}
\notag& \langle \mathcal{P} \tilde{v}_{\tau}, \mathcal{P} h   \rangle _{L^{2}_{\mathbb{F}}(\Omega;H^{1}(0,T;L^{2}(G)))}
+ \tau \langle \tilde{v}_{\tau}, h \rangle _{L^{2}_{\mathbb{F}}(0,T;H^{2}(G))}
\\
& =
\langle  \tilde{f} - \mathcal{P} \tilde{F}, \mathcal{P} h   \rangle _{L^{2}_{\mathbb{F}}(\Omega;H^{1}(0,T;L^{2}(G)))}
+ \tau \langle v^{*}, h \rangle _{L^{2}_{\mathbb{F}}(0,T;H^{2}(G))}
.
\end{align}
By choosing $ h = \tilde{v}_{\tau} $ in \cref{eqDou20234-2}, we get that
\begin{align}
\notag
\label{eqDou20234}
& |\mathcal{P} \tilde{v}_{\tau}|_{L^{2}_{\mathbb{F}}(\Omega;H^{1}(0,T;L^{2}(G)))}^{2}
+ \tau |  \tilde{v}_{\tau} |^{2} _{L^{2}_{\mathbb{F}}(0,T;H^{2}(G))}
\\
& \leq
| \tilde{f} -  \mathcal{P} \tilde{F}|^{2}_{L^{2}_{\mathbb{F}}(\Omega;H^{1}(0,T;L^{2}(G)))}
+ \tau |v^{*}|^{2}_{L^{2}_{\mathbb{F}}(0,T;H^{2}(G))}
.
\end{align}
Since $ \tau = \delta^{2 \alpha} $ and $ \delta < 1$, it holds that $ \delta^{2} \leq \tau $.
Combining this with \cref{eqDou2023Assumtion2,eqDou20234}, we obtain that
\begin{align}
\label{eqDou20235}
|  \tilde{v}_{\tau} |^{2} _{L^{2}_{\mathbb{F}}(0,T;H^{2}(G))}
 \leq
\mathcal{C} (1 + |v^{*}|^{2}_{L^{2}_{\mathbb{F}}(0,T;H^{2}(G))} )
\end{align}
and that
\begin{align}
\label{eqDou20236}
|\mathcal{P} \tilde{v}_{\tau}|_{L^{2}_{\mathbb{F}}(\Omega;H^{1}(0,T;L^{2}(G)))}^{2}
 \leq
\mathcal{C} (1 + |v^{*}|^{2}_{L^{2}_{\mathbb{F}}(0,T;H^{2}(G))} ) \delta^{2\alpha}.
\end{align}
Let $ w_{\tau} = \tilde{v}_{\tau}\big(  1 + |v^{*}|_{L^{2}_{\mathbb{F}}(0,T;H^{2}(G))} \big)^{-1} $. From \cref{eqDou20235},
\cref{eqDou20235,eqDou20236,thmProb9}, we know that, for $ \delta \in (0, 1) $,
\begin{align*}
| w_{\tau}|_{L^{2}_{\mathbb{F}}(\varepsilon, T-\varepsilon; H^{1}(\widehat{G}))}
\leq
\mathcal{C} \delta^{\alpha \gamma}
.
\end{align*}
This, together  with \cref{eqDou20237}, implies that
\begin{align}
\label{eqDou20238}
| \tilde{v}_{\tau}|_{L^{2}_{\mathbb{F}}(\varepsilon, T-\varepsilon; H^{1}(\widehat{G}))}
\notag
& \leq
\mathcal{C} \big( 1 + |v^{*}|_{L^{2}_{\mathbb{F}}(0,T;H^{2}(G))} \big) \delta^{\alpha \gamma}
\\
& \leq
\mathcal{C} \big( 1 + |y^{*}|_{L^{2}_{\mathbb{F}}(0,T;H^{2}(G))} \big) \delta^{\alpha \gamma}.
\end{align}
Since $ \tilde{v}_{\tau} = v^{*} - v_{\tau} = y^{*} - F^{*} - y_{\tau} + F $, from \cref{eqDou2023Assumtion2}, we have
\begin{align}\notag
\label{eqDou20239}
| \tilde{v}_{\tau}|_{L^{2}_{\mathbb{F}}(\varepsilon, T-\varepsilon; H^{1}(\widehat{G}))}
& \geq
| y_{\tau} -   y^{*} |_{L^{2}_{\mathbb{F}}(\varepsilon, T-\varepsilon; H^{1}(\widehat{G}))}
- |F^{*} - F|_{L^{2}_{\mathbb{F}}(0,T;H^{2}(G))}
\\
& \geq
| y_{\tau} -   y^{*} |_{L^{2}_{\mathbb{F}}(\varepsilon, T-\varepsilon; H^{1}(\widehat{G}))}
- \delta
.
\end{align}
Noting that $ \delta^{\alpha \gamma} \geq \delta $, combining \cref{eqDou20238,eqDou20239}, we complete the proof.
\end{proof}

\section{Further comments}
\label{secFurPro}

To the best of our knowledge, \cite{Lue2012} is the first work employing Carleman estimate in studying inverse problems for stochastic parabolic equations. Subsequently, there have been numerous studies that have also addressed inverse problems for stochastic parabolic equations using Carleman estimates (e.g., \cite{Dou2022,Fu2017,Gong2021,Li2013,Li2022,Li2021,Lue2012,Lue2015b,Niu2020,Wu2020,Wu2022}).
Some of these works are presented in this chapter.
\cref{thm.fIForParablic} is first established in \cite{Tang2009}  and \cref{thm.2.1} is a direct consequence of  \cite[Theorem 5.1]{Tang2009}.
\cref{th consta} is an extension of  \cite[Theorem 1.3]{Lue2012}.
\cref{carleman est1-1} is taken from  \cite{Dou2022}.
\cref{thmISPC} is a generalization of \cite[Theorem 1.1]{Yuan2021}.
The main contents of \Cref{secIllPosedCauchy,secReconstruct} are borrowed from \cite{Dou2024},
while
the main content of \Cref{secReconstructTer} is derived from \cite{Dou2022}.

There are many open problems related to inverse problems for stochastic parabolic equations.
Some of them require new ideas and further development.

\begin{itemize}

\item \emph{Inverse coefficient problems for stochastic parabolic equations}

Carleman estimates are widely employed in studying inverse coefficient problems for deterministic parabolic equations (e.g.,\cite{Yamamoto2009}).
However, there is currently no relevant work on stochastic parabolic equations. The main difficulty lies in  that, by the classical way to utilize Carleman estimate to solve inverse coefficient problem, one needs to take derivative of the solution with respect to the time variable to reduce an inverse coefficient problem to an inverse state problem.  Such a method is not applicable to solutions of stochastic parabolic equations.

\item \emph{Inverse state and source problems for  semilinear stochastic  parabolic equations}

For a general semilinear stochastic parabolic equation \cref{sp-eq1}, can we investigate its inverse   source problem with  boundary  measurements?

\item \emph{Inverse geometric shape of the domain problems}

As we explained in \cref{Ch1-sec2}, in many practical problems, equations may involve unknown boundaries, such as a change of phase, a reaction front, or an unknown population.
Can the unknown boundaries be determined through measurements of the solutions?
This constitutes an important class of inverse problems, extensively studied in deterministic partial differential equations \cite{Bryan1998,Canuto2002,Elschner2020,Bukhgeim1999,Vessella2008}.
As fas as we know, the only result for   stochastic parabolic equations is \cite{Liao2024a}.
Compared with the deterministic counterpart, there are still many open problems in this area.

\item \emph{Inverse the boundary transfer coefficient problems}

In practical applications, the equation \cref{sp-eq1} may be with Robin boundary conditions, specifically $ \frac{\partial y}{\partial \nu} + \gamma(t,x) y = \varphi $ on $ \Sigma $.
In this case, the boundary transfer coefficient $ \gamma $ might also be unknown.
Is it possible to determine both the state and the boundary transfer coefficient using terminal measurements or internal measurements simultaneously?

\item \emph{Inverse problems with many measurements}

In this chapter, when discussing boundary measurements, we typically use Dirichlet data or Neumann data.
See Problems \ref{probP9}, \ref{prob.p7pre} and \ref{probICPDou}. If we know the Neumann data corresponding to all Dirichlet data, that is, if we know the so-called lateral Dirichlet-to-Neumann map, can we obtain better results for determining the state and source term?

\item \emph{Inverse problems with non-local measurements}

Due to inevitable noise, pointwise measurements can be highly sensitive.
In engineering applications, it is a nature way to take an (weighted) average values of various   measurement points.
For example, measurements $ h $ may take the following two non-local forms:
\begin{align*}
    \int_{D} w_{1}(x) y(t, x) d x= h(t),  \quad  t \in (0,T),
\end{align*}
or
\begin{align*}
    \int_{0}^{T} w_{2}(t) y(t, x) d t = h(x),  \quad  x \in \Gamma,
\end{align*}
where $ w_{1}, w_{2} $ are the weight function.
For these types of non-local measurements, due to  less information being given, the inverse problem becomes more challenging.

\item \emph{Efficiency algorithm for the construction of unknowns}

In \Cref{secReconstructTer,secReconstruct}, we discuss reconstructing the solution $y$ from certain  measurements using the Tikhonov regularization approach.
The reconstruction problem is reframed as determining the minimizer of the Tikhonov functional defined by  \cref{eqDou2023Fucntional,eqTikhonovFucntional}. The same approach can be extended to investigate the reconstruction of the unknown state through the internal measurement  and the unknown source by the internal/terminal measurement. We encourage interested readers to explore this.

On the other hand, many algorithms feasible for deterministic problems encounter  essential difficulties when applied to stochastic problems.
For instance, when solving stochastic problems using the conjugate gradient method, the dual system is a backward stochastic parabolic equation, which is highly challenging to  solve numerically.
Indeed, although there are some progresses (e.g., \cite{Lue2022,Prohl2021}), the problem of solving backward stochastic partial differential equations remains far from well understood.
In recent work, \cite{Dou2024} applies kernel-based learning theory to obtain numerical solutions for \cref{probICPDou}.
The article \cite{Dou2022} employs conjugate gradient methods and Picard-type algorithms for numerical solutions of \cref{probRI}.
Further  efficiency algorithm remains to be done.

\end{itemize}

\chapter{Inverse problems for stochastic hyperbolic equations}
\label{ch3}

This chapter is devoted to studying inverse problems for stochastic hyperbolic equations.

Throughout this chapter, let $(\Omega, \mathcal{F}, \mathbf{F}, \mathbb{P})$ with $\mathbf F=\{\mathcal{F}_{t}\}_{t \geq 0}$ be a complete filtered probability space on which a one-dimensional standard Brownian motion $\{W(t)\}_{t \geq 0}$ is defined and $\mathbf{F}$ is the natural filtration generated by $W(\cdot)$.
Write $ \mathbb{F} $ for the progressive $\sigma$-field with respect to $\mathbf{F}$.

Let $T > 0$, and $G \subset \mathbb{R}^{n}$ ($n \in
\mathbb{N}$) be a given bounded domain with a
$C^{2}$ boundary $\Gamma$. Let $\Gamma_0$ be a subset of  $\Gamma$.  Denote by $ \mathbf{L}(G) $ all Lebesgue measurable subsets of $G$. 
Put
\begin{align*}
Q = (0,T) \times G, \quad \Sigma = (0,T) \times \Gamma, \quad \Sigma_0 = (0,T) \times \Gamma_0.
\end{align*}
Also, unless other stated, we denote
by $\nu(x) = (\nu^1(x), \nu^2(x), \cdots, \nu^n(x))$ the unit
outward normal vector of $\Gamma$ at $x\in \Gamma$.  

Let $(b^{jk})_{1\leq j,k\leq n} \in C^1(G;{\mathbb{R}}^{n\times n})$ satisfying that $b^{jk} = b^{kj}$ for all $j,k = 1,\cdots, n$, and for some constant $s_0
> 0$,
\begin{align} \label{eq.bijGeqHyper}
\sum_{j,k=1}^nb^{jk}\xi_{j}\xi_{k} \geq s_0 |\xi|^2,
\quad  \forall\, (x,\xi)\deq (x,\xi_{1}, \cdots,
\xi_{n}) \in G \times \mathbb{R}^{n}.
\end{align}


\section{Formulation of the problems}

\subsection{Inverse state problem with a boundary measurement}

Let $F: [0,T]\times\Omega\times G\times \mathbb{R} 
\times\mathbb{R} \times\mathbb{R}^n\to\ \mathbb{R} $ and $K: [0,T]\times\Omega\times G\times \mathbb{R} \to\ {\mathbb{R}}$ be two functions such that, for each $(\eta,\varrho,\zeta)\in
\mathbb{R} \times\mathbb{R} \times\mathbb{R}^n$, $F(\cdot,\cdot,\cdot,\eta,\varrho,\zeta) : [0,T]\times\Omega \times G\to {\mathbb{R}}$ and $K(\cdot,\cdot,\cdot,\eta) : [0,T]\times\Omega \times G\to {\mathbb{R}}$
are ${\mathbb{F}}\times \mathbf{L}(G)$-measurable; and for a.e. $(t,\omega,x)\in [0,T]\times\Omega\times G$ and any $(\eta_{i},\varrho_{i},\zeta_{i})\in
\mathbb{R} \times\mathbb{R} \times\mathbb{R}^n$ ($i=1,2$),
\begin{align}\label{10.5-eq1}
\left\{
\begin{aligned}
    &|F(t,\omega,x,\eta_1,\varrho_1,\zeta_1)-F(t,\omega,x,\eta_2,\varrho_2,\zeta_2)|
    \\ 
    & \quad \quad  \le
    L(|\eta_1-\eta_2|+|\varrho_1-\varrho_2|+|\zeta_1-\zeta_2|_{\mathbb{R}^n}), \\
&|K(t,\omega,x,\eta_1)-K(t,\omega,x,\eta_2)| \leq
L|\eta_1-\eta_2|,\\
&|F(\cdot,\cdot,\cdot,0,0,0)|\in L^2_{\mathbb{F}}(0,T;L^2(G)), \quad
|K(\cdot,\cdot,\cdot,0)|\in L^2_{\mathbb{F}}(0,T;H^1_0(G))
\end{aligned}
\right.
\end{align}
for some constant $L>0$. 

Consider the following semilinear stochastic hyperbolic equation:
\begin{align}\label{ch-5-4.12-eq1}
\left\{
\begin{aligned}
    & dy_{t} - \sum_{j,k=1}^n(b^{jk}y_{x_j})_{x_k}dt=F(y, y_{t}, \nabla y) dt + K(y) dW(t)&\mbox{ in }Q,\\
  & y=0&\mbox{ on }\Sigma,\\
  & y(0)=y_0, \quad y_{t}(0) = y_{1} &\mbox{ in }G,
\end{aligned}
\right.
\end{align}
where $(y_0, y_{1}) \in L^2_{{\mathcal F}_0}(\Omega; H^{1}_{0}(G) \times L^2(G))$ are unknown random variables.
Thanks to the classical well-posedness result of SPDEs (see \cref{eqWellposed}), the equation \cref{ch-5-4.12-eq1} admits a  unique (mild) solution
\begin{align*}
y  \in {\mathbb{H}}_T \deq  L^2_{{\mathbb{F}}} (\Omega;C([0,T]; H_{0}^1(G)))\cap
L^2_{{\mathbb{F}}} (\Omega;C^{1}([0,T]; L^2(G))).
\end{align*}
Moreover,
\begin{align}
\label{eqEnergyEstimate}
\notag
&|y|_{ L^2_{{\mathbb{F}}} (\Omega;C([0,T]; H_{0}^1(G)))}
+ |y_{t}|_{L^2_{{\mathbb{F}}} (\Omega;C([0,T]; L^2(G)))}
\\ \notag
&\leq 
e^{\mathcal{C}(L^{2} + 1)}
(
    |(y_{0}, y_{1})|_{ L^2_{{\mathcal F}_0}(\Omega; H^{1}_{0}(G) \times L^2(G))}
    + |F(\cdot,\cdot,0,0,0)|_{L^2_{{\mathbb{F}}}(0,T;L^2(G))}  
\\
& \quad \quad \quad \quad \quad 
    + |K(\cdot,\cdot,0)|_{L^2_{{\mathbb{F}}}(0,T;L^2(G))}
)
.
\end{align} 
To simplify notations, we will suppress the time variable $t$ ($\in [0,T]$), the sample point $\omega$ ($\in \Omega$), and/or the space variable $x$ ($\in G$) in the functions if there is no risk of confusion.

Define a map  ${\cal M}_1:  L^2_{{\cal F}_0}(\Omega;H_0^1(G)\times  L^2(G))\to L^2_{{\mathbb{F}}}(0,T;L^2(\Gamma_0))$
as follows:
\begin{align*}
{\cal M}_1(y_0,y_1)= {\frac{\partial y}{\partial \nu}}\Big|_{\Sigma_0},\quad\forall\;(y_0,y_1)\in L^2_{{\cal F}_0}(\Omega;H_0^1(G)\times  L^2(G)),
\end{align*}
where $y$ solves the equation \eqref{ch-5-4.12-eq1}.
From the hidden regularity property of the solution (see \cref{propHidden}), we have  
$\frac{\partial y}{\partial\nu}\in L^2_{\mathbb{F}}(0,T;L^2(\Gamma_0))$, which implies that this map is well-defined.

As mentioned  in \cref{ch2}, for the given initial data $(y_0,y_1)\in
L^2_{{\cal F}_0}(\Omega;$ $H_0^1(G)\times L^2(G))$, finding the value of the
mapping ${\cal M}_1(y_0,y_1)$ is a direct problem, which involves solving the equation \cref{ch-5-4.12-eq1}. On the other hand, finding the inverse mapping of ${\cal M}_1(\cdot,\cdot)$ is an inverse
problem. Furthermore, if $ \mathcal{M}_{1}^{-1} $ is a bounded operator, one can determinate the solution $ y $ to \cref{ch-5-4.12-eq1} using the measurement $ \mathcal{M}_{1}(y_{0}, y_{1}) $ and the mapping from $ \mathcal{M}_{1}(y_{0}, y_{1}) $ to $y$ is continuous. 
Recall that this kind  of problem  is commonly referred to an inverse state problem.

More precisely, we consider the following problem:
\begin{problem}
\label{probICP}
Does there exist a constant ${\cal C} >0$ such that for any initial data $(y_0,y_1), (\hat y_0, \hat y_1)\in L^2_{{\cal F}_0}(\Omega;H_0^1(G)\times L^2(G))$,
\begin{align}
\notag
\label{ch-5-th2eq1}
& |(y_0-\hat y_0, y_1-\hat y_1)|_{L^2_{{\cal F}_0}(\Omega;H_0^1(G)\times
L^2(G))} 
\\
& \leq  {\cal C}|{\cal M}_1(y_0,y_1) -{\cal M}_1(\hat y_0,\hat
y_1)|_{L^2_{{\mathbb{F}}}(0,T;L^2(\Gamma_0))}?
\end{align}

\end{problem}

\subsection{Inverse state and source problem with a boundary measurement and a final time measurement I}\label{Ch3-ssec1.2}

Consider the following equation:
\begin{align}\label{ch-5-system1}
\left\{
    \begin{aligned}
        &dz_{t}  - \sum_{j,k=1}^n (b^{jk}z_{x_j})_{x_k}dt = \big(b_1 z_t+ b_2\cdot\nabla z  + b_3 z  + f\big)dt  
        \\
        & \quad \quad \quad \quad \quad \quad \quad \quad \quad \quad \quad  
        + (b_4 z + g)dW(t) & {\mbox { in }} Q, \\
        &z = 0 & \mbox{ on } \Sigma, \\
        &z(0) = z_0, \quad  z_{t}(0) = z_1 & \mbox{ in } G,
    \end{aligned}
\right .
\end{align}
where the initial data $(z_0, z_1) \in L^2_{{\cal F}_0}(\Omega;H_0^1(G)\times  L^2(G))$, the coefficients $b_j$
$(1 \leq j \leq 4)$ satisfy that
\begin{align} \label{ch-5-aibi}
\left\{
    \begin{aligned}
        &b_1 \in L_{{\mathbb{F}}}^{\infty}(0,T;L^{\infty}(G)), \\
&b_2 \in L_{{\mathbb{F}}}^{\infty}(0,T;L^{\infty}(G;\mathbb{R}^{n})),  \\
&b_3 \in L_{{\mathbb{F}}}^{\infty}(0,T;L^{p}(G))\;\;\hbox{for some }p\in [n,\infty] \text{ and } p> 2,\\
& b_4 \in L_{{\mathbb{F}}}^{\infty}(0,T;L^{\infty}(G)),
    \end{aligned} 
\right.
\end{align}
and the nonhomogeneous terms  $f,g \in L^2_{{\mathbb{F}}}(0,T;L^2(G))$.

Define a map  
\begin{align*}
{\cal M}_2:  L^2_{{\cal F}_0}(\Omega;H_0^1(G)\times  L^2(G)) \times L^2_{{\mathbb{F}}}(0,T;L^2(G))
\\
\to L^2_{{\mathbb{F}}}(0,T;L^2(\Gamma_0)) \times L^2_{{\cal F}_T}(\Omega;H_0^1(G))
\end{align*}
as follows:
\begin{align*}
{\cal M}_2(z_0,z_1,g)= \Big ({\frac{\partial z}{\partial \nu}}\Big|_{\Sigma_0},z(T)\Big ),
\end{align*}
where $z$ is the corresponding solution to \cref{ch-5-system1}.

This gives rise to the following inverse problem, which is discussed in \Cref{secISP}:
\begin{problem}
\label{probISP}
Does ${\cal M}_2(z_0,z_1,g)$ uniquely determine  $(z_0,z_1,g)$?
\end{problem}

Noting that the random force $\int_0^tgdW$ is assumed to cause the random vibration starting from some initial state $(z_0, z_1)$. 
The objective of the inverse source problem (\cref{probISP}) is to determine the unknown random force intensity $g$, the unknown initial displacement $z_0$ and the initial velocity $ z_1$ from the (partial) boundary observation $\left.\frac{\partial z}{\partial \nu}\right|_{\Sigma_0}$ and the measurement on the terminal displacement $z(T)$.

\subsection{Inverse state and source problem with a boundary measurement and a final time measurement  II}

In \Cref{Ch3-ssec1.2}, we consider the problem of determine the source term in the diffusion term based on  the measurement $\Big (\displaystyle   \frac{\partial z}{\partial  \nu}\Big|_{\Sigma_0},z(T)\Big )$.  It was highlighted in Remark \ref{rkStoWave} that these measurements are insufficient for determining the source term in the drift component. Then it naturally raises the question: with additional measurements or more prior information on the source terms, can we ascertain both unknown nonhomogeneous terms in the stochastic hyperbolic equation simultaneously? 
It turns out that such a simultaneous determination is indeed achievable by imposing an assumption akin to the equations for the nonhomogeneous terms.  

Denote $x=(x_1,x')\in {\mathbb{R}}^n$, where $x' = (x_2,\cdots,x_n)\in {\mathbb{R}}^{n-1}$. 
Let  $G=(0,l)\times G'$, where $l>0$ and $G'\subset {\mathbb{R}}^{n-1}$ be a bounded domain with a $C^2$ boundary. 
Consider the following stochastic hyperbolic equation:
\begin{align}\label{systemHyper}
    \left\{
    \begin{aligned} 
    & d z_{t} - \Delta z d t =  ( b_{1} z_{t} + b_{2} \cdot \nabla z + b_{3}z + h R  
    ) dt 
    + (b_{4} z + g )dW(t) &&\text{ in } Q,\\
    & z = 0 && \text{ on } \Sigma,\\
    \end{aligned}
    \right.
\end{align}
where $ b_1, b_{3}, b_{4} \in L_{{\mathbb{F}}}^{\infty}(0,T;W^{1, \infty}(G)) $, $ b_2 \in L_{{\mathbb{F}}}^{\infty}(0,T;W^{1, \infty}(G;\mathbb{R}^{n}))  $, 
$ h \in L^{2}_{\mathbb{F}}(0,T; H^{1}(G')) $, $ g \in L^{2}_{\mathbb{F}}(0,T; H^{1}(G)) $ and $ R \in C^{3}(\overline{Q}) $.
Note that $R$ is a deterministic function and $h$ does not depend on $x_1$.

We propose an inverse source problem that aims to simultaneously identify the two unknown nonhomogeneous terms using both boundary measurements and terminal measurements:
\begin{problem}
\label{probISPHyper}
For a given $R$, determine the  source function $h $ and $ g $ by means of the measurement $\Big (\dfrac{\partial z}{\partial \nu}\Big|_{\Sigma}, ~ \dfrac{\partial z_{x_1}}{\partial \nu}\Big|_{\Sigma},~ z(T),~ z_{t}(T) \Big )$.
\end{problem}
We provide the answer to this problem in \Cref{secISP2}.

\begin{remark}
    It is also quite interesting to consider \cref{probISPHyper}, when
    replacing $ \Delta z $ in \cref{systemHyper} with the general coefficient $ \sum\limits_{j,k=1}^n (b^{jk}z_{x_j})_{x_k} $.
    However, in that case, constructing a suitable Carleman weight function to ensure that the Carleman estimate in \cref{thmCarlemanHyperISP} holds remains an open problem.
\end{remark}

\subsection{Inverse state problem with the internal measurement}

All of the above inverse problems are global; that is, we are expected to obtain all the information about the unknowns.
As a cost, we need to measure enough information.
For instance, to solve \cref{probICP}, we require a boundary condition and the value of $\frac{\partial y}{\partial\nu}$ on $\Sigma_0$ for the time $T$ being large enough and $\Gamma_0$ fulfilling some geometric conditions (see \Cref{secDP} for more details).

It is natural to ask another kind of question, say, if the ability for the measurement is limited, what kind of information for the solution can ne derived? 
In what follows, we formulate one of such problems, in which the boundary condition of the equation is unknown.

Consider the following stochastic hyperbolic equation:
\begin{equation}\label{system1}
\sigma d\tilde z_{t} - \Delta \tilde z dt = \big(\tilde b_1 \tilde z_t + \tilde b_2\cdot\nabla
\tilde z + \tilde b_3 \tilde z \big)dt +  \tilde b_4 \tilde z dW(t) \qquad {\mbox { in }} Q,
\end{equation}
where, $\sigma\in C^1(\overline Q)$ is positive,
\begin{eqnarray} \label{aibi}
&\,& \tilde b_1 \in L_{{\mathbb{F}}}^{\infty}(0,T;L^{\infty}_{\rm loc}(G)),
\qquad \tilde b_2
\in L_{{\mathbb{F}}}^{\infty}(0,T;L^{\infty}_{\rm loc}(G;\mathbb{R}^{n})), \nonumber \\
&\,& \tilde b_3 \in L_{{\mathbb{F}}}^{\infty}(0,T;L^{n}_{\rm loc}(G)), \qquad
\,\, \tilde b_4 \in L_{{\mathbb{F}}}^{\infty}(0,T;L^{\infty}_{\rm loc}(G)).
\end{eqnarray}
We call $\tilde z \in  L_{{\mathbb{F}}}^2 (\Omega; C([0,T];H_{\rm loc}^1(G)))\cap
L_{{\mathbb{F}}}^2 (\Omega; C^{1}([0,T];L^2_{\rm loc}(G))) $ a solution to the equation
(\ref{system1}) if for any $t \in (0,T]$, nonempty open subset $\widehat G \subset\!\subset G$ and
$\eta \in H_0^1(\widehat G)$, it holds that
\begin{align}\label{solution to sysh}
\notag
& \int_{\widehat G} \sigma(t,x)\tilde z_t(t,x)\eta(x)dx - \int_{\widehat G}
\sigma(0,x)\tilde z_t(0,x)\eta(x)dx 
\\ \notag
& -\int_0^t \int_{\widehat G} \sigma_t(s,x)\tilde z_t(s,x)\eta(x)d x d s
\\ \notag
& = \int_0^t \int_{\widehat G}   \big[ -\nabla \tilde z \cdot\nabla\eta  + 
\big(b_1 \tilde z_t  + 
b_2\cdot\nabla \tilde z + b_3 \tilde z\big)\eta(x) \big]d x d s 
\\
& + \int_0^t \int_{\widehat G}   b_4 \tilde z \eta(x) dx dW(s), \quad
{\mathbb{P}}\mbox{-a.s. }
\end{align}

Put $S$ as a $C^2$-hypersurface in $\mathbb{R}^{n}$ with $S \subset\!\subset  G$. 
Let $x_0\in S\setminus \partial S$.
We assume that $S$ divides the open ball $B_{\rho}(x_0)\subset G$, centered at $x_0$ and with radius $\rho$, into two parts ${\cal D}_{\rho}^+$ and ${\cal D}_{\rho}^-$. 
In this case, for any $x\in S$, we denote by $\nu(x)$ the unit normal vector of $S$ at $x$ inward to ${\cal D}_{\rho}^+$.
Let $\tilde z$ be a solution to the equation (\ref{system1}), and $\varepsilon\in (0,T)$.
Consider the following local inverse problem, which is addressed in \Cref{secLIP}:
\begin{problem}
\label{probLocalInverse}
Can $\tilde z$ in ${\cal D}_{\rho}^+\times (\varepsilon,T-\varepsilon)$ be uniquely determined by the values of $\tilde z$ in ${\cal D}_{\rho}^-\times (0,T)$?
\end{problem}

\begin{remark}
\Cref{probLocalInverse} investigates whether a solution to \cref{system1} on a single side of $S$ uniquely determines the solution on the opposite side.
We remark that the number $\varepsilon$ depends on both $\sigma(\cdot)$ and $\rho$, and this number cannot be zero.
In fact, due to the finite propagation speed of solutions \cref{system1}, one needs some time to observe the solutions.
\end{remark}
\begin{remark}
Unlike the stochastic parabolic equation, due to the finite propagation speed of the solution to a stochastic hyperbolic equation, we typically cannot infer the global properties of the solution through  local observations.
\end{remark}

\subsection{Reconstruct the  unknown  state with a boundary measurement}
\label{secReconstruct-hy}

Our objective is to reconstruct the solution $ z $ to the equation \cref{ch-5-system1} with observed data from the lateral boundary. More precisely, consider the following problem:
\begin{problem}\label{Hyrestruct}
Find a function $ z $ solving \cref{ch-5-system1Cauchy} with $ \dfrac{\partial z}{ \partial \nu} = h  $ on $ \Sigma_{0} $ for a given $ h \in L^{2}_{\mathbb{F}}(0, T; L^{2}(\Gamma_{0})) $.
\end{problem}

\medskip

The rest of this paper is organized as follows. 
In \Cref{secFI}, we present some preliminaries, including a fundamental pointwise weighted identity for second order stochastic hyperbolic-like  operators, which is used to derive Carleman estimates for stochastic hyperbolic equations, a hidden regularity property of solutions to stochastic hyperbolic equations and an energy estimate for stochastic hyperbolic equations. 
Sections \ref{secDP}--\ref{secReconstructHyper} are devoted to studying Problems \ref{probICP}--\ref{Hyrestruct}, respectively.   
At last, in  \Cref{secFChyper}, we give some comments for the content in this chapter and some related open problems.     

\section{Some preliminaries}\label{sechyPre}

To obtain Carleman estimates for stochastic hyperbolic equations, this section presents a fundamental weighted identity for second order hyperbolic-like operators.
Furthermore, we establish the hidden regularity of the stochastic hyperbolic equation (\cref{propHidden}), as well as energy estimates (\cref{ch-5-energy ensi}).

\subsection{A fundamental weighted identity}
\label{secFI}

In this subsection, we prove the following result.

\begin{theorem}
\label{thmFi}
Let $\phi \in C^1((0,T)\times\mathbb{R}^n)$, $b^{jk}=b^{kj}\in
C^2(\mathbb{R}^n)$ for $j,k=1,2,\cdots,n$, 
and $\ell,\Psi \in C^2((0,T)\times\mathbb{R}^n)$. 
Let $u$ be an $H^2(G)$-valued, $\mathbf{F}$-adapted process such that $u_t$ is an $L^2(G)$-valued It\^o process. 
Set $\theta = e^\ell$ and $w=\theta u$. 
Then, for a.e.     $x\in G$ and ${\mathbb{P}}$-a.s. $\omega \in \Omega$,
\begin{align}
    \notag
    \label{hyperbolic2}  
    &   \theta \Big ( -2\phi\ell_t w_t + 2\sum_{j,k=1}^n b^{jk}\ell_{x_j} w_{x_k} + \Psi w \Big )
    \Big[ \phi du_t - \sum_{j,k=1}^n (b^{jk}u_{x_j})_{x_k} dt \Big]  
    \\ \notag
    & \quad+\sum_{j,k=1}^n \Big[ \sum_{j',k'=1}^n \Big (
    2b^{jk}b^{j'k'}\ell_{x_{j'}}w_{x_j}w_{x_{k'}} -
    b^{jk}b^{j'k'}\ell_{x_j} w_{x_{j'}}w_{x_{k'}}
    \Big )   
    \\ \notag
    &\quad  - 2 \phi b^{jk}\ell_t w_{x_j} w_t + \phi b^{jk}\ell_{x_j} w_t^2  +
    \Psi b^{jk}w_{x_j} w - \Big({\cal A}\ell_{x_j} +
    \frac{\Psi_{x_j}}{2}\Big)b^{jk}w^2 \Big]_{x_k} dt
    \\ \notag
    &\quad +d\Big\{ \phi\sum_{j,k=1}^n b^{jk}\ell_t w_{x_j} w_{x_k}-
    2\phi\sum_{j,k=1}^n b^{jk}\ell_{x_j}w_{x_k}w_t  + \phi^2\ell_t w_t^2
    - \phi\Psi w_t w 
    \\ \notag 
    & \quad \quad \quad 
    + \Big[ \phi {\cal A}\ell_t +
    \frac{1}{2}(\phi\Psi)_t\Big]w^2 \Big\}   
    \\ \notag
    &   =  \bigg\{ \Big[(\phi^2\ell_{t})_t + \sum_{j,k =1}^n (\phi
    b^{jk}\ell_{x_j})_{x_k} - \phi\Psi \Big]w_t^2   d t
    \\ \notag
    & \quad - 2\sum_{j,k=1}^n \big[(\phi
    b^{jk}\ell_{x_k})_t +
    b^{jk}(\phi\ell_{t})_{x_k}\big]w_{x_j}w_t   d t   
    +\sum_{j,k=1}^n c^{jk}w_{x_j}w_{x_k} d t + Bw^2 d t
    \\
    & \quad 
    + \Big(   -2\phi\ell_tv_t 
    + 2\sum_{j,k=1}^n b^{jk}\ell_{x_j}w_{x_k} + \Psi w
    \Big)^2\bigg\} dt + \phi^2\theta^2 \ell_t(du_t)^2,  
\end{align}
where 
\begin{align}\label{AB1} \notag
    & {\cal A}\deq\phi (\ell_t^2-\ell_{tt})-\sum_{j,k=1}^n
    (b^{jk}\ell_{x_j}\ell_{x_k}-b^{jk}_{x_k}\ell_{x_j}
    -b^{jk}\ell_{x_jx_k})-\Psi,\\ \notag
       & {\cal B}\deq{\cal A}\Psi+(\phi {\cal A}\ell_t)_t\!-\! \sum_{j,k=1}^n\!({\cal A}
    b^{jk}\ell_{x_j})_{x_k}\! +\!\dfrac 12
    \Big[\big(\phi\Psi\big)_{tt}\!-\!\sum_{j,k=1}^n\!
    \big(b^{jk}\Psi_{x_j}\big)_{x_k}\Big],\\  
      & c^{jk}\deq \big(\phi b^{jk}\ell_t\big)_t\! +\!
    \sum_{j',k'=1}^n\! \Big[2b^{jk'}(b^{j'k}\ell_{x_{j'}})_{x_{k'}}
    \!-\!\big(b^{jk}b^{j'k'}\ell_{x_{j'}}\big)_{x_{k'}}\Big] \!+\! \Psi
    b^{jk}.
\end{align}
\end{theorem}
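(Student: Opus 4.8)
The plan is to establish \cref{hyperbolic2} by a direct (if somewhat lengthy) pointwise computation of the same flavour as the one behind \cref{thm.fIForParablic}, exploiting the algebraic structure rather than any probabilistic subtlety. The key observation is that, writing $\theta = e^{\ell}$ and $w = \theta u$, the weighted second-order stochastic hyperbolic operator $\theta\bigl[\phi\, du_t - \sum_{j,k}(b^{jk}u_{x_j})_{x_k}\,dt\bigr]$ can be re-expressed in terms of $w$ via the conjugation identities $\theta\, du_t = d w_t - \ell_t w_t\,dt - (\text{lower order in }\ell)\,dt$ and $\theta\sum_{j,k}(b^{jk}u_{x_j})_{x_k} = \sum_{j,k}(b^{jk}w_{x_j})_{x_k} - 2\sum_{j,k}b^{jk}\ell_{x_k}w_{x_j} + (\cdots)w$, exactly as in \crefrange{eqIDParabolic1}{eqIDParabolic2} but now with a genuine time-second-derivative term coming from $d u_t$.

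First I would split the conjugated operator as a sum of a ``principal'' part $I \deq -2\phi\ell_t w_t + 2\sum_{j,k}b^{jk}\ell_{x_j}w_{x_k} + \Psi w$ plus remainder terms, multiply through by $\theta$ times the quantity in the first line of \cref{hyperbolic2} (which is exactly $I$ plus its time–derivative companions), and organize the product into three groups: (i) a perfect It\^o differential $d\{\cdots\}$, (ii) a spatial divergence $\sum_k[\cdots]_{x_k}\,dt$, and (iii) a ``bulk'' quadratic form in $(w_t,\nabla w,w)$ plus the square $I^2\,dt$. Each grouping is forced by the requirement that the cross terms telescope: the terms $2\phi b^{jk}\ell_{x_j}w_{x_k}\cdot du_t$ and $\phi\ell_t w_t\cdot du_t$ must be massaged with It\^o's formula to produce $d(\phi\sum b^{jk}\ell_t w_{x_j}w_{x_k})$, $d(\phi^2\ell_t w_t^2)$, etc., generating in the process the It\^o-correction term $\phi^2\theta^2\ell_t(du_t)^2$ on the right-hand side (this is the one genuinely stochastic feature, entirely analogous to the $-\theta^2\mathcal{A}(du)^2$ term in \cref{eq.fIForParablic}). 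The coefficients $\mathcal{A}$, $\mathcal{B}$, $c^{jk}$ in \cref{AB1} are then simply defined to collect whatever is left over after the perfect-differential and divergence pieces are extracted; so the bulk of the work is bookkeeping: expanding $\sum_{j,k}(b^{jk}w_{x_j})_{x_k}\cdot I$, integrating by parts in $x$ to move derivatives off the highest-order factor, and carefully tracking which terms land in the divergence and which in the bulk. I would handle the purely $x$-derivative terms first (these are identical in structure to Steps~1–3 of the proof of \cref{thm.fIForParablic}), then the purely $t$-derivative terms (new: they produce the $w_t^2$ and $w_{x_j}w_t$ coefficients displayed in the bulk), and finally the mixed $\ell_{x_j}\ell_t$-type cross terms.

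The main obstacle will be the mixed space-time cross terms, specifically keeping track of the interaction between $-2\phi\ell_t w_t$ and $2\sum b^{jk}\ell_{x_j}w_{x_k}$ inside $I^2$ versus the same combination arising from the conjugation. One must verify that the off-diagonal contribution $-4\phi\sum b^{jk}\ell_t\ell_{x_j}w_t w_{x_k}$ produced by squaring $I$ is consistent with the coefficient $-2\sum_{j,k}[(\phi b^{jk}\ell_{x_k})_t + b^{jk}(\phi\ell_t)_{x_k}]w_{x_j}w_t$ appearing explicitly in the bulk term — i.e. that nothing is double-counted and no stray terms survive. A parallel subtlety is that $du_t$ appears both linearly (to be absorbed into exact differentials) and quadratically (the $(du_t)^2$ correction), and one must apply It\^o's formula to products like $w_{x_j}w_t$ and $w_t^2$ correctly, remembering that $w$ and $\nabla w$ are of bounded variation in $t$ while $w_t$ is an It\^o process, so only $(dw_t)^2 = \theta^2(du_t)^2$ (modulo lower-order $\ell$ factors) contributes a quadratic-variation term. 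Once these cross terms are reconciled and one checks the endpoint by substituting $u=z$ into a known case (e.g. $\phi\equiv 1$, $b^{jk}=\delta^{jk}$, recovering the classical deterministic hyperbolic Carleman identity), the identity \cref{hyperbolic2} follows. I would present the proof in four steps mirroring \cref{thm.fIForParablic}: Step~1, conjugation identities; Step~2, the It\^o-differential terms; Step~3, the spatial-divergence and bulk terms; Step~4, collecting everything and reading off $\mathcal{A},\mathcal{B},c^{jk}$.
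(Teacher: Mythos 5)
Your proposal is correct and follows essentially the same route as the paper: conjugate by $\theta$ to rewrite the operator in terms of $w$, set $I=-2\phi\ell_t w_t+2\sum_{j,k}b^{jk}\ell_{x_j}w_{x_k}+\Psi w$ so that $\theta I\bigl[\phi\,du_t-\sum_{j,k}(b^{jk}u_{x_j})_{x_k}\,dt\bigr]=I^2\,dt+I\,I_2$ with $I_2=\phi\,dw_t-\sum_{j,k}(b^{jk}w_{x_j})_{x_k}\,dt+\mathcal{A}w\,dt$, and then expand $I\,I_2$ term by term via It\^o's formula and spatial integration by parts, reading off $\mathcal{A}$, $\mathcal{B}$, $c^{jk}$ from what remains after extracting the exact differential and divergence pieces. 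The double-counting worry you raise is resolved automatically by this decomposition, since the whole square $I^2\,dt$ is kept intact on the right-hand side while the bulk coefficients come exclusively from $I\,I_2$.
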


\begin{proof}

By $ \theta = e^{\ell}, w = \theta u $, we derive that 
\begin{align}
\label{eqIDHyper1}
\theta d u_{t} 
= 
d w_{t} - 2 \ell_{t} w_{t} d t 
+ (\ell_{t}^{2} - \ell_{tt}) w d t
.
\end{align}
Let 
\begin{align}\label{eqIDHyper2}
\begin{cases}
    \begin{aligned}
        I & = -2 \phi \ell_{t} w_{t} 
        + 2 \sum_{j, k =1}^{n} b^{j k} \ell_{x_{j}} w_{x_{k}}
        + \Psi w 
        , 
        \\
    I_{2} & = 
    \phi d w_{t}
    - \sum_{j, k =1}^{n} (b^{j k} w_{x_{j}})_{x_{k}} d t  
    + \mathcal{A} w d t
    .
    \end{aligned}
\end{cases}
\end{align}
From \cref{eqIDHyper2,eqIDHyper1,eqIDParabolic1}, we obtain
\begin{align}\label{eqIDHyper2.1}
\theta I  \Big[\phi d u_{t} - \sum_{j, k =1}^{n} (b^{j k} u_{x_{j}})_{x_{k}} d t \Big] = I^{2} d t  + I I_{2}
.
\end{align}

Let us compute the terms in $I I_{2} $ one by one.  

Thanks to It\^o's formula, we obtain that 
\begin{align}\label{eqIDHyper2.2}
- 2  \phi^{2} \ell_{t} w_{t} d w_{t}
=
- d (\phi^{2} \ell_{t} w_{t}^{2}) 
+ \phi^{2} \ell_{t} (d w_{t})^{2} 
+ (\phi^{2} \ell_{t})_{t} w_{t}^{2} d t 
,
\end{align}
\begin{align}\label{eqIDHyper2.3}
&\notag 2 \sum_{j, k =1}^{n} b^{j k} \ell_{x_{j}} w_{x_{k}} \phi d w_{t}
\\& = 
d \bigg(2 \phi \sum_{j, k=1}^{n} b^{j k} \ell_{j} w_{x_{k}}  w_{t} \bigg)
- 2 \sum_{j, k=1}^{n}  ( \phi  b^{j k} \ell_{j}  )_{t} w_{t} w_{x_{k}} d t 
\\  \notag 
& \quad 
-  \sum_{j, k=1}^{n}  ( \phi  b^{j k} \ell_{j} w_{t}^{2} )_{x_{k}} d t 
+  \sum_{j, k=1}^{n}  ( \phi  b^{j k} \ell_{j}  )_{x_{k}} w_{t}^{2}d t 
,
\end{align}
and that
\begin{align}\label{eqIDHyper2.4}
\Psi w \phi d w_{t} 
= 
d(\phi\Psi w w_{t})
- \phi\Psi w_{t}^{2} d t 
- \frac{1}{2} d \big[(\phi\Psi)_{t} w^{2}\big]
+ \frac{1}{2} (\phi\Psi)_{t t} w^{2} d t.
\end{align}
By direct computations, we get that
\begin{align}\label{eqIDHyper2.5}
\notag& 2 \sum_{j, k=1}^{n} \phi \ell_{t} w_{t} (b^{j k} w_{x_{j}})_{x_{k}} d t 
\\ \notag
& = 
 \sum_{j, k=1}^{n} \big[ 2 (\phi \ell_{t} w_{t} b^{j k} w_{t} w_{x_{j}})_{x_{k}}  
- 2  b^{j k} (\phi \ell_{t} )_{x_{k}} w_{t} w_{x_{j}}  
-   ( b^{j k} \phi \ell_{t} w_{x_{k} w_{x_{j}}})_{t}  
\\  
& \quad \quad \quad \quad 
+   ( b^{j k} \phi \ell_{t})_{t} w_{x_{k} w_{x_{j}}} 
\big] d t 
, 
\end{align}
\begin{align}\label{eqIDHyper2.6}
\notag& 2 \sum_{j, k=1}^{n} \sum_{j', k'=1}^{n} b^{j k} \ell_{x_{j}} w_{x_{k}} (- b^{j' k'} w_{x_{j'}})_{x_{k'}} d t 
\\\notag
& = 
 \sum_{j, k=1}^{n} \sum_{j', k'=1}^{n} \big[
    - 2( b^{j k} b^{j' k'} \ell_{x_{j}} w_{x_{k}}  w_{x_{j'}})_{x_{k'}}  
+ 2  ( b^{j k} \ell_{x_{j}})_{x_{k'}} b^{j' k'}  w_{x_{k}}  w_{x_{j'}}  
\\
& \quad \quad \quad \quad \quad \quad 
+  ( b^{j k} b^{j' k'} \ell_{x_{j}} w_{x_{k'}}  w_{x_{j'}})_{x_{k}}  
-  ( b^{j k} b^{j' k'} \ell_{x_{j}})_{x_{k}} w_{x_{k'}}  w_{x_{j'}}  
\big] d t 
,
\end{align}
and 
\begin{align}\label{eqIDHyper2.7}
&   \sum_{j, k=1}^{n} \Psi w  (- b^{j k} w_{x_{j}})_{x_{k}} d t 
\\ \notag
& = 
 \sum_{j, k=1}^{n}   \bigg[
     ( b^{j k}  \Psi w w_{x_{j}} )_{x_{k}}  
    +     b^{j k}      w_{x_{k}}  w_{x_{j}}  
    + \frac{1}{2} ( b^{j k}  \Psi_{x_{k}} w^{2} )_{x_{j}}  
    - \frac{1}{2} ( b^{j k}  \Psi_{x_{k}} )_{x_{j}}  w^{2} 
\bigg] d t 
.
\end{align}
Similarly, from \cref{eqIDHyper2} and It\^o's formula, we have
\begin{align}\label{eqIDHyper2.8}
\notag& I \mathcal{A} w d t
\\
& = 
\sum_{j, k=1}^{n} (b^{j k } \ell_{x_{j}} \mathcal{A} w^{2})_{x_{k}} d t 
- d (\phi \ell_{t} \mathcal{A} w^{2}) 
+ (\phi \mathcal{A} \ell_{t})_{t} w^{2} d t 
\\ \notag
& \quad 
- \sum_{j, k=1}^{n} (b^{j k} \ell_{x_{j}} \mathcal{A})_{x_{k}} w^{2} d t 
+ \Psi \mathcal{A} w^{2} d t   
. 
\end{align}
Combining \cref{eqIDHyper2.1}--\cref{eqIDHyper2.8}, we get \cref{hyperbolic2}.
\end{proof}

\subsection{Hidden regularity property of solutions to stochastic hyperbolic equations}

In this subsection, we derive a hidden regularity property of solutions to the equation \cref{ch-5-4.12-eq1}.

\begin{lemma}\label{ch-5-hidden v} \cite[Lemma 3.1]{Lions1988}
There exists a vector field
$\xi=(\xi_1,\cdots,\xi_n)\in
C^1(\mathbb{R}^n;\mathbb{R}^n)$ such
that $\xi=\nu$ on $\Gamma$.
\end{lemma}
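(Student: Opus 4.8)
\textbf{Proof plan for Lemma \ref{ch-5-hidden v}.}

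The statement asserts the existence of a $C^1$ vector field $\xi$ on $\mathbb{R}^n$ that restricts to the unit outward normal $\nu$ on $\Gamma$. The plan is to construct $\xi$ explicitly in a tubular neighborhood of $\Gamma$ and then extend it globally by a cut-off. First I would invoke the regularity of the boundary: since $G$ has a $C^2$ boundary $\Gamma$, the signed distance function $d(x)$ to $\Gamma$ (taken negative inside $G$, positive outside) is well-defined and of class $C^2$ in some open neighborhood $\mathcal{N}$ of $\Gamma$, and $\nabla d(x)$ is a $C^1$ unit vector field on $\mathcal{N}$ with $\nabla d(x) = \nu(x)$ for $x \in \Gamma$. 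This is the standard collar/tubular-neighborhood fact for $C^2$ hypersurfaces.

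Next I would globalize. Choose a smaller neighborhood $\mathcal{N}_0 \subset\subset \mathcal{N}$ of $\Gamma$ and a cut-off function $\chi \in C^\infty(\mathbb{R}^n)$ with $\chi \equiv 1$ on $\mathcal{N}_0$ and $\mathrm{supp}\,\chi \subset \mathcal{N}$. Then set $\xi(x) \deq \chi(x)\nabla d(x)$ for $x \in \mathcal{N}$ and $\xi(x) \deq 0$ otherwise. Since $\nabla d \in C^1(\mathcal{N};\mathbb{R}^n)$ and $\chi$ is smooth with support compactly contained in $\mathcal{N}$, the product $\xi$ is $C^1$ on all of $\mathbb{R}^n$ (the two definitions agree and are $C^1$ on the overlap region where $\chi = 0$). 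On $\Gamma$ we have $\Gamma \subset \mathcal{N}_0$, so $\chi \equiv 1$ there and hence $\xi = \nabla d = \nu$ on $\Gamma$, as required.

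The only genuine technical point is justifying the $C^2$ regularity of the signed distance function near a $C^2$ boundary, together with the identity $\nabla d|_\Gamma = \nu$; this is classical (see, e.g., Gilbarg--Trudinger or Lions \cite{Lions1988}), and I would simply cite it rather than reprove it. Everything else is routine: the partition-of-unity/cut-off gluing is standard and introduces no obstruction since the relevant sets are nested with compact containment. I would also remark that the construction is local near $\Gamma$ and the behavior of $\xi$ away from $\Gamma$ is irrelevant for the intended use (multiplier identities for the stochastic hyperbolic equation), so no further properties need be arranged. Thus the main ``obstacle'' is merely bookkeeping about the distance function's smoothness, and the proof is short.
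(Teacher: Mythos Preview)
Your proposal is correct and takes a genuinely different route from the paper. The paper argues via local defining functions and a partition of unity: for each $x_0\in\Gamma$ it picks a local $C^2$ function $\varphi$ with $\Gamma$ locally as $\{\varphi=0\}$, sets $\eta=\nabla\varphi/|\nabla\varphi|$ (which equals $\nu$ on $\Gamma$ locally), covers $\Gamma$ by finitely many such patches using compactness, throws in an interior set $V_0\subset\subset G$ with $\eta_0\equiv 0$, and glues the local fields via a $C^2$ partition of unity subordinate to the cover.

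Your signed-distance approach replaces this local patchwork by a single global object near $\Gamma$: since $d\in C^2$ in a tubular neighborhood with $\nabla d|_\Gamma=\nu$, a single cutoff suffices. This is shorter and more direct, at the cost of importing the regularity of the signed distance function as a black box (itself typically proved via the inverse/implicit function theorem). The paper's argument is more self-contained, working straight from the definition of a $C^2$ boundary via local charts, and so does not need to quote the distance-function result. Both are standard and either is perfectly acceptable here; for the intended application (the multiplier identity in Proposition~\ref{propHidden}) nothing beyond $\xi\in C^1$ and $\xi|_\Gamma=\nu$ is used, so your remark that the behavior away from $\Gamma$ is irrelevant is apt.
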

\begin{proof} Since the boundary $\Gamma$ is of class $C^2$, for any fixed  
	point $x_0\in \Gamma$, there exists an open neighborhood $V$ of $x_0$ 
	in $\mathbb{R}^n$ and a function $\varphi: V \to \mathbb{R}$ of class $C^2$
	such that the gradient $\nabla \varphi(x)$ is non-zero for all $x$ in $V$ and $\varphi(x) = 0$ for $x$ in $V\cap\Gamma$. If necessary, we can consider $-\varphi$ instead of $\varphi$ to ensure that $\nu(x_0)\cdot\nabla\varphi(x_0) > 0$. By choosing $V$ sufficiently small, we can also assume that $V\cap\Gamma$ is a connected set. The function $\psi: V \to \mathbb{R}^n$ defined by unit vector $\eta = \nabla\varphi/|\nabla\varphi|$ is of class $C^1$
	and equals the normal vector $\nu$ on $V\cap\Gamma$.
	
	Given that $\Gamma$ is compact, it can be covered by a finite number of neighborhoods $V_1,\cdots, V_m$
	of this type. Let $\eta_1,\cdots,\eta_m$
	denote the corresponding functions. We then have the following conditions:
	\begin{align*}
		\Gamma &\subset \bigcup_{j=1}^m V_j,\\
		\eta_j &= \nu \text{ on } V_j\cap\Gamma, \text{ for } j=1,\cdots,m.
	\end{align*}
	
	We select a compact subset $V_0\subset\subset G$ such that:
	\begin{align*}
		\overline G &\subset \bigcup_{j=0}^m V_j
	\end{align*}
	and define $\eta_0 : V_0 \to \mathbb{R}^n$
	by setting $\eta_0(x) = 0$ for $x\in V_0$. 
	Let $\alpha_0,\cdots,\alpha_m$
	be a $C^2$
	partition of unity corresponding to the covering $V_0, \cdots, V_m$
	of $G$. Then, the vector field $\zeta = \sum\limits_{j=0}^m\alpha_j\eta_j$
	serves as the desired vector field under these conditions.
\end{proof}

\begin{proposition}
\label{propHidden}
For any solution to the equation \eqref{ch-5-4.12-eq1}, we have
\begin{align}\label{ch-5-hidden ine}
    \notag
    &  \Big|\frac{\partial y}{\partial \nu}\Big
    |_{L^2_{{\mathbb{F}}}(0,T;L^2(\Gamma_0))} 
    \\ \notag
    & \leq e^{{\cal C}(L^2 + 1)}
    \Big(|(y_0,y_1)|_{L^2_{{\cal F}_0}(\Omega; H_0^1(G)\times L^2(G))}+ |F(\cdot,\cdot,0,0,0)|_{L^2_{{\mathbb{F}}}(0,T;L^2(G))} 
    \\
    &  \quad \quad \quad \quad \quad 
    + |K(\cdot,\cdot,0)|_{L^2_{{\mathbb{F}}}(0,T;L^2(G))}\Big).
\end{align}
\end{proposition}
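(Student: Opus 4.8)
The plan is to establish the hidden regularity estimate \eqref{ch-5-hidden ine} by a multiplier argument, using the fundamental weighted identity \cref{thmFi} (or rather its non-weighted specialization) together with the vector field $\xi$ provided by \cref{ch-5-hidden v}. The key point is that, because $\xi = \nu$ on $\Gamma$ and $y = 0$ on $\Sigma$, the tangential gradient of $y$ vanishes on the boundary, so $\nabla y = \dfrac{\partial y}{\partial \nu}\,\nu$ there; hence a suitably chosen divergence term will reproduce exactly $\int_{\Sigma_0}\bigl(\dfrac{\partial y}{\partial \nu}\bigr)^2$ up to a controllable factor.

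First I would reduce to the linear case: set $f = F(y, y_t, \nabla y)$ and $k = K(y)$, so that $y$ solves
\begin{align*}
\left\{
\begin{aligned}
& dy_t - \sum_{j,k=1}^n (b^{jk} y_{x_j})_{x_k}\,dt = f\,dt + k\,dW(t) & \mbox{ in } Q,\\
& y = 0 & \mbox{ on } \Sigma,\\
& y(0) = y_0,\quad y_t(0) = y_1 & \mbox{ in } G,
\end{aligned}
\right.
\end{align*}
and by \cref{10.5-eq1} together with the energy estimate \cref{eqEnergyEstimate},
\begin{align*}
|f|_{L^2_{\mathbb{F}}(0,T;L^2(G))} + |k|_{L^2_{\mathbb{F}}(0,T;L^2(G))}
\le e^{\mathcal{C}(L^2+1)}\bigl(|(y_0,y_1)|_{L^2_{\mathcal{F}_0}(\Omega;H_0^1(G)\times L^2(G))} + |F(\cdot,\cdot,0,0,0)|_{L^2_{\mathbb{F}}(0,T;L^2(G))} + |K(\cdot,\cdot,0)|_{L^2_{\mathbb{F}}(0,T;L^2(G))}\bigr).
\end{align*}
So it suffices to prove $\bigl|\tfrac{\partial y}{\partial \nu}\bigr|_{L^2_{\mathbb{F}}(0,T;L^2(\Gamma_0))} \le \mathcal{C}\bigl(|(y_0,y_1)|_{L^2_{\mathcal{F}_0}(\Omega;H_0^1(G)\times L^2(G))} + |f|_{L^2_{\mathbb{F}}(0,T;L^2(G))} + |k|_{L^2_{\mathbb{F}}(0,T;L^2(G))}\bigr)$ for the linear equation, with a generic constant $\mathcal{C}$.

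Next I would apply It\^o's formula to $d\bigl(\sum_{j,k} b^{jk} y_{x_j}\,\xi^k \cdot\text{(something)}\bigr)$ — more precisely, use the multiplier $\sum_{k} \xi^k y_{x_k}$ against the equation. Formally, compute $d\bigl(y_t \sum_k \xi^k y_{x_k}\bigr)$ by It\^o's formula, integrate over $Q$, take expectations, and use the equation to substitute $dy_t$. Integration by parts in the spatial variables, using $y|_\Sigma = 0$ (so $y_{x_j}|_\Sigma = \tfrac{\partial y}{\partial\nu}\nu_j$) and $\xi|_\Gamma = \nu$, produces a boundary term $\tfrac12 \mathbb{E}\int_\Sigma \bigl(\sum_{j,k} b^{jk}\nu_j\nu_k\bigr)\bigl(\tfrac{\partial y}{\partial\nu}\bigr)^2\,d\Gamma\,dt$, which by the ellipticity \cref{eq.bijGeqHyper} dominates $\tfrac{s_0}{2}\mathbb{E}\int_\Sigma \bigl(\tfrac{\partial y}{\partial\nu}\bigr)^2\,d\Gamma\,dt \ge \tfrac{s_0}{2}\mathbb{E}\int_{\Sigma_0}\bigl(\tfrac{\partial y}{\partial\nu}\bigr)^2\,d\Gamma\,dt$. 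The remaining terms are: (i) interior terms quadratic in $\nabla y$ and $y_t$, which are controlled by $\mathbb{E}\int_Q(|\nabla y|^2 + y_t^2)\,dx\,dt$, i.e. by the energy of $y$; (ii) the term $\mathbb{E}\int_Q f \sum_k\xi^k y_{x_k}\,dx\,dt$, handled by Cauchy–Schwarz and the energy bound; (iii) the It\^o correction $\mathbb{E}\int_Q \xi^k (k\,dW)\,\partial_{x_k}(\cdots)$-type terms together with the quadratic variation contribution $\tfrac12\mathbb{E}\int_Q |k|^2 \cdot(\text{bounded})\,dx\,dt$ arising from $d(y_t\cdot)$, which are bounded by $\mathcal{C}|k|^2_{L^2_{\mathbb{F}}(0,T;L^2(G))}$; and (iv) the terminal/initial time boundary contributions $\mathbb{E}\int_G y_t\sum_k\xi^k y_{x_k}\,dx\big|_0^T$, bounded by the energy at times $0$ and $T$, hence again by the data via \cref{eqEnergyEstimate}. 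Assembling all these and invoking the energy estimate to bound $\mathbb{E}\int_Q(|\nabla y|^2 + y_t^2)$ in terms of the data gives \eqref{ch-5-hidden ine}.

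The main obstacle — or rather the point requiring care — is the stochastic terms: one must be scrupulous about the It\^o correction when differentiating products involving $y_t$ (which is an It\^o process with martingale part driven by $\int k\,dW$), making sure that the cross term $\mathbb{E}\int_Q (\text{multiplier})\,d[\,\cdot\,]$ with a martingale differential vanishes after taking expectation, while the genuine quadratic-variation term $\propto \mathbb{E}\int_Q |k|^2$ stays on the right-hand side. A secondary technicality is that these manipulations are first carried out for smooth solutions (or for solutions of a regularized problem / for initial data in a dense subspace) and then extended to general $(y_0,y_1) \in L^2_{\mathcal{F}_0}(\Omega;H_0^1(G)\times L^2(G))$ by a density and limiting argument, which is standard once the a priori estimate is in hand. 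Keeping track of the constant's dependence on $L$ is routine: it enters only through \cref{eqEnergyEstimate} and \cref{10.5-eq1}, yielding the factor $e^{\mathcal{C}(L^2+1)}$.
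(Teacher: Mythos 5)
Your proposal is correct and follows essentially the same route as the paper: the classical multiplier $\rho\cdot\nabla y$ with the boundary vector field from \cref{ch-5-hidden v}, an It\^o computation of $d(y_t\,\rho\cdot\nabla y)$, integration by parts to isolate the boundary term $\mathbb{E}\int_\Sigma\bigl(\sum_{j,k}b^{jk}\nu^j\nu^k\bigr)\bigl|\frac{\partial y}{\partial\nu}\bigr|^2\,d\Gamma\,dt$, ellipticity \cref{eq.bijGeqHyper} to bound it below, and the energy estimate \cref{eqEnergyEstimate} to absorb the interior and data terms. The only cosmetic difference is your preliminary reduction to a linear equation with frozen sources $f=F(y,y_t,\nabla y)$, $k=K(y)$, which the paper carries along explicitly instead.
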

\begin{proof}
For any $ \rho = (\rho^{1}, \cdots, \rho^{n}) \in C^{1}(\mathbb{R}^{1+n}; \mathbb{R}^{n})$, by a direct computation, we obtain 
\begin{align}
    \notag
    \label{eqHid1}
    & \sum_{j,k=1}^{n} \Bigl(
        2 \rho\cdot \nabla y b^{jk} y_{x_{j}} 
        - \sum_{i=1}^{n} \rho^{k} b^{i j} y_{x_{i}} y_{x_{j}}
    \Bigr)_{x_{k}}
    \\
    & =
    2 \sum_{j,k=1}^{n} \Bigl(
         \rho\cdot \nabla y (b^{jk} y_{x_{j}})_{x_{k}} 
        + \sum_{i=1}^{n} \rho^{i}_{x_{k}} b^{j k} y_{x_{i}} y_{x_{j}}
    \Bigr) 
    - \sum_{i, j, k=1}^{n} (\rho^{k} b^{i j})_{x_{k}} y_{x_{i}} y_{x_{j}}
    .
\end{align}
Thanks to It\^o's formula, it holds that 
\begin{align}
\label{eqHid2}\notag
2 \rho \cdot \nabla y d y_{t} 
& = 
2 d (y_{t} \rho \cdot \nabla y)
- 2 y_{t} \rho_{t} \cdot \nabla y d t 
- 2 \rho y_{t} \nabla y_{t}
\\
& = 
2 d (y_{t} \rho \cdot \nabla y)
- 2 y_{t} \rho_{t} \cdot \nabla y d t 
- \sum_{j=1}^{n} (\rho^{j} y_{t}^{2})_{x_{j}} d t 
+ \sum_{j=1}^{n} \rho^{j}_{x_{j}} y_{t}^{2} d  t 
.
\end{align}
From \cref{eqHid1,eqHid2}, we get  
\begin{align}
\notag
\label{eqHid3}
& \sum_{k=1}^{n} \Bigl(
    2 \rho\cdot \nabla y \sum_{j=1}^{n} b^{jk} y_{x_{j}} 
    + \rho^{k} y_{t}^{2}
    - \sum_{i, j=1}^{n} \rho^{k} b^{i j} y_{x_{i}} y_{x_{j}}
\Bigr)_{x_{k}} d t
\\ \notag
& =
2 \rho\cdot \nabla y \Big[ 
    \sum_{j,k=1}^{n} (b^{j k} y_{x_{j}})_{x_{k}} d t 
    - d y_{t}
\Big]
+ 2 d (y_{t} \rho \cdot \nabla y)
- 2 y_{t} \rho_{t} \cdot \nabla y d t 
\\
& \quad 
+  \sum_{j=1}^{n} \rho^{j}_{x_{j}} y_{t}^{2} d t 
+  \sum_{i, j, k=1}^{n} \bigl(
    2 b^{j k} \rho^{i}_{x_{k}} y_{x_{j}} y_{x_{i}} d t 
    - (b^{i j} \rho^{k})_{x_{k}} y_{x_{j}} y_{x_{i}} 
\bigr)d t 
.
\end{align} 
Since $ \Gamma \in C^{2} $, by \cref{ch-5-hidden v}, there exists a vector field $ \zeta \in C^{1}(\mathbb{R}^{n}; \mathbb{R}^{n}) $ such that $ \zeta = \nu  $ on $ \Gamma $. 
Letting $ \rho = \zeta $, integrating \cref{eqHid3} in $ Q $, and taking expectation on $ \Omega $, recalling \cref{ch-5-4.12-eq1}, we deduce that 
\begin{align}
\notag
\label{eqHid4}
& \mathbb{E} \int_{\Sigma} \sum_{k=1}^{n} \Bigl(
    2 \rho\cdot \nabla y \sum_{j=1}^{n} b^{jk} y_{x_{j}} 
    + \rho^{k} y_{t}^{2}
    - \sum_{i, j=1}^{n} \rho^{k} b^{i j} y_{x_{i}} y_{x_{j}}
\Bigr)_{x_{k}} \nu^{k} d \Gamma d t 
\\ \notag
& =
2 \mathbb{E} \int_{G}  y_{t}(T) \rho(T) \cdot \nabla y(T)  dx
- 2 \mathbb{E} \int_{G}  y_{1} \rho(0) \cdot \nabla y_{0}  dx
\\ \notag
& \quad 
+ \mathbb{E} \int_{Q} \Big[ 
    - 2 \rho\cdot \nabla y  F(y, y_{t}, \nabla y)
    - 2 y_{t} \rho_{t} \cdot \nabla y 
    + \sum_{j=1}^{n} \rho^{j}_{x_{j}} y_{t}^{2}  
\\
& \quad \quad \quad \quad \quad 
+  \sum_{i, j, k=1}^{n} \bigl(
    2 b^{j k} \rho^{i}_{x_{k}} y_{x_{j}} y_{x_{i}}  
    - (b^{i j} \rho^{k})_{x_{k}} y_{x_{j}} y_{x_{i}} 
    \bigr)  
\Big] d x d t 
.
\end{align}
Noting that $ y = 0 $ and $ \rho = \nu $ on $ \Gamma $, we have 
\begin{align}
\notag
\label{eqHid5}
& \mathbb{E} \int_{\Sigma} \sum_{k=1}^{n} \Bigl(
    2 \rho\cdot \nabla y \sum_{j=1}^{n} b^{jk} y_{x_{j}} 
    + \rho^{k} y_{t}^{2}
    - \sum_{i, j=1}^{n} \rho^{k} b^{i j} y_{x_{i}} y_{x_{j}}
\Bigr)_{x_{k}}  \nu^{k} d \Gamma d t 
\\
& = 
\mathbb{E} \int_{\Sigma}  \sum_{j, k=1}^{n}   b^{j k} \nu^{j} \nu^{k}  \bigg| \frac{\partial y}{\partial \nu} \bigg|^{2} d \Gamma d t 
\geq 
s_{0} \mathbb{E} \int_{\Sigma}    \bigg| \frac{\partial y}{\partial \nu} \bigg|^{2} d \Gamma d t 
.
\end{align}
Combining \cref{eqHid4,eqHid5,eqEnergyEstimate}, the desired estimate \cref{ch-5-hidden ine} holds.
\end{proof}

Let 
\begin{align}\label{9.29-eq1}
\begin{cases}\displaystyle
r_1 \mathop{\buildrel\Delta\over=} |b_1|_{L_{{\mathbb{F}}}^{\infty}(0,T;L^{\infty}(G))} +
|b_2|_{L_{{\mathbb{F}}}^{\infty}(0,T;L^{\infty}(G;\mathbb{R}^{n}))} +
|b_4|_{L_{{\mathbb{F}}}^{\infty}(0,T;L^{\infty}(G))},\\
r_2 \mathop{\buildrel\Delta\over=} |b_3|_{L_{{\mathbb{F}}}^{\infty}(0,T;L^{p}(G))}.
\end{cases}
\end{align}
From \cref{propHidden}, we immediately have the following result.
\begin{corollary}\label{ch-5-hidden r-1}
For any solution to the equation \cref{ch-5-system1}, it holds
that
\begin{align}\label{ch-5-hidden ine-1}
\notag
\Big|\frac{\partial z}{\partial \nu}\Big
|_{L^2_{{\mathbb{F}}}(0,T;L^2(\Gamma_0))} 
\leq e^{{\cal C}(r_1^2 + r_2^2 +1)}
&  \Big(|(z_0,z_1)|_{L^2_{{\cal F}_0}(\Omega; H_0^1(G)\times L^2(G))}
\\
&+ |f|_{L^2_{{\mathbb{F}}}(0,T;L^2(G))}+
|g|_{L^2_{{\mathbb{F}}}(0,T;L^2(G))}\Big ).
\end{align}
\end{corollary}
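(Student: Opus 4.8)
The statement \cref{ch-5-hidden r-1} is exactly \cref{propHidden} applied to the linear hyperbolic equation \cref{ch-5-system1}, so the plan is to recognize \cref{ch-5-system1} as a special instance of \cref{ch-5-4.12-eq1} and then to invoke \cref{propHidden} directly. Concretely, I would set
\begin{align*}
F(t,\omega,x,\eta,\varrho,\zeta) = b_1(t,\omega,x)\varrho + b_2(t,\omega,x)\cdot\zeta + b_3(t,\omega,x)\eta + f(t,\omega,x),
\qquad
K(t,\omega,x,\eta) = b_4(t,\omega,x)\eta + g(t,\omega,x),
\end{align*}
so that with $y=z$, $(y_0,y_1)=(z_0,z_1)$ the equation \cref{ch-5-4.12-eq1} becomes precisely \cref{ch-5-system1}. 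The first task is to check that this $F$ and $K$ satisfy the structural hypotheses \cref{10.5-eq1}: the Lipschitz bound for $F$ holds with a constant $L$ comparable to $r_1 + r_2$ using \cref{ch-5-aibi}, and the Lipschitz bound for $K$ holds with $L$ comparable to $r_1$; for the last line of \cref{10.5-eq1}, note $F(\cdot,\cdot,\cdot,0,0,0) = f$ and $K(\cdot,\cdot,\cdot,0) = g$, which lie in $L^2_{\mathbb{F}}(0,T;L^2(G))$ by assumption. A subtlety here is that \cref{10.5-eq1} nominally requires $K(\cdot,\cdot,\cdot,0)\in L^2_{\mathbb{F}}(0,T;H^1_0(G))$, whereas we only assume $g\in L^2_{\mathbb{F}}(0,T;L^2(G))$; but an inspection of the proof of \cref{propHidden} shows that the hidden regularity estimate \cref{ch-5-hidden ine} only uses the $L^2$ norm of $K(\cdot,\cdot,0)$ on its right-hand side, so this weaker hypothesis is enough for the conclusion we want.

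Once the reduction is in place, I would apply \cref{ch-5-hidden ine} with $\Gamma_0$ the given subset of $\Gamma$, obtaining
\begin{align*}
\Big|\frac{\partial z}{\partial\nu}\Big|_{L^2_{{\mathbb{F}}}(0,T;L^2(\Gamma_0))}
\leq e^{{\cal C}(L^2+1)}\Big(|(z_0,z_1)|_{L^2_{{\cal F}_0}(\Omega;H^1_0(G)\times L^2(G))} + |f|_{L^2_{{\mathbb{F}}}(0,T;L^2(G))} + |g|_{L^2_{{\mathbb{F}}}(0,T;L^2(G))}\Big),
\end{align*}
and then I would absorb $L^2 \leq {\cal C}(r_1^2 + r_2^2)$ into the exponent, which yields the stated factor $e^{{\cal C}(r_1^2 + r_2^2 + 1)}$ after renaming the generic constant ${\cal C}$. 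This completes the argument.

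\textbf{Main obstacle.} There is no serious analytical obstacle; the only point requiring care is the bookkeeping of how the coefficient norms $r_1, r_2$ in \cref{9.29-eq1} control the Lipschitz constant $L$ of the reconstructed nonlinearity $F$, and in particular confirming that the term $b_3 z$ — with $b_3$ only in $L^p(G)$ for some $p\in[n,\infty]$, $p>2$ — still fits the framework. Strictly speaking $F$ as written above is not globally Lipschitz in $(\eta,\varrho,\zeta)$ with a bounded constant when $b_3\notin L^\infty$, so one must either observe that the proof of \cref{propHidden} (via the multiplier identity \cref{eqHid3} and the energy estimate \cref{eqEnergyEstimate}) only needs the $L^p$-to-energy estimates that hold for the linear equation \cref{ch-5-system1} directly, or re-run the multiplier computation of \cref{propHidden} for \cref{ch-5-system1} treating $b_3 z$ by Sobolev embedding $H^1_0(G)\hookrightarrow L^{p'}(G)$. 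Either way the constant that appears is $e^{{\cal C}(r_1^2 + r_2^2 + 1)}$, so the conclusion is unchanged; this is why the result is correctly labeled a corollary rather than a theorem requiring a fresh proof.
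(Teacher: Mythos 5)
Your proposal matches the paper exactly: the text derives \cref{ch-5-hidden r-1} by stating that it follows immediately from \cref{propHidden} applied to the linear equation \cref{ch-5-system1}, which is precisely your reduction. Your extra care about the mismatch between $g\in L^2_{\mathbb{F}}(0,T;L^2(G))$ and the nominal hypothesis $K(\cdot,\cdot,\cdot,0)\in L^2_{\mathbb{F}}(0,T;H^1_0(G))$, and about $b_3$ lying only in $L^p$ so that the pointwise Lipschitz condition \cref{10.5-eq1} does not literally hold, is a point the paper silently glosses over, and your proposed resolution (the multiplier identity and energy estimate only need $L^2$/Sobolev-type control of these terms) is the right one.
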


\subsection{An energy estimate for stochastic hyperbolic equations}

In this subsection, we derive an energy estimate for \cref{ch-5-system1}.

\begin{proposition}\label{ch-5-energy ensi} 
For any $z$ solving the equation \cref{ch-5-system1} and any $0\leq s,t\leq T$, it holds that
\begin{align}\label{ch-5-en esti}
    \notag
    & {\mathbb{E}}\int_G\big( |z_t(t,x)|^2 + |\nabla
    z(t,x)|^2 \big) dx 
    \\ \notag
    & \leq \displaystyle e^{{\cal C}\big(r_1^2 + r_2^{\frac{1}{2-n/p}}+1\big)T}
    {\mathbb{E}}\int_G \big(|z_t(s,x)|^2 + |\nabla z(s,x)|^2 \big) dx
    \\
    & \quad 
    + {\cal C}{\mathbb{E}}\int_0^T\int_G \big(f(\tau,x)^2 + g(\tau,x)^2\big)
    dxd\tau.
\end{align}
\end{proposition}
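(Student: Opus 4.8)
The plan is to prove the energy estimate \cref{ch-5-en esti} by the standard energy method: apply It\^o's formula to the energy functional
\[
E(t) = \frac12 \mathbb{E}\int_G \Bigl( |z_t(t,x)|^2 + \sum_{j,k=1}^n b^{jk} z_{x_j}(t,x) z_{x_k}(t,x) \Bigr)\, dx,
\]
using the coercivity hypothesis \cref{eq.bijGeqHyper} to see that $E(t)$ is equivalent (up to constants depending only on $s_0$ and $|b^{jk}|_{C^1}$) to $\mathbb{E}\int_G (|z_t|^2 + |\nabla z|^2)\,dx$. First I would apply It\^o's formula to $\mathbb{E}\int_G |z_t|^2\,dx$, substituting $dz_t = \sum_{j,k}(b^{jk}z_{x_j})_{x_k}dt + (b_1 z_t + b_2\cdot\nabla z + b_3 z + f)dt + (b_4 z + g)dW(t)$ from \cref{ch-5-system1}; the $dW$-term vanishes in expectation, the It\^o correction contributes $\mathbb{E}\int_G (b_4 z + g)^2\,dx\,dt$, and integration by parts against $z_t$ in the principal term produces, after using $z=0$ on $\Sigma$, a term that cancels $\frac{d}{dt}\tfrac12\mathbb{E}\int_G \sum b^{jk}z_{x_j}z_{x_k}\,dx$ up to a harmless $\mathbb{E}\int_G b^{jk}_t z_{x_j}z_{x_k}$ contribution (absorbed since $b^{jk}\in C^1$). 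Collecting terms yields a differential inequality
\[
\frac{d}{dt} E(t) \le \mathcal{C}\,\mathbb{E}\int_G \bigl( |b_1 z_t + b_2\cdot\nabla z + b_3 z|\,|z_t| + |z_t|^2 + (b_4 z + g)^2 + |f|^2 \bigr) dx.
\]

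The next step is to estimate the lower-order terms. The contributions of $b_1 z_t$, $b_2\cdot\nabla z$, and $b_4 z$ are bounded using $b_1, b_2, b_4 \in L^\infty_{\mathbb F}(0,T;L^\infty)$ by $\mathcal{C}(r_1^2 + 1)\,\mathbb{E}\int_G(|z_t|^2 + |\nabla z|^2 + |z|^2)\,dx \le \mathcal{C}(r_1^2 + 1)\bigl(E(t) + \mathbb{E}\int_G |z|^2 dx\bigr)$, with $r_1$ as in \cref{9.29-eq1}; the extra $\mathbb{E}\int_G|z|^2\,dx$ is controlled by $\mathbb{E}\int_G |z_t|^2\,dx$ via Poincar\'e in time (write $z(t) = z(s) + \int_s^t z_\tau\,d\tau$) or simply folded into the Gronwall argument. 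The genuinely delicate term is $b_3 z$, since $b_3$ only lies in $L^p(G)$ with $p \ge n$, $p>2$. Here I would invoke the Gagliardo--Nirenberg--Sobolev inequality: for $n\ge 2$ and the relevant $p$, one has $|z|_{L^{2p/(p-2)}(G)} \le \mathcal{C}|z|_{H^1(G)}^{\kappa}|z|_{L^2(G)}^{1-\kappa}$ with $\kappa = n/p \in (0,1]$ (for $n=1$ a simpler Sobolev embedding suffices), so that by H\"older
\[
\Bigl|\mathbb{E}\int_G b_3 z\, z_t\,dx\Bigr| \le r_2\,|z|_{L^{2p/(p-2)}}|z_t|_{L^2} \le \mathcal{C} r_2\, |z|_{H^1}^{n/p} |z|_{L^2}^{1-n/p} |z_t|_{L^2}.
\]
By Young's inequality with exponents chosen so the $|z|_{H^1}$-power becomes $2$, this is bounded by $\tfrac18 E(t) + \mathcal{C} r_2^{1/(1-n/(2p))}\bigl(\mathbb{E}\int_G|z|^2 dx + E(t)\bigr)$; tracking the exponent gives precisely the factor $r_2^{1/(2-n/p)}$ appearing in the statement (after renaming $\mathcal{C}$). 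This is the step I expect to be the main obstacle — getting the exact power $r_2^{1/(2-n/p)}$ requires carefully balancing the Gagliardo--Nirenberg exponent against the Young's-inequality split, and one must be slightly careful at the endpoint $p=n$ (where $\kappa=1$ and the interpolation degenerates, handled by a direct Sobolev embedding plus $\varepsilon$-Young).

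Finally I would assemble the differential inequality $\frac{d}{dt}E(t) \le \mathcal{C}\bigl(r_1^2 + r_2^{1/(2-n/p)} + 1\bigr) E(t) + \mathcal{C}\,\mathbb{E}\int_G (f^2 + g^2)\,dx$, valid for all $t$ (and, running time backward, also for $t<s$), and apply Gronwall's inequality on the interval between $s$ and $t$. Since $|t-s|\le T$, this yields
\[
E(t) \le e^{\mathcal{C}(r_1^2 + r_2^{1/(2-n/p)}+1)T} E(s) + \mathcal{C}\,\mathbb{E}\int_0^T\!\!\int_G (f^2+g^2)\,dx\,d\tau,
\]
and converting back from $E(\cdot)$ to $\mathbb{E}\int_G(|z_t|^2 + |\nabla z|^2)\,dx$ via the coercivity bound \cref{eq.bijGeqHyper} gives \cref{ch-5-en esti}. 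A small technical point worth noting: to justify the It\^o computation rigorously one should first argue for sufficiently regular solutions (e.g. by a density/approximation argument or by working with Galerkin approximations) and then pass to the limit, since the solution in \cref{ch-5-system1} is only a mild/weak solution; this is routine and I would only remark on it rather than carry it out in detail.
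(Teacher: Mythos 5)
Your overall strategy coincides with the paper's: apply It\^o's formula to the natural energy, handle the $b_3 z$ term by H\"older ($|b_3|_{L^p}|z|_{L^{2p/(p-2)}}|z_t|_{L^2}$) plus the Sobolev/Gagliardo--Nirenberg interpolation $|z|_{L^{2p/(p-2)}}\lesssim |\nabla z|_{L^2}^{n/p}|z|_{L^2}^{1-n/p}$, and close with Gronwall. However, there is a genuine gap in the step where you claim to recover the stated power $r_2^{1/(2-n/p)}$. Your own bookkeeping gives the exponent $1/(1-n/(2p))=2/(2-n/p)$, which is \emph{not} $1/(2-n/p)$; and indeed, if you keep the unweighted $\mathbb{E}\int_G|z|^2dx$ alongside $E(t)$ in the Gronwall functional, the homogeneity of the term $r_2\,|\nabla z|^{n/p}|z|^{1-n/p}|z_t|$ only lets you extract a factor $r_2^{1}$ (crude three-factor AM-GM) or $r_2^{2/(1-n/p)}$ (if you insist on putting all of $r_2$ onto the $|z|^2$ term, which moreover degenerates as $p\downarrow n$). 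None of these yields the exponent $\frac{1}{2-n/p}$ in the statement, so as written you prove a strictly weaker inequality.

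The missing device is to build the correct weight into the energy itself: the paper works with
\begin{align*}
\mathcal{E}(t)=\mathbb{E}\int_G\Big(|z_t|^2+|\nabla z|^2+r_2^{\frac{2}{2-n/p}}|z|^2\Big)dx,
\end{align*}
so that, writing $r_2=r_2^{\frac{1}{2-n/p}}\cdot r_2^{\frac{1-n/p}{2-n/p}}$ and attaching the second factor to $|z|_{L^2}^{1-n/p}$, the three-factor Young inequality with weights $\frac{n}{2p}+\frac{1-n/p}{2}+\frac12=1$ gives
\begin{align*}
r_2\,|\nabla z|_{L^2}^{n/p}\,|z|_{L^2}^{1-n/p}\,|z_t|_{L^2}
= r_2^{\frac{1}{2-n/p}}\,|\nabla z|_{L^2}^{n/p}\Big(r_2^{\frac{1}{2-n/p}}|z|_{L^2}\Big)^{1-n/p}|z_t|_{L^2}
\leq \mathcal{C}\,r_2^{\frac{1}{2-n/p}}\,\mathcal{E}(t),
\end{align*}
and the cross term $r_2^{2/(2-n/p)}\mathbb{E}\int_G z z_t\,dx$ coming from It\^o's formula applied to the added $|z|^2$ piece is likewise bounded by $\tfrac12 r_2^{1/(2-n/p)}\mathcal{E}(t)$ by Cauchy--Schwarz. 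Gronwall then produces exactly $e^{\mathcal{C}(r_1^2+r_2^{1/(2-n/p)}+1)T}$, and Poincar\'e lets you pass back to $\mathbb{E}\int_G(|z_t|^2+|\nabla z|^2)dx$ at the two endpoints. You should replace your ``Poincar\'e in time / fold into Gronwall'' treatment of $\mathbb{E}\int_G|z|^2dx$ by this weighted energy; the rest of your argument (including the approximation remark justifying the It\^o computation and the backward-in-time Gronwall for $t\le s$) then goes through.
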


\begin{proof}
    \emph{Step 1.} 
Without loss of generality, we consider the case $ t \leq s $.
Put
\begin{align*}
{\cal E}(t) = {\mathbb{E}}\int_G\big( |z_t(t,x)|^2 + |\nabla
z(t,x)|^2 +  r_2^{\frac{2}{2-n/p}} |z(t,x)|^2\big)
dx.
\end{align*}
Noting that $ z = 0 $ on $ \Sigma $, from Poincar\'e's inequality, we obtain 
\begin{equation}\label{en eq0}
 {\cal E}(t) \leq \mathcal{C} \Big (r_2^{\frac{2}{2-n/p}}
+1\Big ){\mathbb{E}}\int_G\big( |z_t(t)|^2 + |\nabla z(t)|^2
\big) dx.
\end{equation}
Thanks to It\^o's formula, we get 
\begin{align*}
d\big(z^2_t + r_{2}^{\frac{2}{2-n/p}} z^{2}\big) = 2 z_t d z_t + (dz_t)^2 + 2 r_{2}^{\frac{2}{2-n/p}} z z_{t}.
\end{align*}
This, together with \cref{ch-5-system1}, implies 
\begin{align}\label{en eq1}
\notag
& {\mathbb{E}}\int_G  \Big(|z_t(s,x)|^2 + r_2^{\frac{2}{2-n/p}}|z(s,x)|^2 
+ \sum_{j,k=1}^{n}b^{j k}(x) z_{x_{j}}(s,x) z_{ x_{j} }(s,x)  
\Big) dx 
\\ \notag
& \quad 
-  {\mathbb{E}}\int_G \Big(|z_t(t,x)|^2 + r_2^{\frac{2}{2-n/p}}|z(t,x)|^2
+ \sum_{j,k=1}^{n}b^{j k}(x) z_{x_{j}}(t,x) z_{ x_{j} }(t,x) 
\Big) dx  
\\ \notag
&  =  
 {\mathbb{E}}\int_t^s\int_G z_t(\tau,x)\big(b_1(\tau,x) z_t(\tau,x) 
+  b_2(\tau,x)\cdot\nabla z(\tau,x)  
\\ \notag
&   \quad \quad \quad \quad \quad \quad \quad \quad \quad 
+ b_3(\tau,x) z(\tau,x) + f(\tau,x) \big)dxd\tau 
\\ \notag
& \quad 
+ {\mathbb{E}}\int_t^s\int_G \big(b_4(\tau,x) z(\tau,x)
+ g(\tau,x) \big)^2 dxd\tau 
\\
&  \quad 
+ 2r_2^{\frac{2}{2-n/p}}{\mathbb{E}}\int_t^s\int_G z_t(\tau,x)z(\tau,x)dxd\tau.
\end{align}
From \cref{en eq1,9.29-eq1}, we see that 
\begin{align}
    \label{eqEnergyEstimateE1}
\notag
& {\mathbb{E}}\int_G  \Big(|z_t(s,x)|^2 + r_2^{\frac{2}{2-n/p}}|z(s,x)|^2 
+ \sum_{j,k=1}^{n}b^{j k}(x) z_{x_{j}}(s,x) z_{ x_{j} }(s,x)  
\Big) dx 
\\ \notag
& \quad 
-  {\mathbb{E}}\int_G \Big(|z_t(t,x)|^2 + r_2^{\frac{2}{2-n/p}}|z(t,x)|^2
+ \sum_{j,k=1}^{n}b^{j k}(x) z_{x_{j}}(t,x) z_{ x_{j} }(t,x) 
\Big) dx  
\\ \notag
&  \leq 
\mathcal{C} (r_{1}^{2}+1) {\mathbb{E}} \int_{t}^{s} \int_G\Big(|z_t(\tau,x)|^2 + |\nabla
z(\tau,x)|^2 +  r_2^{\frac{2}{2-n/p}} |z(\tau,x)|^2\Big) d x d \tau  
\\ \notag
& \quad 
+ 2{\mathbb{E}}\!\int_t^s\!\int_G\big( f(\tau,x)^2\! + \! g(\tau,x)^2 \big)dxd\tau 
+   \mathbb{E}\int_t^s\!\int_G\! b_3(\tau,x)
z(\tau,x)z_t(\tau,x)dxd\tau 
\\
& \quad 
+  2r_2^{\frac{2}{2-n/p}}{\mathbb{E}}\int_t^s\int_G z_t(\tau,x)z(\tau,x)dxd\tau
.
\end{align}

\emph{Step 2.} In this step, we estimate the last two terms in \cref{eqEnergyEstimateE1}.

Let  $p_1 = \frac{2p}{n-2}$ and $p_2 = \frac{2p}{p-n}$. It is easy to check that
\begin{align*}
\frac{1}{p} + \frac{1}{p_1} + \frac{1}{p_2} +
\frac{1}{2} = 1,
\end{align*}
which, together with H\"{o}lder's inequality, implies that 
\begin{align}\label{en eq2.1}
\notag
& \Big|{\mathbb{E}}\int_{G}b_3(\tau,x) z(\tau,x)z_t(\tau,x)dx\Big|
\\ \notag
& 
 \leq{\mathbb{E}}\int_{G}|b_3(\tau,x)|
|z(\tau,x)|^{\frac{n}{p}}|z(\tau,x)|^{1-\frac{n}{p}}|z_t(\tau,x)|dx
\\ \notag
& 
 \leq r_2{\mathbb{E}}
\Big (\big||z(\tau,\cdot)|^{\frac{n}{p}}\big|_{L^{p_1}(G)}
\big||z(\tau,\cdot)|^{1-\frac{n}{p}}\big|_{L^{p_2}(G)}\big|
z_t(\tau,\cdot) \big|_{L^2(G)} \Big )
\\ \notag
& 
= r_2 {\mathbb{E}} \Big ( \big|
z(\tau,\cdot)\big|^{\frac{n}{p}}_{L^{\frac{2 n}{n-2}}(G)}
 \big|
z(\tau,\cdot)\big|^{1-\frac{n}{p}}_{L^{2}(G)}\big|z_t(\tau,\cdot)
\big|_{L^2(G)} \Big )
\\
 &= r_2^{\frac{1}{2-n/p}}{\mathbb{E}} \Big ( \big|
z(\tau,\cdot)\big|^{\frac{n}{p}}_{L^{\frac{2 n}{n-2}}(G)}
r_2^{\frac{1-n/p}{2-n/p}}\big|
z(\tau,\cdot)\big|^{1-\frac{n}{p}}_{L^{2}(G)}\big|z_t(\tau,\cdot)
\big|_{L^2(G)} \Big ).
\end{align}
Noting that 
\begin{align*}
\frac{1}{2(n/p)^{-1}} +
\frac{1}{2(1-n/p)^{-1}} + \frac{1}{2} = 1,
\end{align*}
from Sobolev's embedding theorem, we find that
\begin{align}
\label{eqEnergyH1}
\notag
\big| z(\tau,\cdot)\big|^{\frac{n}{p}}_{L^{\frac{2 n}{n-2}}(G)}
& \leq 
\mathcal{C} |\nabla z (\tau, \cdot)|^{\frac{n}{p}}_{L^{2}(G)}
\\
& \leq
\mathcal{C} \Big[\int_G\Big ( |z_t(\tau,x)|^2 + |\nabla z(\tau,x)|^2
+ r_2^{\frac{2}{2-n/p}} |z(\tau,x)|^2\Big ) dx
\Big]^{\frac{n}{2p}}.
\end{align}
Clearly, it holds that 
\begin{align}
\notag
\label{eqEnergyH2}
 & r_2^{\frac{1-n/p}{2-n/p}}\big|
z(\tau,\cdot)\big|^{1-\frac{n}{p}}_{L^{2}(G)} 
\\ \notag
& \leq 
\Big(\int_{G} r_2^{\frac{2}{2-n/p}} |z(\tau, x)|^{2} d x  \Big)^{\frac{1}{2}-\frac{n}{2p}}
\\
& \leq
\Big[\int_G\Big ( |z_t(\tau,x)|^2 + |\nabla z(\tau,x)|^2 +
r_2^{\frac{2}{2-n/p}} |z(\tau,x)|^2\Big ) dx
\Big]^{\frac{1}{2}-\frac{n}{2p}} 
\end{align}
and that
\begin{align}
\label{eqEnergyH3}
\big|z_t(\tau,\cdot) \big|_{L^2(G)} \leq
\Big[\int_G\Big ( |z_t(\tau,x)|^2 + |\nabla z(\tau,x)|^2 +
r_2^{\frac{2}{2-n/p}} |z(\tau,x)|^2\Big ) dx
\Big]^{\frac{1}{2}}.
\end{align}
From \cref{en eq2.1,eqEnergyH2,eqEnergyH3,eqEnergyH1}, we obtain that
\begin{align}
\label{en eq2.2}
\Big|{\mathbb{E}}\int_{G}b_3(\tau,x)
z(\tau,x)z_t(\tau,x)dx\Big| \leq \mathcal{C}
r_2^{\frac{1}{2-n/p}}{\cal E}(\tau).
\end{align}
By Cauchy-Schwartz's inequality, we have 
\begin{align}\notag
\label{en eq2.3}
r_2^{\frac{2}{2-n/p}}{\mathbb{E}}\int_G
z(\tau,x)z_t(\tau,x)dx 
&\leq
\frac{1}{2}r_2^{\frac{1}{2-n/p}}{\mathbb{E}}\int_G\Big (
r_2^{\frac{2}{2-n/p}}z^2(\tau,x) +
z_t^2(\tau,x)\Big )dx 
\\ 
&
\leq
\frac{1}{2}r_2^{\frac{1}{2-n/p}}{\cal E}(\tau).
\end{align}

\emph{Step 3.}
Combining \cref{en eq2.2,en eq2.3,eqEnergyEstimateE1}, we conclude that 
\begin{align}\label{en eq3}
\notag
 {\cal E}(t)   \leq \mathcal{C} \Big[ &{\cal E}(s) +  \Big (r_1^2 +
r_2^{\frac{1}{2-n/p}}+1\Big ) \int_t^s {\cal E}(\tau)d\tau
\\
& +
\mathbb{E}\int_t^s\int_G\big( f(\tau,x)^2 + g(\tau,x)^2
\big)dxd\tau\Big].
\end{align}
From \cref{en eq3} and backward Gronwall's inequality, we find that 
\begin{align}
\label{en eq4}
{\cal E}(t) \leq e^{\mathcal{C}\Big (r_1^2 +
r_2^{\frac{1}{2-n/p}}+1\Big )(s-t)}{\cal E}(s) +
\mathcal{C}{\mathbb{E}}\int_t^s\int_G\big( f(\tau,x)^2 + g(\tau,x)^2
\big)dxd\tau.
\end{align}
Combining \cref{en eq4,en eq0}, we get 
\begin{align*}
& {\mathbb{E}}\int_G  \big(|z_t(t,x)|^2 + |\nabla z(t,x)|^2
\big) dx \\
& \leq \mathcal{C} e^{\mathcal{C}\Big (r_1^2 +
r_2^{\frac{1}{2-n/p}}+1\Big )(s-t)} {\mathbb{E}}\int_G
\big(|z_t(s,x)|^2 + |\nabla z(s,x)|^2
\big) dx
\\
& \quad +
\mathcal{C}{\mathbb{E}}\int_t^s\int_G\big( f(\tau,x)^2 + g(\tau,x)^2
\big)dxd\tau,
\end{align*}
which implies \cref{ch-5-en esti} immediately.
\end{proof}

\section{Solution to inverse state problem with the bo\-undary measurement}
\label{secDP}

In this section, we address \cref{probICP}. Prior to unveiling the solution to \cref{probICP}, certain assumptions regarding $(b^{jk})_{1\leq j, k \leq n}$, $T$, and $\Gamma_{0}$ are required.

\begin{condition}\label{condition of d}
There exists a positive function $\psi(\cdot) \in C^2(\overline{G})$
satisfying the following:

{\rm (1)} For some constant $\mu_0 > 0$ and any $ (x,\xi) \in     \overline{G} \times \mathbb{R}^n $, it holds that
\begin{align*}
    \sum_{j,k=1}^n\Big\{ \sum_{j',k'=1}^n\Big[
2b^{jk'}(b^{j'k}\psi_{x_{j'}})_{x_{k'}} -
b^{jk}_{x_{k'}}b^{j'k'}\psi_{x_{j'}} \Big] \Big\}\xi_{j}\xi_{k} \geq
\mu_0 \sum_{j,k=1}^nb^{jk}\xi_{j}\xi_{k} .
\end{align*}

{\rm (2)} There is no critical point of $\psi(\cdot)$ in
$\overline{G}$, i.e.,
\begin{align*}
    \min_{x\in \overline{G} }|\nabla \psi(x)| > 0.
\end{align*}
\end{condition}

\begin{remark}
\cref{condition of d} can be viewed as a form of pseudo-convex condition
(e.g., \cite[Section 28.3]{Hormander}). 
Further exploration on this subject can be found in \cite{Liu2013}.
It is worth noting that if  $(b^{jk})_{1\leq j,k\leq n}$ is the identity matrix, then $\psi(x)=|x-x_0|^2$ satisfies
\cref{condition of d}, where $x_0$ is any point belongs to $\mathbb{R}^n\setminus\overline G$. 
If $c\in C^1(\overline G)$  satisfies $c(x)\neq 0$ for any $x\in\overline G$ and
\begin{align*}
    \big(x-x_0\big)\cdot\nabla c^{-2}(x) \geq \mu_1,\quad\forall\;
x\in\overline G,
\end{align*}
for some  $\mu_1>0$, then the same function $\psi(x)=|x-x_0|^2$ also meets the condition \cref{condition of d} when $(b^{jk})_{1\leq j,k\leq n}=c^2I$
(e.g., \cite[Theorem 1.10.2]{Beilina2012}).  
\end{remark}

One can verify  that if $\psi(\cdot)$ satisfies \cref{condition of d}, then for any constants $a \geq 1$ and $b \in \mathbb{R}$, the function $\tilde{\psi} = a\psi + b$ also satisfies this condition with $\mu_0$ replaced by $a\mu_0$.
Therefore, we can choose $\psi$, $\mu_0$, $c_0>0$, $c_1>0$ and $T$  such that the following
condition holds:
\begin{condition}
\label{ch-5-condition2}
\begin{align*}
\left\{
    \begin{aligned}
       & \frac{1}{4}\sum_{j,k=1}^nb^{jk}(x)\psi_{x_j}(x)\psi_{x_k}(x) \geq
        R^2_1\deq\max_{x\in\overline G}\psi(x)\geq R_0^2 \deq
        \min_{x\in\overline G}\psi(x),\quad \forall x\in \overline G,\\ 
        &   T> T_0\deq2 R_1, \quad \Big (\frac{2R_1}{T}\Big )^2<c_1<\frac{2R_1}{T},\quad \mu_0 - 4c_1 -c_0 > 0.
    \end{aligned}
\right.
\end{align*}
\end{condition}

Put 
\begin{align}\label{ch-5-def gamma0}
	\Gamma_0 \deq \Big\{ x\in \Gamma \;\Big|\;
	\sum_{j,k=1}^nb^{jk}(x)\psi_{x_j}(x)\nu^{k}(x)
	> 0 \Big\}.
\end{align}

We have the following positive answer to  \cref{probICP}.
\begin{theorem}\label{th2} 
Assume \cref{condition of d} and \cref{ch-5-condition2} are satisfied.
Then there exists a constant ${\cal C} >0$ such that for any initial data $(y_0,y_1),  (\hat
y_0, \hat y_1)\in L^2_{{\cal F}_0}(\Omega;H_0^1(G)\times L^2(G))$, the
inequality \cref{ch-5-th2eq1} holds for $\Gamma_0$ given by  \cref{ch-5-def gamma0}.
\end{theorem}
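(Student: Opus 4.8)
\textbf{Proof strategy for \Cref{th2}.}
The plan is to reduce the stability estimate \cref{ch-5-th2eq1} to a global Carleman estimate for the linearized stochastic hyperbolic equation, in direct analogy with the parabolic case treated in \Cref{thm.2.1}. First I would set $z = y - \hat y$, where $y$ and $\hat y$ solve \cref{ch-5-4.12-eq1} with initial data $(y_0,y_1)$ and $(\hat y_0, \hat y_1)$, respectively. Using the global Lipschitz assumptions \cref{10.5-eq1} on $F$ and $K$, the process $z$ satisfies a stochastic hyperbolic equation of the form
\begin{align*}
\left\{
\begin{aligned}
& dz_t - \sum_{j,k=1}^n (b^{jk} z_{x_j})_{x_k} dt = \hat F \, dt + \hat K \, dW(t) & \mbox{ in } Q,\\
& z = 0 & \mbox{ on } \Sigma,\\
& z(0) = y_0 - \hat y_0, \quad z_t(0) = y_1 - \hat y_1 & \mbox{ in } G,
\end{aligned}
\right.
\end{align*}
with $\hat F = F(y, y_t, \nabla y) - F(\hat y, \hat y_t, \nabla \hat y)$ and $\hat K = K(y) - K(\hat y)$, so that $|\hat F| \le L(|z| + |z_t| + |\nabla z|)$ and $|\hat K| \le L|z|$, $|\nabla \hat K| \le L(|z| + |\nabla z|)$ (the latter using the structural assumption that $K(\cdot,\cdot,\cdot,0) \in L^2_{\mathbb F}(0,T;H^1_0(G))$).

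Second, I would invoke the fundamental weighted identity \Cref{thmFi}, choosing the weight functions dictated by \Cref{condition of d} and \Cref{ch-5-condition2}: namely $\ell = \lambda \varphi$ with $\varphi(t,x) = e^{\mu(\psi(x) - c_1(t - T/2)^2)}$ (or the un-exponentiated polynomial variant $\varphi = \psi(x) - c_1(t-T/2)^2$), and $\Psi$ chosen to cancel the lower-order terms, exactly the standard Fursikov–Imanuvilov/Zhang-type choice for hyperbolic Carleman estimates. Integrating the pointwise identity \cref{hyperbolic2} over $Q$, taking expectation, and estimating the coefficients $\mathcal{A}$, $\mathcal{B}$, $c^{jk}$ and the mixed $w_{x_j}w_t$ terms with respect to $\lambda$ and $\mu$ — using Condition 2.1(1) to get the positivity $\sum c^{jk}w_{x_j}w_{x_k} \gtrsim \lambda\varphi|\nabla w|^2$ and the condition $\mu_0 - 4c_1 - c_0 > 0$ to control the quadratic form in $(w_t, \nabla w)$ — should yield, after handling the boundary terms (where $w = 0$ on $\Sigma$ kills most of them and leaves only a $\Gamma_0$-supported term with the correct sign thanks to the definition \cref{ch-5-def gamma0} of $\Gamma_0$ and the inequality $T > 2R_1$ guaranteeing $\varphi(0,\cdot), \varphi(T,\cdot) < 0$ on $G$), a Carleman estimate of the shape
\begin{align*}
& \lambda \mathbb{E}\int_Q \theta^2 \big( |\nabla z|^2 + |z_t|^2 + \lambda^2\varphi^2 |z|^2 \big)\, dx\, dt \\
& \quad \le \mathcal{C} \, \mathbb{E}\Big[ \int_Q \theta^2\big(|\hat F|^2 + |\nabla \hat K|^2 + \lambda^2\varphi^2 |\hat K|^2\big)\, dx\, dt + \lambda \mathbb{E}\int_{\Sigma_0} \theta^2 \Big|\frac{\partial z}{\partial\nu}\Big|^2\, d\Gamma\, dt \Big] + \text{(boundary data at } t=0,T).
\end{align*}

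Third, I would absorb the terms involving $\hat F$ and $\hat K$ into the left-hand side: since $|\hat F|^2 + |\nabla\hat K|^2 + \lambda^2\varphi^2|\hat K|^2 \le \mathcal{C}L^2(|z|^2 + |z_t|^2 + |\nabla z|^2 + \lambda^2\varphi^2|z|^2)$, choosing $\lambda$ large compared to $\mathcal{C}L^2$ eats these into the dominant left-hand terms. Then, using that $z(0) = y_0 - \hat y_0$ and $z_t(0) = y_1 - \hat y_1$ carry the quantity to be estimated, I would run an energy-type argument (in the spirit of \Cref{ch-5-energy ensi}, applied to the equation for $z$) to compare $\mathbb{E}|(z(0),z_t(0))|^2_{H^1_0 \times L^2}$ with the integral $\int_Q \theta^2(|\nabla z|^2 + |z_t|^2)$ over an interior time interval where $\theta$ is bounded below, and to discard the terminal data at $t = T$ (which does not appear in the measurement) — here one exploits the lower bound on $\varphi$ near $t = 0$ coming from \Cref{ch-5-condition2}, together with the finite-speed/energy propagation to move the estimate back to $t=0$. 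The net result is $\mathbb{E}|(y_0-\hat y_0, y_1 - \hat y_1)|^2_{H^1_0\times L^2} \le \mathcal{C}\, \mathbb{E}\int_{\Sigma_0}|\partial_\nu z|^2\, d\Gamma\, dt$, which is \cref{ch-5-th2eq1}.

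\textbf{Main obstacle.} I expect the delicate point to be the treatment of the first-order-in-time cross terms $\big[(\phi b^{jk}\ell_{x_k})_t + b^{jk}(\phi\ell_t)_{x_k}\big] w_{x_j} w_t$ and the simultaneous positivity of the quadratic form in $(w_t, \nabla w)$: unlike the parabolic case, there is no parabolic smoothing to spare, so the pseudoconvexity condition \Cref{condition of d} and the precise inequalities in \Cref{ch-5-condition2} (especially $(2R_1/T)^2 < c_1 < 2R_1/T$ and $\mu_0 - 4c_1 - c_0 > 0$) must be used sharply to ensure the bulk terms dominate. A secondary technical nuisance, genuinely stochastic in nature, is that $z_t$ is only an Itô process (not differentiable in $t$), so the identity \cref{hyperbolic2} must be applied in its Itô form and the correction term $\phi^2\theta^2\ell_t (du_t)^2$ must be tracked and shown to combine favorably with the $\theta^2|\nabla\hat K|^2$ and $\theta^2\lambda^2\varphi^2|\hat K|^2$ contributions, which is precisely why the $H^1$-regularity assumption on $K$ in \cref{10.5-eq1} is needed. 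Finally, moving the interior Carleman estimate back to time $t = 0$ without losing the constant requires the energy estimate \Cref{ch-5-energy ensi} and care that the exponential weights do not blow up; this is routine but must be done in the right order.
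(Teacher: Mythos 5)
Your proposal is correct and takes essentially the same route as the paper: set $z=y-\hat y$, apply the weighted identity \cref{thmFi} with the polynomial weight $\ell=\lambda\big[\psi(x)-c_1(t-T/2)^2\big]$ to obtain a boundary observability estimate on $\Sigma_0$ (the paper states this separately as \cref{ch-5-observability}, writing the difference equation with mean-value-theorem coefficients and $f=g=0$), and transfer the interior bound back to $t=0$ via the energy estimate \cref{ch-5-energy ensi}. The only cosmetic deviations are that the paper eliminates the $t=0,T$ boundary terms of the identity with a time cut-off rather than by the exponential smallness of $\theta$ there (both work under \cref{ch-5-condition2}), and that in the hyperbolic setting the It\^o correction contributes only $\theta^2\ell_t|\hat K|^2$, so no $|\nabla \hat K|^2$ term actually arises.
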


\begin{remark} \cref{th2} shows that the state $y(t)$ of \cref{ch-5-4.12-eq1} (for $t\in [0,T]$) can be uniquely determined by the measurement $\displaystyle{\frac{\partial y}{\partial \nu}} \Big|_{\Sigma_0}$, ${\mathbb{P}}$-a.s., and continuously depends on it. 
Therefore, it provides an affirmative answer to the inverse state problem of the system \eqref{ch-5-4.12-eq1}.
\end{remark}

In what follows, for $\lambda \in \mathbb{R}$ and $r\in\mathbb{N}$, we  use $O(\lambda^r)$ to denote a function of order $\lambda^r$ for large $\lambda$.

First, we derive a boundary observability estimate for the equation
\cref{ch-5-system1}.
\begin{theorem}
\label{ch-5-observability}
Assume  \cref{condition of d} and \cref{ch-5-condition2} are
satisfied. For any solution $z$ to the equation \cref{ch-5-system1}, it holds that 
\begin{align} \label{ch-5-obser esti2}
\notag
& |(z_0,z_1)|_{L^2_{{\cal F}_0}(\Omega; H_0^1(G)\times L^2(G))}\\
 & \leq
e^{{\cal C}\big(r_1^2 + r_2^{\frac{1}{ 3/2 - n/p}}+1\big)}
\Big(\Big|\frac{\partial z}{\partial \nu}\Big
|_{L^2_{{\mathbb{F}}}(0,T;L^2(\Gamma_0))} + |f|_{L^2_{{\mathbb{F}}}(0,T;L^2(G))}  + |g|_{L^2_{{\mathbb{F}}}(0,T;L^2(G))}\Big).
\end{align}
\end{theorem}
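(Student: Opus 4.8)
\textbf{Proof strategy for Theorem~\ref{ch-5-observability}.} The plan is to apply the Carleman estimate machinery built on the fundamental weighted identity \cref{hyperbolic2} to the equation \cref{ch-5-system1}. First I would introduce the Carleman weight adapted to the pseudo-convex function $\psi$ of \cref{condition of d}: set $\phi(t,x) = \psi(x) - c_1(t - T/2)^2$ (so that $\phi$ is large near $t=T/2$ and small near $t=0,T$), $\ell = \lambda\phi$, $\theta = e^{\ell}$, with $\Psi$ chosen of the form $\Psi = \lambda(\text{something like }\sum b^{jk}\psi_{x_jx_k} - \text{constant})$ as in the parabolic case. Then I would apply \cref{thmFi} with $u=z$, integrate \cref{hyperbolic2} over $Q$, and take expectations. \cref{condition of d}(1) is exactly what forces the interior quadratic form $\sum c^{jk}w_{x_j}w_{x_k}$ to dominate $\lambda|\nabla w|^2$, while the choice of $\phi$ and \cref{ch-5-condition2} (the time condition $T > 2R_1$ and the inequalities on $c_1$) control the $w_t^2$ coefficient $(\phi^2\ell_t)_t + \sum(\phi b^{jk}\ell_{x_j})_{x_k} - \phi\Psi$ and the cross term, and make the weight negative on $\{t=0\}\cup\{t=T\}$ so that the time-boundary contributions have a favorable sign. \cref{condition of d}(2) (no critical points) ensures no interior loss of the gradient term.

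The second step is to handle the boundary terms on $\Sigma$. Since $z = 0$ on $\Sigma$, we have $w = 0$ and $w_{x_j} = \theta\frac{\partial z}{\partial\nu}\nu^j$ there, so the spatial divergence terms in \cref{hyperbolic2} collapse to a boundary integral over $\Sigma$ proportional to $\lambda \int_\Sigma \theta^2 (\sum_{j,k}b^{jk}\psi_{x_j}\nu^k)(\sum_{j,k}b^{jk}\nu^j\nu^k)\big|\frac{\partial z}{\partial\nu}\big|^2\,d\Gamma\,dt$ plus lower-order-in-$\lambda$ pieces. By the definition \cref{ch-5-def gamma0} of $\Gamma_0$, the factor $\sum b^{jk}\psi_{x_j}\nu^k$ is negative on $\Gamma\setminus\Gamma_0$, so that part of the boundary term has the right sign and can be dropped, leaving only a controllable term over $\Sigma_0 = (0,T)\times\Gamma_0$. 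The stochastic correction terms $\phi^2\theta^2\ell_t(du_t)^2$ and $\theta^2\sum b^{jk}(dz_{x_j}+\ell_{x_j}dz)(dz_{x_k}+\ell_{x_k}dz)$, together with the It\^o correction from applying It\^o's formula to the $du_t$-type term, get replaced using \cref{ch-5-system1}: the diffusion is $b_4 z + g$, so these contribute terms bounded by $\mathcal{C}\mathbb{E}\int_Q\theta^2(r_1^2 z^2 + g^2 + \lambda^2\varphi^2 z^2)\,dx\,dt$ after differentiating $b_4 z + g$ in $x$ — note here we only have $g \in L^2$, not $H^1$, so one must be careful; since the diffusion term does not carry spatial derivatives of $g$ in $\cref{ch-5-system1}$ (unlike the parabolic case), the estimate should close without needing $\nabla g$, which is why the statement has $|g|_{L^2_{\mathbb{F}}(0,T;L^2(G))}$ only.

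The third step is to absorb the right-hand side drift term $b_1 z_t + b_2\cdot\nabla z + b_3 z + f$. The terms $b_1 z_t$, $b_2\cdot\nabla z$ are bounded by $\mathcal{C}r_1^2\theta^2(z_t^2 + |\nabla z|^2)$ and absorbed into the dominant left-hand side for $\lambda$ large. The genuinely delicate term is $b_3 z$ with $b_3 \in L^p$, $p\ge n$, $p>2$: one uses H\"older and Sobolev embedding exactly as in the proof of \cref{ch-5-energy ensi} (the chain of inequalities \cref{en eq2.1}--\cref{eqEnergyH3}), which produces the exponent $r_2^{1/(3/2 - n/p)}$ — the shift from $2 - n/p$ in the energy estimate to $3/2 - n/p$ here comes from the extra half-power of $\lambda\varphi$ available in the Carleman estimate (one has $s\lambda^2\varphi w^2$ versus only $|\nabla w|^2$, roughly). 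After choosing $\lambda$ depending on $r_1, r_2$ this way and fixing it, one obtains $\mathbb{E}\int_{(T/2 - \delta, T/2+\delta)\times G}(z_t^2 + |\nabla z|^2)\,dx\,dt \le e^{\mathcal{C}(\cdots)}\big(\big|\frac{\partial z}{\partial\nu}\big|^2_{L^2_{\mathbb{F}}(\Sigma_0)} + |f|^2_{L^2_{\mathbb{F}}(0,T;L^2(G))} + |g|^2_{L^2_{\mathbb{F}}(0,T;L^2(G))}\big)$. Finally, applying the energy estimate \cref{ch-5-en esti} backward and forward in time to pass from the time-slab near $T/2$ to $t=0$, and using $z(0) = z_0$, $z_t(0) = z_1$, yields \cref{ch-5-obser esti2}.

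\textbf{Main obstacle.} The hardest part will be the bookkeeping of all the lower-order (in $\lambda$) terms coming from the weighted identity — in particular verifying that with the specific $\phi = \psi - c_1(t-T/2)^2$ and $\Psi$, under \cref{condition of d} and \cref{ch-5-condition2}, the coefficients of $w_t^2$, of $\sum w_{x_j}w_{x_k}$, and of $w^2$ are simultaneously positive (modulo absorbable remainders) for large $\lambda$, and that the time-boundary terms at $t=0,T$ carry the correct sign; this is where the precise interplay $T > 2R_1$, $(2R_1/T)^2 < c_1 < 2R_1/T$, and $\mu_0 - 4c_1 - c_0 > 0$ is used. The secondary subtlety is tracking the Sobolev/H\"older argument carefully enough to extract the stated exponent $\frac{1}{3/2 - n/p}$ rather than something worse, and making sure the whole argument never needs spatial regularity of $g$.
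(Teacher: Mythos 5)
Your overall strategy is the one the paper uses: the weight $\ell=\lambda\,[\psi(x)-c_1(t-T/2)^2]$ with $\phi\equiv 1$ and $\Psi=\ell_{tt}+\sum_{j,k}(b^{jk}\ell_{x_j})_{x_k}-c_0\lambda$ in \cref{thmFi}, \cref{condition of d} to make the gradient quadratic form coercive, \cref{ch-5-condition2} to make $\mathcal{B}\gtrsim\lambda^3$, the reduction of the spatial divergence term to a boundary integral over $\Sigma_0$ via \cref{ch-5-def gamma0}, the H\"older--Sobolev interpolation for $b_3\in L^p$ yielding the threshold $\lambda\gtrsim r_2^{1/(3/2-n/p)}$, the observation that the It\^o correction $\theta^2\ell_t(du_t)^2=\theta^2\ell_t(b_4z+g)^2\,dt$ never differentiates $g$, and a final passage to $(z_0,z_1)$ through the energy estimate \cref{ch-5-en esti}. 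All of that matches the paper's Steps 2--4.

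The one step that would fail as you state it is the treatment of the terms at $t=0$ and $t=T$. You claim that \cref{ch-5-condition2} makes "the time-boundary contributions have a favorable sign." It does not: the leading contribution at $t=0$ is $\mathcal{A}\ell_t\,w^2\approx \lambda^3 c_1T\,[\,c_1^2T^2-\sum_{j,k}b^{jk}\psi_{x_j}\psi_{x_k}\,]\,w^2$, and under \cref{ch-5-condition2} one has $c_1<2R_1/T$ and $\tfrac14\sum b^{jk}\psi_{x_j}\psi_{x_k}\ge R_1^2$, hence $c_1^2T^2<4R_1^2\le\sum b^{jk}\psi_{x_j}\psi_{x_k}$ — the \emph{wrong} inequality for a sign. (The favorable-sign mechanism at $t=0$ is exactly what \cref{condition of c0c1c2T}, i.e.\ $4c_1^2T^2>\sum b^{jk}\psi_{x_j}\psi_{x_k}$, buys in the inverse-source Theorem \cref{ch-5-2-uniqueness}; it is not available here.) What \emph{is} true, and what you correctly observe, is that $\ell(0,\cdot)$ and $\ell(T,\cdot)$ are strictly negative (since $c_1>(2R_1/T)^2$ gives $c_1T^2/4>R_1^2\ge\psi$), so $\theta^2$ is exponentially small in $\lambda$ near $t=0,T$. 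The paper exploits this not by signing the boundary terms but by inserting a temporal cut-off $\chi$ equal to $1$ on $(\tfrac T2-\varepsilon_1T,\tfrac T2+\varepsilon_1T)$ and applying the identity to $u=\chi z$, so the $t=0,T$ terms vanish identically; the price is the commutator source $\chi_{tt}z+2\chi_t z_t$, supported where $\theta^2\le e^{2\lambda[R_1^2-c_1(\varepsilon_1T)^2]}$ with negative exponent, which is then absorbed against the lower bound $e^{\lambda R_0^2/2}$ on the central slab using \cref{ch-5-en esti} and $\lambda$ large. Your version can be repaired the same way (either adopt the cut-off, or bound the $t=0,T$ terms in absolute value by $\mathcal{C}\lambda^3e^{-2\lambda\delta}$ times the energy and absorb), but the sign argument itself is not valid under the stated hypotheses.
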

\begin{proof}
The proof is divided into four steps.

\emph{Step 1.} In this step, we introduce a cut-off function and reduce the equation \cref{ch-5-system1} to an equation with both initial and final data equal zero.
 
Put
\begin{align*}
\ell(t,x)=\lambda\Big[\psi(x)-c_1\Big (t-\frac{T}{2}\Big )^{2}\Big],\quad (t,x)\in Q.
\end{align*}
From \cref{ch-5-condition2}, there exists $\varepsilon_1\in (0,1/2)$ such that for all 
$ (t,x) \in \big[ \big (0, \frac{T}{2} - \varepsilon_1 T\big )\bigcup \big (\frac{T}{2}+\varepsilon_1 T,T\big )  \big]\times G $, it holds that 
\begin{align}\label{ch-5-e1}
\ell(t,x) 
\leq \lambda [R_{1}^{2} - c_{1} (\varepsilon_{1} T)^{2}]
< 0.
\end{align}
Thanks to \cref{ch-5-condition2} again, noting that
\begin{align*}
\ell\Big ( \frac{T}{2},x\Big ) = \lambda\psi(x)\geq \lambda R_0^2, \qquad \forall \; x\in G,
\end{align*}
there exists $\varepsilon_{0}\in (0, \varepsilon_1)$ such that  
\begin{align}
    \label{eqHyperE1}
\ell(t,x)\geq \frac{\lambda R_0^2}{2},\quad \quad  \forall \; (t,x)\in
\Big (\frac{T}{2}-\varepsilon_0 T,\frac{T}{2}+\varepsilon_0 T\Big ) \times G.
\end{align}

Choose a cut-off function $ \chi \in C_{0}^{\infty}[0,T] $ satisfying 
\begin{align}
\label{eqChi}
\chi(t)=1, \quad \quad  \forall \; t \in  \Big (\frac{T}{2}-\varepsilon_1 T,\frac{T}{2}+\varepsilon_1 T\Big ).
\end{align}

Put $u=\chi z$.
Then, $u$ solves the following equation:
\begin{align}{\label{ch-5-system2}}
\left\{
\begin{aligned}
& du_{t} - \sum_{j,k=1}^n(b^{jk}u_{x_j})_{x_k}dt = \big(b_1 u_t +
b_2\cdot\nabla u + b_3 u + \tilde{f} \big)dt 
\\
& \quad \quad \quad \quad \quad \quad \quad \quad \quad \quad \quad 
+ \big(b_4 u + \chi g\big)dW(t)
& {\mbox { in }} Q,
\\
& 
u = 0 & \mbox{ on } \Sigma, 
\\
& 
u(0) = u(T) = 0,\; u_{t}(0) = u_t(T) = 0 & \mbox{ in } G,
\end{aligned}
\right.
\end{align}
where $\tilde{f} =\chi f+\chi_{tt}z + 2\chi_t z_t - b_1\chi_t z$.

\medskip

\emph{Step 2.} In this step, we give the weight function of the Carleman estimate and compute the order of the terms in the equality \cref{hyperbolic2} with respect to $\lambda$. 

Thanks to \cref{thmFi}, choosing 
\begin{align}
 \label{1.27-eq2}
\phi\equiv 1,\quad p^{jk} = b^{jk}, \quad\Psi = \ell_{tt} +
\sum_{j,k=1}^n\big(b^{jk}\ell_{x_j}\big)_{x_k} - c_0\lambda,
\end{align}
we will estimate the resulting terms in \cref{hyperbolic2} one by one.

For $j,k=1,2,\cdots,n$, it is straightforward  to verify  that
\begin{equation}\label{hyper 1.1 h2}
\ell_t= -  \lambda c_{1}(2t-T), 
\quad 
\ell_{tt} = - 2 \lambda c_{1},
\quad
\ell_{x_j}= \lambda \psi_{x_{j}},
\quad 
\ell_{t x_j}=0.
\end{equation}
Combining \cref{1.27-eq2,hyper 1.1 h2}, we obtain 
\begin{align}
\label{ch-5-coeffvt}
\Big[\ell_{tt} + \sum_{j,k=1}^n(b^{jk}\ell_{x_j})_{x_k}
-\Psi\Big]w_{t}^2 = c_0\lambda  w_{t}^2,
\end{align}
and 
\begin{align}
\label{ch-5-bcoeffvtvi}
\sum_{j,k=1}^n\big[(b^{jk}\ell_{x_j})_t +
b^{jk}\ell_{tx_j}\big]w_{x_k} w_t = 0.
\end{align}
By \cref{condition of d,hyper 1.1 h2,1.27-eq2}, it holds that 
\begin{align}\label{vivj}\notag
& \sum_{j,k=1}^n c^{jk}w_{x_j}w_{x_k} 
\\ \notag
& = 
\sum_{j,k=1}^n \Big\{ (b^{jk}\ell_t)_t \!+\! \sum_{j',k'=1}^n
\big[ 2b^{jk'}(b^{j'k}\ell_{x_{j'}})_{x_{k'}} \!-\!
(b^{jk}b^{j'k'}\ell_{x_{j'}})_{x_{k'}} \big] \!+\!
\Psi b^{jk} \Big\}w_{x_j} w_{x_k} 
\\ \notag
& = 
\sum_{j,k=1}^n \Big\{  2 b^{jk}\ell_{tt} + \sum_{j',k'=1}^n
\big[ 2b^{jk'}(b^{j'k}\ell_{x_{j'}})_{x_{k'}}-
 b^{jk}_{x_{k'}}b^{j'k'}\ell_{x_{j'}} \big]
- c_{0} \lambda  b^{jk} \Big\}w_{x_j} w_{x_k} 
\\
& \geq 
\lambda (\mu_0 -4c_1 - c_0)\sum_{j,k=1}^nb^{jk}v_{x_j}v_{x_k}
.
\end{align}

Now we compute the coefficient of $w^2$. 
Combining \cref{AB1,hyper 1.1 h2,1.27-eq2}, we have 
\begin{align}
\label{eqCarHyper1}
\notag
\mathcal{A}
& =
\ell_{t}^{2} - 2 \ell_{tt} - \sum_{j,k=1}^{n} b^{j k} \ell_{x_{j}} \ell_{x_{k}} + c_{0} \lambda
\\
& =
\lambda^2  \Big[ c_1^2(2t - T)^2 - \sum_{j,k=1}^nb^{jk}\psi_{x_j} \psi_{x_k} \Big] + O(\lambda).
\end{align}
From \cref{1.27-eq2,eqCarHyper1}, we see that 
\begin{align*}
& ( \mathcal{A} \ell_{t})_{t} 
= 
- 6 c_{1}^{3} \lambda^{3} (2 t - T)^{2}
+2 c_{1} \lambda^{3} \sum_{j, k=1}^{n} b^{j k} \psi_{x_{j}} \psi_{x_{k}}
,
\end{align*}
and
\begin{align*}
- \sum_{j, k=1}^{n} ( \mathcal{A} b^{j k} \ell_{x_{j}})_{x_{k}} 
&= 
\lambda^{3} \sum_{j, k=1}^{n} \Big\{
- c_{1}^{2}  (b^{j k} \psi_{x_{j}})_{x_{k}} 
+ \sum_{j', k'=1}^{n}   (b^{j k} \psi_{x_{j}} \psi_{x_{k}})_{x_{k'}} b ^{j' k'} \psi_{x_{j'}}
\\
& \quad \quad \quad \quad \quad 
+\sum_{j', k'=1}^{n}   b^{j k} \psi_{x_{j}} \psi_{x_{k}} (b ^{j' k'} \psi_{x_{j'}})_{x_{k'}}
\Big\}
.
\end{align*}
This, together with \cref{eqCarHyper1,1.27-eq2,hyper 1.1 h2,AB1}, we conclude that 
\begin{align}
\notag
\label{CH-5-B1}
{\cal B}  &  =  (4c_1+c_0)\lambda^3 \sum_{j,k=1}^nb^{jk}\psi_{x_j}\psi_{x_k} +
\lambda^3\sum_{j,k=1}^n\sum_{j',k'=1}^nb^{jk}\psi_{x_j}
\big(b^{j'k'}\psi_{x_{j'}}\psi_{x_{k'}}\big)_{x_k} 
\\
& \quad   - (8c_1^3 + c_0c_1^2)\lambda^3(2t-T)^2 + O(\lambda^2).
\end{align}

From \cref{condition of d}, we have 
\begin{align}
\label{eqCarHyper2}
\notag
& \sum_{j,k=1}^n\sum_{j',k'=1}^nb^{jk}\psi_{x_j} \big(b^{j'k'}\psi_{x_{j'}}\psi_{x_{k'}}\big)_{x_k}
\\\notag
& = 
\sum_{j,k=1}^n\sum_{j',k'=1}^n ( 
    b^{jk} b^{j'k'}_{x_{k}} \psi_{x_j} \psi_{x_{j'}}\psi_{x_{k'}} 
    + 2 b^{jk} b^{j'k'} \psi_{x_j} \psi_{x_{k} x_{j'}}\psi_{x_{k'}} 
)
\\\notag
& = 
\sum_{j,k=1}^n\sum_{j',k'=1}^n ( 
    b^{jk} b^{j'k'}_{x_{k}} \psi_{x_j} \psi_{x_{j'}}\psi_{x_{k'}} 
    -  2  b^{j'k'} b^{jk}_{x_{j'}} \psi_{x_j} \psi_{x_{k}}\psi_{x_{k'}} 
\\\notag
& \quad \quad \quad \quad \quad \quad 
    +  2  b^{j'k'} (b^{jk}  \psi_{x_{k}} )_{x_{j'}}\psi_{x_j}\psi_{x_{k'}} 
)
\\\notag
& = 
\sum_{j,k=1}^n\Big\{ \sum_{j',k'=1}^n\Big[
2b^{jk'}(b^{j'k}\psi_{x_{j'}})_{x_{k'}} -
b^{jk}_{x_{k'}}b^{j'k'}\psi_{x_{j'}} \Big] \Big\}\psi_{x_{j}}\psi_{x_{k}} 
\\
& \geq
\mu_0 \sum_{j,k=1}^nb^{jk}\psi_{x_{j}}\psi_{x_{k}} .
\end{align}
From \cref{CH-5-B1,eqCarHyper2,ch-5-condition2}, we deduce that 
\begin{align*}
{\cal B}  \geq 2\big(4c_1+c_0\big)\lambda^3 \big(4R_1^2 - c_1^2T^2\big) +
O(\lambda^2).
\end{align*}
Since $ 4 R_{1}^{2} - c_{1}^{2} T^{2} > 0 $, there exists  $\lambda_1 > 0$ so that, for any $\lambda > \lambda_{1}$, 
\begin{align}
\label{Ch-5-B ine}
{\cal B}w ^2 \geq 8c_1  \big(4R_1^2 - c_1^2T^2\big) \lambda^3 w^2.
\end{align}

\emph{Step 3.} In this step, we apply \cref{thmFi} to the equation \cref{ch-5-system2}.

Integrating (\ref{hyperbolic2}) in $Q$, taking mathematical expectation and by \cref{ch-5-coeffvt,ch-5-bcoeffvtvi,vivj,Ch-5-B ine}, and noting 
\begin{align}\notag 
	\label{eqCarHyper3}
	\mathbb{E} \int_{0}^{T} \int_{G} d &\Big[ \sum_{j,k=1}^n b^{jk}\ell_t w_{x_j} w_{x_k}-
	2 \sum_{j,k=1}^n b^{jk}\ell_{x_j}w_{x_k}w_t  +  \ell_t w_t^2
	- \Psi w_t w 
	\\ 
	&  
	+ \Big(  {\cal A}\ell_t +
	\frac{1}{2}  \Psi _t\Big)w^2 \Big]  d x 
	= 0,
\end{align}
where we use $ w(0) = w(T) = 0 $, it holds that 
\begin{align}\notag
\label{bhyperbolic31}
&  \mathbb{E} \int_Q \theta\Big\{ \Big( -2\ell_t w_t + 2
\sum_{j,k=1}^n b^{jk}\ell_{x_j}w_{x_k} +
\Psi w \Big) \Big[du_t- \sum_{j,k=1}^n\!\big(b^{jk}u_{x_j}\big)_{x_k}dt \Big]  \Big\}dx 
\\ \notag
&\quad + \lambda
\mathbb{E}\!\int_{\Sigma}\sum_{j,k=1}^n\!\sum_{j',k'=1}^n\!\big(
2b^{jk}b^{j'k'}\psi_{x_{j'}}w_{x_j} w_{x_{k'}}\!\! -\!
b^{jk}b^{j'k'}\psi_{x_j} w_{x_{j'}}w_{x_{k'}} \big)\nu_k d \Gamma d t  
\\\notag
& \quad 
-  \mathbb{E} \int_Q  \theta \ell_t |du_t|^2 d x
\\ \notag
& \geq {\cal C} \mathbb{E}\int_Q \theta^2\Big[\big( \lambda u_t^2 + \lambda
|\nabla u|^2 \big) 
+   \lambda^3 u^2 \Big]dxdt 
\\
& \quad 
+ \mathbb{E}\int_Q\Big(-2\ell_t w_t +
2\sum_{j,k=1}^nb^{jk}\ell_{x_j}w_{x_k} + \Psi w\Big)^2
dxdt. 
\end{align}
Noting that $ u = 0 $ on $ \Sigma $, ${\mathbb{P}}$-a.s., from \cref{ch-5-def gamma0}, we deduce that 
\begin{align} \notag
\label{bhyperbolic32}
& \mathbb{E}\int_{\Sigma}\sum_{j,k=1}^n\sum_{j',k'=1}^n\Big(
2b^{jk}b^{j'k'}\psi_{x_{j'}}w_{x_j} w_{x_{k'}} -
b^{jk}b^{j'k'}\psi_{x_j} w_{x_{j'}}w_{x_{k'}}
\Big)\nu^k d \Gamma d t   
\\ \notag
&  =  \mathbb{E}\int_{\Sigma}\Big( \sum_{j,k=1}^nb^{jk}\nu^j \nu^k
\Big)\Big( \sum_{j',k'=1}^nb^{j'k'}\psi_{x_{j'}}\nu^{k'}
\Big)\theta^2\Big|\frac{\partial u}{\partial
\nu}\Big|^2d \Gamma d t  
\\
&  \leq \mathbb{E}\int_{\Sigma_0}\Big( \sum_{j,k=1}^nb^{jk}\nu^j
\nu^k \Big)\Big( \sum_{j',k'=1}^nb^{j'k'}\psi_{x_{j'}}\nu^{k'}
\Big)\theta^2\Big|\frac{\partial u}{\partial \nu}\Big|^2d \Gamma d t .
\end{align}

Thanks to \cref{ch-5-system2}, it holds that 
\begin{align}
\notag
\label{bhyperbolic4}
& \mathbb{E} \int_Q \theta\Big\{ \Big( -2\ell_t w_t + 2
\sum_{j,k=1}^n b^{jk}\ell_{x_j}w_{x_k} +
\Psi w \Big) \Big[du_t- \sum_{j,k=1}^n\!\big(b^{jk}u_{x_j}\big)_{x_k}dt \Big]  \Big\}dx  
\\ \notag
& 
\leq {\cal C}\bigg[\mathbb{E}\int_Q \theta^2\big( |\tilde{f}|^2 + 
|g|^2\big)dxdt 
+ |b_1|^2_{L^{\infty}_{{\mathbb{F}}}(0,T;L^{\infty}(G))}  \mathbb{E}\int_Q \theta^2 u_t^2 dxdt 
\\ 
& \qquad\;\,  
+|b_2|^2_{L^{\infty}_{{\mathbb{F}}}(0,T;L^{\infty}(G,\mathbb{R}^n))}
\mathbb{E}\int_Q\theta^2 |\nabla u|^2dxdt    
+ \mathbb{E}\int_Q \theta^2  b_3^2 u^2 dxdt \nonumber
\\
& \quad \quad \;\,
+ |b_4|_{L^{\infty}_{{\mathbb{F}}}(0,T;L^{\infty}(G))}^2 \mathbb{E} \int_Q
\theta^2 u^2 dxdt
\bigg]\nonumber
\\
&  \qquad\;\, + \mathbb{E} \int_Q \Big ( - 2\ell_t w_t + \sum_{j,k=1}^n
b^{jk}\ell_{x_j} w_{x_k} +  \Psi w \Big )^2dxdt,
\end{align}
and 
\begin{align}
\label{eqCarHyper8}
-  \mathbb{E} \int_Q  \theta \ell_t |du_t|^2 d x
\leq 
\lambda |b_4|_{L^{\infty}_{{\mathbb{F}}}(0,T;L^{\infty}(G))}^2 \mathbb{E} \int_Q     \theta^2 u^2 dxdt
+ \lambda \mathbb{E} \int_{Q} \theta^{2} |g|^{2} d x d t 
.
\end{align}

\medskip

{\it Step 4}. In this step, we handle the terms in \cref{bhyperbolic4,{eqCarHyper8}}.

Choosing $ s = 2p / (p-2) $, we have 
\begin{align*}
\frac{2}{p} + \frac{2}{s} = 1,
\end{align*}
which, combined with H\"older's inequality and \cref{9.29-eq1}, implies that 
\begin{align}
\label{eqCarHyper4}
|b_3\theta u|^2_{L^2_{\mathbb{F}}(0,T;L^2(G))} \leq r_2^{2} |\theta u|^2_{L^2_{\mathbb{F}}(0,T;L^{s}(G))} 
.
\end{align}
Since $ \frac{1}{s} = \frac{1}{2} - \frac{n}{p} $, by Sobolev's inequality, we get 
\begin{align}
\label{eqCarHyper5}
|\theta u|^2_{L^2_{\mathbb{F}}(0,T;L^{s}(G))}  \leq |\theta u|^2_{L^2_{\mathbb{F}}(0,T;H^{n/p}(G))}
.
\end{align}
For any $\rho\in L^2_{{\cal F}_T}(\Omega;H^1({\mathbb{R}}^n))$, by H\"{o}lder's
inequality again, we obtain 
\begin{align}\notag
\label{eqCarHyper6}
|\rho|^2_{L^2_{{\cal F}_T}(\Omega;H^{n/p}({\mathbb{R}}^n))}  
&  = {\mathbb{E}}\int_{{\mathbb{R}}^n}(1+|\xi|^2)^{n/p}|\hat \rho(\xi)|^{2n/p}|\hat \rho(\xi)|^{2(1-n/p)}d\xi 
\\
& 
\leq |\rho|^{2n/p}_{L^2_{{\cal F}_T}(\Omega;H^1({\mathbb{R}}^n))}
|\rho|^{2(1-n/p)}_{L^2_{{\cal F}_T}(\Omega;L^2({\mathbb{R}}^n))}.
\end{align}
Hence,  it holds that 
\begin{align}
\label{eqCarHyper6-1}
|\widetilde \rho|^2_{L^2_{{\cal F}_T}(\Omega;H^{n/p}(G))} \leq
{\cal C}|\widetilde
\rho|^{2n/p}_{L^2_{{\cal F}_T}(\Omega;H_0^1(G))}|\widetilde
\rho|^{2(1-n/p)}_{L^2_{{\cal F}_T}(\Omega;L^2(G))}, \; \forall\;\widetilde
\rho \in L^2_{{\cal F}_T}(\Omega;H_0^1(G)),
\end{align}
and 
\begin{align}
\label{eqCarHyper7}
|\overline \rho|^2_{L^2_{\mathbb{F}}(0,T;H^{n/p}(G))} \! \leq {\cal C}|\overline
\rho|^{2n/p}_{L^2_{\mathbb{F}}(0,T;H_0^1(G))}|\overline
\rho|^{2(1-n/p)}_{L^2_{\mathbb{F}}(0,T;L^2(G))},\;\forall\;\overline
\rho \in L^2_{\mathbb{F}}(0,T;H_0^1(G)).
\end{align}
Combining \cref{eqCarHyper4,eqCarHyper5,eqCarHyper6,eqCarHyper6-1,eqCarHyper7}, thanks to Cauchy-Schwarz inequality, for all $ \varepsilon > 0 $, we deduce that 
\begin{align}
\label{eqCarHyper9}
\notag
& |b_3\theta u|^2_{L^2_{\mathbb{F}}(0,T;L^2(G))}\nonumber
\\ \notag
& \leq {\cal C} r_{2}^{2}| \theta u|^{2n/p}_{L^2_{\mathbb{F}}(0,T;H_0^1(G))}| \theta
u|^{2(1-n/p)}_{L^2_{\mathbb{F}}(0,T;L^2(G))}
\\
&  \leq \varepsilon\lambda| \theta u|^{2}_{L^2_{\mathbb{F}}(0,T;H_0^1(G))} +
{\cal C}(\varepsilon)r_2^{ 2p/(p-n)}\lambda^{-n/(p-n)}| \theta
u|^{2}_{L^2_{\mathbb{F}}(0,T;L^2(G))}.
\end{align}

Choosing
\begin{align*}
\lambda_2 = \max\Big\{\lambda_1,{\cal C}\Big (r_1^2 +
r_2^{\frac{1}{3/2-n/p}}+1\Big )\Big\},
\end{align*}
from \cref{bhyperbolic31,bhyperbolic32,bhyperbolic4,eqCarHyper8,eqCarHyper9}, for all $ \lambda \geq \lambda_{2} $, it holds that 
\begin{align}
\label{eqCarHyper10}
\notag
&{\cal C} \lambda \mathbb{E}\int_{\Sigma_0}\theta^2\Big(
\sum_{j,k=1}^nb^{jk}\nu^{j} \nu^{k} \Big)\Big(
\sum_{j',k'=1}^nb^{j'k'}\psi_{x_{j'}}\nu^{k'} \Big)\Big|\frac{\partial
u}{\partial \nu}\Big|^2d\Gamma d t
\\ \notag
& \quad + {\cal C}\mathbb{E}\int_Q \theta^2( |\tilde{f}|^2 + \lambda |g|^2)dxdt
\\
& \geq \mathbb{E}\int_Q \theta^2 \Big (  \lambda u_t^2 + \lambda |\nabla u|^2
+ \lambda^3 u^2  \Big )dxdt.
\end{align}
Noting that \cref{eqChi} and $ u = \chi z $, by \cref{eqCarHyper10}, we get 
\begin{align}\notag
\label{eqCarHyper11}
&{\cal C} \lambda \mathbb{E}\int_{\Sigma_0}\theta^2\Big(
\sum_{j,k=1}^nb^{jk}\nu^{j} \nu^{k} \Big)\Big(
\sum_{j',k'=1}^nb^{j'k'}\psi_{x_{j'}}\nu^{k'} \Big)\Big|\frac{\partial
z}{\partial \nu}\Big|^2d\Gamma d t
\\ \notag
& \quad 
+ \mathcal{C} \mathbb{E} \int_{(0, \frac{T}{2} - \varepsilon_{1}T) \cup (\frac{T}{2}+ \varepsilon_{1}T, T)} \int_{G} \theta^2 \big(z_t^2 +  |\nabla z|^2 + z^2\big)dxdt 
\\ \notag
& \quad + {\cal C}\mathbb{E}\int_Q \theta^2( |f|^2 + \lambda |g|^2)dxdt
\\
& \geq \mathbb{E}\int_{\frac{T}{2} - \varepsilon_{0}T}^{\frac{T}{2} + \varepsilon_{0}T} \int_{G} \theta^2 \Big (  \lambda z_t^2 + \lambda |\nabla z|^2
+ \lambda^3 z^2  \Big )dxdt.
\end{align}
From \cref{ch-5-en esti,ch-5-e1,eqHyperE1}, we obtain that 
\begin{align}\notag
    \label{eqHyperE2}
    & \mathbb{E} \int_{(0, \frac{T}{2} - \varepsilon_{1}T) \cup (\frac{T}{2}+ \varepsilon_{1}T, T)} \int_{G} \theta^2 \big(z_t^2 +  |\nabla z|^2 + z^2\big)dxdt
    \\\notag
    &  \leq 
    \mathcal{C} e^{\lambda [R_{1}^{2} - c_{1} (\varepsilon_{1} T)^{2}]} \mathbb{E} \int_{(0, \frac{T}{2} - \varepsilon_{1}T) \cup (\frac{T}{2}+ \varepsilon_{1}T, T)} \int_{G}   \big(z_t^2 +  |\nabla z|^2  \big)dxdt
    \\\notag
    & \leq
     \mathcal{C} e^{{\cal C}\big(r_1^2 + r_2^{\frac{1}{2-n/p}}+1\big)T} e^{\lambda [R_{1}^{2} - c_{1} (\varepsilon_{1} T)^{2}]} \mathbb{E}   \int_{G}   \big(z_1^2 +  |\nabla z_{0}|^2  \big) d x 
    \\
    & \quad 
    + \mathcal{C} e^{R^{2}_{1} \lambda} \mathbb{E} \int_{Q} (|f|^{2} + |g|^{2}) d x d t 
    ,
\end{align}
and that 
\begin{align}\notag
    \label{eqHyperE3}
    & \mathbb{E}\int_{\frac{T}{2} - \varepsilon_{0}T}^{\frac{T}{2} + \varepsilon_{0}T} \int_{G} \theta^2 \Big (  \lambda z_t^2 + \lambda |\nabla z|^2
    + \lambda^3 z^2  \Big )dxdt 
    \\\notag
    & \geq
    \mathcal{C} e^{\frac{R_{0}^{2}}{2} \lambda} \mathbb{E} \int_{\frac{T}{2} - \varepsilon_{0}T}^{\frac{T}{2} + \varepsilon_{0}T} \int_{G} \Big (    z_t^2 +   |\nabla z|^2  \Big )dxdt 
    \\\notag
    & \geq 
    \mathcal{C} e^{\frac{R_{0}^{2}}{2} \lambda}  e^{-{\cal C}\big(r_1^2 + r_2^{\frac{1}{2-n/p}}+1\big)T}  \mathbb{E}   \int_{G}   \big(z_1^2 +  |\nabla z_{0}|^2  \big) d x 
    \\
    & \quad 
    - \mathcal{C}  e^{\frac{R_{0}^{2}}{2} \lambda}  \mathbb{E} \int_{Q} (|f|^{2} + |g|^{2}) d x d t 
    .
\end{align}
Combining \cref{eqHyperE3,eqCarHyper11,eqHyperE2}, noting that $  R_{1}^{2} - c_{1} (\varepsilon_{1} T)^{2} < 0 $, there exists a
\begin{align*}
\lambda_3={\cal C}\Big (r_1^2 + r_2^{\frac{1}{3/2-n/p}}+1\Big )\geq \lambda_2
\end{align*}
such that for all $\lambda\geq \lambda_3$, it holds that
\begin{align}\label{bhyperbolic7}
\notag
& {\cal C} \lambda \mathbb{E} \int_{\Sigma_0} \theta^2\Big( \sum_{j,k=1}^n
b^{jk}\nu_{j} \nu_{k} \Big)\Big( \sum_{j',k'=1}^n
b^{j'k'}\psi_{x_{j'}}\nu_{k'} \Big)\Big|\frac{\partial z}{\partial \nu}\Big|^2
d \Gamma d t 
\\ \notag
& \quad 
+ {\cal C}e^{ R^{2}_{1}  \lambda} \mathbb{E} \int_Q   \big(|f|^2 + |g|^2\big) dxdt 
\\
& \geq \mathbb{E} \int_G  \big( z_1^2 +
|\nabla z_0|^2  \big)dx.
\end{align}
Taking $\lambda=\lambda_3$, we deduce that 
\begin{align}\label{bhyperbolic8}
\notag
& {\cal C}e^{ R^{2}_{1} \lambda_3}  \Big[\mathbb{E}\int_{\Sigma_0} \Big|\frac{\partial z}{\partial
\nu}\Big|^2d \Gamma d t 
+  \mathbb{E} \int_Q  \big(|f|^2 + |g|^2\big) dxdt \Big]  
\\
& \geq
\mathbb{E} \int_G  \big( z_1^2 + |\nabla
z_0|^2 \big)dx,
\end{align}
which gives the desired inequality \eqref{ch-5-obser esti2}.
\end{proof}

We are in a position to prove \cref{th2}.

\begin{proof}[Proof of \cref{th2}]
Let $ \hat{y} $ solve \cref{ch-5-4.12-eq1} with the initial datum $ (\hat{y}_{0}, \hat{y}_{1}) $.
Put $ z =  y - \hat{y} $.
Then $ z $ fulfills \cref{ch-5-system1} with 
\begin{align*}
\left\{
    \begin{aligned}
        & b_1=\int_0^1\partial_\varrho F(\hat y,\hat y_t+s(y_t-\hat
        y_t),\nabla \hat y)ds, 
        \\
        & b_2=\int_0^1\partial_\zeta F(\hat y,
        \hat y_t,\nabla  \hat y+s(\nabla  y-\nabla \hat y))ds, 
        \\
        &   b_3=\int_0^1\partial_\eta F(\hat y+s(y-\hat y)  \hat
        y_t ,\nabla \hat y)ds, 
        \\
        &   b_4=\int_0^1\partial_\eta K(\hat y+s(y-\hat
        y))ds,\quad f=0,  \quad  g=0,\\
        &  z_0=y_0-\hat y_0, \quad z_1=y_1-\hat y_1.
    \end{aligned}
\right.
\end{align*}
The proof is completed directly by \cref{ch-5-obser esti2}.
\end{proof}

\section[Solution to inverse source problem. I]{Solution to inverse source problem with the boundary measurement and  the final time  measurement. I}
\label{secISP}

This section is devoted to  \cref{probISP}. Since the random source   $g(\cdot)$ appears in the right hand side of
\cref{ch-5-obser esti2}, we cannot  use \cref{ch-5-observability} directly to solve to solve \cref{probISP}. To address this, we must develop a new Carleman estimate for \cref{ch-5-system1} in which the source term $g$ and the initial data are bounded by the measurement. To this end, for $ \psi $ satisfying \cref{condition of d}, we assume the following condition:
\begin{condition}
\label{condition of c0c1c2T}
\begin{align*}
      \left\{
    \begin{aligned}
        &\mu_0 - 4c_1 -c_0 > 0,\\ 
     &   \frac{\mu_0}{(8c_1 +
    c_0)}\sum_{j,k=1}^nb^{jk}\psi_{x_j}\psi_{x_k}
    > 4c_1^2T^2
    > \sum_{j,k=1}^nb^{jk}\psi_{x_j}\psi_{x_k}.
    \end{aligned}
\right.
\end{align*}
\end{condition}

Given that $ \Gamma_{0} $ is defined in \cref{ch-5-def gamma0}, the following is a positive answer to \cref{probISP}.

\begin{theorem}
\label{eqOber}
Let   \cref{condition of d,condition of c0c1c2T} 
hold. If the solution $z\in {\mathbb{H}}_T$ to  \cref{ch-5-system1} 
satisfies that $z(T)=0$ in $G$, ${\mathbb{P}}$-$\hbox{\rm a.s.}$, then
\begin{align*}
  &  |(z_0, z_1)|_{L^2_{{\cal F}_0}(\Omega; H^1_0(G) \times
L^2(G))}+|\sqrt{T-t}g|_{ L^2_{{\mathbb{F}}}(0,T;L^2(G))}
\\
& \le
{\cal C}e^{\big(r_1^2 + r_2^{\frac{1}{ 3/2 - n/p}}+1\big)} \bigg( \left|\frac{\partial z}{\partial \nu}\right|_{L^2_{{\mathbb{F}}}(0,T;L^2(\Gamma_0))} + |f|_{L^{2}_{\mathbb{F}}(0,T; L^{2}(G))} \bigg),
\end{align*}
where $ r_{1} $ and $ r_{2} $ are defined in \cref{9.29-eq1}.
\end{theorem}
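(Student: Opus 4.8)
\textbf{Proof proposal for \Cref{eqOber}.}

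The plan is to establish a new Carleman estimate for \cref{ch-5-system1} in which both the diffusion source $g$ and the initial data appear on the \emph{left} hand side, with only the boundary normal derivative and $f$ on the right. The starting point is the same weighted identity \cref{hyperbolic2} with $\phi\equiv 1$, $p^{jk}=b^{jk}$, and the same choice of $\Psi$ as in \cref{1.27-eq2}, and the same weight $\ell(t,x)=\lambda[\psi(x)-c_1(t-T/2)^2]$ used in the proof of \Cref{ch-5-observability}. The crucial difference is in the treatment of the Itô correction term. In the proof of \Cref{ch-5-observability}, when $u$ solves the equation, the term $-\mathbb{E}\int_Q\theta\ell_t|du_t|^2\,dx$ was merely absorbed (see \cref{eqCarHyper8}); here one must instead \emph{keep} the contribution of $(b_4z+\chi g)^2\,dt$ coming from $(du_t)^2$ with its favorable sign. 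Under \Cref{condition of c0c1c2T}, the coefficient $-\ell_t=\lambda c_1(2t-T)$ is negative on $(T/2,T)$ and positive on $(0,T/2)$, so naively this term does not have a sign. The device that fixes this is to exploit that $z(T)=0$: one works on subintervals and uses the energy estimate \Cref{ch-5-energy ensi} (backward in time from $t=T$) to transfer information, so that effectively one gains control of $g$ weighted by $\sqrt{T-t}$, which is exactly the weight appearing in the statement. More precisely, I would first run the Carleman machinery exactly as in Steps~1--4 of \Cref{ch-5-observability} to obtain the analogue of \cref{eqCarHyper10}, but \emph{without} discarding the $(du_t)^2$ term, landing at an inequality of the shape
\begin{align*}
& \mathcal{C}\lambda\,\mathbb{E}\int_{\Sigma_0}\theta^2\Big(\sum_{j,k=1}^nb^{jk}\nu^j\nu^k\Big)\Big(\sum_{j',k'=1}^nb^{j'k'}\psi_{x_{j'}}\nu^{k'}\Big)\Big|\frac{\partial z}{\partial\nu}\Big|^2 d\Gamma dt
+ \mathcal{C}\,\mathbb{E}\int_Q\theta^2\big(|f|^2+\text{(cut-off terms)}\big)\,dxdt
\\
& \geq \mathbb{E}\int_{\frac{T}{2}-\varepsilon_0T}^{\frac{T}{2}+\varepsilon_0T}\int_G\theta^2\big(\lambda z_t^2+\lambda|\nabla z|^2+\lambda^3z^2\big)\,dxdt
+ \mathbb{E}\int_Q (-\ell_t)\theta^2\big(b_4z+\chi g\big)^2\,dxdt.
\end{align*}

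The second step is to deal with the last term, which is where the hypothesis $z(T)=0$ is decisive. On $(T/2,T)$ we have $-\ell_t=\lambda c_1(2t-T)\geq 0$, so the $g$-contribution there is already a good term (up to the $b_4^2z^2$ part, which is dominated by the $\lambda^3z^2$ term on the left for $\lambda$ large). On $(0,T/2)$, where $-\ell_t\leq0$, I would instead use \Cref{ch-5-energy ensi} together with $z(T)=0$ and standard backward propagation to bound $\mathbb{E}\int_0^{T/2}\int_G|g|^2$ by $\mathbb{E}\int_{T/2}^T\int_G|g|^2$ plus the boundary and $f$ data — or, more directly, re-derive the Carleman estimate on the shifted weight so that the ``bad'' part of the time interval is pushed past $t=T$. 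Either way, after fixing $\lambda$ at the threshold value $\lambda_3=\mathcal{C}(r_1^2+r_2^{1/(3/2-n/p)}+1)$ exactly as in \cref{bhyperbolic7}, one absorbs the cut-off terms using the energy estimate \Cref{ch-5-energy ensi} and the sign information \cref{ch-5-e1}, \cref{eqHyperE1}, \cref{eqHyperE2}, \cref{eqHyperE3}, precisely mirroring the end of the proof of \Cref{ch-5-observability}. This yields control of $\mathbb{E}\int_G(z_1^2+|\nabla z_0|^2)\,dx$ and of $\mathbb{E}\int_Q(T-t)|g|^2\,dxdt$ (the weight $(T-t)$ arising from $-\ell_t$ near $t=T$ behaving like $\lambda c_1(2t-T)\sim \lambda c_1(T-t)$ up to constants, combined with the lower bound $\theta^2\geq\mathcal{C}>0$ on the relevant region) by the right hand side.

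The third step is bookkeeping: combine the gradient/time-derivative bound on the middle interval with \Cref{ch-5-energy ensi} to propagate to $t=0$, recovering the full $H^1_0\times L^2$ norm of $(z_0,z_1)$, and collect the exponential constants into the stated factor $e^{(r_1^2+r_2^{1/(3/2-n/p)}+1)}$. The use of \Cref{ch-5-hidden r-1} (\cref{ch-5-hidden ine-1}) is not needed here since the boundary term already sits on the correct side, but it could be invoked if one wishes to further simplify.

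The main obstacle, as anticipated, is handling the Itô correction term $(du_t)^2=(b_4z+\chi g)^2\,dt$ with the \emph{time-dependent, sign-changing} coefficient $-\ell_t$. Unlike the deterministic hyperbolic case, this stochastic term cannot simply be thrown away — it is the only place where $g$ enters with a definite structure — yet its coefficient is positive only on half the time interval. The resolution must lean on the terminal condition $z(T)=0$ (so that backward energy propagation from $T$ is available and cheap) and on choosing the Carleman parameters $c_0,c_1$ under \Cref{condition of c0c1c2T} so that the negative part of $-\ell_t$ is confined to a region where $\theta$ is exponentially small and the corresponding loss is swallowed by the boundary data; the residual $g$-weight that survives this trade-off is exactly $\sqrt{T-t}$, which is why the theorem cannot assert an unweighted $L^2_{\mathbb{F}}(0,T;L^2(G))$ bound on $g$. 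Verifying that the constants can indeed be arranged consistently — i.e., that \Cref{condition of c0c1c2T} is simultaneously compatible with \Cref{condition of d} and with the threshold inequalities $4R_1^2-c_1^2T^2>0$ etc. — is the delicate quantitative point, but it is of the same nature as the constant-chasing already carried out in \Cref{ch-5-condition2}.
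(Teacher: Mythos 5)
There is a genuine gap in your primary route. You keep the weight $\ell=\lambda[\psi(x)-c_1(t-T/2)^2]$ from the proof of \Cref{ch-5-observability}, for which $\ell_t=\lambda c_1(T-2t)$. In the identity \cref{hyperbolic2} the It\^o correction enters as $+\phi^2\theta^2\ell_t(du_t)^2$ on the right-hand side, so it is a \emph{good} term (one that yields control of $g$) precisely where $\ell_t\geq 0$, i.e.\ on $(0,T/2)$ — the opposite of what you assert. On $(T/2,T)$ it is a bad term that must be dominated, and your proposed repair — using \Cref{ch-5-energy ensi} and $z(T)=0$ to bound $\mathbb{E}\int_0^{T/2}\int_G|g|^2$ by $\mathbb{E}\int_{T/2}^T\int_G|g|^2$ plus data — cannot work: $g$ is an arbitrary free source in $L^2_{\mathbb{F}}(0,T;L^2(G))$, not a quantity governed by any equation, so no forward or backward propagation mechanism applies to it (indeed \cref{ch-5-en esti} carries $|g|^2_{L^2(Q)}$ on its right-hand side and never controls $g$). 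Your asymptotic claim $\lambda c_1(2t-T)\sim\lambda c_1(T-t)$ near $t=T$ is also false: the left side tends to $\lambda c_1T$ there, not to zero. With the $T/2$-centered weight the only weight on $g$ you could hope to extract is $\sqrt{(T-2t)_+}$ on the first half of the interval, which is not the statement of \Cref{eqOber}.

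The correct mechanism is the one you mention only in passing: recenter the weight at $t=T$, taking $\ell=\lambda[\psi(x)-c_1(t-T)^2]$ as in \Cref{ch-5-2-uniqueness}. Then $\ell_t=2\lambda c_1(T-t)\geq 0$ on all of $[0,T]$, so the It\^o correction is a good term everywhere and produces exactly the factor $(T-t)$ (hence $|\sqrt{T-t}\,g|$) after splitting $|b_4z+g|^2\geq\tfrac12|g|^2-2|b_4z|^2$ and absorbing the $z^2$ part into the $\lambda^3z^2$ term. This recentering also forces you to drop the cut-off: the temporal boundary terms at $t=T$ vanish because $\ell(T)=\ell_t(T)=0$ together with $z(T)=0$ (this is where the terminal condition is actually used), while the terms at $t=0$ are rendered positive by the inequality $4c_1^2T^2>\sum b^{jk}\psi_{x_j}\psi_{x_k}$ of \Cref{condition of c0c1c2T} and directly deliver $\mathbb{E}\int_G\theta^2(0)(\lambda|z_1|^2+\lambda|\nabla z_0|^2+\lambda^3|z_0|^2)\,dx$ on the good side — no energy propagation back to $t=0$ is needed. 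Note also that \Cref{eqOber} assumes \Cref{condition of d,condition of c0c1c2T} but not \Cref{ch-5-condition2}, so the machinery of $\varepsilon_0,\varepsilon_1$ and \cref{ch-5-e1}, \cref{eqHyperE1} that you import from the proof of \Cref{ch-5-observability} is not available here.
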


Similar to the proof in \cref{ch-5-observability}, \cref{eqOber} can be directly derived by the following global Carleman estimate for \cref{ch-5-system1}.

\begin{theorem}\label{ch-5-2-uniqueness} 
Assume   \cref{condition of d,condition of c0c1c2T} are satisfied.
Then, there exists a constant $\tilde\lambda>0$ such that for any
$\lambda\geq\tilde \lambda$ and each solution $z\in {\mathbb{H}}_T$ to the equation
\cref{ch-5-system1}  satisfying $z(T)=0$ in $G$, ${\mathbb{P}}$-a.s., it
holds that
\begin{align}\notag
    \label{eqObser}
    &   \mathbb{E}\int_G\theta^2(0) \big( \lambda|z_1|^2 + \lambda |\nabla z_0|^2 + \lambda^3
|z_0|^2\big)dx + \lambda\mathbb{E}\int_Q (T-t) \theta^2 g^2dxdt
\\
&  \leq {\cal C}\Big (
\lambda \mathbb{E} \int_{\Sigma_0}\theta^2\Big| \frac{\partial z}{\partial\nu}
\Big|^2 d\Gamma dt  +  \mathbb{E}\int_Q  \theta^2 f^2dxdt\Big ).
\end{align}
\end{theorem}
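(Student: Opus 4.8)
The statement to prove is the global Carleman estimate \cref{eqObser} for \cref{ch-5-system1} under the terminal condition $z(T) = 0$. My plan follows the same four-step architecture used in the proof of \cref{ch-5-observability}, but with a modified weight that forces the diffusion coefficient $g$ (and not merely the initial data) into the observable side. First I would apply the fundamental identity \cref{hyperbolic2} from \cref{thmFi} with $\phi \equiv 1$, $p^{jk} = b^{jk}$, and $\Psi$ as in \cref{1.27-eq2}, but now to a suitably cut-off version of $z$. The key new ingredient compared to \cref{secDP} is that, when we take the It\^o differential $d(\,\cdot\,)$ of the quadratic-in-$w_t$ and quadratic-in-$\nabla w$ terms, the stochastic correction term $\phi^2\theta^2\ell_t (du_t)^2$ on the right-hand side of \cref{hyperbolic2}, together with the $\theta^2 b^{jk}(dz_{x_j}+\ell_{x_j}dz)(dz_{x_k}+\ell_{x_k}dz)$-type contributions coming from $I_2$, produces — after using the equation \cref{ch-5-system1} — a term of the form $\lambda\,\mathbb{E}\int_Q \theta^2 (b_4 z + g)^2\,dxdt$ with a \emph{favorable} sign. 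Here is where \cref{condition of c0c1c2T} is used more strongly than \cref{ch-5-condition2}: the refined inequality $4c_1^2T^2 < \sum b^{jk}\psi_{x_j}\psi_{x_k}$ guarantees $\ell_t$ has a definite sign behavior (via $-\ell_t = \lambda c_1(2t-T)$) in a way that allows us to keep $\lambda\,\mathbb{E}\int_Q (T-t)\theta^2 g^2\,dxdt$ on the left after absorbing the $b_4 z$ cross term by the zeroth-order $\lambda^3\theta^2 z^2$ term and by Poincar\'e's inequality.

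\textbf{Key steps in order.} (1) Introduce the same weight $\ell(t,x) = \lambda[\psi(x) - c_1(t - T/2)^2]$ and a cut-off $\chi \in C_0^\infty[0,T]$ with $\chi \equiv 1$ on $(T/2 - \varepsilon_1 T, T/2 + \varepsilon_1 T)$, set $u = \chi z$; since $z(T) = 0$ we no longer need $\chi$ to vanish near $t = T$ — this asymmetry is exactly what lets the $(T-t)$-weighted $g$-term survive, so I would instead take $\chi$ supported on an interval of the form $[0,T]$ with $\chi(T)=1$ but $\chi$ (and $\chi_t, \chi_{tt}$) vanishing near $t=0$, or handle the $t=0$ and $t=T$ endpoints asymmetrically in the integration by parts. (2) Compute the orders in $\lambda$ of the coefficients $c^{jk}$ (giving $\lambda(\mu_0 - 4c_1 - c_0)\sum b^{jk}w_{x_j}w_{x_k}$ by \cref{condition of d} and the first line of \cref{condition of c0c1c2T}), of $\mathcal{A}$, and of $\mathcal{B}$; the new point is that the bound $\mathcal{B}w^2 \geq c\lambda^3 w^2$ now requires the sharper pair of inequalities in \cref{condition of c0c1c2T} rather than just \cref{ch-5-condition2}. (3) Integrate \cref{hyperbolic2} over $Q$, take expectations, and carefully track the $d(\cdot)$ total-differential term: using $z(T) = 0$, $z(0) = z_0$, $z_t(0) = z_1$, the terminal contributions vanish and the initial contributions produce $-\mathbb{E}\int_G \theta^2(0)(\lambda|z_1|^2 + \lambda|\nabla z_0|^2 + \lambda^3|z_0|^2)\,dx$ with the right sign; simultaneously the stochastic quadratic-variation terms produce $+\lambda\,\mathbb{E}\int_Q (T-t)\theta^2 (b_4 z + g)^2\,dxdt$ plus lower-order stochastic remainders. (4) Bound the boundary terms using \cref{ch-5-def gamma0} (only $\Sigma_0$ contributes with the right sign, as in \cref{bhyperbolic32}), bound the drift-coefficient terms $b_1, b_2, b_3$ exactly as in Step 4 of \cref{ch-5-observability} via H\"older, Sobolev, and the interpolation inequalities \cref{eqCarHyper6-1,eqCarHyper7} (choosing $\lambda \gtrsim r_1^2 + r_2^{1/(3/2 - n/p)} + 1$ to absorb them), and finally absorb $\lambda\,\mathbb{E}\int_Q (T-t)\theta^2 b_4^2 z^2$ into $\lambda^3\,\mathbb{E}\int_Q\theta^2 z^2$ for $\lambda$ large. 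Dropping unneeded nonnegative terms yields \cref{eqObser}; passing from it to \cref{eqOber} is then the same soft argument (evaluating $\theta^2$ at $t=0$ and on a subinterval, plus the energy estimate \cref{ch-5-en esti}) used after \cref{ch-5-observability}.

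\textbf{Main obstacle.} The delicate point — and the reason the extra hypothesis \cref{condition of c0c1c2T} is imposed — is the simultaneous sign control of three things at once: the zeroth-order coefficient $\mathcal{B}$ (needs $\mathcal{B} \geq c\lambda^3 > 0$), the principal gradient coefficient $c^{jk}$ (needs $\mu_0 - 4c_1 - c_0 > 0$), and the sign/size of $\ell_t$ entering the stochastic correction $-\theta\ell_t|du_t|^2$ and the $(T-t)$-weight on $g$. In \cref{ch-5-observability} one only needed a net positivity after discarding the $g$-terms, but here we must retain a \emph{positive multiple} of $\int_Q(T-t)\theta^2 g^2$, which tightens the admissible range of $c_1$ from below and above — precisely the content of $4c_1^2 T^2 < \sum b^{jk}\psi_{x_j}\psi_{x_k} < \frac{\mu_0}{8c_1 + c_0}\sum b^{jk}\psi_{x_j}\psi_{x_k}$. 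I expect the bookkeeping of which cross terms (especially those mixing $w_t$, $\nabla w$, and the stochastic differentials) can be absorbed, versus which must be kept, to be the step requiring the most care; everything else is a direct adaptation of the machinery already developed for \cref{ch-5-observability}.
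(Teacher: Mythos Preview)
Your proposal has a real gap: you keep the weight $\ell = \lambda[\psi(x) - c_1(t-T/2)^2]$ from the proof of \cref{ch-5-observability}, but the paper instead shifts the time center to $t=T$ and takes $\ell = \lambda[\psi(x) - c_1(t-T)^2]$. This shift is the key idea you are missing, and it makes everything work without any cut-off. With this choice $\ell_t = 2\lambda c_1(T-t) \ge 0$ on all of $[0,T]$, so the stochastic correction $\theta^2\ell_t(du_t)^2$ in \cref{hyperbolic2} is everywhere nonnegative and equals $2\lambda c_1(T-t)\theta^2|b_4 z + g|^2\,dt$; this is precisely the origin of the $(T-t)$-weight on $g^2$. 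With your weight $\ell_t = \lambda c_1(T-2t)$ changes sign at $T/2$, so this term is not sign-definite and there is no way to isolate $\lambda\int_Q(T-t)\theta^2 g^2$ on the left.

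The shifted weight also removes any need for a cut-off. At $t=T$ one has $\ell_t(T)=0$ and $z(T)=0$, hence $w(T)=\nabla w(T)=0$, and \emph{every} endpoint term in the $d\{\cdots\}$ part of \cref{hyperbolic2} vanishes (the only term containing $w_t(T)$ alone is $\ell_t w_t^2$, and $\ell_t(T)=0$). At $t=0$ the relevant inequality in \cref{condition of c0c1c2T} is $4c_1^2T^2 > \sum_{j,k} b^{jk}\psi_{x_j}\psi_{x_k}$ (you quoted it with the direction reversed); together with $\ell_t(0)=2\lambda c_1 T>0$ it gives $\mathcal{A}(0)>0$ and forces the endpoint expression at $t=0$ to dominate $\mathcal{C}[\lambda(w_t^2+|\nabla w|^2)+\lambda^3 w^2]$, which is exactly the initial-data contribution on the left of \cref{eqObser}. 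The upper inequality $\frac{\mu_0}{8c_1+c_0}\sum b^{jk}\psi_{x_j}\psi_{x_k} > 4c_1^2T^2$ is what keeps $\mathcal{B}\ge c\lambda^3$ positive now that $(t-T)^2$ ranges over $[0,T^2]$ rather than $[0,T^2/4]$. Once the weight is corrected, the boundary handling on $\Sigma_0$ and the absorption of $b_1,\dots,b_4$ are indeed exactly as in \cref{ch-5-observability}.
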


\begin{proof} The proof is divided into three steps.

\emph{Step 1.} 
In this step, we give the weight function of the Carleman estimate and compute the order of the terms in the equality \cref{hyperbolic2} with respect to $\lambda$. 

We choose
\begin{align*}
\theta = e^\ell, \qquad \ell  = \lambda\big[\psi(x) - c_1 ( t-T )^2\big],
\end{align*}
where $\lambda>0$ is a parameter, $\psi(\cdot)$ fulfills \cref{condition of d,condition of c0c1c2T}, and $c_1$ is the constant in \cref{condition of c0c1c2T}.

Similarly to the proof of  \cref{ch-5-observability},   choosing 
\begin{align}
 \label{1.27-eq2-1}
\phi\equiv 1,\quad u=z, \quad  p^{jk} = b^{jk}, \quad\Psi = \ell_{tt} +
\sum_{j,k=1}^n\big(b^{jk}\ell_{x_j}\big)_{x_k} - c_0\lambda,
\end{align}
we will estimate the resulting terms in \cref{hyperbolic2} one by one.
The only difference is the way to deal with the corresponding terms at $t=0$ and $t=T$. 

For $j,k=1,2,\cdots,n$, it is easy  to show  that
\begin{equation}\label{hyper 1.1 h2-1}
\ell_t= - 2 \lambda c_{1}( t-T), 
\quad 
\ell_{tt} = - 2 \lambda c_{1},
\quad
\ell_{x_j}= \lambda \psi_{x_{j}},
\quad 
\ell_{t x_j}=0.
\end{equation}
Hence, \cref{ch-5-coeffvt,ch-5-bcoeffvtvi,vivj} still holds.
Next, we compute the coefficient of $ w^{2} $.
Combining \cref{AB1,1.27-eq2-1,hyper 1.1 h2-1}, we obtain 
\begin{align}
\label{eqHyperA}
\mathcal{A}
=
\lambda^2  \Big[ 4 c_1^2( t - T)^2 - \sum_{j,k=1}^nb^{jk}\psi_{x_j} \psi_{x_k} \Big] + O(\lambda).
\end{align}
From \cref{1.27-eq2-1,eqHyperA}, we have 
\begin{align*}
& ( \mathcal{A} \ell_{t})_{t} 
= 
- 24 c_{1}^{3} \lambda^{3} (  t - T )^{2}
+2 c_{1} \lambda^{3} \sum_{j, k=1}^{n} b^{j k} \psi_{x_{j}} \psi_{x_{k}}
\end{align*}
and 
\begin{align*} 
- \sum_{j, k=1}^{n} ( \mathcal{A} b^{j k} \ell_{x_{j}})_{x_{k}} 
&
= 
\lambda^{3} \sum_{j, k=1}^{n} \Big\{
- c_{1}^{2}  (b^{j k} \psi_{x_{j}})_{x_{k}} 
+ \sum_{j', k'=1}^{n}   (b^{j k} \psi_{x_{j}} \psi_{x_{k}})_{x_{k'}} b ^{j' k'} \psi_{x_{j'}}
\\
& \quad \quad \quad \quad \quad 
+\sum_{j', k'=1}^{n}   b^{j k} \psi_{x_{j}} \psi_{x_{k}} (b ^{j' k'} \psi_{x_{j'}})_{x_{k'}}
\Big\}
.
\end{align*}
Combining it with  \cref{AB1,1.27-eq2-1,hyper 1.1 h2-1,eqHyperA}, we obtain 
\begin{align}\notag
    \label{eqHyperBe1}
    {\cal B}  &  =  (4c_1+c_0)\lambda^3 \sum_{j,k=1}^nb^{jk}\psi_{x_j}\psi_{x_k} +
\lambda^3\sum_{j,k=1}^n\sum_{j',k'=1}^nb^{jk}\psi_{x_j}
\big(b^{j'k'}\psi_{x_{j'}}\psi_{x_{k'}}\big)_{x_k} 
\\
& \quad   - 4 c_1^2 (8c_1  + c_0)\lambda^3( t-T)^2 + O(\lambda^2).
\end{align}
Thanks to \cref{condition of d}, similar to \cref{eqCarHyper2}, we can get 
\begin{align}
    \label{eqHyperBe2}
    \sum_{j,k=1}^n\sum_{j',k'=1}^nb^{jk}\psi_{x_j} \big(b^{j'k'}\psi_{x_{j'}}\psi_{x_{k'}}\big)_{x_k}
    \geq
    \mu_0 \sum_{j,k=1}^nb^{jk}\psi_{x_{j}}\psi_{x_{k}} .
\end{align}
From \cref{eqHyperBe1,eqHyperBe2,condition of c0c1c2T},  there exists  $\lambda_1 > 0$ such that, for any $\lambda > \lambda_{1}$, 
\begin{align}
\label{Ch-5-B ine1}
{\cal B}w ^2 \geq \mathcal{C} \lambda^3 w^2.
\end{align}

\medskip

\emph{Step 2.} In this step, we deal with the terms corresponding to 
$t=0$ and $t=T$.

For $ t =0 $, from Cauchy-Schwarz inequality, we obtain 
\begin{align}
\notag
\label{eqCarHyperI1}
&  \sum_{j,k=1}^n b^{jk}\ell _tw_{x_j}w_{x_k}- 2 \sum_{j,k=1}^n
b^{jk}\ell _{x_j}w_{x_k}w_t + \ell _tw_t^2 -\Psi w_t w  +
\Big({\cal A}\ell _t+ \frac{1}{2}\Psi_t \Big)w^2
\\ \notag
&   =    
2c_1 T \lambda \sum_{j,k=1}^n b^{jk}w_{x_j}w_{x_k} - 2\lambda
\sum_{j,k=1}^n b^{jk}\psi_{x_j}w_{x_k}w_t  
\\ \notag
& \quad  - \lambda\Big[ -2c_1+
\sum_{j,k=1}^n (b^{jk}\psi_{x_j})_{x_k} - c_0\Big]w_t w+ 2c_1 T \lambda
w_t^2
\\ \notag
&  \quad  + \Big[ 2c_1 T\Big(4c_1^2T^2 -
\sum_{j,k=1}^nb^{jk}\psi_{x_j}\psi_{x_k}\Big)\lambda^3 + O(\lambda^2) \Big]w^2
\\ \notag
& \geq   2c_1 T \lambda \sum_{j,k=1}^n b^{jk}w_{x_j}w_{x_k} - \lambda \Big(
\sum_{j,k=1}^n b^{jk}\psi_{x_j}\psi_{x_k} \Big)^{\frac{1}{2}}
\sum_{j,k=1}^n b^{jk}w_{x_j}w_{x_k}
\\ \notag
& \quad 
- \lambda \Big( \sum_{j,k=1}^n
b^{jk}\psi_{x_j}\psi_{x_k}\Big)^{\frac{1}{2}}w_t^2
+ 2c_1 T \lambda w_t^2 - w_t^2  
\\
&  \quad  
+ \Big[2c_1T\Big(4c_1^2T^2 -
\sum_{j,k=1}^nb^{jk}\psi_{x_j}\psi_{x_k}\Big)\lambda^3 +
O(\lambda^2)\Big]w^2
.
\end{align}
Thanks to \cref{condition of c0c1c2T}, we have 
\begin{align}
\label{eqCarHyperI2}
4c_1^2T^2 - \sum_{j,k=1}^nb^{jk}\psi_{x_j}\psi_{x_k}
> 0.
\end{align}
When $ t = 0 $, from \cref{eqCarHyperI1,eqCarHyperI2}, we know that there exists $ \lambda_2 > 0$ such that for any $ \lambda \geq  \lambda_2$, it holds that
\begin{align}\notag
\label{eqCarHyperI3}
&\sum_{j,k=1}^nb^{jk}\ell _tw_{x_j}w_{x_k} - 2
\sum_{j,k=1}^nb^{jk}\ell _{x_j}w_{x_k}w_t + \ell _tw_t^2-\Psi w_t w
+ \Big( {\cal A}\ell_t + \frac{1}{2}\Psi_t \Big)w^2
\\
&\geq {\cal C}\Big[\lambda\big(w_t^2 + |\nabla w|^2\big) +
\lambda^3 w^2 \Big].
\end{align}

Noting that $ \ell(T) = \ell_{t} (T) = 0 $ and $ w(T)  = |\nabla w(T)|  = 0 $, for $ t= T $, we get that 
\begin{align}
\label{eqCarHyperI4}
& \sum_{j,k=1}^nb^{jk}\ell _tw_{x_j}w_{x_k} \!-\! 2
\sum_{j,k=1}^nb^{jk}\ell _{x_j}w_{x_k}w_t \!+\! \ell _tw_t^2 \!-\!\Psi w_t w
\!+ \!\Big(
{\cal A}\ell _t + \frac{1}{2}\Psi_t \Big)w^2
= 0.
\end{align}

\emph{Step 3.} In this step, we apply \cref{thmFi} to the equation \cref{ch-5-system1}.

Integrating \cref{hyperbolic2} in $ Q $, taking mathematical expectation, and noting \cref{bhyperbolic32,ch-5-coeffvt,ch-5-bcoeffvtvi,vivj,Ch-5-B ine1,eqCarHyperI3,eqCarHyperI4}, we have 
\begin{align*}
& \mathbb{E}\int_Q \theta\Big\{\Big( -2\ell _t v_t +
2\sum_{j,k=1}^n\!b^{jk}\ell _{x_j}v_{x_k} + \Psi v \Big) \Big[ dz_t
- \sum_{j,k=1}^n (b^{jk}z_{x_j})_{x_k}dt\Big]\Big\}dx
\\
&\quad + \lambda \mathbb{E}\int_{\Sigma_{0}}\Big(
\sum_{j,k=1}^nb^{jk}\nu_{j} \nu_{k} \Big)\Big(
\sum_{j',k'=1}^nb^{j'k'}\psi_{x_{j'}}\nu_{k'} \Big)\Big|\frac{\partial
v}{\partial
\nu}\Big|^2d \Gamma d t  
\\
&\geq  {\cal C} \mathbb{E} \int_Q  \theta^2\Big( \lambda  z_t^2 +
\lambda |\nabla z|^2 + \lambda^3 z^2 \Big)  d x d t 
\\
& \quad 
+ {\cal C} \mathbb{E} \int_Q  \Big( - 2\ell _tv_t + 2
\sum_{j,k=1}^n b^{jk}\ell _{x_j}v_{x_k} + \Psi
v\Big)^2 dxdt  
\\
&\quad   
+ {\cal C}\mathbb{E} \int_G \theta^2(0)\Big[\lambda(|\nabla z_0|^2 +
|z_1|^2) + \lambda^3|z_0|^2 \Big]dx  
\\
& \quad 
+ {\cal C}\lambda\mathbb{E} \int_Q (T-t) \theta^2
|b_4z + g|^2 dxdt 
,
\end{align*}
for $ \lambda \geq \max\{\lambda_{1}, \lambda_{2}\} $.

Since $ |b_4 z + g|^2 \geq \frac{1}{2} |g|^2 - 2|b_4z|^2, $
we deduce that 
\begin{align}\label{ch-5-2-hyperbolic3.1} 
& \lambda\mathbb{E}\int_Q (T-t) \theta^2 |b_4z + g|^2 dxdt  
  \geq
\frac{1}{2}\lambda\mathbb{E}\int_Q (T-t) \theta^2  g^2   dxdt - 2\lambda
T\mathbb{E}\int_Q \theta^2 b_4^2 z^2 dxdt.
\end{align}
Similar to the proof of \cref{eqCarHyper10}, taking $ \tilde{\lambda} =   \max \{\lambda_1, \lambda_{2},{\cal C}  (r_1^2 + r_2^{\frac{1}{3/2-n/p}}+1  ) \} $, for any $ \lambda \geq \tilde{\lambda} $, we deduce \cref{eqObser}. 
\end{proof}
\begin{remark}
In contrast to the inverse source problem (\cref{prob.p7}) for the stochastic parabolic equation in \cref{ch2}, here we can determine not only the source term $ g $ but also simultaneously determine the initial datum of the system. Further, we do not need assumptions on $g$ as the one in \cref{inv th2}.
\end{remark}
\begin{remark}\label{rkStoWave}
Noting that we used boundary and terminal measurements to uniquely determine $g$ in \cref{probISP}.
An interesting question arises: Can we determine $ f $ that is in the drift term and $ (z_{0}, z_{1}) $ through the measurement $\Big ( \dfrac{\partial
	z}{\partial \nu}\Big|_{\Sigma_0}\!\!,z(T)\Big )$?
To be more specific, we consider the following stochastic hyperbolic equation:
\begin{align}{\label{ch-5-2-sy1}}
	\left\{
	\begin{aligned}
		&dz_{t} - \!\sum_{j,k=1}^n(b^{jk}z_{{x_j}})_{{x_k}}dt = \left(b_1
		z_t + b_2\cdot\nabla z  +\! b_3 z  + f\right)dt  +  b_4z
		dW(t) \!\!& {\mbox { in }} Q,
		\\
		&   z = 0 & \mbox{ on } \Sigma, \\
		&   z(0) = z_0,\ z_{t}(0) = z_1 & \mbox{ in } G,
	\end{aligned}
	\right.
\end{align}
in which $z_0$, $z_1$ and $f$ are unknowns. 
As pointed in \cite[Remark 2.7]{Lue2015a}, the answer is no even for  deterministic
wave equations. To see this,  let us choose a nonzero  $y\in C_0^\infty(Q)$.
Put $f=y_{tt} - \Delta y$. Then, it is easy to see that $y$ solves the
following wave equation:
$$
\left\{
\begin{array}{ll}\displaystyle
	y_{tt} -\Delta y = f & \mbox{ in }Q,\\
	\noalign{\smallskip} \displaystyle y=0  &\mbox{ on } \Sigma,\\
	\noalign{\smallskip} \displaystyle y(0)=0,\ y_t(0)=0 &\mbox{ in }G.
\end{array}
\right.
$$
We have $y(T)=0$ in $G$ and $\frac{\partial y}{\partial \nu}=0$ on
$\Sigma$. However, it is clear that $f$ does not vanish in $Q$. This counterexample illustrates that the formulation of inverse source problems for SPDEs can differ significantly from their deterministic counterparts. Furthermore, it indicates that \cref{probISP} is inherently stochastic, distinct from any deterministic scenario, and is meaningful only within the stochastic framework.
\end{remark}

\section[Solution to inverse source problem. II]{Solution to inverse source problem with the boundary measurement and  the final time  me\-asurement.  II}
\label{secISP2}

In this section, we consider the inverse source problem of simultaneously determining the two unknown source terms for \cref{systemHyper}.

\begin{theorem}
\label{thmISPHyper}
Let $ h \in L^{2}_{\mathbb{F}}(0,T; H^{1}(G')) $, $ g \in L^{2}_{\mathbb{F}}(0,T; H^{1}(G)) $ and $ R \in C^{3}(\overline{Q}) $ satisfy 
\begin{align}
    \label{eqISPHyperR}
    |R(t, x)| \neq 0 \quad \text { for all }(t, x) \in  [0,T] \times \overline{G}.
\end{align}
If $ z \in \mathbb{H}_{T} $ satisfies \cref{systemHyper} and 
\begin{align*}
    & \frac{\partial z}{\partial \nu} = \frac{\partial z_{x_{1}}}{\partial \nu} = 0 \quad \text { on }(0, T) \times \partial G, ~ \text{${\mathbb{P}}$-a.s.},  \\
    & z(T) = z_{t}(T) =0  \text { in } G, ~ \text{${\mathbb{P}}$-a.s.},  \\
    &
\end{align*}
then 
\begin{align*}
    h (t, x^{\prime} )=0 & \text { for all } (t, x^{\prime} ) \in(0, T) \times G^{\prime},~ \text{${\mathbb{P}}$-a.s.}  \\
    g(t, x)=0 & \text { for all }(t, x) \in(0, T) \times G, ~ \text{${\mathbb{P}}$-a.s.} 
\end{align*}
\end{theorem}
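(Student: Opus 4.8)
\textbf{Proof strategy for Theorem~\ref{thmISPHyper}.}
The plan is to mimic the ``differentiate in $x_1$ and reduce to a stochastic hyperbolic equation with a Volterra-type lower order term'' technique used in the proof of \cref{inv th2Pre} (and \cref{inv th2}), but now using the hyperbolic Carleman estimate behind \cref{ch-5-2-uniqueness}/\cref{eqOber} rather than the parabolic one. First I would set $z = R u$, so that $u$ solves a stochastic hyperbolic equation of the same type as \cref{systemHyper} with modified (still $W^{1,\infty}$-in-$x$, $C^1$-in-$t$) coefficients, with the source $hR$ replaced essentially by $h$ (after dividing by $R$, which is legitimate by \cref{eqISPHyperR}), with homogeneous Dirichlet data on $\Sigma$, vanishing Neumann trace, and $u(T)=u_t(T)=0$. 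Then I differentiate this equation in $x_1$; since $h$ does not depend on $x_1$, the term $h$ disappears from the drift of the equation for $v \deq u_{x_1}$, and one is left with a stochastic hyperbolic equation for $v$ whose drift contains $\nabla u$, $u_t$ and $u$ — all of which, using $u(t,0,x')=0$ (because $z|_{\Sigma}=0$ on the face $x_1=0$), can be rewritten as the integrals $\int_0^{x_1} v(t,\eta,x')\,d\eta$, $\int_0^{x_1} \nabla v(t,\eta,x')\,d\eta$, etc., exactly as in \crefrange{eq.s45-2-1Pre}{eq.s45-3Pre}. The diffusion term becomes $(b_4 u + g)_{x_1} = (b_4 u)_{x_1} + g_{x_1}$; the $g_{x_1}$ part is retained (it will be killed by the terminal condition $u(T)=u_t(T)=0$ combined with the weight, as in \cref{eq.s45-7}).

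Next I introduce a temporal cut-off $\chi$ supported near $t=T$ (vanishing for $t\le t_1$, equal to $1$ for $t\ge t_2$, with $t_1<t_2<\varepsilon$ or symmetrically near $T$ depending on which endpoint carries the zero data) and set $w = \chi v$, so that $w$ has vanishing Cauchy data at both temporal endpoints; the commutator term $\chi_t v$ enters the drift and is supported in $(t_1,t_2)$. I then apply the hyperbolic Carleman estimate (the one proved inside \cref{ch-5-2-uniqueness}, i.e.\ the pointwise identity \cref{hyperbolic2} with the weight $\ell = \lambda[\psi(x) - c_1(t-T)^2]$, $\psi$ satisfying \cref{condition of d} and \cref{condition of c0c1c2T}) to $w$ and to $w$'s equation. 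The key point, as in \crefrange{eq.s45-5Pre}{eq.s45-7Pre} and \crefrange{eq.s45-5}{eq.s45-7}, is that the Volterra terms $\int_0^{x_1}w\,d\eta$ and $\nabla\int_0^{x_1}w\,d\eta = \int_0^{x_1}\nabla w\,d\eta$ obey $\big|\int_0^{x_1}w\,d\eta\big|^2 \le l\int_0^l |w|^2\,d\eta$, so after integrating in $x$ these contribute $\le l^2\,\|\theta\,(|w|^2+|\nabla w|^2)\|_{L^2(Q)}$, which for $\lambda$ large is absorbed into the left-hand side of the Carleman inequality. This yields
\begin{align*}
\mathbb{E}\int_Q\big(\lambda|\nabla w|^2 + s\lambda^2\varphi w^2\big)\theta^2\,dx\,dt
\le \mathcal{C}\,\mathbb{E}\int_Q \theta^2\chi_t^2\, v^2\,dx\,dt,
\end{align*}
and since $\chi_t$ is supported where the weight $\theta$ is (strictly) smaller than where $w$ lives, letting $\lambda\to+\infty$ forces $w\equiv 0$ on the relevant time interval, hence $v=u_{x_1}\equiv 0$ there, hence (by $z(t,0,x')=0$ and \cref{eq.s45-2-1}-type integration) $z\equiv 0$ on $(\text{that interval})\times G$, $\mathbb{P}$-a.s. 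Covering $[0,T]$ by finitely many such intervals — exploiting, as in the proof of \cref{main result}, the freedom in the center of the weight — gives $z\equiv 0$ in $Q$, $\mathbb{P}$-a.s.

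Finally, plugging $z\equiv 0$ back into \cref{systemHyper} gives
$\int_s^t h(\tau,x')R(\tau,x)\,d\tau + \int_s^t g(\tau,x)\,dW(\tau) = 0$ for all $t$, $\mathbb{P}$-a.s.; the first integral is absolutely continuous in $t$, so the martingale part is too, forcing its quadratic variation $\int_s^t g^2\,d\tau$ to vanish, whence $g\equiv 0$ and then $hR\equiv 0$, and by \cref{eqISPHyperR}, $h\equiv 0$ — exactly the last paragraph of the proof of \cref{inv th2}. The main obstacle I anticipate is technical rather than conceptual: one must verify that the differentiated-in-$x_1$ equation genuinely fits the hypotheses of the hyperbolic Carleman machinery — in particular that the coefficients remain in the required classes after differentiation (this is why the hypothesis demands $b_j\in W^{1,\infty}$ in $x$, $R\in C^3$, and uses the product structure $R = R_1(t)R_2(x)$ is \emph{not} needed here, unlike \cref{inv th2}, since $h$ carries no $x_1$-dependence and can be eliminated outright), and that the Neumann trace of $u_{x_1}$ vanishes on $\partial G$, which is guaranteed precisely by the extra boundary datum $\frac{\partial z_{x_1}}{\partial\nu}\big|_\Sigma = 0$ in the hypothesis. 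A secondary subtlety, already flagged in the remark following \cref{probISPHyper}, is that the Carleman weight $\psi$ with the good convexity property \cref{condition of c0c1c2T} is only known to exist for $\Delta$ (not for general $(b^{jk})$), which is why the theorem is stated for the Laplacian; within that setting $\psi(x)=|x-x_0|^2$ works and the argument goes through.
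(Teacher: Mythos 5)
Your skeleton is right where it overlaps with the paper -- the substitution $z=Ru$, differentiation in $x_1$ so that $h$ drops out, rewriting $u,\nabla u$ as Volterra integrals $\int_0^{x_1}w\,d\eta$ and absorbing them for large $\lambda$, the use of $\frac{\partial z_{x_1}}{\partial\nu}\big|_\Sigma=0$ to kill the boundary term, and the quadratic-variation endgame yielding $g\equiv0$ and then $h\equiv0$. But the core of the argument -- which Carleman estimate drives the conclusion $z\equiv0$ in all of $Q$ -- is where you have a genuine gap. The paper does \emph{not} reuse the pseudo-convex weight $\lambda[\psi(x)-c_1(t-T)^2]$ of \Cref{secISP}; it constructs a new weight $\varphi=|x'-x_0'|^2-\beta|x_1|^2+|t+m|^2$ adapted to the cylinder $G=(0,l)\times G'$ (\cref{eqVarphiH}, \cref{thmCarlemanHyperISP}). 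The decisive feature is that $\ell_t=2\lambda(t+m)>0$ on all of $[0,T]$: the $t=T$ boundary terms in the pointwise identity vanish because $z(T)=z_t(T)=0$, the $t=0$ terms come out with a \emph{favorable} sign (they sit on the left of \cref{eqCarHyperISP}), and since the full Neumann trace of $w=u_{x_1}$ vanishes on $\Sigma$ the entire right-hand side is zero after absorption -- so $w\equiv0$ on $(0,T)\times G$ in a single application, with no cut-off, no covering, and no largeness or geometric condition on $T$. This weight construction is the technical heart of the section; the remark after \cref{probISPHyper} flags exactly this as the obstruction for general $(b^{jk})$.

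Your proposed route has two concrete problems. First, invoking the estimate of \cref{ch-5-2-uniqueness} smuggles in \cref{condition of d} and \cref{condition of c0c1c2T}, which force $T$ to be large relative to the geometry; \cref{thmISPHyper} carries no such hypothesis, so at best you would prove a weaker statement. Second, the cut-off/$\lambda\to\infty$/covering mechanism you borrow from the parabolic proofs does not transfer. With the weight $\psi(x)-c_1(t-T)^2$ the level sets are not time slabs (since $\psi$ varies over $G$ between $R_0^2$ and $R_1^2$), so ``$\chi_t$ is supported where $\theta$ is strictly smaller than where $w$ lives'' fails unless one verifies an inequality of the type $c_1[(T-t_2)^2-(T-\varepsilon)^2]>R_1^2-R_0^2$, which you neither state nor check and which again forces $T$ large; and the covering argument of \cref{main result} relies on parabolic (infinite-propagation-speed) structure and on measuring over the whole time interval, neither of which is available here. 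In fact, once you use a weight anchored at $t=T$ with vanishing terminal data, the cut-off is unnecessary anyway -- the issue is purely the sign of the $t=0$ terms, which is what the paper's time-monotone weight is designed to fix. Your side observations (that the product structure $R=R_1(t)R_2(x)$ is not needed here, unlike \cref{inv th2}) are correct.
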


To prove \cref{thmISPHyper}, we need a new global Carleman estimate for the special case of \cref{ch-5-system1}, where $ (b^{jk})_{1\leq j,k \leq n} $ is the identity matrix and $ G = (0,l) \times G' $.

For any $ x_{0}' \in \mathbb{R}^{n-1} \backslash \overline{G'} $, let 
\begin{align}
\label{eqVarphiH}
\theta = e ^{\ell}, \quad \ell = \lambda \varphi, \quad  \varphi (t,x) = |x ' - x_{0}'|^{2} - \beta |x_{1}|^{2} + |t + m|^{2}
,
\end{align}
where constants $ \beta $ and $ m $ satisfy
\begin{align}
\label{eqbmH}
\beta \in \Big(0, \frac{4}{5}\Big) \text{ and } m > \max_{x' \in \overline{G}'} |x' - x_{0}'| + \beta l 
.
\end{align}
We have the following global Carleman estimate:
\begin{theorem}
\label{thmCarlemanHyperISP}
Let $ (b^{jk})_{1\leq j,k \leq n} $ be the identity matrix and $ \varphi, \theta $ be given in \cref{eqVarphiH}. 
Then there exist constants $ \mathcal{C} > 0 $ and $ \lambda_{0} > 0 $ such that for all $ \lambda \geq \lambda_{0} $, for any solution $ z $ to the equation \cref{ch-5-system1}, it holds that 
\begin{align}
    \label{eqCarHyperISP}
    \notag
    &\lambda \mathbb{E}  \int_Q \theta^{2}  (g^2+z_t^2+|\nabla z|^2+\lambda^2 z^2 )  d x d t 
        \\ \notag
        &  \quad 
        +\lambda \mathbb{E} \int_G \theta^{2}(0)  (z_t^2(0)+|\nabla z(0)|^2+\lambda^2 z^2(0) )  d x 
        \\
        & \leq  \mathcal{C} \mathbb{E}\Big(\lambda \int_{\Sigma} \theta^{2} \Big|\frac{\partial z}{\partial \nu}\Big|^2   d \Gamma d t +\int_Q \theta^{2} f^{2} d x d t\Big).
\end{align}
\end{theorem}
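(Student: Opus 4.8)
The plan is to derive \cref{eqCarHyperISP} as a consequence of the fundamental pointwise weighted identity \cref{hyperbolic2} from \cref{thmFi}, specialized to $(b^{jk})$ being the identity, $\phi\equiv 1$, $u=z$, and the weight $\ell=\lambda\varphi$ with $\varphi$ given by \cref{eqVarphiH}. First I would verify that this $\varphi$ satisfies a pseudo-convexity-type condition adapted to the hyperbolic operator, exploiting the product structure $G=(0,l)\times G'$ and the sign choice $\beta\in(0,\tfrac45)$. Concretely, since $\varphi_{x_1x_1}=-2\beta$, $\varphi_{x_ix_i}=2$ for $i\geq 2$, and $\varphi_{tt}=2$, one computes $\ell_t=2\lambda(t+m)$, $\ell_{x_1}=-2\lambda\beta x_1$, $\ell_{x_i}=2\lambda(x_i-x_{0,i}')$, so that $|\nabla\ell|^2$ and $\ell_t^2$ are controlled; I would choose $\Psi$ of the form $\Psi=\lambda(\ell_{tt}+\Delta\ell)-c_0\lambda$ (analogously to \cref{1.27-eq2-1}), and then show that the coefficient of $|\nabla w|^2$ in the $c^{jk}w_{x_j}w_{x_k}$ term, the coefficient of $w_t^2$, and the coefficient $\mathcal{B}$ of $w^2$ are all bounded below by positive multiples of $\lambda$, $\lambda$, and $\lambda^3$ respectively, for $\lambda$ large. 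The condition \cref{eqbmH} on $m$ is precisely what guarantees that $\ell_t=2\lambda(t+m)>0$ on $[0,T]$ and that $\varphi$ has no critical points in $\overline{Q}$, while $\beta<\tfrac45$ makes the quadratic form in the leading $w^2$ coefficient positive definite.

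Next I would integrate \cref{hyperbolic2} over $Q$, take expectations, and dispose of the boundary terms on $\Sigma$: since $z=0$ on $\Sigma$ we have $w=0$ there and $\nabla w=\theta\tfrac{\partial z}{\partial\nu}\nu$, so the divergence-in-$x$ terms reduce to an integral over $\Sigma$ of $\theta^2|\tfrac{\partial z}{\partial\nu}|^2$ times a bounded factor involving $\ell_{x_k}\nu^k$, giving the $\lambda\int_\Sigma\theta^2|\tfrac{\partial z}{\partial\nu}|^2$ term on the right of \cref{eqCarHyperISP}. The temporal boundary terms at $t=0$ and $t=T$: at $t=0$ I would show, as in Step 2 of the proof of \cref{ch-5-2-uniqueness} (via Cauchy–Schwarz and $\ell_t(0)=2\lambda m>0$ being large enough), that the boundary quadratic form is bounded below by $\mathcal{C}[\lambda(z_t^2(0)+|\nabla z(0)|^2)+\lambda^3 z^2(0)]$, yielding the initial-data term on the left; at $t=T$ the terms are simply kept (they have a favorable sign by the same computation, using $\ell_t(T)=2\lambda(T+m)>0$) or estimated away. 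The drift term $\theta[\,\cdots\,][dz_t-\Delta z\,dt]=\theta I\cdot(b_1z_t+b_2\cdot\nabla z+b_3z+f)dt+(\text{stochastic part})$ is absorbed using the lower-order character of $b_1,b_2,b_3$ and the already-present large parameters: the $b_1z_t$, $b_2\cdot\nabla z$ and $b_3z$ contributions are dominated by $\varepsilon\lambda\mathbb{E}\int_Q\theta^2(z_t^2+|\nabla z|^2)+\mathcal{C}(\varepsilon)\lambda^3\mathbb{E}\int_Q\theta^2 z^2$ for $\lambda$ large, which get swallowed by the left-hand side, leaving only $\mathcal{C}\mathbb{E}\int_Q\theta^2 f^2$.

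The remaining new ingredient, compared with \cref{ch-5-2-uniqueness}, is the recovery of $g$: the It\^o correction term $\phi^2\theta^2\ell_t(du_t)^2$ in \cref{hyperbolic2} equals $\lambda\ell_t\theta^2(b_4z+g)^2\,dt$, and since $\ell_t=2\lambda(t+m)\geq 2\lambda m>0$ on $[0,T]$ — here the condition \cref{eqbmH} on $m$ is crucial, in contrast to the $(T-t)$ weight of \cref{ch-5-2-uniqueness} which only gave $g$ on a partial interval — this contributes $\gtrsim\lambda\mathbb{E}\int_Q\theta^2 g^2$ after using $(b_4z+g)^2\geq\tfrac12 g^2-2b_4^2z^2$ and absorbing $\lambda\mathbb{E}\int_Q\theta^2 b_4^2z^2$ (which is $O(\lambda)\,\mathbb{E}\int_Q\theta^2 z^2$, dominated by the $\lambda^3$-weighted $z^2$ term). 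Collecting everything for $\lambda\geq\lambda_0$ gives \cref{eqCarHyperISP}. The main obstacle I expect is verifying the pseudo-convexity lower bounds for $\mathcal{B}$, $c^{jk}$, and the $w_t^2$ coefficient with this particular $\varphi$ — i.e., confirming that the interplay among the constants $\beta$, $m$, $c_0$, and $\lambda$ genuinely forces all three leading coefficients to be positive definite simultaneously; the bound $\beta<\tfrac45$ is the delicate threshold that makes the spatial-versus-temporal balance work, and checking that $\varphi$ has no critical point in $\overline Q$ (so that the $|\nabla\ell|$-type terms in $\mathcal A$ and $\mathcal B$ dominate) must be done carefully on the product domain.
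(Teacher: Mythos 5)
Your overall strategy coincides with the paper's: specialize \cref{thmFi} with $\phi\equiv 1$, $u=z$, $(b^{jk})$ the identity, $\ell=\lambda\varphi$, and $\Psi=\ell_{tt}+\Delta\ell-c_0\lambda$; verify the pseudo-convexity lower bounds (coefficient $c_0\lambda$ for $w_t^2$, coefficients $(8-c_0)\lambda$ and $[4(1-\beta)-c_0]\lambda$ for the gradient, and $\mathcal{B}\geq c\lambda^3$ via $m^2-\beta^2 l^2>0$); reduce the divergence-in-$x$ terms to $\lambda\int_\Sigma\theta^2|\partial z/\partial\nu|^2$ using $z=0$ on $\Sigma$; bound the $t=0$ quadratic form below by $\mathcal{C}\lambda\theta^2(z_t^2+|\nabla z|^2+\lambda^2 z^2)$ using $m>\max_{x'\in\overline{G}'}|x'-x_0'|+\beta l$; and extract the $g^2$ term from the It\^o correction $\theta^2\ell_t(dz_t)^2$ with $\ell_t\geq 2\lambda m>0$. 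This is exactly the paper's Steps 1--2. (Minor slip: you wrote $\Psi=\lambda(\ell_{tt}+\Delta\ell)-c_0\lambda$; the extra factor of $\lambda$ would make the $w_t^2$ coefficient $\ell_{tt}+\Delta\ell-\Psi=(1-\lambda)(\ell_{tt}+\Delta\ell)+c_0\lambda$ negative of order $\lambda^2$. It must be $\Psi=\ell_{tt}+\Delta\ell-c_0\lambda$, as in \cref{1.27-eq2-1}, which is clearly what you intended.)

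The genuine gap is your treatment of the temporal boundary term at $t=T$. After integrating $d\{\cdot\}$ over $[0,T]$ and rearranging, the quantity $E(T)=\big[\ell_t|\nabla w|^2-2\nabla\ell\cdot\nabla w\,w_t+\ell_t w_t^2-\Psi w_t w+(\mathcal{A}\ell_t+\tfrac12\Psi_t)w^2\big]\big|_{t=T}$ ends up on the same side of the inequality as the boundary measurement, i.e.\ on the right-hand side of the final Carleman estimate. Since $\ell_t(T)=2\lambda(T+m)>0$ and $\mathcal{A}(T)\ell_t(T)\sim 8\lambda^3(T+m)\big[(T+m)^2-|x'-x_0'|^2-\beta^2x_1^2\big]>0$ under \cref{eqbmH}, this term is \emph{nonnegative} --- the opposite of the ``favorable sign'' you assert --- and it is controlled by neither $\partial z/\partial\nu|_\Sigma$ nor $f$, so it can neither be ``kept'' harmlessly nor ``estimated away.'' The paper kills it by using $z(T)=z_t(T)=0$, hence $w(T)=w_t(T)=0$; this hypothesis is available in the intended application (\cref{thmISPHyper}) even though it is not displayed in the statement of \cref{thmCarlemanHyperISP}. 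Without that terminal vanishing (or without adding a terminal-data term to the right-hand side of \cref{eqCarHyperISP}), the argument as you set it up does not close.
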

\begin{proof} The proof is divided into two steps.
	
{\it Step 1}. 	
In this step, we  compute the order of the terms in the equality \cref{hyperbolic2} with respect to $\lambda$. 	
	
Choosing 
\begin{align}
 \label{eqISPH1}
\phi\equiv 1, \quad  u = z, \quad p^{jk} = \delta_{jk}, \quad\Psi = \ell_{tt} +
\Delta \ell - c_0\lambda,
\end{align}
in \cref{thmFi}, we will estimate the terms in \cref{hyperbolic2} one by one.
For $j =1, \cdots,n$, it is straightforward  to verify  that
\begin{equation}\label{eqISPH2}
\ell_t= 2 \lambda (t + m ), 
\quad 
\ell_{tt} =  2 \lambda ,
\quad 
\ell_{x_{1}} = - 2 \beta  \lambda x_{1},
\quad 
\ell_{t x_j}=0,
\end{equation}
and for $ k = 2, \cdots, n $,
\begin{align}\label{eqISPH3}
\ell_{x_k}=2 \lambda (x_{k} - x_{0,k}').
\end{align}
Combining \cref{eqISPH1,eqISPH2,eqISPH3}, we obtain 
\begin{align}
\label{eqISPH4}
(\ell_{tt} + \Delta \ell     -\Psi\Big)w_{t}^2 = c_0 \lambda  w_{t}^2,
\end{align}
and 
\begin{align}
\label{eqISPH5}
2 \sum_{j =1}^n \ell_{tx_j} w_{x_j} w_t = 0.
\end{align}
By \cref{AB1,eqISPH1,eqISPH2,eqISPH3}, for $ c_{1} \leq \min\{ 8-c_{0}, 4(1-\beta) - c_{0} \} $, it holds that 
\begin{align}\label{eqISPH6}\notag
\sum_{j,k=1}^n c^{jk}w_{x_j}w_{x_k} 
& = 
\sum_{j =1 }^n (\ell_{tt} + 2 \ell_{x_{j}x_{j}} - \Delta \ell + \Psi ) w_{x_j}^{2}  
\\ \notag
& =  
\sum_{j =2 }^n  (8 - c_{0}) \lambda w_{x_{j}}^{2} 
+ [4 (1- \beta) - c_{0}] \lambda w_{x_{1}}^{2}
\\
& \geq 
c_{1} |\nabla w|^{2}
.
\end{align}
Now we compute the coefficient of $ w^{2} $.
Thanks to \cref{AB1,eqISPH1,eqISPH2,eqISPH3}, we have 
\begin{align}
\label{eqISPH7}
\notag
\mathcal{A} 
& = \ell_{t}^{2} -\ell_{tt} - |\nabla \ell|^{2} + \Delta \ell - \Psi
\\
& =
4 \lambda^{2} [(t+m)^{2} - |x' - x'_{0} |^{2} - \beta^{2}x_{1}^{2}]  + (c_{0} - 4) \lambda
.
\end{align}
From \cref{eqISPH2,eqISPH3,eqISPH4,eqISPH7}, we obtain 
\begin{align}
\label{eqISPH8}
(\mathcal{A} \ell_{t} )_{t}
=
8 \lambda^{3} [ 3 (t+m)^{2} - |x' - x'_{0} |^{2} - \beta^{2}x_{1}^{2}]  + O(\lambda^{2}),
\end{align}
and 
\begin{align}
\label{eqISPH9}
\notag
\sum_{j=1}^{n}(\mathcal{A} \ell_{x_{j}} )_{x_{j}}
& =
- 8 (n-1-\beta)  \lambda^{3} [   (t+m)^{2} - |x' - x'_{0} |^{2} - \beta^{2}x_{1}^{2}]  
\\
& \quad 
+ 16 \lambda^{3} |x' - x'_{0}|^{2}
-16 \beta^{3} \lambda^{3}   x_{1}^{2}
+ O(\lambda^{2})
.
\end{align}
Combining \cref{eqISPH9,eqISPH8,eqISPH7,AB1,eqISPH1,eqbmH}, for $ c_{2} \leq m^{2} - \beta^{2} l^{2} $, we conclude that 
\begin{align*}
\mathcal{B} 
& = 
4 \lambda^{3} [ (8- c_{0})  (t+m)^{2}  + c_{0} |x' - x'_{0} |^{2} 
+ \beta^{2} (-4-4\beta + c_{0}) x_{1}^{2}]  
+ O(\lambda^{2})
\\
& \geq 
28 \lambda^{3} ( m^{2} - \beta^{2} \ell^{2}) + O(\lambda^{2})
\\
& \geq 
c_{2} \lambda^{3} + O(\lambda^{2})
.
\end{align*}
Hence, there exists $ \lambda_{1} >0 $ so that for any $ \lambda > \lambda_{1} $, 
\begin{align}
\label{eqISPH10}
\mathcal{B} w^{2} \geq  c_{2} \lambda^{3} w^{2}.
\end{align}

\smallskip

\emph{Step 2.} In this step, we apply \cref{thmFi} to the equation \cref{ch-5-system1}.

\smallskip

Since  $ z(T) = z_{t}(T) = 0 $, we have $ w(T) = w_{t}(T) = 0 $. Also, noting that $ z = 0  $ on $ \Sigma $, ${\mathbb{P}}$-a.s., integrating (\ref{hyperbolic2}) in $Q$, taking mathematical expectation and by \cref{eqISPH4,eqISPH5,eqISPH6,eqISPH10},   for $ \lambda \geq \lambda_{1} $, we obtain
\begin{align}
\label{eqISPH11}
\notag
&  \mathbb{E} \int_Q \theta \big[ ( -2\ell_t w_t + 2 \nabla \ell \nabla w  + \Psi w  )  (dz_t-  \Delta z  dt  ) \big]dx 
+   \lambda \mathbb{E} \int_{\Sigma}  \theta^{2} \bigg|\frac{\partial z}{\partial\nu}\bigg|^{2} d \Gamma d t   
\\\notag
& \quad 
+ \mathbb{E} \int_{G} \big(\ell_{t} |\nabla w|^{2} - 2 \nabla \ell \cdot \nabla w w_{t} + \ell_{t} w_{t}^{2} - \Psi w_{t} w + \mathcal{A} \ell_{t} w^{2}\big) \big |_{t=0} d x 
\\  \notag
& \geq 
{\cal C} \lambda \mathbb{E}\int_Q \theta^2  (  z_t^2 +  |\nabla z|^2  
+   \lambda^2 z^2  )dxdt 
+
\mathbb{E} \int_Q  \theta \ell_t |d z_t|^2 d x
\\ 
& \quad 
+ \mathbb{E}\int_Q (-2\ell_t w_t + 2 \nabla \ell \cdot \nabla w   + \Psi w )^2
dxdt. 
\end{align}

At $ t = 0 $, from \cref{eqISPH7,eqISPH1,eqVarphiH,eqbmH}, there exists $ \lambda_{2} >0  $ such that for all $ \lambda \geq \lambda_{2} $, we have 
\begin{align}
\label{eqISPH12}
\notag
& \ell_{t} |\nabla w|^{2} - 2 \nabla \ell \cdot \nabla w w_{t} + \ell_{t} w_{t}^{2} - \Psi w_{t} w + \mathcal{A} \ell_{t} w^{2}
\\ \notag
& \geq
2 \lambda m (|\nabla w |^{2} + w_{t}^{2}) 
- 2 \lambda (|x' - x'_{0}| + \beta l ) |\nabla w| |w_{t}| 
- \mathcal{C} \lambda | w_{t}| |w| 
\\ \notag
& \quad 
+ 8  \lambda^{3} m (m^{2} - |x'- x'_{0}|^{2}- \beta^{2} l^{2}) w^{2}
+ O(\lambda^{2}) w^{2}
\\ \notag
& \geq
\mathcal{C} \lambda (w_{t}^{2} + |\nabla w|^{2} +  \lambda^{2} w^{2})
\\
&\geq
\mathcal{C} \lambda \theta^{2} (z_{t}^{2} + |\nabla z|^{2} +  \lambda^{2} z^{2})
.
\end{align}
It follows from \cref{eqVarphiH} that
\begin{align}
\label{eqISPH13}
\mathbb{E} \int_Q  \theta \ell_t |d z_t|^2 d x
\geq
\mathcal{C} \mathbb{E} \int_{Q} \theta^{2} g^{2} d x d t  
- 2 |b_{4}|_{_{L_{{\mathbb{F}}}^{\infty}(0,T;L^{\infty}(G))}}  \lambda m \mathbb{E} \int_{Q} \theta^{2} z^{2} d x d t  
.
\end{align}
By \cref{ch-5-system1}, we get that 
\begin{align}
\notag
\label{eqISPH14}
&  \mathbb{E} \int_Q \theta \big[ ( -2\ell_t w_t + 2 \nabla \ell \nabla w  + \Psi w  )  (dz_t-  \Delta z  dt  )  \big]dx 
\\ \notag
& 
\leq {\cal C}\Big[\mathbb{E}\int_Q \theta^2\big( |f|^2 + 
|g|^2\big)dxdt 
+ |b_1|^2_{L^{\infty}_{{\mathbb{F}}}(0,T;L^{\infty}(G))}  \mathbb{E}\int_Q \theta^2 z _t^2 dxdt 
\\ 
& \qquad \;  
+|b_2|^2_{L^{\infty}_{{\mathbb{F}}}(0,T;L^{\infty}(G,\mathbb{R}^n))}
\mathbb{E}\int_Q\theta^2 |\nabla z |^2dxdt    
+ \mathbb{E}\int_Q \theta^2  b_3^2 z ^2 dxdt \nonumber
\\
& \quad \quad  \;
+ |b_4|_{L^{\infty}_{{\mathbb{F}}}(0,T;L^{\infty}(G))}^2 \mathbb{E} \int_Q
\theta^2 z ^2 dxdt
 \Big ]\nonumber
\\
&  \qquad \; + \mathbb{E} \int_Q   \big( - 2\ell_t w_t + 2 \nabla \ell \cdot \nabla w +  \Psi w \big)^2dxdt
.
\end{align}
Combining \cref{eqISPH11,eqISPH12,eqISPH13,eqISPH14}, similar to \cref{eqCarHyper10}, we know there exists 
\begin{align*}
\lambda_{0} \geq \max\{\lambda_{1}, \lambda_{2}\} 
\end{align*}
such that for all $ \lambda \geq \lambda_{0} $,   \cref{eqCarHyperISP} holds. 
\end{proof}

\begin{proof}[Proof of \cref{thmISPHyper}]
Set  $ z = R u  $. 
From \cref{systemHyper}, we have 
\begin{align}
    \label{eqISPT1}
    \left\{\begin{aligned} 
        & d u_{t}-\Delta u d t  =       [(b_{1} -2  R^{-1} R_{t} ) u_{t}   + (2 R^{-1} \nabla R + b_{2}) \cdot \nabla u 
        &&  \\
        & \quad \quad \quad \quad \quad  \quad ~ 
        + R^{-1}(- R_{tt} + \Delta R + b_{1} R_{t}   + b_{2} \cdot \nabla R + b_{3} R) u]d t   
        &&  \\
        & \quad \quad \quad \quad \quad  \quad ~ 
        +  f (t, x^{\prime} )   d t+ (R^{-1} g+b_{4} u) d W(t)  &&  \text {in } Q, \\
        & u=\frac{\partial u}{\partial \nu}  =0  &&\text {on } \Sigma. \\
        \end{aligned}\right.
\end{align}
Differentiate both side of \cref{eqISPT1} with respect to $ x_{1} $, and set $ w = u_{x_{1}} $. 
Noting that $ \nabla u  = \dfrac{\partial u }{ \partial \nu} \nu = 0 $ on $ \Sigma $, we obtain that 
\begin{align}
    \label{eqISPT2}
    \left\{\begin{aligned} 
    & d w_{t}-\Delta w d t  =       
    [(b_{1} -2  R^{-1} R_{t} ) w_{t}   + (2 R^{-1} \nabla R + b_{2}) \cdot \nabla w 
    &&  \\
    & \quad \quad \quad \quad \quad  \quad \quad  
    + R^{-1}(- R_{tt} + \Delta R + b_{1} R_{t} w + b_{2} \cdot \nabla R + b_{3} R) w]d t   
    &&  \\
    & \quad \quad \quad \quad \quad  \quad ~ 
    + \{(b_{1} -2  R^{-1} R_{t} )_{x_{1}} u_{t}   + (2 R^{-1} \nabla R + b_{2})_{x_{1}} \cdot \nabla u 
    &&  \\
    & \quad \quad \quad \quad \quad  \quad \quad \quad  
    + [R^{-1}(- R_{tt} + \Delta R + b_{1} R_{t}   + b_{2} \cdot \nabla R + b_{3} R)]_{x_{1}} u\}d t   
    &&  \\
    & \quad \quad \quad \quad \quad  \quad ~ 
    + ( (R^{-1} g)_{x_{1}}+b_{4} w + b_{4,x_{1}} u ) d W(t)  &&  \text {in } Q, \\
    & w  =0  &&\text {on } \Sigma. \\
    \end{aligned}\right.
\end{align}
Thanks to \cref{thmCarlemanHyperISP}, there exists $ \lambda_{1} > \lambda_{0} $ such that for all $ \lambda \geq \lambda_{1} $, it holds that 
\begin{align}
    \label{eqISPT3} \notag
    &\lambda \mathbb{E}  \int_Q \theta^{2}  (|(R^{-1}g)_{x_{1}} + b_{4, x_{1}} u| ^2+w _t^2+|\nabla w |^2+\lambda^2 w ^2 )  d x d t 
    \\
    & \leq  \mathcal{C} \mathbb{E}\bigg\{\lambda \int_{\Sigma} \theta^{2} \bigg|\frac{\partial w }{\partial \nu}\bigg|^2   d \Gamma d t +\int_Q \theta^{2} \tilde{f}^{2} d x d t\bigg\},
\end{align}
where 
\begin{align}
    \label{eqISPT4} \notag
    \tilde{f} 
    & =
    (b_{1} -2  R^{-1} R_{t} )_{x_{1}} u_{t}   + (2 R^{-1} \nabla R + b_{2})_{x_{1}} \cdot \nabla u 
     \\
    &  \quad 
    + [R^{-1}(- R_{tt} + \Delta R + b_{1} R_{t}   + b_{2} \cdot \nabla R + b_{3} R)]_{x_{1}} u
    .
\end{align}
Since $ w  = u_{x_{1}} $, and $ u(t, 0, x') = z (t, 0, x') = 0 $ for $ (t, x') \in (0,T) \times G' $, we have 
\begin{align}
    \label{eqISPT4-1}
    u (t, x_{1}, x') = \int_{0}^{x_{1}} w (t, \eta, x') d \eta.
\end{align}
Similar to \cref{eq.s45-5,eq.s45-6}, from \cref{eqISPT4,eqISPT4-1} we obtain 
\begin{align}
    \label{eqISPT5}
    \int_Q \theta^{2} \tilde{f}^{2} d x d t
    \leq 
    \mathbb{E}  \int_Q \theta^{2}  ( w _t^2+|\nabla w |^2+ w ^2 )  d x d t 
    ,
\end{align}
and 
\begin{align} \notag
    \label{eqISPT5-1}
    & \lambda \mathbb{E}  \int_Q \theta^{2}  (|(R^{-1}g)_{x_{1}} + b_{4, x_{1}} u| ^2 d x d t 
    \\
    & 
    \geq 
    \frac{1}{2} \lambda \mathbb{E}  \int_Q \theta^{2}  (|(R^{-1}g)_{x_{1}}  | ^2 d x d t 
    - \mathcal{C} \lambda \mathbb{E}  \int_Q \theta^{2}   w ^2    d x d t 
    .
\end{align}
From \cref{eqISPT3,eqISPT5,eqISPT5-1}, we know there exists $ \lambda_{2} \geq \lambda_{1} $ such that for all $ \lambda \geq \lambda_{2} $, it holds that
\begin{align*}
 \lambda \mathbb{E}  \int_Q \theta^{2}  (|(R^{-1}g)_{x_{1}}| ^2+w _t^2+|\nabla w |^2+\lambda^2 w ^2 )  d x d t 
 \leq  \mathcal{C} \mathbb{E} \lambda \int_{\Sigma} \theta^{2} \bigg|\frac{\partial w }{\partial \nu}\bigg|^2   d \Gamma d t  
 .
\end{align*}
Since $ \dfrac{\partial w}{\partial \nu} = \dfrac{\partial u_{x_{1}}}{\partial \nu} = 0  $ on $ \Sigma $, for $ \lambda \geq \lambda_{2} $, we have 
\begin{align*}
\lambda \mathbb{E}  \int_Q \theta^{2}  \big(|(R^{-1}g)_{x_{1}}| ^2+w _t^2+|\nabla w |^2+\lambda^2 w ^2 \big)  d x d t 
\leq  0.
\end{align*}
This yields that 
\begin{align*}
w = 0 \text{ in } (0, T) \times G, ~ \text{${\mathbb{P}}$-a.s.,} 
\end{align*}
which, together with \cref{eqISPT4-1}, implies that
\begin{align*}
u = 0 \text{ in } (0, T) \times G, ~ \text{${\mathbb{P}}$-a.s.} 
\end{align*}
Consequently,
\begin{align}\label{eqISPT4-2}
z = 0 \text{ in } (0, T) \times G, ~ \text{${\mathbb{P}}$-a.s.} 
\end{align}
From \cref{systemHyper} and \cref{eqISPT4-2}, for any $ t \in (0,T) $, we have 
\begin{align}\label{eqISPT4-3}
\int_{0}^{t} h(s, x') R(s, x) ds + \int_{0}^{t} g(s,x) d W(s) = 0.
\end{align}
Noting that $\int_{0}^{t} h(s, x') R(s, x) ds$ is absolutely continuous with respect to $t$, we get from \cref{eqISPT4-3} that $\int_{0}^{t} g(s,x) d W(s)$ is also absolutely continuous with respect to $t$. Consequently, its quadratic variation is zero, which concludes  that 
\begin{align*}
 g  = 0 \text{ in } (0, T) \times G, ~ \text{${\mathbb{P}}$-a.s.} 
\end{align*}
This, together with \cref{eqISPT4-3}, implies that  
\begin{align*}
\int_{0}^{t} h(s, x') R(s, x) =  0,\qquad \forall t \in (0,T). 
\end{align*}
Therefore,
\begin{align*}
h(s, x') = 0 \text{ in } (0, T) \times G, ~ \text{${\mathbb{P}}$-a.s.} 
\end{align*}
\end{proof}

\section{Solution to inverse state problem with the internal measurement}
\label{secLIP}

In this section, we study \cref{probLocalInverse}. To this end, we need the following definition.

\begin{definition}\label{def1}
Assume $x_0\in S$ and $ K$ is a nonnegative number. 
Given the hypersurface $S$, it is said to  satisfy the outer
paraboloid condition with $K$ at $x_0$ if there exists a
neighborhood ${\cal V}$ of $x_0$ and a paraboloid ${\cal P}$ tangential to $S$
at $x_0$ such that ${\cal P}\cap{\cal V}\subset {\cal D}^{-}_\rho$ with ${\cal P}$ being congruent to $\displaystyle x_1 = - K\sum_{j=2}^n x_j^2$.
\end{definition}

The following theorem gives a positive answer to \cref{probLocalInverse}.

\begin{theorem}\label{thmInveHyperDeter}
Let $ x_0 \in S\setminus\partial S$ satisfy $\frac{\partial  \sigma(T/2,x_0)}{\partial\nu}<0$, and  $S$ satisfy the outer  paraboloid condition with
\begin{equation}\label{th1-eq1}
K <\frac{-\frac{ \partial
\sigma}{\partial\nu}(T/2,x_0)}{4(|\sigma|_{L^\infty(B_\rho(T/2,x_0))}+1)}.
\end{equation}
If $z \in   L_{{\mathbb{F}}}^2 (\Omega; C([0,T];H_{\rm loc}^1(G)))\cap L_{{\mathbb{F}}}^2 (\Omega; C^{1}([0,T];L^2_{\rm loc}(G))) $ is a solution to the equation     \cref{system1} satisfying that
\begin{equation}\label{th1-eq2}
z=\frac{\partial z}{\partial\nu} = 0 \quad\mbox{ on } (0,T)\times
S,\;\;{\mathbb{P}}\mbox{-}\hbox{\rm a.s.} ,
\end{equation}
then for the neighborhood ${\cal V}$ given in  \cref{def1} and some $\varepsilon\in (0,T/2)$,
it holds that
\begin{equation}\label{th1-eq3}
z = 0 \quad\mbox{ in } ({\cal V}\cap {\cal D}_{\rho}^+)\times
(\varepsilon,T-\varepsilon),\;\;{\mathbb{P}}\mbox{-}\hbox{\rm a.s.}
\end{equation}
\end{theorem}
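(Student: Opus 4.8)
The plan is to reduce \cref{thmInveHyperDeter} to a local Carleman estimate for the stochastic hyperbolic operator $\sigma\,d z_t - \Delta z\,dt$ near $x_0$, in the spirit of the Carleman arguments used throughout this chapter. First I would normalize the time interval so that the relevant moment is $t=T/2$, and work in the ball $B_\rho(x_0)$, choosing $\rho$ small enough that $S$ divides $B_\rho(x_0)$ into ${\cal D}_\rho^+$ and ${\cal D}_\rho^-$ and that the outer paraboloid ${\cal P}$ from \cref{def1} lies, together with a small time-slab, inside the region where we already know $z$ vanishes (namely, over ${\cal D}_\rho^-\times(0,T)$ by hypothesis, and on $(0,T)\times S$ together with its normal derivative by \cref{th1-eq2}). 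The key geometric input is the outer paraboloid condition with the bound \cref{th1-eq1}: I would use it to build a weight function $\ell = \lambda\phi$ whose level sets, at the critical time, are paraboloids congruent to $x_1 = -K\sum_{j\ge2}x_j^2$ slightly opened, so that a fixed sublevel set $\{\phi > \phi(x_0)\}$ is a lens-shaped region $\Theta$ contained in $B_\rho(x_0)$ with $\partial\Theta\cap{\cal D}_\rho^+$ staying away from $S$, while $\partial\Theta\cap\overline{{\cal D}_\rho^-}$ is where $z\equiv0$. The condition $\frac{\partial\sigma}{\partial\nu}(T/2,x_0)<0$ together with \cref{th1-eq1} is exactly what makes $\phi$ \emph{pseudo-convex} with respect to the principal symbol $\sigma\xi_t^2 - |\xi|^2$ in a neighborhood of $x_0$, which is the hypothesis needed to run the pointwise identity \cref{hyperbolic2} (with $p^{jk}=\delta_{jk}$, $\phi=\sigma$ in the notation of \cref{thmFi}, and $\Psi$ chosen as in the earlier proofs) and obtain, after integrating over $\Theta\times(T/2-\delta,T/2+\delta)$, taking expectations, and absorbing the lower-order terms $\tilde b_1 z_t + \tilde b_2\cdot\nabla z + \tilde b_3 z$ and the diffusion term $\tilde b_4 z\,dW$ for $\lambda$ large, a Carleman estimate of the form
\begin{align*}
\lambda\,\mathbb{E}\int \theta^2\big(z_t^2 + |\nabla z|^2 + \lambda^2 z^2\big)\,dx\,dt
\le {\cal C}\,\big(\text{boundary terms on }\partial\Theta\cup\{t=T/2\pm\delta\}\big),
\end{align*}
where the integration is over $\Theta\times(T/2-\delta,T/2+\delta)$.

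Next I would multiply $z$ by a cutoff function $\chi$ supported in $\Theta\times(T/2-\delta,T/2+\delta)$ and equal to $1$ on a slightly smaller such region, producing an equation for $\chi z$ with a right-hand side supported in the ``collar'' where $\chi$ transitions; the crucial point is that, by the choice of the sublevel set $\Theta$, this collar is contained in the union of (i) the set where $z$ already vanishes (over ${\cal D}_\rho^-$ and near $S$, using \cref{th1-eq2} to kill both $z$ and $\partial z/\partial\nu$ on the $S$-portion of $\partial\Theta$), and (ii) the time faces $t = T/2\pm\delta$. On the time faces the weight $\theta$ is small relative to its value on the support of $\chi_t$-free region, so after dividing by the appropriate power of $\theta$ evaluated at an intermediate level set, the standard Carleman-to-unique-continuation argument (let $\lambda\to+\infty$, or optimize in $\lambda$) forces $z\equiv0$ on a fixed neighborhood ${\cal V}\cap{\cal D}_\rho^+$ for times in $(T/2-\delta',T/2+\delta')$. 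Finally, to upgrade the conclusion from a short interval around $T/2$ to $(\varepsilon, T-\varepsilon)$, I would either translate the construction in time — repeating the argument with $T/2$ replaced by $t_0$ ranging over a compact subinterval, using the hypothesis at $T/2$ replaced by a uniform pseudo-convexity that persists for $t_0$ near $T/2$ (here is where $\varepsilon$ depends on $\sigma$ and $\rho$, as noted in the remark) — or invoke the finite-speed-of-propagation and energy estimates to propagate the vanishing; I expect the cleanest route is the time-translation/covering argument exactly as in Step 4 of the proof of \cref{main result} and \cref{thmProb9}.

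The main obstacle I anticipate is constructing the weight function $\phi$ and verifying pseudo-convexity with the \emph{explicit} constant from \cref{th1-eq1}: one must exhibit $\phi$ whose level surface at $t=T/2$ is a paraboloid of the correct aperture, check that the outer paraboloid condition places the ``bad'' part of $\partial\Theta$ inside ${\cal D}_\rho^-$ where data is known, and simultaneously verify that the Hörmander-type pseudo-convexity inequality (the positivity of the $\mathcal{B}$-term and the $c^{jk}$-form in \cref{hyperbolic2}) holds on a genuine neighborhood of $x_0$ — this is where the sign of $\partial_\nu\sigma(T/2,x_0)$ and the smallness \cref{th1-eq1} enter quantitatively, since $\sigma$ plays the role of the (variable, time-dependent) coefficient of $z_{tt}$. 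A secondary technical point is handling the Itô correction term $\sigma\theta^2\ell_t(du_t)^2$ and the stochastic integral $\tilde b_4 z\,dW$: as in the parabolic and hyperbolic estimates above, these contribute terms controlled by $\mathbb{E}\int\theta^2 z^2$, absorbable for large $\lambda$, but one must be careful that $\sigma>0$ and $\ell_t$ has a favorable sign on the relevant slab so that the correction term does not spoil the estimate. The non-smoothness of $S$ (only $C^2$) and the merely $L^\infty_{\rm loc}$/$L^n_{\rm loc}$ regularity of $\tilde b_i$ from \cref{aibi} require the same Sobolev-interpolation bookkeeping as in \cref{ch-5-observability}, but present no essential new difficulty.
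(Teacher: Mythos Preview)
Your overall architecture---local Carleman estimate, multiplication by a cutoff, absorption of lower-order and diffusion terms for large parameter, then letting the parameter tend to infinity---is exactly the route the paper takes. The substantive differences are in the \emph{shape of the weight} and the \emph{geometry of the cutoff region}, and these differences matter.

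The paper does not use a weight of the form $\ell=\lambda\phi$; it takes the explicit quadratic
\[
\varphi(t,x)=hx_1+\tfrac12\sum_{j\ge2}x_j^2+\tfrac12\big(t-\tfrac T2\big)^2+\tfrac12\tau,
\]
with $h$ chosen so that $K<\tfrac1{2h}<\tfrac{\alpha}{4(M_0+1)}$ (this is where \cref{th1-eq1} enters), and then the \emph{two-parameter singular} weight $\ell=s\,\varphi^{-\lambda}$, $\theta=e^{\ell}$. The Carleman estimate (\cref{thmCarlemanFinal}) is proved for processes supported in the space--time lens $Q_\tau=\{x_1>\gamma(x'),\ \varphi<\tau\}$, with $\Psi=0$ (not the $\Psi$ from the earlier global estimates). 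The crucial geometric computation is that for $\tau$ small enough the boundary $\partial Q_\mu$ has only two pieces: a portion of $S$ (where $z=\partial_\nu z=0$ by hypothesis) and the level set $\{\varphi=\mu\}$; the ``lateral'' piece $\{\sum_{j\ge2}x_j^2=\delta_0\}$ is empty. In particular there are \emph{no time faces}. The cutoff $\chi$ is then supported in $Q_2=\{ \varphi<\tau-\widetilde\tau\}$ and equals $1$ on $Q_3=\{\varphi<\tau-2\widetilde\tau\}$, so the commutator $f$ lives in the collar $Q_2\setminus\overline{Q_3}$ where $\varphi$ is close to $\tau$; since $\theta=e^{s\varphi^{-\lambda}}$ is \emph{larger} where $\varphi$ is \emph{smaller}, the weight on the inner set $Q_4$ dominates the weight on the collar, and sending $s\to\infty$ forces $z=0$ on $Q_4$.

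Your proposal separates space and time: you cut off on $\Theta\times(T/2-\delta,T/2+\delta)$, deal with the time faces $t=T/2\pm\delta$ by arguing the weight is small there, and then need a covering/translation argument at the end. This is not wrong in spirit, but it is more delicate and is not what the paper does; building the time variable into $\varphi$ from the start makes the whole argument a single step, the level-set comparison automatic, and the final covering step disappears entirely. Also, getting a hyperbolic Carleman estimate with a simple weight $e^{\lambda\phi}$ on a region with time faces requires the Hessians of $\phi$ in $t$ and $x$ to cooperate with the sign of $\partial_\nu\sigma$ in a way you have not specified; the paper's choice of the singular weight $s\varphi^{-\lambda}$ with $\varphi$ \emph{convex} in $t$ is precisely the device that makes the coefficients ${\cal B}$ and $c^{jk}$ in \cref{hyperbolic2} come out positive on $Q_\tau$ for $\tau$ small.
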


\begin{remark}
Noting that in  \cref{thmInveHyperDeter}, we assume that $\frac{\partial  \sigma(T/2,x_0)}{\partial\nu}<0$, which can be regarded as  a kind of pseudoconvex conditions (e.g.,
\cite[Chapter XXVII]{Hormander}).  
This assumption is reasonable . 
In fact, even in the deterministic setting, without such an assumption, it may lead to the involved (local) uniqueness failing (e.g., \cite{Alinhac1983}).
\end{remark}

We first give the weight function which will be used to establish
a   Carleman estimate for the equation \cref{system1}.

Without loss of generality, we assume that $ x_{0} = 0 \in S \setminus \partial S$ and $\nu(0)=(1, 0, \cdots,0)$.
Since $S$ is $C^2$, for some $r >0$ and $\gamma(\cdot)\in C^2(\mathbb{R}^{n-1})$, we can parameterize $S$ in a neighborhood of the origin by
\begin{equation}\label{car eq1}
\big\{(x_1,x_2,\cdots,x_n)\in \mathbb{R}^{n}\;\big|\;x_1=\gamma(x_2,\cdots,x_n),\;|x_2|^2+\cdots+|x_n|^2 <r\big\}.
\end{equation}

We put
\begin{equation}\label{car eq3}
\begin{cases}
\displaystyle B_r\Big (\frac T2,0\Big )=\left\{(t,x)\in{\mathbb{R}}^{n+1}\;\Big|\; |x|^2
+ \Big (t-\frac T2\Big )^2 < r^2\right\},\\
B_r(0)=\big\{ x\in{\mathbb{R}}^n\;\big|\;  |x|<r \big\}.
\end{cases}
\end{equation}
Then, it follows from \cref{th1-eq1} that
\begin{align}\label{car eq2}
\left\{
\begin{aligned}
    & -\alpha_0  \deq \frac{\partial
\sigma}{\partial \nu}\Big (\frac T2,0 \Big )<0,\\ 
& K <\frac{\alpha_0}{4(|\sigma|_{L^\infty(B_r(T/2,0))}+1)}.
\end{aligned}
\right.
\end{align}
Thanks to \cref{def1,car eq1}, we get 
\begin{align}
\label{eqParaHyper}
 - K \sum_{j=2}^n x_j^2 <\gamma(x_2,\cdots,x_n), \quad  \text{ if }\sum_{j=2}^n x_j^2 < r.
\end{align}

Denote
\begin{align*}
{\cal D}^{-}_r=\big\{x\in B_r(0)\;\big|\;x_1<\gamma(x_2,\cdots,x_n)\big\}
,\quad {\cal D}^+_r = B_r(0)\setminus \overline{D^-_r}.
\end{align*}
Since $\sigma\in C^1(\overline Q)$, for any $ \alpha \in (0, \alpha_{0}) $, we can find a sufficiently small $ \delta_{0} > 0 $ such that $ \delta_{0} < \min\{ 1, r^{2}\} $ and 
\begin{align}
\label{eqSnH}
\frac{\partial \sigma(t,x)}{\partial \nu}<-\alpha,  \quad  \text{ if } |x|^2 + \Big (t-\frac T2\Big )^2 \leq \delta_0.
\end{align}

Put 
\begin{align}
\label{car eq4}
M_0=|\sigma|_{L^\infty(B_r(T/2,0))},\quad M_1=\max\left\{ |\sigma|_{C^1(B_r(0,0))},1 \right\}.
\end{align}
From \cref{car eq2}, we can choose $ \alpha >0$ and $ h > 0 $ such that 
\begin{align}
\label{car eq6}
K<\frac{1}{2h}<\frac{\alpha}{4(M_0+1)}.
\end{align}

For $ \tau \in (0,1) $, denote the weight function
\begin{align}
\label{weight1}
\varphi(t,x) = h x_1 + \frac{1}{2}\sum_{j=2}^n x_j^2 +
\frac{1}{2}\Big (t-\frac T2\Big )^2 + \frac{1}{2}\tau.
\end{align}
For any positive number $ \mu $ with $  \tau \geq \mu >  \frac{\tau}{2} $, put 
\begin{align}
\label{eqDefQm}
Q_\mu = \Big\{ (t,x)\in{\mathbb{R}}^{n+1}\;\Big|\; x_1>\gamma
(x_2,x_3,\cdots,x_n),\; \sum_{j=2}^N x_j^2<\delta_0, \varphi(t,x)<\mu
\Big\}.
\end{align}
We claim that $ Q_{\mu} \neq \emptyset $.

In fact, from \cref{eqParaHyper,eqDefQm}, we have 
\begin{align*}
h x_{1} > h \gamma(x_2,\cdots,x_n) > - h K \sum_{j=2}^n x_j^2,
\end{align*}
which, together with  \cref{weight1,car eq6}, implies that 
\begin{align*}
\varphi (t,x ) > \biggl(\frac{1}{2} - h K \biggr) \sum_{j=2}^n x_j^2 + \frac{\tau}{2} \geq \frac{\tau}{2} 
.
\end{align*}
This, together with \cref{eqDefQm}, yields $Q_\mu \neq \emptyset$.

Next, we show that  there exists $ \tau_{1} > 0 $  such that for all $ \tau \leq \tau_{1} $ and $ \mu \leq \tau $, we have 
\begin{align}
\label{eqtxQm}
|x|^{2}  + \bigg (t-\frac T2\bigg )^2 \leq \delta_0, \quad  \forall \; (t,x ) \in Q_{\mu}.
\end{align}

Since $ \varphi < \mu \leq  \tau $, from \cref{weight1}, we have 
\begin{align}
\label{eqTau1}
x_{1} < - \frac{1}{2 h} \sum_{j=2}^n x_j^2 -     \frac{1}{2 h}\Big (t-\frac T2\Big )^2 + \frac{1}{2 h}\tau \leq \frac{\tau}{ 2 h}
.
\end{align}
By \cref{eqTau1} and
\begin{align*}
- K \sum_{j=2}^n x_j^2  < \gamma(x_2,\cdots,x_n) < x_{1}
,
\end{align*}
we get 
\begin{align}
\label{eqxj}
\sum_{j=2}^n x_j^2 \leq \frac{\tau}{1- 2 K h},
\end{align}
which implies 
\begin{align}
\label{eqTau2}
- x_{1} \leq \frac{K \tau}{ 1 - 2 K \tau}.
\end{align}
From \cref{eqTau2,eqTau1}, we deduce that 
\begin{align}
\label{eqTau3}
|x_{1} | \leq \max \bigg \{  \frac{K }{ 1 - 2 K \tau}, \frac{1}{2 h} \bigg \}\tau
.
\end{align}
Combining \cref{eqTau3,eqxj}, noting that $ \varphi < \tau $, we obtain that 
\begin{align*}
- \frac{K h \tau}{ 1 - 2 K \tau} + \frac{1}{2} \sum_{j=2}^n x_j^2 
+ \frac{1}{2} \biggl( t - \frac{T}{2} \biggr)^{2} + \frac{\tau}{2} \leq \varphi < \tau,
\end{align*}
which implies
\begin{align}
\label{eqT}
\bigg (t-\frac T2\bigg )^2 \leq \frac{\tau}{1- 2 K h}.
\end{align}
From \cref{eqTau3,eqT}, for $ (t,x) \in Q_{\mu} $, we have 
\begin{align}
\label{eqMuTao}
|x|^{2}  + \bigg (t-\frac T2\bigg )^2 
\leq  \mu_0(\tau) \deq   \Big|\max\left\{\frac K {1- 2K h}, \frac 1{2h}\right\}\Big|^2 \tau ^2 + \frac{2\tau} {1 -2K h} 
.
\end{align}
Choosing  $ \tau_{1} > 0 $ such that $ \mu_0(\tau_{1}) \leq \delta_{0} $, for all $ \tau \leq \tau_{1} $, we obtain \cref{eqtxQm}.

\begin{theorem}
\label{thmCarlemanFinal}
Let $u$ be an $H^2_{\rm loc}(G)$-valued $\mathbf{F}$-adapted process such that $u_t$ is an $L^2_{\rm loc}(G)$-valued It\^o process. 
Set $ \ell = s \varphi^{-1} $ with $ \varphi $ given by \cref{weight1}, $\theta = e^\ell$ and $w=\theta u$. 
Then there exists $ \tau_{0} >0 $ such that for all $0< \tau \leq \tau_{0} $ and $u$ supported  in $Q_{\tau}$, there exist positive constants ${\cal C} $, $ \lambda_{0} $ and   $s_0$, such that for all $ \lambda \geq \lambda_{0} $ and  $s \geq s_0$, it holds that 
\begin{align}\label{4.15-eq20}
&  {\mathbb{E}}\int_{Q_\tau}\theta \big( -2\sigma\ell_t w_t +
2\nabla\ell\cdot\nabla w \big)
\big( \sigma du_t - \Delta u dt \big)dx\nonumber  \\
& \geq {\cal C} {\mathbb{E}}\int_{Q_\tau} \Big[s\lambda^2
\varphi^{-\lambda-2}(|\nabla w|^2+w_t^2)+
s^3\lambda^4 \varphi^{-3\lambda-4}w^2 \Big]dxdt \\
& \quad + {\mathbb{E}}\int_{Q_\tau}\big( -2\sigma \ell_tw_t +
2\nabla\ell\cdot \nabla w \big)^2 dxdt + {\cal C}
{\mathbb{E}}\int_{Q_\tau}\sigma^2\theta^2\ell_t(du_t)^2dx. \nonumber
\end{align}
\end{theorem}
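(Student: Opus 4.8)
The strategy is to specialize the fundamental pointwise weighted identity \cref{hyperbolic2} of \cref{thmFi} to the hyperbolic-like operator $\sigma d u_t - \Delta u\, dt$ (i.e.\ with $b^{jk}=\delta_{jk}$ but the genuine time-coefficient $\phi=\sigma$ retained, since $\sigma$ here is not identically $1$), with the singular weight $\ell = s\varphi^{-\lambda}$ built from the function $\varphi$ of \cref{weight1}. (Note: the statement writes $\ell = s\varphi^{-1}$ but the conclusion \cref{4.15-eq20} has powers $\varphi^{-\lambda-2}$, so the effective weight carries the parameter $\lambda$, and I will work with $\ell = s\varphi^{-\lambda}$.) First I would choose the auxiliary function $\Psi$ — the natural choice being $\Psi$ of the form $\ell_{tt}+\Delta\ell$ modified by a lower-order term proportional to $s\lambda\varphi^{-\lambda-1}$ or similar — so as to make the first-order cross terms manageable and the coefficients $c^{jk}$, $\mathcal B$ of the identity have a definite sign. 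Then I would compute, to leading order in $s$ and $\lambda$, the derivatives of $\ell$: since $\varphi_t = t-\tfrac T2$, $\varphi_{x_1}=h$, $\varphi_{x_j}=x_j$ ($j\ge 2$), one gets $\ell_t = -s\lambda\varphi^{-\lambda-1}\varphi_t$, $\ell_{x_j}=-s\lambda\varphi^{-\lambda-1}\varphi_{x_j}$, and second derivatives producing the dominant terms $s\lambda^2\varphi^{-\lambda-2}|\nabla\varphi|^2$ etc.

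The key estimate is the positivity of the principal terms of the right-hand side of \cref{hyperbolic2}. After integrating over $Q_\tau$ and taking expectation, the divergence terms drop because $u$ (hence $w$) is supported in $Q_\tau$ (so all boundary integrals over $\partial Q_\tau$ vanish), and the exact-differential terms $d(\cdots)$ integrate to zero since $w$ vanishes at $t$-endpoints on $Q_\tau$. One is left with: a quadratic form in $(w_t,\nabla w)$ with coefficient matrix of order $s\lambda^2\varphi^{-\lambda-2}$, which must be shown positive definite using that $|\nabla\varphi|^2 - (\varphi_t)^2\sigma \ge$ (positive), i.e.\ the pseudoconvexity-type inequality; a coefficient of $w^2$ of order $s^3\lambda^4\varphi^{-3\lambda-4}$, positive for $s,\lambda$ large; the nonnegative square $\big(-2\sigma\ell_t w_t + 2\nabla\ell\cdot\nabla w + \Psi w\big)^2$; and the Itô-correction term $\sigma^2\theta^2\ell_t(du_t)^2$, which I would bound below by $-\,(\text{const})\,s\lambda\varphi^{-\lambda-1}\theta^2(du_t)^2$ — but in fact on $Q_\tau$ with $t<T/2$ one has $\varphi_t<0$ hence $\ell_t>0$, and one should check whether the geometry of $Q_\tau$ forces a favorable sign; if not, this term is absorbed into the $\int \sigma^2\theta^2\ell_t(du_t)^2$ term already appearing on the right.

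The crucial geometric input is exactly where the constant $K$ in the outer paraboloid condition and the bound \cref{car eq6}, $K < \tfrac1{2h} < \tfrac{\alpha}{4(M_0+1)}$, enter: on $Q_\tau$ we have $|x|^2+(t-\tfrac T2)^2\le \delta_0$ by \cref{eqtxQm}, and there $\tfrac{\partial\sigma}{\partial\nu}<-\alpha$ by \cref{eqSnH}; combined with $|\nabla\varphi|^2 = h^2 + \sum_{j\ge 2}x_j^2$ and $(\varphi_t)^2 = (t-\tfrac T2)^2$, the choice $h^2 > \sigma(\varphi_t)^2$ (guaranteed for $\tau$ small because $|t-\tfrac T2|$ is then small while $h$ is fixed) yields the required strict positivity $h^2 - \sigma(\varphi_t)^2 \ge$ (positive constant), which is what makes the quadratic form in $(\nabla w, w_t)$ coercive. \textbf{The main obstacle} I anticipate is controlling the lower-order (order-$s\lambda$ or smaller) error terms that arise when one expands the coefficients $c^{jk}$, $\mathcal B$, and $\Psi_t$-type quantities in \cref{AB1}: these involve derivatives of $\varphi^{-\lambda}$ that mix powers of $\varphi$ and must be shown to be dominated by the leading $s^3\lambda^4\varphi^{-3\lambda-4}w^2$ and $s\lambda^2\varphi^{-\lambda-2}(|\nabla w|^2+w_t^2)$ terms — this requires first fixing $\lambda$ large (to beat the $\mu_0$-type combinatorial constants), then $s$ large (to beat the remaining $\lambda$-polynomial errors), in that order, and carefully tracking that no term of the form $\varphi^{-2\lambda-k}$ with an indefinite sign survives. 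A secondary technical point is justifying the Itô calculus manipulations for an $H^2_{\mathrm{loc}}$-valued process with only local regularity, which is handled by the cutoff to $Q_\tau$ built into the hypothesis.
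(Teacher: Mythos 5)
Your overall strategy coincides with the paper's: apply \cref{thmFi} with $b^{jk}=\delta_{jk}$ and $\phi=\sigma$ to the weight $\ell=s\varphi^{-\lambda}$ (you are right that the exponent $-1$ in the statement is effectively $-\lambda$), integrate over $Q_\tau$, kill the divergence and exact-differential terms using the support hypothesis, carry the It\^o-correction term $\sigma^2\theta^2\ell_t(du_t)^2$ to the right-hand side as stated, and obtain positivity of the first- and zeroth-order coefficients from the smallness of $Q_\tau$ (through $\mu_0(\tau)$ and \cref{eqtxQm}) together with the constraints \cref{car eq2}--\cref{car eq6} on $K$, $h$, $\alpha$, $M_0$, $M_1$, fixing $\lambda$ large first and then $s$ large.

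The one concrete misstep is your choice of $\Psi$. The multiplier appearing on the left of \cref{4.15-eq20} is $-2\sigma\ell_t w_t+2\nabla\ell\cdot\nabla w$ with no zeroth-order term, so the identity must be invoked with $\Psi=0$ — and that is exactly what the paper does. Your proposed $\Psi\approx \ell_{tt}+\Delta\ell$, which is of the large size $s\lambda^2\varphi^{-\lambda-2}$, would (i) produce a different left-hand side (an extra $\Psi w$ in the multiplier and in the square term), hence not the stated inequality, and (ii) risk destroying the positivity of the $w_t^2$ coefficient: in \cref{hyperbolic2} that coefficient is $(\phi^2\ell_t)_t+\sum_{j,k}(\phi b^{jk}\ell_{x_j})_{x_k}-\phi\Psi$, and with your $\Psi$ the leading term becomes $\sigma(\sigma-1)\ell_{tt}+\cdots$, indefinite in sign. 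The reason $\Psi=0$ suffices here — in contrast to the global Carleman estimates of \Cref{secDP,secISP}, where $\Psi=\ell_{tt}+\sum(b^{jk}\ell_{x_j})_{x_k}-c_0\lambda$ is essential — is that for the weight $\varphi^{-\lambda}$ both $\ell_{tt}$ and the spatial Hessian of $\ell$ are \emph{positive} to leading order in $\lambda$ (each $\approx s\lambda(\lambda+1)\varphi^{-\lambda-2}$ times a square), so no zeroth-order correction is needed; one only has to check that the off-diagonal $w_{x_j}w_t$ terms, of size $s\lambda^2\varphi^{-\lambda-2}\sqrt{\mu_0(\tau)}$, are absorbed for $\tau$ small (this, rather than the inequality $|\nabla\varphi|^2-\sigma\varphi_t^2>0$, is what makes the first-order quadratic form coercive; the latter inequality is what gives $\mathcal B\gtrsim s^3\lambda^4\varphi^{-3\lambda-4}$, since $\mathcal B$ is to leading order a positive multiple of $\bigl[\sigma\varphi_t^2-|\nabla\varphi|^2\bigr]^2$, which must be bounded away from zero). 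With $\Psi=0$ fixed, the rest of your plan reproduces the paper's argument.
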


\begin{proof}
Choose $ (b^{jk})_{1\leq j,k\leq n} $ be the identity matrix,  $ \Psi = 0 $ and $ \phi = \sigma $ in \cref{thmFi}. Let us first estimate terms in \cref{hyperbolic2} one by one.

For $j=1,2,\cdots,n$, noting that  $ \ell = s \varphi^{-1} $ and \cref{weight1}, it is straightforward  to verify  that
\begin{equation}\label{hyperL}
\left\{
\begin{aligned}
    & \ell_t= -s \lambda \varphi^{-\lambda-1} \Big(t-\frac{T}{2}\Big), 
    \\ 
    &\ell_{tt}  =s \lambda(\lambda+1)  \varphi^{-\lambda-2} \Big(t-\frac{T}{2}\Big) ^2 -s \lambda \varphi^{-\lambda-1},
    \\
    &\ell_{x_j}= -s \lambda \varphi^{-\lambda-1} \varphi_{x_{j}},
    \\
    &\ell_{x_j x_{j}}= s \lambda(\lambda+1)  \varphi^{-\lambda-2}  \varphi_{x_{j}}^{2} -s \lambda \varphi^{-\lambda-1} \varphi_{x_{j}x_{j}},
    \\ 
    &\ell_{t x_j}= s \lambda(\lambda+1)  \varphi^{-\lambda-2}  \varphi_{x_{j}}  \Big(t-\frac{T}{2}\Big).
\end{aligned}
\right.
\end{equation}

From \cref{hyperL,eqSnH}, we deduce that 
\begin{align}
\label{eqCarHypIII1}\notag
& [ (\sigma^2 \ell_t )_t+\nabla \cdot(\sigma \nabla \ell) ] w_t^2
\\ \notag
& =   (2 \sigma \sigma_t \ell_t+\sigma^2 \ell_{t t}+\nabla \sigma \cdot \nabla \ell+\sigma \Delta \ell ) w_t^2 
\\ \notag
& =  -s \lambda \varphi^{-\lambda-1}\Big[2 \sigma \sigma_t\Big(t-\frac{T}{2}\Big)+\sigma^2+(\nabla \sigma \cdot \nabla \varphi+\sigma \Delta \varphi)\Big] w_t^2 
\\ \notag
& \quad  +s \lambda(\lambda+1) \varphi^{-\lambda-2}\Big[\sigma^2\Big(t-\frac{T}{2}\Big)^2+\sigma|\nabla \varphi|^2\Big] w_t^2
\\
& \geq 
s \lambda^{2} h^2 \sigma \varphi^{-\lambda-2} w_t^2  
+ O(s \lambda \varphi^{-\lambda - 1}) w_{t}^{2}
.
\end{align}
Combining \cref{hyperL,eqSnH,eqMuTao}, we have 
\begin{align}
\label{eqCarHypIII2}\notag
& -2\big[(\sigma \nabla \ell)_t+\nabla(\sigma \ell_t)\big] \cdot \nabla w w_t
\\ \notag
& = 
2 s \lambda \varphi^{-\lambda -1 } \Big[ \sigma_{t} \nabla \varphi + \nabla \sigma \Big( t - \frac{T}{2} \Big) \Big] \cdot \nabla w w _{t} 
\\ \notag
& \quad 
- 4  s \lambda (\lambda+1) \varphi^{-\lambda -2 } \sigma \Big( t - \frac{T}{2} \Big) \nabla \varphi \cdot \nabla w w _{t} 
\\ \notag
& \geq 
- 2 s \lambda^{2} \varphi^{-\lambda -2 } \big( h M_{1} \sqrt{\mu_{0}(\tau)} + M_{1} \sqrt{\mu_{0}(\tau)}\big) \big(|\nabla w|^{2} + |w_{t}|^{2}\big)
\\
& \quad 
+ O(s \lambda \varphi^{-\lambda -1}) (|\nabla w|^{2} + |w_{t}|^{2})
.
\end{align}
Thanks to \cref{eq.identyDetail,hyperL,weight1}, we obtain that
\begin{align}
\label{eqCarHypIII3}\notag
& \big[(\sigma \ell_t)_t+\Delta \ell\big]|\nabla w|^2 
\\ \notag
& =  
-s \lambda \varphi^{-\lambda-1}\Big[\sigma_t\Big(t-\frac{T}{2}\Big)+\sigma+\Delta \varphi\Big]|\nabla w|^2 
\\ \notag
& \quad 
+s \lambda(\lambda+1) \varphi^{-\lambda-2}\Big[\sigma\Big(t-\frac{T}{2}\Big)^2+|\nabla \varphi|^2\Big]|\nabla w|^2
\\
& \geq 
s \lambda^{2} \varphi^{-\lambda - 2 } h^{2} |\nabla w|^{2}
+ O(s \lambda \varphi^{-\lambda -1}) |\nabla w|^{2}.
\end{align}
Hence, from \cref{eqCarHypIII3,eqCarHypIII2,eqCarHypIII1}, by letting $ \sigma_{0} = \min \{\sigma, 1\} $, we know there exist $ \tau_{2} > 0 $ with $ \tau_{2} < \tau_{1} $  and $ \lambda_{1} >  0 $ such that for all $ \tau \leq \tau_{2} $ and $ \lambda \geq \lambda_{1} $,  it holds that
\begin{align}
\label{eqCarHypIII4}\notag
& \big[ (\sigma^2 \ell_t )_t+\nabla \cdot(\sigma \nabla \ell) \big] w_t^2
-2\big[(\sigma \nabla \ell)_t+\nabla(\sigma \ell_t)\big] \cdot \nabla w w_t
+ \big[(\sigma \ell_t)_t+\Delta \ell\big]|\nabla w|^2 
\\ \notag
& \geq 
s \lambda^{2} \varphi^{- \lambda - 2} \big(h^{2} \sigma_{0} - 2 h   M_{1} \sqrt{\mu_{0}(\tau)} - M_{1} \sqrt{\mu_{0}(\tau)}\big) \big(|\nabla w|^{2} + |w_{t}|^{2}\big)
\\ \notag
& \quad 
+ O\big(s \lambda \varphi^{-\lambda -1}\big) (|\nabla w|^{2} + |w_{t}|^{2})
\\
& \geq 
\mathcal{C} s \lambda^{2} \varphi^{- \lambda - 2} (|\nabla w|^{2} + |w_{t}|^{2})
.
\end{align}

From \cref{AB1,hyperL,weight1}, it is easy to show that 
\begin{align*}
\mathcal{A} 
& = 
\sigma (\ell_t^2-\ell_{t t} )-(|\nabla \ell|^2-\Delta \ell )
\\
& \geq 
s^{2} \lambda^{2} \varphi^{- 2 \lambda - 2} \Big[ \sigma \biggl( t - \frac{T}{2} \biggr)^{2} - |\nabla \varphi|^{2} \Big] 
+ O(s \lambda^{2} \varphi^{- \lambda - 2})
.
\end{align*}
This, together with \cref{hyperL,weight1,eqMuTao}, yields that there exist positive constants $ \tau_{3}, \lambda_{2}, s_{1}  $ with $ \tau_{3} < \tau_{1} $, such that for all $ \tau \leq \tau_{3} $, $ \lambda \geq \lambda_{2} $ and $ s \geq s_{1} $, it holds that 
\begin{align}
\label{eqCarHypIII5} \notag
\mathcal{B} 
& =
(\sigma \mathcal{A} \ell_t )_t-\nabla \cdot(\mathcal{A} \nabla \ell) 
\\ \notag
& =
\sigma_t \mathcal{A} \ell_t+\sigma \mathcal{A}_t \ell_t+\sigma \mathcal{A} \ell_{t t}-\nabla \mathcal{A} \cdot \nabla \ell-\mathcal{A} \Delta \ell
\\ \notag
& \geq 
3 s^{3} \lambda^{4} \varphi^{-3 \lambda - 4} \Big[ \sigma \Big( t - \frac{T}{2} \Big)^{2} - |\nabla \varphi|^{2} \Big]^{2}
+ O(s^{3} \lambda^{3} \varphi^{- 3 \lambda - 4}) 
+ O(s^{2} \lambda^{4} \varphi^{- 2 \lambda - 3}) 
\\ \notag
& \geq 
3 s^{3} \lambda^{4} \varphi^{-3 \lambda - 4}  \big[h^{2} + (1-M_{0}) \mu_{0}(\tau)\big]^{2}
+ O\big(s^{3} \lambda^{3} \varphi^{- 3 \lambda - 4}\big) 
+ O\big(s^{2} \lambda^{4} \varphi^{- 2 \lambda - 3}\big) 
\\ 
& \geq 
\mathcal{C} s^{3} \lambda^{4} \varphi^{-3 \lambda - 4}
.
\end{align}

Integrating \cref{hyperbolic2} over $ Q_{\tau} $, taking mathematical expectation, and noting that $u$ is supported  in $Q_{\tau}$, from \cref{eqCarHypIII4,eqCarHypIII5}, by choosing $ \tau_{0} = \min\{ \tau_{1}, \tau_{2}, \tau_{3}\} $, $ \lambda_{0} = \max \{ \lambda_{1}, \lambda_{2} \} $ and $ s_{0} = s_{1} $, we obtain \cref{4.15-eq20}.
\end{proof}

With the help of the Carleman estimate, we are in a position to prove \cref{thmInveHyperDeter}.

\begin{proof}[Proof of \cref{thmInveHyperDeter}]

Without loss of generality, we make the same assumption in \cref{car eq1}. 
By \cref{eqDefQm}, the boundary $ \Gamma_{\mu} $ of $ Q_{\mu} $ can be decomposed as the following three parts:
\begin{align*}
\left\{
    \begin{aligned}
        \Gamma_\mu^1=\Big\{(x, t) \in \mathbb{R}^{n+1} \mid x_1=\gamma\left(x_2, x_3, \ldots, x_n\right), \sum_{j=2}^n x_j^2<\delta_0, \varphi(x, t)<\mu\Big\}, \\
    \Gamma_\mu^2=\Big\{(x, t) \in \mathbb{R}^{n+1} \mid x_1>\gamma\left(x_2, x_3, \ldots, x_n\right), \sum_{j=2}^n x_j^2<\delta_0, \varphi(x, t)=\mu\Big\}, \\
    \Gamma_\mu^3=\Big\{(x, t) \in \mathbb{R}^{n+1} \mid x_1>\gamma\left(x_2, x_3, \ldots, x_n\right), \sum_{j=2}^n x_j^2=\delta_0, \varphi(x, t)<\mu\Big\}
    .
    \end{aligned}
\right.
\end{align*}

We claim that $ \Gamma_\mu^3 = \emptyset $. 
In fact, thanks to \cref{eqxj} and $ \sum\limits_{j=2}^n x_j^2=\delta_0 $, we obtain 
\begin{align*}
\delta_{0} \leq  \frac{\tau}{1- 2 K h},
\end{align*}
which contradicts $ \mu_{0}(\tau) \leq \delta_{0} $.
Consequently, $ \Gamma_{\mu} =  \Gamma_{\mu}^{1} \cup \Gamma_{\mu}^{2}    $.
Put $ t_0=\sqrt{\frac{\tau}{1-2 K h}}  $. 
From \cref{eqT}, we deduce that 
\begin{align}
\label{eqBoudndary}
\left\{
    \begin{aligned}
        & \Gamma_\mu^1 \subset\left\{x \mid x_1=\gamma\left(x_2, x_3, \ldots, x_n\right)\right\} \times\left\{t \mid | t-T / 2 \mid \leq t_0\right\}, \\
        & \Gamma_\mu^2 \subset\{x \mid \varphi(x, t)=\mu\}, \quad \mu \in(0, \tau] ,
    \end{aligned}
\right.
\end{align}
and $ \Gamma_{\mu}^{1} \subset \Gamma_{\tau}^{1} $ for $ \mu \leq \tau $.

Let $ Q_{1} = Q_{\tau} $, and 
for fixing $ \tilde{\tau} \in \big(0, \frac{\tilde{\tau}}{8}\big) $, let 
\begin{align}
\label{eqdefQk}
Q_{k+1}=\{(t, x) \in Q_{1} \mid \varphi(x, t)<\tau-k \widetilde{\tau}, k=1,2,3\} .
\end{align}
Then we have $ Q_{4} \subset Q_{3} \subset Q_{2} \subset Q_{1} $.

Choose a cut-off function $ \chi \in C_{0}^{\infty}(Q_{2};[0,1]) $ satisfying $ \chi = 1 $ in $ Q_{3} $. 
Put $ u = \chi z $. By \cref{th1-eq2,eqBoudndary}, $ u $ solves the following equation:
\begin{align}
\label{eqDerH1}
\left\{
    \begin{aligned}
        & \sigma d u_t-\Delta u d t= (b_1 u_t+b_2 \cdot \nabla u+b_3 u +f ) d t+b_4 u d W(t) & \text { in } Q_\tau 
        \\ 
        & u=0, \quad \frac{\partial u}{\partial \nu}=0 & \text { on } \Gamma_\tau 
        ,
    \end{aligned}    
\right.
\end{align}
where $ f=\sigma \chi_{t t} z+2 a \chi_t z_t-2 \nabla \chi \cdot \nabla z-z \Delta \chi-b_1 \chi_t z-b_2 \cdot z \nabla \chi $.
Then  \cref{4.15-eq20} holds.
From \cref{eqDerH1}, we have 
\begin{align}
\label{eqDerH2} \notag
& {\mathbb{E}}\int_{Q_\tau}\theta \big( -2\sigma\ell_t w_t +
2\nabla\ell\cdot\nabla w \big)
\big( \sigma du_t - \Delta u dt \big)dx
\\ \notag
& 
\leq {\mathbb{E}}\int_{Q_\tau}\big( -2\sigma \ell_tw_t +
2\nabla\ell\cdot \nabla w \big)^2 dxdt
+ {\mathbb{E}}\int_{Q_\tau} \theta^{2}|f|^{2} dx d t
\\
& \quad 
+ \mathcal{C} \mathbb{E} \int_{Q_{\tau}} \theta^{2} (|u_{t}|^{2} + |\nabla u|^{2} + |u|^{2} ) d x d t 
.
\end{align}
Combining \cref{eqDerH1,hyperL}, we get that 
\begin{align}
\label{eqDerH3}  
{\mathbb{E}}\int_{Q_\tau}\sigma^2\theta^2\ell_t(du_t)^2dx
\leq
\mathcal{C} {\mathbb{E}}\int_{Q_\tau}  \theta^2 |u|^{2} dx  d t
.
\end{align}
By \cref{4.15-eq20,eqDerH2,eqDerH3}, we deduce that 
\begin{align}
\label{eqDerH4} \notag  
& {\mathbb{E}}\int_{Q_\tau} \Big[s\lambda^2 \varphi^{-\lambda-2}(|\nabla w|^2+w_t^2)+ s^3\lambda^4 \varphi^{-3\lambda-4}w^2 \Big]dxdt
\\ \notag
&
\leq \mathcal{C} \mathbb{E} \int_{Q_{\tau}} \theta^{2} (|u_{t}|^{2} + |\nabla u|^{2} + |u|^{2} ) d x d t 
+{\mathbb{E}}\int_{Q_\tau} \theta^{2}|f|^{2} dx d t
\\  
&
\leq \mathcal{C} \mathbb{E} \int_{Q_{\tau}} \theta^{2} (|u_{t}|^{2} + |\nabla u|^{2} + |u|^{2} ) d x d t 
+\mathcal{C} {\mathbb{E}}\int_{Q_{2} \setminus \overline{Q_{3}}} \theta^{2} (|z_{t}|^{2} + |\nabla z|^{2} + |z|^{2} ) dx d t
.
\end{align}
From \cref{hyperL}, we have 
\begin{align}
\label{eqDerH5}   
|\nabla w|^2+w_t^2 \geq \mathcal{C} \theta^2\left(s^2 \lambda^2 \varphi^{-2 \lambda-2} u^2+|\nabla u|^2+u_t^2\right) .
\end{align}
Combining \cref{eqDerH5,eqDerH4}, and noting that $ u =z  $ in $ Q_{3} $, we know there exist $ \lambda_{1} > \lambda_{0} $ and $ s_{1} > s_{0} $ such that for all $ \lambda \geq \lambda_{1} $ and $ s \geq s_{1} $, it holds that 
\begin{align*}
& {\mathbb{E}}\int_{Q_3} \Big[s\lambda^2 \varphi^{-\lambda-2}(|\nabla z|^2+z_t^2)+ s^3\lambda^4 \varphi^{-3\lambda-4}z^2 \Big]dxdt
\\
&\leq
\mathcal{C} {\mathbb{E}}\int_{Q_{2} \setminus \overline{Q_{3}}} \theta^{2} (|z_{t}|^{2} + |\nabla z|^{2} + |z|^{2} ) dx d t
\end{align*}
If follows from \cref{eqdefQk} that
\begin{align*}
\left\{
    \begin{aligned}
        & \theta(t, x) >e^{s(\tau-3 \widetilde{\tau})^{-\lambda}},  && \forall \; (t,x) \in  Q_4  
        \\
        & e^{s(\tau-\widetilde{\tau})^{-\lambda}}<\theta(t, x)<e^{s(\tau-2 \widetilde{\tau})^{-\lambda}},  && \forall \; (t,x) \in  Q_2 \setminus \overline{Q_{3}}  
        .
    \end{aligned}
\right.
\end{align*}
Hence, we have 
\begin{align}\label{eqDerH5-1}  
\notag
& {\mathbb{E}}\int_{Q_4}  ( |\nabla z|^2+z_t^2  + z^2 )dxdt
\\
&\leq
\mathcal{C} e^{2[s(\tau-2 \widetilde{\tau})^{-\lambda}-s(\tau-3 \widetilde{\tau})^{-\lambda}]} {\mathbb{E}}\int_{ Q_{\tau}} \theta^{2} (|z_{t}|^{2} + |\nabla z|^{2} + |z|^{2} ) dx d t
.
\end{align}
Letting $ s \rightarrow \infty $ in \cref{eqDerH5-1}, we find $ z = 0 $ in $ Q_{4} $, which completes the proof.
\end{proof}

\section{Reconstruct  the unknown  state with a boundary measurement}\label{secReconstructHyper}

In this section, we study  \cref{Hyrestruct} for $g=0$ in \cref{ch-5-system1}.
Recall that the reconstruction problem means to reconstruct the solution $z$ from some observation or measurement.

Similar to Section \ref{secReconstructTer}, to solve  \cref{Hyrestruct}, we use Tikhonov regularization strategy. To begin with, we give the following immediate corollary of \cref{ch-5-observability,ch-5-energy ensi}.

Consider the following stochastic hyperbolic equation:
\begin{align}\label{ch-5-system1Cauchy}
	\left\{
	\begin{aligned}
		&dz_{t}  - \sum_{j,k=1}^n (b^{jk}z_{x_j})_{x_k}dt = \big(b_1 z_t+ b_2\cdot\nabla z  + b_3 z  + f\big)dt  
		+  b_{4} z dW(t) \!\!\!&  {\mbox { in }} Q, \\
		&z = 0 & \mbox{ on } \Sigma,
	\end{aligned}
	\right .
\end{align}
where coefficients $ b_{j}  ~(1\leq j \leq 4) $ satisfy \cref{ch-5-aibi}.

\begin{proposition}
	\label{ch-5-observability-1}
	Assume  \cref{condition of d} and \cref{ch-5-condition2} are
	satisfied. For any solution $z$ to the equation \cref{ch-5-system1Cauchy}, it holds that 
	\begin{align} \label{ch-5-obser esti2-1}
		\notag
		& |(z, z_{t})|_{L^2_{\mathbb{F}}(0,T; H^{1}(G)) \times L^2_{\mathbb{F}}(0,T; L^{2}(G))}\\
		& \leq
		e^{{\cal C}\big(r_1^2 + r_2^{\frac{1}{ 3/2 - n/p}}+1\big)}
		\Big(\Big|\frac{\partial z}{\partial \nu}\Big
		|_{L^2_{{\mathbb{F}}}(0,T;L^2(\Gamma_0))} + |f|_{L^2_{{\mathbb{F}}}(0,T;L^2(G))}  \Big),
	\end{align}
	where $ \Gamma_{0} \subset \Gamma $ is defined in \cref{ch-5-def gamma0}. 
\end{proposition}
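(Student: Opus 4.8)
\textbf{Proof proposal for Proposition \ref{ch-5-observability-1}.}

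The plan is to obtain \cref{ch-5-obser esti2-1} by combining the boundary observability estimate of \cref{ch-5-observability} (applied to \cref{ch-5-system1Cauchy} with $g=0$) with the energy estimate of \cref{ch-5-energy ensi}. The point of \cref{ch-5-observability} is that it controls the initial data $|(z_0,z_1)|_{L^2_{{\cal F}_0}(\Omega;H_0^1(G)\times L^2(G))}$ by the boundary measurement and the source term, but what we now want on the left-hand side is the whole space-time norm $|(z,z_t)|_{L^2_{\mathbb F}(0,T;H^1(G))\times L^2_{\mathbb F}(0,T;L^2(G))}$. The bridge between the two is the energy estimate, which propagates the norm of $(z(s),z_t(s))$ to any other time slice $(z(t),z_t(t))$, at the cost of the exponential factor $e^{{\cal C}(r_1^2+r_2^{1/(2-n/p)}+1)T}$ and an additive source contribution.

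First I would note that \cref{ch-5-system1Cauchy} is the special case $g=0$ of \cref{ch-5-system1}, so both \cref{ch-5-observability} and \cref{ch-5-energy ensi} apply verbatim with $g\equiv 0$. Applying \cref{ch-5-observability} with $g=0$ gives
\begin{align*}
|(z_0,z_1)|_{L^2_{{\cal F}_0}(\Omega;H_0^1(G)\times L^2(G))}
\leq e^{{\cal C}(r_1^2+r_2^{1/(3/2-n/p)}+1)}\Big(\Big|\tfrac{\partial z}{\partial\nu}\Big|_{L^2_{\mathbb F}(0,T;L^2(\Gamma_0))}+|f|_{L^2_{\mathbb F}(0,T;L^2(G))}\Big).
\end{align*}
Next I would apply \cref{ch-5-energy ensi} with $s=0$ and $g=0$: for each $t\in[0,T]$,
\begin{align*}
{\mathbb{E}}\int_G\big(|z_t(t)|^2+|\nabla z(t)|^2\big)dx
\leq e^{{\cal C}(r_1^2+r_2^{1/(2-n/p)}+1)T}{\mathbb{E}}\int_G\big(|z_1|^2+|\nabla z_0|^2\big)dx
+{\cal C}{\mathbb{E}}\int_0^T\int_G f^2\,dxd\tau.
\end{align*}
Using Poincar\'e's inequality to bound $\|z(t)\|_{H^1(G)}$ by $\|\nabla z(t)\|_{L^2(G)}$ (valid since $z=0$ on $\Sigma$), integrating the last inequality in $t$ over $(0,T)$, and then substituting the observability bound for $|(z_0,z_1)|_{L^2_{{\cal F}_0}(\Omega;H_0^1(G)\times L^2(G))}$, one arrives at \cref{ch-5-obser esti2-1}; the two exponential prefactors merge into a single $e^{{\cal C}(r_1^2+r_2^{1/(3/2-n/p)}+1)}$ after absorbing $T$ (a fixed constant once \cref{ch-5-condition2} is imposed) and noting $\tfrac{1}{2-n/p}\le\tfrac{1}{3/2-n/p}$ so the slower-growing exponent is dominated.

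There is no serious obstacle here — the statement is essentially a bookkeeping corollary. The only points requiring a little care are: keeping track of which exponential exponent dominates (the $r_2$-power from the energy estimate is weaker than the one from the observability estimate, so it is harmless), and making sure the additive source terms from the two estimates are both of the form $|f|_{L^2_{\mathbb F}(0,T;L^2(G))}$ up to the stated exponential constants, which they are. One should also record that $T$ is fixed by \cref{ch-5-condition2}, so the factor $e^{{\cal C}(\cdots)T}$ can legitimately be written as $e^{{\cal C}(\cdots)}$ after renaming ${\cal C}$. This is the kind of reduction the authors already perform elsewhere (e.g.\ in deriving \cref{th consta} from \cref{inv th1}), so I would simply state ``combining \cref{ch-5-observability} and \cref{ch-5-energy ensi}'' and carry out these few lines.
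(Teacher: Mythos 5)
Your proposal is correct and follows exactly the route the paper intends: the paper presents this proposition as an ``immediate corollary of \cref{ch-5-observability,ch-5-energy ensi}'' without writing out details, and your chain (observability estimate with $g=0$ to control $(z_0,z_1)$, energy estimate with $s=0$ to propagate to all $t$, integration in $t$ plus Poincar\'e, and absorption of the weaker $r_2$-exponent and the fixed $T$ into the constant) is precisely that bookkeeping. No gaps.
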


Denote $ \mathcal{H} = L^{2}_{\mathbb{F}}(0,T; H^{2}(G)) \cap L^{2}_{\mathbb{F}}(\Omega; H^{1} (0,T; L^{2}(G))) $. Define an operator $\mathcal{P}:\mathcal{H}\to L^{2}_{\mathbb{F}}(0,T; L^{2}(G))$ as follows: 
\begin{align*}
(\mathcal{P} u)(t,x) 
\deq & u_{t}(t, x)-u_{t}(0, x) 
\\
& 
- \int_0^t  \sum_{j, k=1}^n\big(b^{j k}(s, x) u_{x_{j}}(s, x)\big)_{x_{k}}  d s
\\
& 
- \int_0^t\left[
    b_{1}(s, x) u_{t}(s, x)
    + \left\langle b_2(s, x), \nabla u(s, x)\right\rangle
    + b_3(s, x) u(s, x)\right] ds 
\\
&   
-\int_0^t b_4(s, x) u(s, x) d W(s), \quad \text{${\mathbb{P}}$-a.s.,}\quad \forall\,u \in \mathcal{H},\;(t,x) \in Q.
\end{align*}
Put 
\begin{align*}
\mathcal{U} \deq \{ u \in \mathcal{H}
& \mid 
\mathcal{P} u  \in L^{2}_{\mathbb{F}}(\Omega; H^{1}(0,T;L^{2}(G))), ~
u|_{\Sigma} = 0, ~ \partial_{\nu} u |_{\Sigma_{0}} = h \}
.
\end{align*}
If $u$ is a solution to the equation \cref{ch-5-system1Cauchy}, then 
$\mathcal{P} u(t)=\int_{0}^{t} f(s) d s$, which yields that $u$ belong to $\mathcal{U}$. Consequently, $\mathcal{U}\neq\emptyset$.

Denote $ \mathbf{I}f (t) = \int_{0}^{t}  f(s) d s $ for $ t \in [0,T] $.
Fixing a function $ F \in \mathcal{U} $, we construct the Tikhonov functional as follows:
\begin{align}
\label{eqDou2024Fucntional}
\mathcal{J}_{\gamma}(u) = | \mathcal{P} u - \mathbf{I}f |^{2}_{L^{2}_{\mathbb{F}}(\Omega;H^{1}(0,T;L^{2}(G)))} + \gamma | u - F |^{2}_{\mathcal{H}},
\end{align} 
where $ u \in \mathcal{U} $ and $ \gamma \in (0,1) $.

We have the following result regarding the existence of the minimizer.

\begin{proposition}
For $ \gamma \in (0,1) $, there exists a unique minimizer $ u_{\gamma} \in \mathcal{U} $ of the functional $ \mathcal{J}_{\gamma}(u) $. 
\end{proposition}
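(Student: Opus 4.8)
The statement is that the Tikhonov functional $\mathcal{J}_{\gamma}$ defined in \cref{eqDou2024Fucntional} has a unique minimizer over the affine space $\mathcal{U}$. This is a standard quadratic-regularization argument, entirely parallel to the parabolic case treated in the proof of the analogous proposition in \Cref{secReconstruct}, so the plan is to mirror that argument in the hyperbolic setting. First I would reduce to a linear problem over a Hilbert space: fix $F \in \mathcal{U}$, set $v = u - F$, and observe that $v$ ranges over the linear subspace $\mathcal{U}_{0} = \{u \in \mathcal{H} \mid \mathcal{P}u \in L^{2}_{\mathbb{F}}(\Omega;H^{1}(0,T;L^{2}(G))),\ u|_{\Sigma} = 0,\ \partial_{\nu}u|_{\Sigma_{0}} = 0\}$. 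In terms of $v$ the functional becomes $\overline{\mathcal{J}}_{\gamma}(v) = |\mathcal{P}v + \mathcal{P}F - \mathbf{I}f|^{2}_{L^{2}_{\mathbb{F}}(\Omega;H^{1}(0,T;L^{2}(G)))} + \gamma|v|^{2}_{\mathcal{H}}$, and $u_{\gamma}$ minimizes $\mathcal{J}_{\gamma}$ over $\mathcal{U}$ if and only if $v_{\gamma} = u_{\gamma} - F$ minimizes $\overline{\mathcal{J}}_{\gamma}$ over $\mathcal{U}_{0}$.

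Next I would equip $\mathcal{U}_{0}$ with the inner product
\begin{align*}
\langle \varphi, \psi \rangle_{\mathcal{U}_{0}} = \langle \mathcal{P}\varphi, \mathcal{P}\psi \rangle_{L^{2}_{\mathbb{F}}(\Omega;H^{1}(0,T;L^{2}(G)))} + \gamma \langle \varphi, \psi \rangle_{\mathcal{H}},
\end{align*}
whose induced norm is, for $\gamma$ fixed, equivalent to $|\cdot|_{\mathcal{H}}$ plus $|\mathcal{P}\cdot|_{L^{2}_{\mathbb{F}}(\Omega;H^{1}(0,T;L^{2}(G)))}$; I would then pass to the completion $\overline{\mathcal{U}}_{0}$ (continuing to write $\mathcal{U}_{0}$) so as to have a genuine Hilbert space. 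By the variational principle, $v_{\gamma}$ minimizes $\overline{\mathcal{J}}_{\gamma}$ precisely when it solves the linear equation
\begin{align*}
\langle \mathcal{P}v_{\gamma} + \mathcal{P}F - \mathbf{I}f,\ \mathcal{P}h \rangle_{L^{2}_{\mathbb{F}}(\Omega;H^{1}(0,T;L^{2}(G)))} + \gamma \langle v_{\gamma}, h \rangle_{\mathcal{H}} = 0, \qquad \forall\, h \in \mathcal{U}_{0},
\end{align*}
equivalently $\langle v_{\gamma}, h \rangle_{\mathcal{U}_{0}} = \langle \mathbf{I}f - \mathcal{P}F,\ \mathcal{P}h \rangle_{L^{2}_{\mathbb{F}}(\Omega;H^{1}(0,T;L^{2}(G)))}$ for all $h \in \mathcal{U}_{0}$. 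The completing-the-square computation showing that a solution of this equation is indeed a global minimizer (and conversely) is the same line-by-line algebra as in \Cref{secReconstruct}: $\overline{\mathcal{J}}_{\gamma}(v) - \overline{\mathcal{J}}_{\gamma}(v_{\gamma}) = |\mathcal{P}(v - v_{\gamma})|^{2} + \gamma|v - v_{\gamma}|^{2}_{\mathcal{H}} \geq 0$.

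To produce such a $v_{\gamma}$ I would apply the Riesz representation theorem: the right-hand side $h \mapsto \langle \mathbf{I}f - \mathcal{P}F,\ \mathcal{P}h \rangle_{L^{2}_{\mathbb{F}}(\Omega;H^{1}(0,T;L^{2}(G)))}$ is a bounded linear functional on $\mathcal{U}_{0}$, since by Cauchy--Schwarz it is dominated by $(|\mathcal{P}F|_{L^{2}_{\mathbb{F}}(\Omega;H^{1}(0,T;L^{2}(G)))} + |\mathbf{I}f|_{L^{2}_{\mathbb{F}}(\Omega;H^{1}(0,T;L^{2}(G)))})\,|h|_{\mathcal{U}_{0}}$, and the latter is finite because $F \in \mathcal{U}$ and $f \in L^{2}_{\mathbb{F}}(0,T;L^{2}(G))$ so $\mathbf{I}f \in L^{2}_{\mathbb{F}}(\Omega;H^{1}(0,T;L^{2}(G)))$. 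Riesz then yields a unique $v_{\gamma} \in \mathcal{U}_{0}$ solving the variational equation, hence a unique minimizer $u_{\gamma} = v_{\gamma} + F$; uniqueness also follows directly from strict convexity of $\mathcal{J}_{\gamma}$ (the $\gamma|u - F|^{2}_{\mathcal{H}}$ term is strictly convex in $u$). The one point requiring a little care — the main obstacle, such as it is — is verifying that $(\mathcal{U}_{0}, \langle\cdot,\cdot\rangle_{\mathcal{U}_{0}})$, after completion, still consists of elements on which $\mathcal{P}$ and the boundary traces make sense, i.e. that the completion does not introduce spurious elements; this is handled exactly as in the parabolic proof by noting that the $\mathcal{U}_{0}$-norm controls the $\mathcal{H}$-norm, so the completion embeds continuously into $\mathcal{H}$ and the operator $\mathcal{P}$ together with the (vanishing) boundary conditions extend by continuity. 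With that, the proof is complete.
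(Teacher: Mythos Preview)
Your proposal is correct and follows essentially the same approach as the paper's proof: reduce to the linear space $\mathcal{U}_{0}$ via $v = u - F$, equip it with the inner product $\langle \mathcal{P}\varphi, \mathcal{P}\psi\rangle + \gamma\langle\varphi,\psi\rangle_{\mathcal{H}}$, pass to the completion, derive the variational equation, verify by completing the square that any solution is a minimizer, bound the right-hand side by Cauchy--Schwarz, and conclude existence and uniqueness by Riesz. The only cosmetic differences are notational (the paper uses $\rho$ for the test function and writes $|f|_{L^{2}_{\mathbb{F}}(0,T;L^{2}(G))}$ rather than $|\mathbf{I}f|$ in the bound).
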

\begin{proof}
Let  
\begin{align*}
\mathcal{U}_{0} \deq \{ u \in \mathcal{H} 
& \mid 
\mathcal{P} u  \in L^{2}_{\mathbb{F}}(\Omega; H^{1}(0,T;L^{2}(G))), 
 u|_{\Gamma} = 0, ~ \partial u |_{\Gamma_{0}} = 0 \}
.
\end{align*}
Fix $ \gamma \in (0,1) $.
We define the inner product as follows:
\begin{align}
\label{eqDou20231-1}
\langle  \varphi, \psi \rangle_{\mathcal{U}_{0}} = 
\langle \mathcal{P} \varphi, \mathcal{P} \psi \rangle_{L^{2}_{\mathbb{F}}(\Omega; H^{1}(0,T;L^{2}(G)))} 
+ 
\gamma \langle \varphi, \psi \rangle_{\mathcal{H}},
\end{align}
where $ \varphi, \psi \in \mathcal{U}_{0} $. 
Let $\overline{\mathcal{U}}_{0}$ be the completion of $\mathcal{U}_{0}$ with respect to the inner product $\langle \cdot, \cdot \rangle_{\mathcal{U}_{0}}$, and still denoted by $\mathcal{U}_{0}$ for the sake of simplicity.

For $ u \in \mathcal{U} $, let $ v = u - F $. Then $ v \in \mathcal{U}_{0} $. 
Consider the following functional
\begin{align*}
\overline{\mathcal{J}}_{\gamma}(v) = | \mathcal{P} v + \mathcal{P} F - \mathbf{I}f |^{2}_{L^{2}_{\mathbb{F}}(\Omega;H^{1}(0,T;L^{2}(G)))} + \gamma | v |^{2}_{\mathcal{H}},
\end{align*} 
where $ v \in \mathcal{U}_{0} $ and $ \gamma \in (0,1) $.
Clearly, $ u_{\gamma} $ minimizes $\mathcal{J}_{\gamma}(u) $ if and only if $ v_{\gamma} = u_{\gamma} - F $ is the minimizer of the functional $ \overline{\mathcal{J}}_{\gamma}(v) $.

By the variational principle, the minimizer $ v_{\gamma} $ of the functional $ \overline{\mathcal{J}}_{\gamma}(v) $ satisfy the following equation
\begin{align}
\label{eqDou20233-1} 
\langle \mathcal{P} v_{\gamma} + \mathcal{P} F - \mathbf{I}f, \mathcal{P} \rho   \rangle _{L^{2}_{\mathbb{F}}(\Omega;H^{1}(0,T;L^{2}(G)))}
+ \gamma \langle v_{\gamma}, \rho \rangle _{\mathcal{H}}
=0
, \quad 
\forall ~ \rho \in \mathcal{U}_{0}.
\end{align}
Conversely, the solution $ v_{\gamma} $ of equation \cref{eqDou20233-1} is the minimizer of the functional $ \overline{\mathcal{J}}_{\gamma}(v) $.
In fact, if $ v_{\gamma} $ solves \cref{eqDou20233-1}, then for each $ v \in \mathcal{U}_{0} $, it holds that 
\begin{align*}
    & \overline{\mathcal{J}}_{\gamma}(v) - \overline{\mathcal{J}}_{\gamma}(v_{\gamma}) 
    \\
    & =
    | \mathcal{P} v + \mathcal{P} F - \mathbf{I}f |^{2}_{L^{2}_{\mathbb{F}}(\Omega;H^{1}(0,T;L^{2}(G)))} + \gamma | v |^{2}_{\mathcal{H}}
    \\
    & \quad 
    - | \mathcal{P} v_{\gamma} + \mathcal{P} F - \mathbf{I}f |^{2}_{L^{2}_{\mathbb{F}}(\Omega;H^{1}(0,T;L^{2}(G)))} 
    - \gamma | v_{\gamma} |^{2}_{\mathcal{H}}
    \\
    & = 
    | \mathcal{P} v_{\gamma} + \mathcal{P} F - \mathbf{I}f + \mathcal{P}(v- v_{\gamma})|^{2}_{L^{2}_{\mathbb{F}}(\Omega;H^{1}(0,T;L^{2}(G)))} 
    \\
    & \quad 
    + \gamma | v_{\gamma} + (v-v_{\gamma}) |^{2}_{\mathcal{H}}
    - | \mathcal{P} v_{\gamma} + \mathcal{P} F - \mathbf{I}f |^{2}_{L^{2}_{\mathbb{F}}(\Omega;H^{1}(0,T;L^{2}(G)))} 
    - \gamma | v_{\gamma} |^{2}_{\mathcal{H}}
    \\
    & =
    2 \langle \mathcal{P} v_{\gamma} + \mathcal{P} F - \mathbf{I}f, \mathcal{P} (v-v_{\gamma})   \rangle _{L^{2}_{\mathbb{F}}(\Omega;H^{1}(0,T;L^{2}(G)))}
    \\
    & \quad 
    + 2 \gamma \langle v_{\gamma}, v-v_{\gamma} \rangle _{\mathcal{H}}
    + | \mathcal{P} (v - v_{\gamma}) |^{2}_{L^{2}_{\mathbb{F}}(\Omega;H^{1}(0,T;L^{2}(G)))} 
    + \gamma | v-v_{\gamma} |^{2}_{\mathcal{H}}
    \\
    & =
    | \mathcal{P} (v - v_{\gamma}) |^{2}_{L^{2}_{\mathbb{F}}(\Omega;H^{1}(0,T;L^{2}(G)))} 
    + \gamma | v-v_{\gamma} |^{2}_{\mathcal{H}}
    \\
    & \geq
    0
    ,
\end{align*}
which means $ v_{\gamma} $   is a minimizer of the functional $ \overline{\mathcal{J}}_{\gamma}(v) $.

From \cref{eqDou20231-1}, equation \cref{eqDou20233-1} is equivalent to  
\begin{align}
\label{eqDou202333-1}
\langle v_{\gamma}, \rho \rangle _{\mathcal{U}_{0}} = \langle \mathbf{I}f - \mathcal{P} F , \mathcal{P} \rho \rangle _{L^{2}_{\mathbb{F}}(\Omega;H^{1}(0,T;L^{2}(G)))} ,
\quad 
\forall ~ \rho \in \mathcal{U}_{0}
.
\end{align}

Thanks to Cauchy-Schwarz inequality and \cref{eqDou20231-1}, for all $ \rho \in \mathcal{U}_{0} $, we have 
\begin{align*}
& \big| \langle \mathbf{I}f - \mathcal{P} F , \mathcal{P} \rho \rangle _{L^{2}_{\mathbb{F}}(\Omega;H^{1}(0,T;L^{2}(G)))} \big| 
\\
&
\leq (| \mathcal{P} F|_{L^{2}_{\mathbb{F}}(\Omega; H^{1}(0,T;L^{2}(G)))} + |f|_{L^{2}_{\mathbb{F}}(0,T;L^{2}(G))} ) |\rho|_{\mathcal{U}_{0}} 
.
\end{align*}
Utilizing Riesz representation theorem, for all $ \rho \in \mathcal{U}_{0} $, there exists $ w_{\gamma} \in \mathcal{U}_{0} $ such that 
\begin{align*}
\langle w_{\gamma}, \rho \rangle _{\mathcal{U}_{0}} = \langle \mathbf{I}f- \mathcal{P} F , \mathcal{P} \rho \rangle _{L^{2}_{\mathbb{F}}(\Omega;H^{1}(0,T;L^{2}(G)))}
.
\end{align*}
Hence $ w_{\gamma} $ solves \cref{eqDou202333-1},
which means  $ w_{\gamma} $ is the minimizer of the functional $ \overline{\mathcal{J}}_{\gamma}(\cdot) $.
\end{proof}

Assume that $ z^{*} $ is the true solution to the equation \cref{ch-5-system1Cauchy} with the exact data 
\begin{align*}
f^{*} \in L^{2}_{\mathbb{F}}(0,T;L^{2}(G)), \quad 
z^{\ast }| _{\Gamma}=0, \quad   \partial
_{\nu}z^{\ast }|_{\Gamma_{0}}=h^{\ast
}\in L_{\mathbb{F}}^{2}(0,T;L^2(\Gamma_{0})).
\end{align*}
From \cref{ch-5-observability-1}, the solution $ z^{*} $ is unique.
Clearly, there exists $ F^{*} \in \mathcal{U} $ such that 
\begin{align}
\label{eqDou20237-1}
| F^{*} |_{\mathcal{H}} \leq \mathcal{C} |z^{*}|_{\mathcal{H}}  
.
\end{align}
We assume that 
\begin{align}
\label{eqDou2023Assumtion1-1}
|f^{*} - f|_{L^{2}_{\mathbb{F}}(0,T;L^{2}(G))} \leq \delta,  \quad  |h^{\ast }-h|_{L^2_\mathbb{F}(0,T; L^2(\Gamma))}\leq \delta,
\end{align}
and 
\begin{align}
\label{eqDou2023Assumtion2-1}
| \mathcal{P} F^{*} - \mathcal{P} F |_{L^{2}_{\mathbb{F}}(\Omega;H^{1}(0,T;L^{2}(G)))} +  | F^{*} -F  |_{\mathcal{H}} 
\leq 
\delta.
\end{align}

We now establish the estimate for the error between the  minimizer $ z_{\gamma} $ for the functional $\mathcal{J}_{\gamma}(\cdot) $ and the exact solution $ z^{*} $.

\begin{theorem}
    \label{thmConverge}
Assume \cref{eqDou2023Assumtion1-1}, \cref{eqDou2023Assumtion2-1}, and \cref{condition of d,ch-5-condition2} hold. 
Then there exists a constant $ \mathcal{C} > 0 $ such that for all $ \delta > 0 $, 
\begin{align*}
    |z_{\gamma} - z^{*}|_{\mathcal{H}}
    \leq 
    \mathcal{C} (1 + |z^{*}|_{\mathcal{H}}) \delta
\end{align*}
where $ z_{\gamma} $ is the minimizer of the functional \cref{eqDou2024Fucntional} with the regularization parameter  $ \gamma = \delta^{2} $.
\end{theorem}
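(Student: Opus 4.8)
The plan is to follow the same Tikhonov-regularization template used in the proof of the parabolic reconstruction result (the theorem at the end of \Cref{secReconstruct}), substituting the hyperbolic conditional-stability estimate \cref{ch-5-obser esti2-1} for the parabolic one. First I would introduce the auxiliary element $v^{*} = z^{*} - F^{*}$, which lies in $\mathcal{U}_{0}$ and satisfies $\mathcal{P} v^{*} = -\mathcal{P} F^{*}$ (since $\mathcal{P} z^{*} = \mathbf{I} f^{*}$, and here the roles are arranged so that $\mathcal{P}$ of the true solution with exact data equals $\mathbf{I} f^{*}$). Consequently, for all $h \in \mathcal{U}_{0}$,
\begin{align}
\label{eqConvPf1}
\langle \mathcal{P} v^{*} + \mathcal{P} F^{*} - \mathbf{I} f, \mathcal{P} h \rangle_{L^{2}_{\mathbb{F}}(\Omega; H^{1}(0,T;L^{2}(G)))} = \langle \mathbf{I} f^{*} - \mathbf{I} f, \mathcal{P} h \rangle_{L^{2}_{\mathbb{F}}(\Omega; H^{1}(0,T;L^{2}(G)))}.
\end{align}
Subtracting this from the variational identity \cref{eqDou20233-1} satisfied by $v_{\gamma}$ and setting $\tilde{v}_{\gamma} = v^{*} - v_{\gamma}$, $\tilde{f} = \mathbf{I} f^{*} - \mathbf{I} f$, $\tilde{F} = F^{*} - F$, I obtain for all $h \in \mathcal{U}_{0}$
\begin{align}
\label{eqConvPf2}
\langle \mathcal{P} \tilde{v}_{\gamma}, \mathcal{P} h \rangle_{L^{2}_{\mathbb{F}}(\Omega; H^{1}(0,T;L^{2}(G)))} + \gamma \langle \tilde{v}_{\gamma}, h \rangle_{\mathcal{H}} = \langle \tilde{f} - \mathcal{P} \tilde{F}, \mathcal{P} h \rangle_{L^{2}_{\mathbb{F}}(\Omega; H^{1}(0,T;L^{2}(G)))} + \gamma \langle v^{*}, h \rangle_{\mathcal{H}}.
\end{align}

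Next I would test \cref{eqConvPf2} with $h = \tilde{v}_{\gamma}$ and apply Cauchy--Schwarz together with Young's inequality to absorb the cross terms, yielding
\begin{align}
\label{eqConvPf3}
|\mathcal{P} \tilde{v}_{\gamma}|^{2}_{L^{2}_{\mathbb{F}}(\Omega; H^{1}(0,T;L^{2}(G)))} + \gamma |\tilde{v}_{\gamma}|^{2}_{\mathcal{H}} \leq |\tilde{f} - \mathcal{P} \tilde{F}|^{2}_{L^{2}_{\mathbb{F}}(\Omega; H^{1}(0,T;L^{2}(G)))} + \gamma |v^{*}|^{2}_{\mathcal{H}}.
\end{align}
Using $\tau = \gamma = \delta^{2}$, the bound $\delta^{2} \leq \gamma$, and the data assumptions \cref{eqDou2023Assumtion1-1,eqDou2023Assumtion2-1} (note $|\tilde{f}|_{L^2_{\mathbb F}(\Omega;H^1(0,T;L^2(G)))} \le \mathcal C |f^*-f|_{L^2_{\mathbb F}(0,T;L^2(G))} \le \mathcal C\delta$ since $\mathbf I$ maps $L^2(0,T;L^2(G))$ into $H^1(0,T;L^2(G))$), I deduce
\begin{align}
\label{eqConvPf4}
|\tilde{v}_{\gamma}|^{2}_{\mathcal{H}} \leq \mathcal{C}(1 + |v^{*}|^{2}_{\mathcal{H}}), \qquad |\mathcal{P} \tilde{v}_{\gamma}|^{2}_{L^{2}_{\mathbb{F}}(\Omega; H^{1}(0,T;L^{2}(G)))} \leq \mathcal{C}(1 + |v^{*}|^{2}_{\mathcal{H}}) \delta^{2}.
\end{align}

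Here is where the hyperbolic machinery enters: $\tilde{v}_{\gamma}$ has zero Dirichlet trace on $\Sigma$, zero Neumann trace on $\Sigma_{0}$ (both $v^{*}$ and $v_{\gamma}$ meet $u|_\Sigma = 0$, $\partial_\nu u|_{\Sigma_0} = h$), and $\mathcal{P} \tilde{v}_{\gamma} = \mathbf{I}(\text{source})$, so $\tilde{v}_{\gamma}$ solves an equation of the form \cref{ch-5-system1Cauchy} with right-hand side $f$ whose $L^2_{\mathbb F}(0,T;L^2(G))$-norm is controlled by $|\mathcal{P} \tilde v_\gamma|_{L^2_{\mathbb F}(\Omega;H^1(0,T;L^2(G)))}$ and with boundary measurement $\partial_\nu \tilde v_\gamma|_{\Sigma_0} = 0$. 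Applying \cref{ch-5-obser esti2-1} to $\tilde{v}_{\gamma}$ then gives $|\tilde{v}_{\gamma}|_{L^2_{\mathbb F}(0,T;H^1(G))\times\cdots}$ bounded by $\mathcal{C}(1+|v^*|_{\mathcal H})\delta$; combined with an energy estimate (\cref{ch-5-energy ensi} or \cref{ch-5-en esti}) to upgrade from the $L^2_{\mathbb F}(0,T;H^1(G))$-in-time norm to the $\mathcal{H}$-norm where needed, and with \cref{eqDou20237-1} to pass from $|v^{*}|_{\mathcal{H}}$ to $|z^{*}|_{\mathcal{H}}$, I get $|\tilde{v}_{\gamma}|_{\mathcal{H}} \leq \mathcal{C}(1+|z^*|_{\mathcal H})\delta$. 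Finally, since $\tilde{v}_{\gamma} = z^{*} - F^{*} - z_{\gamma} + F$, the triangle inequality and \cref{eqDou2023Assumtion2-1} give $|z_{\gamma} - z^{*}|_{\mathcal{H}} \leq |\tilde{v}_{\gamma}|_{\mathcal{H}} + |F^{*} - F|_{\mathcal{H}} \leq \mathcal{C}(1 + |z^{*}|_{\mathcal{H}})\delta$, which is the claim.

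\textbf{Main obstacle.} The delicate point is matching function spaces when invoking \cref{ch-5-obser esti2-1}: that estimate is stated for solutions of \cref{ch-5-system1Cauchy} and bounds the $L^2_{\mathbb F}(0,T;H^1(G))$-norm of the state (plus $L^2_{\mathbb F}(0,T;L^2(G))$ of $z_t$) by the boundary and source data, whereas the functional \cref{eqDou2024Fucntional} works in the stronger space $\mathcal{H} = L^{2}_{\mathbb{F}}(0,T; H^{2}(G)) \cap L^{2}_{\mathbb{F}}(\Omega; H^{1}(0,T; L^{2}(G)))$. The $\gamma\|\cdot\|_{\mathcal H}^2$-coercivity from \cref{eqConvPf4} gives an $\mathcal{H}$-bound but only with the bad constant $\mathcal{C}(1+|v^*|_{\mathcal H})$, not a $\delta$-decaying one; the $\delta$-decay comes only through $|\mathcal{P}\tilde v_\gamma|$ and hence through \cref{ch-5-obser esti2-1}, which lives in the weaker space. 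One must therefore argue that it suffices to prove the error estimate in the weaker norm (as in the parabolic analogue, the theorem there is stated in $L^2_{\mathbb F}(\varepsilon,T-\varepsilon;H^1(\widehat G))$), or interpolate between the $\delta$-decaying weak bound and the $\mathcal O(1)$ strong bound — and one should double-check that the statement's $\mathcal{H}$-norm on the left is actually intended, or whether it should read as a weaker norm consistent with \cref{ch-5-obser esti2-1}. The remaining steps (testing, Young's inequality, energy estimate, triangle inequality) are routine.
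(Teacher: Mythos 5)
Your proposal follows essentially the same route as the paper: the same auxiliary element $v^{*}=z^{*}-F^{*}$, the same variational identity and testing with $\tilde v_{\gamma}$, the same pair of bounds $|\tilde v_{\gamma}|_{\mathcal H}^{2}\leq \mathcal C(1+|v^{*}|_{\mathcal H}^{2})$ and $|\mathcal P\tilde v_{\gamma}|^{2}\leq \mathcal C(1+|v^{*}|_{\mathcal H}^{2})\delta^{2}$, the same appeal to the observability estimate \cref{ch-5-obser esti2-1} applied to the normalized difference, and the same concluding triangle inequality. The norm mismatch you flag as the main obstacle — that \cref{ch-5-obser esti2-1} only controls the $L^2_{\mathbb F}(0,T;H^1(G))$-type norm while the conclusion is stated in $\mathcal H$ — is a real issue that the paper's own proof also glosses over (its intermediate display even carries the weaker norm on the left before silently switching to $\mathcal H$), so your diagnosis is accurate rather than a gap in your argument.
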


\begin{proof}

Let $ v^{*} = z^{*} - F^{*} $. Then $ z^{*}  \in \mathcal{U}_{0} $ and $ \mathcal{P} v^{*} = - \mathcal{P}  F^{*} $. 
Hence, for all $ \rho \in \mathcal{U}_{0} $, it holds that
\begin{align}
\label{eqDou20232-1}
\langle \mathcal{P} v^{*} + \mathcal{P} F^{*} - \mathbf{I}f, \mathcal{P} \rho   \rangle _{L^{2}_{\mathbb{F}}(\Omega;H^{1}(0,T;L^{2}(G)))}
=  0
.
\end{align}
Subtracting  \cref{eqDou20232-1} from \cref{eqDou20233-1} and denoting $ \tilde{v}_{\gamma} = v^{*} - v_{\gamma} $, $ \tilde{f} = \mathbf{I}f^{*} - \mathbf{I}f $ and $ \widetilde{F} = F^{*} - F $, for all $ \rho \in \mathcal{U}_{0} $, we obtain 
\begin{align}\label{eqDou20232-2} \notag
& \langle \mathcal{P} \tilde{v}_{\gamma}, \mathcal{P} \rho   \rangle _{L^{2}_{\mathbb{F}}(\Omega;H^{1}(0,T;L^{2}(G)))}
+ \gamma \langle \tilde{v}_{\gamma}, \rho \rangle _{\mathcal{H}}
\\
& =  
\langle  \tilde{f} - \mathcal{P} \widetilde{F}, \mathcal{P} \rho   \rangle _{L^{2}_{\mathbb{F}}(\Omega;H^{1}(0,T;L^{2}(G)))}
+ \gamma \langle v^{*}, \rho \rangle _{\mathcal{H}}.
\end{align}
Choose $ \rho = \tilde{v}_{\gamma} $ in \cref{eqDou20232-2}. Then we have 
\begin{align}
\label{eqDou20234-1}
& |\mathcal{P} \tilde{v}_{\gamma}|_{L^{2}_{\mathbb{F}}(\Omega;H^{1}(0,T;L^{2}(G)))}^{2}
+ \gamma |  \tilde{v}_{\gamma} |^{2} _{\mathcal{H}}
 \leq 
| \tilde{f} -  \mathcal{P} \tilde{F}|^{2}_{L^{2}_{\mathbb{F}}(\Omega;H^{1}(0,T;L^{2}(G)))}
+ \gamma |v^{*}|^{2}_{\mathcal{H}}
.
\end{align}
Since $ \gamma = \delta^{2 } $, from \cref{eqDou2023Assumtion2-1,eqDou20234-1,eqDou20234-1}, we obtain
\begin{align}
\label{eqDou20235-1}
|  \tilde{v}_{\gamma} |^{2} _{\mathcal{H}} 
 \leq 
\mathcal{C} (1 + |v^{*}|^{2}_{\mathcal{H}} ),
\end{align}
and
\begin{align}
\label{eqDou20236-1}
	|\mathcal{P} \tilde{v}_{\gamma}|_{L^{2}_{\mathbb{F}}(\Omega;H^{1}(0,T;L^{2}(G)))}^{2} 
	  \leq 
	\mathcal{C} (1 + |v^{*}|^{2}_{\mathcal{H}} ) \delta^{2 }
	.
\end{align}
Let $ w_{\gamma} = \tilde{v}_{\gamma}\big(  1 + |v^{*}|_{\mathcal{H}} \big)^{-1} $. 
It follows from \cref{eqDou20235-1}, \cref{eqDou20236-1,ch-5-observability-1}  that,
\begin{align*}
| w_{\gamma}|_{\mathcal{H}} 
\leq 
\mathcal{C} \delta 
.
\end{align*}
This, together  with \cref{eqDou20237-1}, implies that
\begin{align}
\label{eqDou20238-1}
| \tilde{v}_{\gamma}|_{L^{2}_{\mathbb{F}}(\varepsilon, T-\varepsilon; H^{1}(\widehat{G}))}  
& \leq 
\mathcal{C} \big( 1 + |v^{*}|_{\mathcal{H}} \big) \delta
\leq 
\mathcal{C} \big( 1 + |z^{*}|_{\mathcal{H}} \big) \delta.
\end{align}
Since $ \tilde{v}_{\gamma} = v^{*} - v_{\gamma} = z^{*} - F^{*} - z_{\gamma} + F $, from \cref{eqDou2023Assumtion2-1}, we have 
\begin{align}\notag
\label{eqDou20239-1}
| \tilde{v}_{\gamma}|_{\mathcal{H}} 
& \geq 
| z_{\gamma} -   z^{*} |_{\mathcal{H}} 
- |F^{*} - F|_{\mathcal{H}} 
\\
& \geq 
| z_{\gamma} -   z^{*} |_{\mathcal{H}} 
- \delta
.
\end{align}
Combining \cref{eqDou20238-1,eqDou20239-1}, we complete the proof.
\end{proof}

\section{Further comments}
\label{secFChyper}

To the best of our knowledge, \cite{Zhang2008} is the pioneering work that utilized Carleman estimates in investigating inverse problems for stochastic hyperbolic equations. Subsequently, multiple studies have delved into the realm of inverse problems for stochastic hyperbolic equations using Carleman estimates,  such as \cite{Lue2013a,Lue2015a,Lue2020a,Wu2022,Yuan2015}.
Several of these works are introduced in this chapter. \cref{thmFi} is first established in \cite{Lue2020a}, while \cref{propHidden} make its debut in \cite{Zhang2008}. The energy estimate in \cref{ch-5-energy ensi} is established in \cite{Lue2013a}. \cref{th2} is proved in \cite{Lue2013a}, and the primary content of Section \ref{secISP} is taken from \cite{Lue2015a}. \cref{thmISPHyper} is taken from \cite{Yuan2015}, and the core material of Section \ref{secReconstructHyper} is referenced from \cite{Dou2024a}.

In this chapter, we focus on the inverse state problems and inverse source problems for stochastic hyperbolic equations. 
However, there are still many open problems related to inverse problems for that equation. 

\begin{itemize}

\item \emph{Inverse problems for  stochastic hyperbolic equations with less restrictions}

To prove inequality \cref{ch-5-obser esti2}, we assume  \Cref{condition of d,ch-5-condition2} hold.
However, for deterministic hyperbolic equations, the time $T$ does not need to be that large.
It is an interest question to weaken the condition on time $T$ for \cref{ch-5-observability} holds.

\item \emph{Inverse problems for  stochastic hyperbolic equations with general coefficients}

Replacing $ \Delta \tilde{z} $ in \cref{system1,systemHyper} with $ \sum\limits_{j, k=1}^n\left(b^{j k} \tilde{z}_{x_j}\right){x_k} $, it is also quite interesting to consider Problems \ref{probISPHyper} and \ref{probLocalInverse}.
In this case, the main challenge lies in constructing an appropriate weight function for Carleman estimates to ensure the validity of \cref{thmCarlemanHyperISP,thmInveHyperDeter}.

\item \emph{Inverse state and source problems for  semilinear stochastic  hyperbolic equations}

For a general semilinear stochastic hyperbolic equation \cref{ch-5-4.12-eq1}, can we investigate its inverse   source problem with both boundary and final time measurements, for instance, \cref{probISP}?

\item \emph{Reconstruct problem for the unknown state and source term}

For the equation \cref{systemHyper}, we obtain an estimate of the initial values $(z_{0}, z_{1})$ and  the diffusion term $g$ in \cref{eqOber}.
Analogous to \cref{secReconstructHyper}, can we reconstruct the unknown state and source problem with a boundary measurement and a final time measurement?

\item \emph{Efficiency algorithm for the construction of unknowns}

In \Cref{secReconstructHyper}, we discuss reconstructing the solution from boundary measurement using the Tikhonov regularization approach. 
When applying gradient methods to solve these optimization problems, challenges arise in  solving backward stochastic partial differential equations numerically, similar to stochastic parabolic equations (see \Cref{secFurPro}).
However, it is highly challenging to  solve backward stochastic partial differential equations numerically. 
Therefore, novel numerical algorithms must be developed for stochastic problems.

\item \emph{The strategy for selecting the prior information $F$}

In the reconstruct problem (\Cref{secReconstructHyper}), our objective is to minimize the Tikhonov functional \cref{eqDou2024Fucntional}, which requires selecting a function $F$.
How does the selection of different prior information $F$ affect    the efficacy of numerically solving the approximate solution?
Can a suitable method be proposed to find a better $F$?

\item \emph{The strategy for quantitatively selecting the regularization parameter}
 
In the Tikhonov functional \cref{eqDou2024Fucntional}, we introduce a regularization parameter $ \gamma $. 
In general, the regularization parameter  $ \gamma $   has a certain impact on the numerical error. 
In \cref{thmConverge}, we set a prior selection of the regularization parameter as $ \delta^{2} $. 
Can a method be provided for selecting the regularization parameter based on the input data?

\item \emph{The optimal measurement method}

In practical problems, measurements are typically made through the deployment of costly sensors. 
Numerous instances in engineering applications emphasize the significant impact of sensor placement on the achievable accuracy of inverse problems. 
Therefore, determining where to locate the observation becomes an important problem. 
Some interesting findings already exist in PDEs \cite{Ucinski2005}. 
However, as far as we know, there has been no published work on this problem for  SPDEs.

\item \emph{What can we benefit from uncertainties?}

In \cref{rkStoWave}, we mention that \cref{probISP} is essentially stochastic.
It can be observed that the uncertainty in SPDEs benefits certain inverse problems.
What benefits can we derive from uncertainties in solving inverse problems for SPDEs remains a largely unresolved puzzle.
We believe that studying this problem can lead to new insights into uncertainty.

\end{itemize}

\appendix

\chapter{Stochastic calculus}
\label{chApp}

In this chapter,  we recall the mathematical background material for SPDEs needed in this book. All of them are classical and can be found in standard books. We present them here for the readers' convenience. 
Additional details can be found in \cite[Chapter 2]{Lue2021a}.

\section{Measure and integration}

Let $ \Omega $ be a nonempty set. 

\begin{definition}
    A collection $ \mathcal{F} $ of subsets of $ \Omega $ is called a $ \sigma $-field if 
    \begin{enumerate}[(i)]
        \item  $ \Omega \in \mathcal{F} $,
        \item  $ E \in \mathcal{F} $ implies $ \Omega \setminus E  \in \mathcal{F} $
        \item $ \bigcup\limits_{i=1}^{\infty} E_{i} \in \mathcal{F} $ whenever each $ E_{i} \in \mathcal{F}$.
    \end{enumerate}
\end{definition}

We call $ (\Omega, \mathcal{F}) $ a measurable space if $ \mathcal{F} $ is a $ \sigma $-field on $ \Omega $.
Any element $ E \in \mathcal{F} $ is called a measurable set on $ (\Omega, \mathcal{F}) $.

In what follows, we fix a measurable space $ (\Omega, \mathcal{F}) $.

\begin{definition}
    A measure on $ (\Omega, \mathcal{F}) $ is a function $ \mu : \mathcal{F} \rightarrow [0, +\infty] $ such that 
    \begin{enumerate}[(i)]
        \item $ \mu(\emptyset) = 0 $,
        \item if $ \{E_{i}\}_{i=1}^{\infty} $ is a sequence of disjoint sets in $ \mathcal{F} $, i.e., $ E_{i} \cap E_{j}  = \emptyset$ for all $ i,j \in \mathbb{N}$ with $ i \neq j $, then $ \mu(\bigcup\limits_{i=1}^{\infty} E_{i}) = \sum\limits_{i=1}^{n} \mu(E_{i}) $.
    \end{enumerate}
\end{definition}
If $ \mu $ is a measure on $ (\Omega, \mathcal{F}) $, the triple $ (\Omega, \mathcal{F}, \mu) $ is called a measure space.

Next, we fix a measure space $ (\Omega, \mathcal{F}, \mu) $. 
A set $ E \in \mathcal{F} $ with $ \mu(E) = 0 $ is called a $ \mu $-null set.
If a proposition about points $ x \in \Omega $ is true except for $ x $ in some $ \mu $-null set, it is usual to say that the proposition is true $ \mu $-a.e. (or simply a.e., if there is no ambiguity).

\begin{definition}
    The measure space $ (\Omega, \mathcal{F}, \mu) $ is called complete (and $ \mu $ is said to be complete on $ \mathcal{F} $), if $ \mathcal{F} $ includes all subsets of null sets, i.e., 
    \begin{align}
        \label{eqNullset}
        \mathcal{N} = \{ G \subset \Omega \mid  G \subset E \text{ for some $ \mu $-null set } G \} \subset \mathcal{F}.
    \end{align}
\end{definition}

Completeness can always be achieved by enlarging the domain of $ \mu $. 

\begin{definition}
    Let $ \mathcal{X} $ be a topology space. The smallest $ \sigma $-field containing all open sets of $ \mathcal{X} $ is called the Borel $ \sigma $-field of $ \mathcal{X} $ and denoted by $ \mathcal{B}(\mathcal{X}) $. 
    Any set $ E \in \mathcal{B}(\mathcal{X}) $ is called a Borel set.
\end{definition}
\begin{definition}
    Let $ (\Omega, \mathcal{F}) $ and  $ (\widetilde{\Omega}, \widetilde{\mathcal{F}}) $ be two measurable spaces. A mapping  $ f: \Omega \rightarrow \widetilde{\Omega} $ is called $ \mathcal{F}/\widetilde{\mathcal{F}} $-measurable or simply $ \mathcal{F} $-measurable or even $ measurable $ if $ f^{-1}(\widetilde{\mathcal{F}}) \subset \mathcal{F} $. 
    In particular, if $ (\widetilde{\Omega}, \widetilde{\mathcal{F}}) = (\mathcal{X},\mathcal{B}(\mathcal{X}) )  $, then $ f $ is said to be an ($ \mathcal{X} $-valued) $ \mathcal{F} $-measurable (or measurable) function.
\end{definition}


For a measurable map $  f: (\Omega, \mathcal{F})\rightarrow (\widetilde{\Omega}, \widetilde{\mathcal{F}}) $, denote by $f^{-1}(\widetilde{\mathcal{F}})$ the $\sigma$-field generated by the set $\{f^{-1}(F)|F\in \widetilde{\mathcal{F}}\}$.  
One can easily find that $ f^{-1}(\widetilde{\mathcal{F}}) $,  is a sub $ \sigma $-field of $ \mathcal{F} $ which is called the $ \sigma $-field generated by $ f $, and denoted by $ \sigma(f) $.
In addition, for a family of measurable maps $ \{f_{\lambda}\}_{\lambda \in \Lambda} $, we denote by $ \sigma(f_{\lambda};\lambda \in \Lambda) $ the $ \sigma $-field generated by $ \bigcup_{\lambda \in \Lambda} \sigma(f_{\lambda}) $.

At last, we shall present the definition of measurable and integrable for Banach space-valued functions.
We will fix a Banach space $ \mathcal{X} $.

\begin{definition}
    Let $ f: \Omega \rightarrow \mathcal{X} $ be an $ \mathcal{X} $-valued function.
    \begin{enumerate}[(i)]
        \item The function $ f(\cdot ) $ is called an $ \mathcal{F} $-simple  function (or simple function when $ \mathcal{F} $ is clear) if 
        \begin{align}
            \label{eqSimplefuction}
            f (\cdot) = \sum_{j=1}^{k} \chi_{E_{j}}(\cdot) u_{j},
        \end{align}
        for some $ k \in \mathbb{N} $, $ u_{j} \in \mathcal{X} $, and $ \{E_{j}\}_{j=1}^{k} \subset \mathcal{F} $.
        Here, $ \chi_{E}(\cdot) $ is the characteristic function of $ E $.

        \item The function $ f(\cdot ) $ is said to be strongly $ \mathcal{F} $-measurable with respect to $ \mu $ (or simplify strongly measurable) if there exist simple functions $ \{ f_{k} \}_{k = 1}^{\infty} $ converging to $ f $ in $ \mathcal{X} $, $ \mu $-a.e.
    \end{enumerate}
\end{definition}

\begin{theorem}
    If the Banach space $ \mathcal{X} $ is separable and the measure space $ (\Omega, \mathcal{F}, \mu) $ is $ \sigma $-finite, then, a function  $ f: \Omega \rightarrow \mathcal{X} $ is strongly measurable if and only if $ f $ is measurable.
\end{theorem}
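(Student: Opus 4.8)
The statement to prove is the classical Pettis-type measurability theorem: for a separable Banach space $\mathcal{X}$ and a $\sigma$-finite measure space $(\Omega,\mathcal{F},\mu)$, a function $f:\Omega\to\mathcal{X}$ is strongly measurable if and only if it is (Borel) measurable. The plan is to establish the two implications separately, with the forward direction (strong $\Rightarrow$ measurable) being essentially immediate and the reverse direction (measurable $\Rightarrow$ strong) carrying the real content via an explicit approximation by simple functions.

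First I would dispatch the easy direction. If $f$ is strongly measurable, there are simple functions $f_k$ of the form $\sum_{j=1}^{k_m}\chi_{E_j}u_j$ with $f_k\to f$ in $\mathcal{X}$ $\mu$-a.e. Each simple function is manifestly $\mathcal{F}/\mathcal{B}(\mathcal{X})$-measurable since preimages of Borel sets are finite unions of the $E_j$'s (and their complements/intersections). A pointwise a.e. limit of measurable functions into a metric space is measurable — here one uses that $\{f \in U\}$ for open $U$ can be written using the $f_k$ together with the exceptional null set, and the null set lies in $\mathcal{F}$ (completeness is implicitly available, or one absorbs the null set harmlessly). Hence $f$ is measurable.

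For the reverse direction I would use separability of $\mathcal{X}$ crucially. Fix a countable dense set $\{x_n\}_{n\ge 1}\subset\mathcal{X}$. The key step is: for each $m\in\mathbb{N}$, partition $\mathcal{X}$ into the Borel sets obtained from the balls $B(x_n,1/m)$ by the standard disjointification $A_{n,m}=B(x_n,1/m)\setminus\bigcup_{j<n}B(x_j,1/m)$; these cover $\mathcal{X}$ because $\{x_n\}$ is dense. Since $f$ is measurable, $f^{-1}(A_{n,m})\in\mathcal{F}$. Now use $\sigma$-finiteness to write $\Omega=\bigcup_k \Omega_k$ with $\mu(\Omega_k)<\infty$ and the $\Omega_k$ increasing; on $\Omega_k$ define $g_{m,k}=\sum_{n\le k}\chi_{f^{-1}(A_{n,m})\cap\Omega_k}\,x_n$, a genuine simple function. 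A diagonal choice $m=k\to\infty$ gives simple functions with $\|g_{k,k}(\omega)-f(\omega)\|<1/k$ for all $\omega$ in the (eventually everything) set where $f(\omega)$ falls into one of the first $k$ of the $A_{n,k}$ — and since the $A_{n,m}$ ($n\ge1$) cover $\mathcal{X}$ for each fixed $m$, every $\omega$ is eventually captured. Thus $g_{k,k}\to f$ pointwise (indeed everywhere, modulo the $\sigma$-finite exhaustion which only needs a.e. convergence), so $f$ is strongly measurable.

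The main obstacle I anticipate is purely bookkeeping: handling the $\sigma$-finiteness correctly so that the truncations $\Omega_k$ interact cleanly with the diagonal index $m=k$, and making sure the simple functions have genuinely finite range at each stage (this is why one truncates to $n\le k$ and to $\Omega_k$ simultaneously). There is no deep idea beyond "separability lets you discretize the target, measurability lets you pull the discretization back to $\mathcal{F}$" — the proof is standard and I would present it compactly, citing \cite[Chapter 2]{Lue2021a} for the routine verifications rather than expanding every estimate.
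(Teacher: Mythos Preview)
The paper does not prove this theorem. It appears in the appendix as a stated piece of classical background material, with the blanket remark that ``all of them are classical and can be found in standard books'' and a pointer to \cite[Chapter 2]{Lue2021a} for details. There is therefore nothing to compare your argument against.

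That said, your outline is the standard Pettis-type proof and is correct. One small remark: under the paper's definition of $\mathcal{F}$-simple function the sets $E_j$ are only required to lie in $\mathcal{F}$, not to have finite measure, so the $\sigma$-finite exhaustion $\Omega=\bigcup_k\Omega_k$ in your reverse direction is not actually needed to make the approximants simple --- the truncation to $n\le k$ alone already gives a finite-range function. The $\sigma$-finiteness (or completeness) is only relevant in the forward direction, to absorb the exceptional null set where $f_k\not\to f$ into $\mathcal{F}$; you flagged this correctly.
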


In what follows, we fix a $ \sigma $-finite measure space $ (\Omega, \mathcal{F}, \mu) $.

\begin{definition}
    If $ f(\cdot) $ is a simple function in the form \cref{eqSimplefuction}. 
    We call $ f(\cdot) $ Bochner integrable if $ \mu(E_{j}) < \infty $ for any $ j = 1, \cdots, k $.
    For any $ E \in \mathcal{F} $, we define the Bochner integral of $ f(\cdot) $ over $ E $ as 
    \begin{align*}
        \int_E f(s) d \mu=\sum_{j=1}^k \mu\left(E \cap E_j\right) u_{j} .
    \end{align*}
\end{definition}

For general strongly measurable functions, we have the following definition.

\begin{definition}
    We say the strongly measurable function $ f(\cdot) : \Omega \rightarrow \mathcal{X} $ is Bochner integrable with respect to $ \mu $ if there exists a sequence of Bochner integrable simple  functions $ \{ f_{i}(\cdot) \}_{i=1}^{\infty}    $ converging to $ f(\cdot) $ in $ X $, $ \mu $-a.e., such that 
    \begin{align*}
        \lim _{i, j \rightarrow \infty} \int_{\Omega}\left|f_i(s)-f_j(s)\right|_{\mathcal{X}} d \mu=0
        .
    \end{align*}
    For  $ E \in \mathcal{F} $, the Bochner integral of $ f(\cdot) $ over $ E $ is defined by 
    \begin{align*}
        \int_E f(s) d \mu=\lim _{i \rightarrow \infty} \int_{\Omega} \chi_E(s) f_i(s) d \mu(s) \quad \text { in } \mathcal{X} \text {. }
    \end{align*}
\end{definition}

The following theorem reveals the relationship between the Bochner integral and Lebesgue integral.

\begin{theorem}
    A strongly measurable function $ f(\cdot) : \Omega \rightarrow \mathcal{X} $ is Bochner integrable if and only if the scalar function $ |f(\cdot)|_{\mathcal{X}}: \Omega \rightarrow \mathbb{R} $ is integrable.
    In this case, for any $ E \in \mathcal{F} $,
    \begin{align*}
        \left|\int_E f(s) d \mu\right|_{\mathcal{X}} \leq \int_E|f(s)|_{ \mathcal{X}} d \mu
        .
    \end{align*}
\end{theorem}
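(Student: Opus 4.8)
The statement to prove is the characterization: a strongly measurable function $f(\cdot):\Omega\to\mathcal{X}$ is Bochner integrable if and only if the scalar function $|f(\cdot)|_{\mathcal{X}}:\Omega\to\mathbb{R}$ is Lebesgue integrable, together with the norm bound $\left|\int_E f\,d\mu\right|_{\mathcal{X}}\le\int_E|f|_{\mathcal{X}}\,d\mu$. The plan is to argue both implications from the definition of Bochner integrability via approximating Bochner integrable simple functions, and then deduce the inequality by passing to the limit in the corresponding elementary inequality for simple functions.

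\textbf{Step 1 (necessity).} Suppose $f$ is Bochner integrable, so there is a sequence $\{f_i\}$ of Bochner integrable simple functions with $f_i\to f$ $\mu$-a.e.\ in $\mathcal{X}$ and $\lim_{i,j\to\infty}\int_\Omega|f_i-f_j|_{\mathcal{X}}\,d\mu=0$. Each $|f_i(\cdot)|_{\mathcal{X}}$ is a nonnegative integrable scalar simple function, and by the reverse triangle inequality $\big|\,|f_i|_{\mathcal{X}}-|f_j|_{\mathcal{X}}\,\big|\le|f_i-f_j|_{\mathcal{X}}$ pointwise, so $\{|f_i|_{\mathcal{X}}\}$ is Cauchy in $L^1(\Omega,\mathcal{F},\mu)$. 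By completeness of $L^1$, it converges in $L^1$ to some $g\ge 0$; extracting a further subsequence converging a.e.\ and matching it against the a.e.\ limit $|f_i|_{\mathcal{X}}\to|f|_{\mathcal{X}}$, we identify $g=|f|_{\mathcal{X}}$ $\mu$-a.e. Hence $|f|_{\mathcal{X}}$ is integrable.

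\textbf{Step 2 (sufficiency).} Conversely, suppose $|f|_{\mathcal{X}}$ is integrable. Since $f$ is strongly measurable, pick simple functions $s_k\to f$ $\mu$-a.e.\ in $\mathcal{X}$. The standard device is to truncate: set
\begin{align*}
f_k = s_k\,\chi_{\{|s_k|_{\mathcal{X}}\le 2|f|_{\mathcal{X}}\}}.
\end{align*}
Each $f_k$ is simple; on the set where $|s_k|_{\mathcal{X}}\le 2|f|_{\mathcal{X}}$ the coefficients multiplied by characteristic functions of sets of finite measure (since $\{|f|_{\mathcal{X}}>0\}$ is $\sigma$-finite and $|f|_{\mathcal{X}}$ is integrable, one further intersects with sets $\{|f|_{\mathcal{X}}>1/m\}$ of finite measure and uses that $\mu$ is $\sigma$-finite), so $f_k$ is Bochner integrable. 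One checks $f_k\to f$ $\mu$-a.e.\ and $|f_k-f|_{\mathcal{X}}\le 3|f|_{\mathcal{X}}\in L^1$, so by the dominated convergence theorem for scalar functions, $\int_\Omega|f_k-f|_{\mathcal{X}}\,d\mu\to 0$, and in particular $\{f_k\}$ is Cauchy in the required sense. Thus $f$ is Bochner integrable.

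\textbf{Step 3 (the inequality).} For a Bochner integrable simple function $h=\sum_{j=1}^k\chi_{E_j}u_j$ and $E\in\mathcal{F}$, the triangle inequality gives directly
\begin{align*}
\left|\int_E h\,d\mu\right|_{\mathcal{X}}=\left|\sum_{j=1}^k\mu(E\cap E_j)u_j\right|_{\mathcal{X}}\le\sum_{j=1}^k\mu(E\cap E_j)|u_j|_{\mathcal{X}}=\int_E|h|_{\mathcal{X}}\,d\mu.
\end{align*}
Applying this to the approximating sequence $\{f_i\}$ and letting $i\to\infty$ — using that $\int_E f_i\,d\mu\to\int_E f\,d\mu$ in $\mathcal{X}$ by definition of the Bochner integral, and $\int_E|f_i|_{\mathcal{X}}\,d\mu\to\int_E|f|_{\mathcal{X}}\,d\mu$ from Step 1 — yields the claimed bound. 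The main obstacle is the bookkeeping in Step 2: exhibiting an explicit sequence of \emph{Bochner integrable} (not merely measurable) simple functions converging to $f$ in $L^1$-norm-of-the-norm, which requires carefully exploiting $\sigma$-finiteness together with integrability of $|f|_{\mathcal{X}}$ to ensure the truncated simple functions are supported on finite-measure sets; the rest is routine application of scalar dominated/monotone convergence.
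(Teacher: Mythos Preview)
The paper states this theorem without proof, as part of the appendix of classical background material (``All of them are classical and can be found in standard books''), so there is no paper proof to compare against. Your argument is the standard textbook proof of Bochner's theorem and is essentially correct.

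One small clarification in Step~2: your parenthetical about intersecting with $\{|f|_{\mathcal X}>1/m\}$ and invoking $\sigma$-finiteness is not quite the point. The reason the truncated function $f_k=s_k\chi_{\{|s_k|_{\mathcal X}\le 2|f|_{\mathcal X}\}}$ is a \emph{Bochner integrable} simple function is direct: writing $s_k=\sum_j\chi_{E_j}u_j$ with the $E_j$ disjoint, each nonzero piece of $f_k$ lives on $E_j\cap\{|f|_{\mathcal X}\ge |u_j|_{\mathcal X}/2\}$, which has finite measure by Chebyshev's inequality since $|f|_{\mathcal X}\in L^1$. No further intersection is needed. With that adjustment, Steps~1--3 go through cleanly.
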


\section{Probability, random variables and expectation}

A probability space is a measure space $ (\Omega, \mathcal{F}, \mathbb{P}) $ with $ \mathbb{P}(\Omega) = 1 $, where $ \mathbb{P} $ is called a probability measure. 
Set $ \Omega $ is called a sample space; $ \omega \in \Omega $ is called a sample point; any $ A \in \mathcal{F} $ is called an  event.

In the following, we fix a probability space $ (\Omega, \mathcal{F}, \mathbb{P}) $ and a Banach space $ \mathcal{X} $. 

\begin{definition}
    The function $ f: (\Omega, \mathcal{F}) \rightarrow (\mathcal{X}, \mathcal{B}(\mathcal{X})) $ is called an ($ \mathcal{X} - valued $) random variable if $ f $ is strongly measurable. 
\end{definition}

\begin{definition}
    Let $ f $ be a random variable. If $ f $ is Bochner integrable with respect to the probability measure $ \mathbb{P} $, then we denote 
    \begin{align*}
        \mathbb{E} f = \int_{\Omega} f d \mathbb{P}
    \end{align*}
    and call it the mean or mathematical expectation of $ f $.
\end{definition}

Next, we recall the notion of independence, which is a  fundamental  feature of probability theory.

\begin{definition}
    Let  $ (\Omega, \mathcal{F}, \mathbb{P}) $  be a probability space. 
    \begin{enumerate}[(i)]
        \item Let $ A, B \in \mathcal{F} $. We say that  $ A, B $ are independent w.r.t $ \mathbb{P} $ if $ \mathbb{P} (A \cap  B) = \mathbb{P} (A) \mathbb{P}(B) $.
        \item Let $ \mathcal{J}_{1} $ and $ \mathcal{J}_{2} $ be two subfamilies of $ \mathcal{F} $. We say that $ \mathcal{J}_{1} $ and $ \mathcal{J}_{2} $ are independent if for all $ (A,B) \in \mathcal{J}_{1} \times \mathcal{J}_{2} $, it holds that $ \mathbb{P}(A \cap B) = \mathbb{P}(A) \mathbb{P}(B) $.
        \item Let $f, g:(\Omega, \mathcal{F}) \rightarrow(\mathcal{X}, \mathcal{B}(\mathcal{X}))$ be two random variables. We say that $f$ and $g$  are independent if $\sigma(f)$ and $\sigma(g) $ are independent.
        \item Let $f: (\Omega, \mathcal{F}) \rightarrow(\mathcal{X}, \mathcal{B}(\mathcal{X}))$ be a random variable and $ \mathcal{J} \subset \mathcal{F} $. We say that $f$ and $ \mathcal{J} $  are independent if $\sigma(f)$ and $ \mathcal{J} $ are independent.
    \end{enumerate}
\end{definition}

\begin{definition}
    Let $ X = (X_{1}, \cdots, X_{n}) $ be a $ \mathbb{R}^{n} $-valued random variable. The function 
    \begin{align*}
        F(x) \equiv F\left(x_1, \cdots, x_n\right) \deq \mathbb{P}\left\{X_1 \leq x_1, \cdots, X_n \leq x_n\right\}
    \end{align*}
    is called the distribution function of $ X $. 
    Furthermore, if there exists a nonnegative function $ f(\cdot) $ such that 
    \begin{align*}
        F(x) \equiv F\left(x_1, \cdots, x_n\right)=\int_{-\infty}^{x_1} \cdots \int_{-\infty}^{x_n} f\left(\xi_1, \cdots, \xi_n\right) d \xi_1 \cdots d \xi_n,
    \end{align*}
    then the function $ f(\cdot) $ is called the density of $ X $.
\end{definition}

If random variable $ X $ has the following density: 
\begin{align*}
    f(x)=(2 \pi)^{-\frac{n}{2}}|\operatorname{det} Q|^{-1 / 2} \exp \left\{-\frac{1}{2 \mu}(x-\lambda)^{\top} Q^{-1}(x-\lambda)\right\} \text {, }
\end{align*}
where $ \lambda \in \mathbb{R}^{n} $ and $ Q $ is a positive definite $ n $-dimensional matrix, then $ X $ is called a  normally distributed random variable (or $ X $  has a normal distribution) and denoted by $ X \sim \mathcal{N}(\lambda, Q) $.  
Especially, we call $ X $ a standard normally distributed random variable if $ \lambda = 0 $ and $ Q $ is the $ n $-dimensional identity matrix.

\section{Stochastic process}

Let $T>0$. A stochastic process is a family of $ \mathcal{X} $-valued random variables $ \{X(t)\}_{t \in [0,T]} $ on $ ( \Omega, \mathcal{F}, \mathbb{P}) $.
If there is no ambiguity, we shall use $ \{X(t)\}_{t \in [0,T]}  $, $ X(\cdot) $ or $ X $ to denote a stochastic process.

For every $ \omega \in \Omega $ the function $ t \mapsto X(t, \omega) $ is called a sample path (or path) of $ X $.
The stochastic process $ X(\cdot) $ is said to be continuous is there exists a $ \mathbb{P} $-null set $ N \in \mathcal{F} $ such that for any $ \omega \in \Omega \setminus N$, the path $ X(\cdot, \omega) $  is continuous in $ \mathcal{X} $.

\begin{definition}
    A family of sub-$ \sigma $-field $ \{\mathcal{F}_{t}\}_{t \in [0,T]} $ is called a filtration if $ \mathcal{F}_{s} \subset \mathcal{F}_{t} $ for any $ s, t \in [0,T]$ and $ s \leq t $.
    We simply denote it by $ \mathbf{F} $.
\end{definition}

For any $ t \in [0,T) $, we put 
\begin{align*}
    \mathcal{F}_{t+} \deq \bigcap_{s \in(t, T]} \mathcal{F}_s, \quad \mathcal{F}_{t-} \deq \bigcup_{s \in[0, t)} \mathcal{F}_s .
\end{align*}
If $ \mathcal{F}_{t+} = \mathcal{F}_{t} $ (resp. $ \mathcal{F}_{t-} = \mathcal{F}_{t} $), then we call $ \mathbf{F} $ is right (resp. left) continuous. 

\begin{definition}
    We call $ (\Omega, \mathcal{F}, \mathbf{F}, \mathbb{P}) $ a filtered probability space, and it satisfies the usual condition if $ (\Omega, \mathcal{F}, \mathbb{P}) $ is complete, $ \mathcal{F}_{0} $ contains all $ \mathbb{P} $-null sets in $ \mathcal{F} $, and $ \mathbf{F} $ is right continuous.
\end{definition}

In the sequel, we always assume that the filtered probability $ (\Omega, \mathcal{F}, \mathbf{F}, \mathbb{P}) $ satisfies the usual condition.

\begin{definition}
    Let $ X(\cdot) $ be an $ \mathcal{X} $-valued stochastic process.
    \begin{enumerate}[(i)]
        \item  We say that $ X(\cdot) $ is measurable if the map $ (t , \omega) \mapsto X(t, \omega) $ is strongly $ (\mathcal{B}([0,T] )\times  \mathcal{F}) / \mathcal{B}(\mathcal{X}) $-measurable; 
        \item We say that $ X(\cdot) $ is $ \mathbf{F} $-adapted if it its measurable and for each $ t \in [0,T] $, the map $ \omega \mapsto X(t, \omega) $ is strongly $ \mathcal{F}_{t}/ \mathcal{B}(\mathcal{X}) $-measurable;
        \item We say that $ X(\cdot) $ is $ \mathbf{F} $-progressively measurable if for each $ t \in [0,T] $, the map $ (s, \omega) \mapsto X(s, \omega) $ from $ [0, t] \times \Omega $ is strongly $ ( \mathcal{B}([0,t]) \times \mathcal{F}_{t})/ \mathcal{B}(\mathcal{X}) $ -measurable.
    \end{enumerate}
\end{definition}

\begin{definition}
    We call a set $ A \in [0,T] \times \Omega $ is progressively measurable with respect to $ \mathbf{F} $ if the stochastic process $ \chi_{A}(\cdot) $ is progressively measurable, where $ \chi_{A} $ is the   indicator function  of $ A $.
\end{definition}

We denote the class of all progressively measurable sets by $ \mathbb{F} $, which is a $ \sigma $-field. 
It is clear that a process $ \varphi: [0,T]\times \Omega \rightarrow \mathcal{X} $ is $ \mathbf{F} $-progressively measurable if and only if it is strongly $ \mathbb{F} $-measurable.

Clearly, if $ X(\cdot) $ is $ \mathbf{F} $-progressively measurable, it must be $ \mathbf{F} $-adapted. 
Conversely, one can prove that, for any $ \mathbf{F} $-adapted process $ X(\cdot) $, there exists an $ \mathbf{F} $-progressively measurable process $ \overline{X}(\cdot) $, such that 
\begin{align*}
    \mathbb{P}(\{X(t)=\bar{X}(t)\})=1, \quad \forall \; t \in [0,T].
\end{align*}
Hence, by saying that a process $ X(\cdot) $ is $ \mathbf{F} $-adapted, we mean that it is $ \mathbf{F} $-progressively measurable.

\begin{definition}
    \label{defBrowian}
    The  standard  Brownian motion is a continuous, $ \mathbb{R}  $-valued, $ \mathbf{F} $-adapted stochastic process $ \{W(t)\}_{t \geq 0} $ such that
    \begin{enumerate}[(i)]
        \item $ W(0) = 0 $, $ \mathbb{P} $-a.s.; and 
        \item For any $ s, t \in [0, \infty) $ with $  s < t $, the random variable $ W(t) - W(s) $ is independent of $ \mathcal{F}_{s} $ and $ W(t) - W(s)  \sim \mathcal{N}(0, t-s ) $.
    \end{enumerate}
\end{definition}

We fix a standard Brownian motion $ W(\cdot) $.
For each $ t \in [0,T] $, put $ \mathcal{F}_{t}^{W} = \sigma(W(s); s \in [0,t]) \subset \mathcal{F}_{t} $. 
Let $ \{ \hat{\mathcal{F}}_{t}^{W} \}_{t \in [0,T]} $ be the argumentation of $ \{ \mathcal{F}_{t}^{W} \}_{t \in [0,T]} $ by adding all $ \mathbb{P} $-null sets.
Then, $ \{ \hat{\mathcal{F}}_{t}^{W} \}_{t \in [0,T]} $ is continuous and $ W(\cdot) $ is still a Brownian motion on the filtered probability space $ (\Omega, \mathcal{F}, \{ \hat{\mathcal{F}}_{t}^{W} \}_{t \in [0,T]} , \mathbb{P}) $.
We call $ \mathbf{F} $  the natural filtration generated by $ W(\cdot) $, if $ \mathbf{F} $ is  $ \{ \hat{\mathcal{F}}_{t}^{W} \}_{t \in [0,T]} $  generated as described above.

The notations to be given below will be used in the rest of the book.

For any $ p, q \in [1, \infty) $, denote

\begin{align*}
    L_{\mathbb{F}}^p\left(\Omega ; L^q(0, T ; \mathcal{X})\right) \deq 
    \Big\{\varphi:(0, T) \times \Omega \rightarrow \mathcal{X} 
    & \mid \varphi(\cdot)  \text{ is  $\mathbf{F}$-adapted and } 
    \\
    & \quad 
    \mathbb{E}\Big(\int_0^T|\varphi(t)|_{\mathcal{X}}^q d t\Big)^{\frac{p}{q}}<\infty \Big \}
    ,
\end{align*}
and
\begin{align*}
    L_{\mathbb{F}}^q\left(0, T ; L^p(\Omega ; \mathcal{X})\right) 
    \deq \Big \{\varphi:(0, T) \times \Omega \rightarrow \mathcal{X} \mid & \varphi(\cdot) \text { is $ \mathbf{F} $-adapted and } \\
    &  \int_0^T\left(\mathbb{E}|\varphi(t)|_{\mathcal{X}}^p\right)^{\frac{q}{p}} d t<\infty \Big \}
    .
\end{align*}
Similarly, we may also define (for $1 \leq p, q<\infty$ )
\begin{align*} 
        L_{\mathbb{F}}^{\infty}\left(0, T ; L^p(\Omega ; \mathcal{X})\right), && L_{\mathbb{F}}^q\left(0, T ; L^{\infty}(\Omega ; \mathcal{X})\right), && L_{\mathbb{F}}^{\infty}\left(0, T ; L^{\infty}(\Omega ; \mathcal{X})\right). 
\end{align*}
All these spaces are Banach spaces with the canonical norms. 
In the sequel, we will simply denote $
L_{\mathbb{F}}^p\left(0, T ; L^p(\Omega ; \mathcal{X})\right)$ by $L_{\mathbb{F}}^p(0, T ; \mathcal{X})$; and denote $ L_{\mathbb{F}}^p(0, T ; \mathbb{R}) $ by $L_{\mathbb{F}}^p(0, T)$.

For any $ p \in [1, \infty) $, write 
\begin{align*}
    L_{\mathbb{F}}^p(\Omega ; C([0, T] ; \mathcal{X})) \deq 
    \Big \{\varphi:&[0, T] \times \Omega \rightarrow \mathcal{X} 
     \mid \varphi(\cdot) \text{ is continuous, }   
    \\
    & \quad 
    \text{ $ \mathbf{F} $-adapted and }  \mathbb{E}\big(|\varphi(\cdot)|_{C([0, T] ; \mathcal{X})}^p\big)<\infty \Big\}
\end{align*}
It is clear that 
$L_{\mathbb{F}}^p(\Omega ; C([0, T] ; \mathcal{X}))$ 
is Banach spaces equipped with norms
\begin{align*}
|\varphi(\cdot)|_{L_{\mathbb{F}}^p(\Omega ; C([0, T] ; \mathcal{X}))}=\big[\mathbb{E}|\varphi(\cdot)|_{C([0, T] ; \mathcal{X})}^p\big]^{1 / p}
.
\end{align*}

\section{It\^{o}'s integral}

In this section, we denote $ H $ as a Hilbert space.
Let $ W(\cdot) $ be a standard Brownian motion on the filtered probability space $ (\Omega, \mathcal{F}, \mathbf{F}, \mathbb{P}) $.
We shall define the It\^o integral $ \int_{0}^{T} X(t) d W(t) $ of an $ H $-valued, $ \mathbf{F} $-adapted stochastic process $ X(\cdot) $ with respect to $ W(t) $.

Let us start with the step process $ f \in L^{2}_{\mathbb{F}}(0,T,H) $, which has the form 
\begin{align}
    \label{eqStepPro}
    f(t, \omega)=\sum_{j=0}^n f_j(\omega) \chi_{\left[t_j, t_{j+1}\right)}(t), \quad(t, \omega) \in[0, T] \times \Omega
    ,
\end{align}
where $ n \in \mathbb{N}$, $ 0 =t_0<t_1<\cdots<t_{n+1}=T$, $ f_j $ is $ \mathcal{F}_{t_{j}} $-measurable with 
\begin{align*}
    \sup \left\{\left|f_j(\omega)\right|_H \mid j \in\{0, \cdots, n\}, ~ \omega \in \Omega\right\}<\infty.
\end{align*}
The set of step processes will be denoted by $ \mathcal{L}_{0} $ and is dense in $ L^{2}_{\mathbb{F}}(0,T,H) $.

The stochastic integral of a step process $ f \in \mathcal{L}_{0} $ of the form \cref{eqStepPro} is defined by 
\begin{align*}
    I(f)(t, \omega)=\sum_{j=0}^n f_j(\omega)\left[W\left(t \wedge t_{j+1}, \omega\right)-W\left(t \wedge t_j, \omega\right)\right]
    .
\end{align*}
Clearly, $ I(f) \in L^{2}_{\mathcal{F}_{t}}(\Omega; H) $ and the following  It\^o isometry holds:
\begin{align*}
    |I(f)|_{L_{\mathcal{F}_t}^2(\Omega ; H)}=|f|_{L_{\mathbb{F}}^2(0, t ; H)}
    .
\end{align*}

Since $ \mathcal{L}_{0} $ is dense in $ L^{2}_{\mathbb{F}}(0, T; H) $, for $ f \in  L^{2}_{\mathbb{F}}(0, T; H)  $, there exists a sequence of $ \{ f_{k} \}_{k=1}^{\infty} \subset \mathcal{L}_{0}$ such that 
\begin{align*}
    \lim _{k \rightarrow \infty}\left|f_k-f\right|_{L_{\mathbb{F}}^2(0, T ; H)}=0.
\end{align*}
From It\^o isometry, one can find that $ \{ I(f_{k}) \}_{k=1}^{\infty} $ is a Cauchy sequence in $ L^{2}_{\mathcal{F}_{t}}(\Omega; H) $. 
Hence, it converges to a unique element in $ L^{2}_{\mathcal{F}_{t}}(\Omega; H) $, which is independent of the choice of $ \{ f_{k} \}_{k=1}^{\infty} $.
We call this element the It\^o integral of $ f $ on $ [0,t] $ and denote it by $ \int_{0}^{t} f d W $.

For $ 0 \leq s < t \leq T $, we define the It\^o integral of $ f $ on $ [s,t] $ by 
\begin{align*}
    \int_{s}^{t} f d W = \int_{s}^{t} f(\tau) d W(\tau) \deq \int_{0}^{t} f d W - \int_{0}^{s} f d W
    .
\end{align*}


One can prove the following properties of It\^o integral.

\begin{theorem}
    Let $ f, g \in L^{2}_{\mathbb{F}}(0, T; H) $ and $ a, b \in L^{2}_{\mathcal{F}_{s}} (\Omega) $, $ 0 \leq s  < t \leq T $. Then 
    \begin{enumerate}[(i)]
        \item $ \displaystyle \int_s^t f d W \in L_{\mathbb{F}}^2(\Omega ; C([s, t] ; H)) $;
        \item $ \displaystyle \int_s^t(a f+b g) d W=a \int_s^t f d W+b \int_s^t g d W, \text { a.s. } $;
        \item $ \displaystyle \mathbb{E}\Big(\int_s^t f d W\Big)=0 $;
        \item $ \displaystyle \mathbb{E}\Big(\Big\langle\int_s^t f d W, \int_s^t g d W\Big\rangle_H\Big)=\mathbb{E}\Big(\int_s^t\langle f(\tau, \cdot), g(\tau, \cdot)\rangle_H d \tau\Big) $.
    \end{enumerate}
\end{theorem}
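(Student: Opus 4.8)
The strategy is the standard density argument: establish each property first for step processes in $\mathcal{L}_0$, where everything reduces to finite sums of increments of the Brownian motion, and then pass to the limit using the It\^o isometry and the fact that $\mathcal{L}_0$ is dense in $L^2_{\mathbb{F}}(0,T;H)$. First I would fix $f,g\in L^2_{\mathbb{F}}(0,T;H)$ and choose approximating sequences $\{f_k\},\{g_k\}\subset\mathcal{L}_0$ converging to $f,g$ in $L^2_{\mathbb{F}}(0,T;H)$; by the construction recalled just above, $\int_s^t f_k\,dW\to\int_s^t f\,dW$ in $L^2_{\mathcal{F}_t}(\Omega;H)$, and similarly for $g$.

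For (ii), linearity is immediate for step processes when $a,b$ are deterministic; for $a,b\in L^2_{\mathcal{F}_s}(\Omega)$ one first checks it for simple $\mathcal{F}_s$-measurable $a,b$ (so that $af+bg$ is again adapted and the identity is a rearrangement of finite sums), then approximates $a,b$ by bounded simple $\mathcal{F}_s$-measurable random variables. For (iii), $\mathbb{E}\big(f_j[W(t\wedge t_{j+1})-W(t\wedge t_j)]\big)=0$ because $f_j$ is $\mathcal{F}_{t_j}$-measurable and the increment is independent of $\mathcal{F}_{t_j}$ with zero mean; summing and letting $k\to\infty$ (using that $\mathbb{E}\colon L^2_{\mathcal{F}_t}(\Omega;H)\to H$ is continuous) gives the general case. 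For (iv), the polarization identity reduces it to the It\^o isometry $\mathbb{E}|\int_s^t f\,dW|_H^2=\mathbb{E}\int_s^t|f|_H^2\,d\tau$, which for step processes follows from expanding the square, discarding cross terms via the independence-and-zero-mean argument of (iii), and using $\mathbb{E}|W(t_{j+1})-W(t_j)|^2=t_{j+1}-t_j$; the passage to the limit is continuity of the bilinear form $\langle\int f\,dW,\int g\,dW\rangle$ in $L^2$.

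Part (i) is the one requiring the most care, since it asserts a pathwise regularity statement (continuity of $t\mapsto\int_s^t f\,dW$ with values in $H$, $\mathbb{P}$-a.s., and membership in $L^2_{\mathbb{F}}(\Omega;C([s,t];H))$), not merely an $L^2$ bound. For $f_k\in\mathcal{L}_0$ the integral is manifestly continuous in $t$ because $W(\cdot)$ is continuous. To upgrade to general $f$ one needs a maximal inequality: Doob's $L^2$-martingale inequality applied to the continuous martingale $t\mapsto\int_s^t(f_k-f_\ell)\,dW$, giving
\[
\mathbb{E}\sup_{s\le t\le T}\Big|\int_s^t(f_k-f_\ell)\,dW\Big|_H^2\le 4\,\mathbb{E}\int_s^T|f_k-f_\ell|_H^2\,d\tau,
\]
so that $\{\int_s^\cdot f_k\,dW\}$ is Cauchy in $L^2_{\mathbb{F}}(\Omega;C([s,T];H))$; its limit is a continuous modification of $\int_s^\cdot f\,dW$. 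The main obstacle is therefore invoking (or briefly justifying) the vector-valued Doob inequality and the martingale property of the It\^o integral in order to control the supremum in $t$; the remaining items are routine once the step-process case and the isometry are in hand. I would also note in passing that adaptedness of $\int_s^t f\,dW$ to $\mathcal{F}_t$, used implicitly in (i), is inherited from the step processes.
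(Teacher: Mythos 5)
The paper states this theorem in its appendix as standard background material and gives no proof at all (it is introduced with ``One can prove the following properties of It\^o integral''), so there is no in-paper argument to compare against. Your plan is the standard and correct one: verify each property for step processes in $\mathcal{L}_0$, where everything reduces to finite sums of Brownian increments and the independence-plus-zero-mean computation, then transfer to general $f,g$ by density and the It\^o isometry, with Doob's $L^2$ maximal inequality for the vector-valued continuous martingale supplying the pathwise continuity in (i). The reduction of (iv) to the isometry by polarization and the passage to the limit via continuity of the relevant linear and bilinear maps are all sound.

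One small point worth making explicit if you write this up: in (ii), when $a,b\in L^2_{\mathcal{F}_s}(\Omega)$ are unbounded, the product $af+bg$ need not lie in $L^2_{\mathbb{F}}(s,t;H)$, so the left-hand side is not directly covered by the $L^2$ construction of the integral recalled in the paper; one either assumes $a,b$ bounded, interprets the identity after truncating $a,b$ and passing to an almost-sure limit, or invokes a localized extension of the stochastic integral. This is really a gap in the statement as given rather than in your argument, but your proof should say which interpretation it is using when it ``approximates $a,b$ by bounded simple random variables,'' since the approximating integrals converge almost surely (along a subsequence) rather than in $L^2$ in that generality.
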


\begin{definition}
    We call an $ H $-valued, $ \mathbf{F} $-adapted, continuous stochastic process $ X(\cdot) $ an It\^o process, if there exist two $ H $-valued process $ \phi(\cdot) \in L^{1}_{\mathbb{F}}(0,T; H) $ and $ \Phi(\cdot) \in L^{2}_{\mathbb{F}}(0,T; H) $ such that 
    \begin{align}
        \label{eqItoProcess}
        X(t)=X(0)+\int_0^t \phi(s) d s+\int_0^t \Phi(s) d W(s), \quad \text { a.s., } \quad \forall \; t \in[0, T].
    \end{align}
\end{definition}

The following is a fundamental result called It\^o formula.

\begin{theorem}
    Let $ X(\cdot) $ be an It\^o process given by \cref{eqItoProcess} and $ F : [0,T] \times H \rightarrow  \mathbb{R} $ be a function such that its partial derivatives $ F_{t}, F_{x}, F_{xx} $ are uniformly continuous on any bounded subset of $ [0,T]\times H $. It holds that 
    \begin{align}
        \notag
        \label{eqItoFor}
        & F(t, X(t))-F(0, X(0))
         \\\notag
        & =\int_0^t F_x(s, X(s)) \Phi(s) d W(s)+\int_0^t \Big[F_t(s, X(s))+F_x(s, X(s)) \phi(s) 
        \\
        &  \quad+\frac{1}{2}\left\langle F_{x x}(s, X(s)) \Phi(s), \Phi(s)\right\rangle_H \Big] d s, \quad \text { a.s. }, \quad \forall t \in[0, T]
        .
    \end{align}
\end{theorem}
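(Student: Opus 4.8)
\textbf{Proof proposal for the It\^o formula (Theorem at the end of Chapter~\ref{chApp}).}

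The plan is to reduce the infinite-dimensional It\^o formula to the one-dimensional version that is already familiar, following the standard three-step scheme: first establish it for polynomials in the evaluation functionals, then for general $C^{1,2}$-functions by a localization and approximation argument, and finally pass from step-process integrands to general ones. First I would handle the model case $F(t,x) = \langle x, h_1\rangle_H \langle x, h_2\rangle_H$ for fixed $h_1, h_2 \in H$ (and its time-dependent variants $g(t)\langle x,h_1\rangle_H\langle x,h_2\rangle_H$), where both $Y_i(t) \deq \langle X(t), h_i\rangle_H$ are real-valued It\^o processes with drift $\langle \phi(\cdot), h_i\rangle_H$ and diffusion $\langle \Phi(\cdot), h_i\rangle_H$. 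Applying the classical one-dimensional It\^o product rule to $Y_1(t) Y_2(t)$ gives exactly the claimed identity, the cross-variation term producing $\langle F_{xx}(s,X(s))\Phi(s), \Phi(s)\rangle_H$ since $F_{xx} = h_1\otimes h_2 + h_2 \otimes h_1$. By linearity this extends to all ``cylindrical polynomials'', i.e. finite sums of products $g_j(t)\prod_{i}\langle x, h_{ji}\rangle_H$.

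Next I would treat a general $F$ as in the statement. The idea is to localize: for each $R>0$ pick a smooth cutoff so one may assume $F$, $F_t$, $F_x$, $F_{xx}$ are bounded and uniformly continuous, and work up to the stopping time $\tau_R \deq \inf\{t : |X(t)|_H > R\}$, sending $R\to\infty$ at the end (using continuity of paths of $X(\cdot)$ so $\tau_R \to T$ a.s.). On a fixed bounded ball one approximates $F$ by cylindrical polynomials: using a finite-dimensional projection $P_N : H \to H_N$ (onto the span of the first $N$ basis vectors) and a standard mollification in $[0,T]\times H_N$, one produces $F^{(N,m)}$ converging to $F$ together with the relevant derivatives, uniformly on $[0,T]\times \overline{B_R}$. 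For each approximant the identity holds by the previous step; then one passes to the limit in \eqref{eqItoFor}, controlling the stochastic integral term by the It\^o isometry and the Lebesgue-type terms by dominated convergence, both guaranteed by the uniform convergence of $F_x$, $F_{xx}$ on the ball and the integrability $\phi \in L^1_{\mathbb{F}}(0,T;H)$, $\Phi \in L^2_{\mathbb{F}}(0,T;H)$.

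Finally, since the construction of the It\^o integral only controls $\Phi$ in the $L^2_{\mathbb{F}}(0,T;H)$-norm, one should first prove \eqref{eqItoFor} when $\Phi$ is a step process (where the finite-dimensional approximations are transparent) and then pass to a general $\Phi$ via a sequence $\Phi_k \to \Phi$ in $L^2_{\mathbb{F}}(0,T;H)$, again using the It\^o isometry and the local boundedness of $F_x$, $F_{xx}$ after localization. I expect the main obstacle to be the approximation step in the infinite-dimensional variable: one must arrange that $F^{(N,m)}$, $F^{(N,m)}_t$, $F^{(N,m)}_x$ and $F^{(N,m)}_{xx}$ all converge to their $F$-counterparts uniformly on bounded subsets of $[0,T]\times H$, and in particular that the second-derivative operators converge in the operator norm on the ball; this is where the hypothesis of uniform continuity of $F_t$, $F_x$, $F_{xx}$ on bounded sets is used decisively, and it is the only genuinely delicate point — the rest is bookkeeping with the one-dimensional It\^o formula, the isometry, and dominated convergence.
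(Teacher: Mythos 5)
The paper does not actually prove this theorem: it appears in \Cref{chApp} as classical background material, with the reader referred to standard sources (e.g.\ \cite[Chapter 2]{Lue2021a} or \cite{DaPrato2014}), so there is no in-paper argument to compare yours against and I am judging your sketch on its own. Your overall architecture --- (i) verify the formula for cylindrical polynomials via the one-dimensional It\^o product rule, (ii) localize by stopping times and approximate a general $F$ through finite-dimensional projections and mollification, (iii) pass from step integrands to general $\Phi\in L^2_{\mathbb{F}}(0,T;H)$ via the It\^o isometry --- is the standard one, and steps (i) and (iii) are fine as described.

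The genuine gap is in step (ii), precisely at the point you yourself flag as delicate. You claim that $F^{(N,m)}$ and its derivatives converge to $F$ and its derivatives \emph{uniformly on} $[0,T]\times\overline{B_R}$. This is false when $H$ is infinite-dimensional: the closed ball $\overline{B_R}$ is not compact, the finite-rank projections $P_N$ converge to the identity only strongly, and in fact $\sup_{|x|_H\le R}|x-P_Nx|_H=R$ for every $N$. Concretely, for $F(x)=|x|_H^2$ one has $\sup_{|x|_H\le R}|F(P_Nx)-F(x)|=R^2$ for all $N$, so no amount of mollification restores uniform convergence on the ball, and the uniform continuity of $F_x,F_{xx}$ on bounded sets cannot help, because it is the arguments $P_Nx$ and $x$ themselves that fail to be uniformly close. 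The repair is standard but must be made explicit: for a.e.\ $\omega$ the path $\{X(s,\omega):s\in[0,T]\}$ is a \emph{compact} subset of $H$ (continuous image of a compact interval), and $P_N\to I$ uniformly on compacts, so $F(s,P_NX(s))\to F(s,X(s))$ --- and likewise for $F_t,F_x,F_{xx}$, using their uniform continuity on bounded sets --- uniformly in $s$ for each fixed $\omega$; combined with the uniform bounds on $\overline{B_R}$, this yields convergence of the Lebesgue terms by dominated convergence and of the stochastic integral by the It\^o isometry together with dominated convergence in $\omega$. Alternatively, one can bypass cylindrical approximation altogether and argue as in \cite{DaPrato2014}: reduce to elementary $\phi,\Phi$ taking finitely many values and Taylor-expand $F$ along a partition of $[0,t]$, controlling the second-order remainder by the uniform continuity of $F_{xx}$. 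With either correction the rest of your outline goes through.
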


Usually, we write \cref{eqItoFor} in the following differential form:
\begin{align*}
    d F(t, X(t))= & F_x(t, X(t)) \Phi(t) d W(t)+F_t(t, X(t)) d t+F_x(t, X(t)) \phi(t) d t \\
    & +\frac{1}{2}\left\langle F_{x x}(t, X(t)) \Phi(t), \Phi(t)\right\rangle_H d t
    .
\end{align*}

When dealing with stochastic partial differential equations, the following  form of It\^o's formula for weak solutions will be useful.

Let $ \mathcal{V} $ be a Hilbert space such that the embedding $ \mathcal{V} \subset H $ is continuous and dense. 
Let $ \mathcal{V}^{*} $ be the dual space of $ \mathcal{V} $ with respect to the pivot space $ H $.
Hence, $ \mathcal{V} \subset H = H^{*} \subset \mathcal{V}^{*} $, continuously and densely.

\begin{theorem}
    Suppose that $X_0 \in L_{\mathcal{F}_0}^2(\Omega ; H)$,  $ \phi(\cdot) \in L_{\mathbb{F}}^2\left(0, T ; \mathcal{V}^*\right)$, and $\Phi(\cdot) \in$ $L_{\mathbb{F}}^2 ( 0, T ; H) $.
    Assume
    \begin{align*}
    X(t)=X_0+\int_0^t \phi(s) d s+\int_0^t \Phi(s) d W(s), \quad t \in[0, T]
    .
    \end{align*}
    If $X(\cdot) \in L_{\mathbb{F}}^2(0, T ; \mathcal{V})$, then $X(\cdot) \in C([0, T] ; H)$, a.s., and for any $t \in[0, T]$,
    \begin{align}
        \notag
        \label{eqItoWeakFor}
        |X(t)|_H^2= & \left|X_0\right|_H^2+2 \int_0^t\langle\phi(s), X(s)\rangle_{\mathcal{V}^*, \mathcal{V}} d s \\
        & +2 \int_0^t\langle\Phi(s), X(s)\rangle_H d W(s)+\int_0^t|\Phi(s)|_H^2 d s, \quad \text { a.s. }
    \end{align}
\end{theorem}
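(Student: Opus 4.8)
The statement to prove is the It\^o formula for weak solutions, \eqref{eqItoWeakFor}, under the Gelfand triple hypotheses $\mathcal V\subset H\subset\mathcal V^*$ with $X_0\in L^2_{\mathcal F_0}(\Omega;H)$, $\phi\in L^2_{\mathbb F}(0,T;\mathcal V^*)$, $\Phi\in L^2_{\mathbb F}(0,T;H)$, and $X\in L^2_{\mathbb F}(0,T;\mathcal V)$.

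\textbf{Overall approach.} Since this is a classical result (it goes back to Pardoux and Krylov--Rozovskii), my plan is to reduce it to the smooth case \eqref{eqItoFor} by a double regularization argument: first mollify in the space variable to land inside $\mathcal V$ with enough regularity, then apply the finite-dimensional/Hilbert-space It\^o formula \eqref{eqItoFor} with $F(t,x)=|x|_H^2$, and finally pass to the limit. A cleaner route, which I would actually follow, is the standard \emph{Gelfand-triple} proof: one does not mollify $X$ itself but works directly with the identity, using a sequence of smooth cutoffs/projections and the density of $\mathcal V$ in $H$. Concretely, the key algebraic fact is that for the pivot space $H$ the duality pairing $\langle\cdot,\cdot\rangle_{\mathcal V^*,\mathcal V}$ restricted to $H\times\mathcal V$ agrees with $\langle\cdot,\cdot\rangle_H$, so all three integrals on the right of \eqref{eqItoWeakFor} make sense.

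\textbf{Key steps in order.} (1) Establish that $X(\cdot)$ has an $H$-valued continuous modification: this follows from the representation $X(t)=X_0+\int_0^t\phi\,ds+\int_0^t\Phi\,dW$, where the Bochner integral $\int_0^t\phi\,ds$ is continuous in $\mathcal V^*$ and the It\^o integral $\int_0^t\Phi\,dW$ is in $L^2_{\mathbb F}(\Omega;C([0,T];H))$ by the properties of the It\^o integral stated earlier; the a.s.\ continuity in $H$ then comes out of the energy identity itself once it is proven on a dense set of times, combined with the a priori bound. (2) Regularize: choose a sequence of operators $J_n:\mathcal V^*\to\mathcal V$ (e.g.\ resolvents $J_n=(I+\tfrac1n A)^{-1}$ for a suitable positive self-adjoint $A$ with domain in $\mathcal V$, or finite-dimensional Galerkin projections onto $\mathrm{span}\{e_1,\dots,e_n\}$ for an orthonormal basis $\{e_k\}$ of $H$ lying in $\mathcal V$) so that $J_n$ is self-adjoint on $H$, uniformly bounded, $J_n\to I$ strongly on $H$, on $\mathcal V$, and on $\mathcal V^*$. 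Set $X^n=J_nX$, $\phi^n=J_n\phi$, $\Phi^n=J_n\Phi$; then $X^n$ is an $H$-valued (indeed $\mathcal V$-valued) It\^o process in the strong sense, so \eqref{eqItoFor} applies to $F(x)=|x|_H^2$:
\begin{align}\label{eqPlanGalerkin}
|X^n(t)|_H^2=|X^n_0|_H^2+2\int_0^t\langle\phi^n(s),X^n(s)\rangle_H\,ds+2\int_0^t\langle\Phi^n(s),X^n(s)\rangle_H\,dW(s)+\int_0^t|\Phi^n(s)|_H^2\,ds,
\end{align}
where the pairing $\langle\phi^n,X^n\rangle_H$ may be rewritten using self-adjointness of $J_n$ as $\langle\phi, J_n^2 X\rangle_{\mathcal V^*,\mathcal V}$, which converges. (3) Pass to the limit $n\to\infty$ in \eqref{eqPlanGalerkin}: the left side converges in probability (uniformly in $t$, along a subsequence) because $X^n\to X$ in $L^2_{\mathbb F}(0,T;\mathcal V)$ and in $C([0,T];H)$ after the continuity step; the Lebesgue term $\int_0^t\langle\phi^n,X^n\rangle\,ds\to\int_0^t\langle\phi,X\rangle_{\mathcal V^*,\mathcal V}\,ds$ by the uniform bound on $J_n$ and dominated convergence; the stochastic term converges by the It\^o isometry applied to $\Phi^n X^n-\Phi X\to0$ in $L^2_{\mathbb F}(0,T;\mathbb R)$; and the quadratic-variation term $\int_0^t|\Phi^n|_H^2\,ds\to\int_0^t|\Phi|_H^2\,ds$ by dominated convergence. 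This yields \eqref{eqItoWeakFor} for each fixed $t$ a.s., hence for all rational $t$ simultaneously, and then for all $t$ by continuity of both sides.

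\textbf{Main obstacle.} The delicate point is the interchange in step (2)--(3): one must choose the regularizers $J_n$ so that they are simultaneously well-behaved on all three spaces of the triple and self-adjoint on the pivot space $H$, and then control $\int_0^t\langle\phi^n,X^n\rangle_H\,ds$ — a term that, before regularization, only makes sense as a $\mathcal V^*$--$\mathcal V$ pairing. The uniform (in $n$) integrability needed to pass to the limit there, together with proving the continuity-in-$H$ of $X$ without circularity, is the technical heart of the argument; once the right $J_n$ is in hand (Galerkin projections along a basis of $H$ contained in $\mathcal V$ is the most transparent choice), the rest is routine convergence bookkeeping. I would also remark that uniqueness of the continuous $H$-valued modification makes the final identity unambiguous.
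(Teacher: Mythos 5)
The paper does not prove this theorem at all: it appears in the appendix (\Cref{chApp}), which the authors explicitly present as classical background ``found in standard books,'' with a pointer to \cite{Lue2021a}; so there is no in-paper argument to compare against. Judged on its own, your sketch is the standard Gelfand-triple proof (Pardoux, Krylov--Rozovskii, Pr\'evôt--R\"ockner), and its architecture is sound: regularize, apply the smooth It\^o formula \eqref{eqItoFor} to $F(x)=|x|_H^2$, pass to the limit, and extract $H$-continuity from the resulting energy identity plus an a priori supremum bound.

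There is, however, one step that as written would not go through, and it is exactly the one you defer to at the end. If $J_n=P_n$ is the orthogonal projection onto $\mathrm{span}\{e_1,\dots,e_n\}$ for a generic orthonormal basis of $H$ contained in $\mathcal V$, then the drift term in \eqref{eqPlanGalerkin} becomes $\int_0^t\langle\phi,P_n^2X\rangle_{\mathcal V^*,\mathcal V}\,ds$, and to pass to the limit you need $P_nX\to X$ in $\mathcal V$ (or at least boundedness of $P_nX$ in $\mathcal V$ uniformly in $n$ plus weak convergence). Neither holds for an arbitrary such basis: $P_n$ is a contraction on $H$ but need not be uniformly bounded on $\mathcal V$ or on $\mathcal V^*$. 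Likewise ``dominated convergence'' does not apply, since $|\langle\phi,P_n^2X\rangle_{\mathcal V^*,\mathcal V}|$ has no $n$-independent integrable majorant without such uniform bounds. The standard fixes are: (i) choose the basis as eigenvectors of a self-adjoint coercive operator realizing the triple (equivalently, take $J_n$ to be resolvents of such an operator), so that the $J_n$ are uniformly bounded and strongly convergent on all three spaces simultaneously — but this is an additional structural input you must supply, not a free choice; or (ii) abandon spatial regularization entirely and use the time-discretization argument of Krylov--Rozovskii, expanding $|X(t)|_H^2-|X_0|_H^2$ over a partition via $|b|_H^2-|a|_H^2=2\langle b-a,b\rangle-|b-a|_H^2$ and using that $X(t_k)\in\mathcal V$ for a.e.\ choice of partition points to interpret $\langle X(t_{k+1})-X(t_k),X(t_{k+1})\rangle$ as a $\mathcal V^*$--$\mathcal V$ pairing. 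Your outline correctly names this as the technical heart, but a complete proof must commit to one of these devices; the assertion that ``the rest is routine convergence bookkeeping'' once Galerkin projections are in hand is not accurate for an unstructured basis. The remaining steps (It\^o isometry for the stochastic term, identification of $\langle\cdot,\cdot\rangle_{\mathcal V^*,\mathcal V}$ with $\langle\cdot,\cdot\rangle_H$ on $H\times\mathcal V$, upgrading from rational $t$ to all $t$ by continuity) are fine.
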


As before, we typically express \cref{eqItoWeakFor} in the following differential form:
\begin{align*}
    d|X(t)|_H^2
    =2\langle\phi(t), X(t)\rangle_{\mathcal{V}^*, \mathcal{V}} d t
    +2\langle\Phi(t), X(t)\rangle_H d W(t)
    +|\Phi(t)|_H^2 d t
\end{align*}
and sometimes denote $|\Phi(t)|^{2}_{H} dt$ as $|dX|^{2}_{H}$ for simplicity. 

\section{Stochastic evolution equations}

Let  $ H $ be a separable Hilbert space, and $ A $ be an unbounded linear operator (with domain $ D(A) $) on $ H $.
Assume that $ A $ is the infinitesimal generator of a $C_{0} $-semigroup $ \{S(t)\}_{t \geq 0} $.

We consider the following stochastic evolution equation:
\begin{align}
    \label{eqSEE}
    \left\{ 
        \begin{aligned}
            &d X(t)=(A X(t)+F(t, X(t))) d t+\widetilde{F}(t, X(t)) d W(t) \quad \text { in }(0, T], \\
            &X(0)=X_0,
        \end{aligned}    
    \right.
\end{align}
where $ X_{0}  \in L^{2}_{\mathcal{F}_{0}}(\Omega, H) $ and $ F(\cdot, \cdot), $ $ \widetilde{F}(\cdot, \cdot)  $ are two functions from $ [0,T] \times \Omega \times H $ to $ H $.

\begin{definition}
    We call an $ H $-valued, $ \mathbf{F} $-adapted, continuous stochastic process $ X(\cdot) $ a strong solution to \cref{eqSEE} if 
    \begin{enumerate}[(i)]
        \item $X(t) \in D(A)$ for a.e. $(t, \omega) \in[0, T] \times \Omega$ and $A X(\cdot) \in L^1(0, T ; H)$ a.s.; 
        \item $F(\cdot, X(\cdot)) \in L^1(0, T ; H)$ a.s., $\widetilde{F}(\cdot, X(\cdot)) \in L_{\mathbb{F}}^{2}(0, T ; H)$;  
        \item For all $t \in[0, T]$,
        \begin{align*}
        X(t)=X_0 \!+\! \int_0^t(A X(s)+F(s, X(s))) d s+\int_0^t \widetilde{F}(s, X(s)) d W(s), \text{ a.s. }
        \end{align*}
    \end{enumerate}
\end{definition}

In general, the conditions ensuring the  existence of strong solutions to \cref{eqSEE} are highly restrictive.
Hence, we introduce the following definitions of the weak and the mild solution to \cref{eqSEE}, respectively.

\begin{definition}
    We call an $ H $-valued, $ \mathbf{F} $-adapted, continuous stochastic process $ X(\cdot) $ a weak solution to \cref{eqSEE}     if $F(\cdot, X(\cdot)) \in L^1(0, T ; H)$ a.s., $\widetilde{F}(\cdot, X(\cdot)) \in L_{\mathbb{F}}^{2}(0, T ; H)$, and for any $t \in[0, T]$ and $\xi \in D\left(A^*\right)$,
    \begin{align*}
        \langle X(t), \xi\rangle_H= & \left\langle X_0, \xi\right\rangle_H+\int_0^t\left(\left\langle X(s), A^* \xi\right\rangle_H+\langle F(s, X(s)), \xi\rangle_H\right) d s \\
        & +\int_0^t\langle\widetilde{F}(s, X(s)), \xi\rangle_H d W(s), \quad \text { a.s. }
    \end{align*}
\end{definition}

\begin{definition}
    We call an $ H $-valued, $ \mathbf{F} $-adapted, continuous stochastic process $ X(\cdot) $ a mild solution to \cref{eqSEE} if   $F(\cdot, X(\cdot)) \in L^1(0, T ; H)$ a.s., $\widetilde{F}(\cdot, X(\cdot)) \in L_{\mathbb{F}}^{2}(0, T; H)$, and for any $t \in[0, T]$,
    \begin{align*}
    X(t)\!=\!S(t) X_0\!+\!\int_0^t\! S(t\!-\!s) F(s, X(s)) d s\!+\!\int_0^t\! S(t\!-\!s) \widetilde{F}(s, X(s)) d W(s),  \;\,  \text { a.s. }
    \end{align*}
\end{definition}

\begin{proposition}\label{propWeM}
     Any weak solution to \cref{eqSEE} is also a mild solution to the same equation, and conversely.
\end{proposition}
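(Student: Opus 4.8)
The plan is to prove the two implications separately by a test-function argument combined with the stochastic Fubini theorem, using that $H$ is a Hilbert space (hence reflexive), so that $\{S^*(t)\}_{t\ge 0}$ is a $C_0$-semigroup on $H$ with generator $A^*$.

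\textbf{Weak $\Rightarrow$ mild.} Let $X(\cdot)$ be a weak solution, and fix $t\in[0,T]$ and $\eta\in D(A^*)$. First I would introduce the deterministic map $s\mapsto \phi(s)\deq S^*(t-s)\eta$ on $[0,t]$, which is continuously differentiable with $\phi(s)\in D(A^*)$ and $\phi'(s)=-A^*S^*(t-s)\eta=-S^*(t-s)A^*\eta$. Then I would apply a time-dependent version of the It\^o formula (obtained from \cref{eqItoWeakFor} by approximating $\phi$ with step functions in $s$, applying the formula on each subinterval, and passing to the limit) to the scalar process $s\mapsto\langle X(s),\phi(s)\rangle_H$, using the equation satisfied by $X$ in the weak sense. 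The two drift contributions $\langle X(s),A^*\phi(s)\rangle_H$ and $\langle X(s),\phi'(s)\rangle_H$ cancel, leaving
\begin{align*}
\langle X(t),\eta\rangle_H
&=\langle S^*(t)\eta,X_0\rangle_H
+\int_0^t\langle S^*(t-s)\eta,F(s,X(s))\rangle_H\,ds\\
&\quad+\int_0^t\langle S^*(t-s)\eta,\widetilde{F}(s,X(s))\rangle_H\,dW(s),
\qquad\text{a.s.}
\end{align*}
Moving $S(t-s)$ back onto the integrands and invoking the stochastic Fubini theorem for the It\^o term (legitimate since $\widetilde{F}(\cdot,X(\cdot))\in L_{\mathbb F}^2(0,T;H)$ and $S$ is strongly continuous, hence uniformly bounded on $[0,T]$), this identity says that $\big\langle X(t)-S(t)X_0-\int_0^tS(t-s)F(s,X(s))\,ds-\int_0^tS(t-s)\widetilde{F}(s,X(s))\,dW(s),\,\eta\big\rangle_H=0$. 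Since $D(A^*)$ is dense in $H$, the vector inside the pairing vanishes almost surely, which is exactly the mild formulation; continuity in $t$ of the mild representation is standard.

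\textbf{Mild $\Rightarrow$ weak.} Let $X(\cdot)$ be the mild solution and fix $\xi\in D(A^*)$. I would substitute the mild representation of $X(s)$ into $\int_0^t\langle X(s),A^*\xi\rangle_H\,ds$ and use the elementary identity, valid for $h\in H$ and $\xi\in D(A^*)$,
\[
\langle S(r)h,\xi\rangle_H=\langle h,\xi\rangle_H+\int_0^r\langle S(\sigma)h,A^*\xi\rangle_H\,d\sigma,
\]
which follows from $\frac{d}{dr}\langle S(r)h,\xi\rangle_H=\langle h,S^*(r)A^*\xi\rangle_H=\langle S(r)h,A^*\xi\rangle_H$. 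Applying this with $h=X_0$, $h=F(s,X(s))$ and $h=\widetilde{F}(s,X(s))$, then interchanging the $ds\,d\sigma$-integrations by Fubini's theorem and the deterministic/It\^o integrations by the stochastic Fubini theorem, the rearranged terms reconstitute $\langle X(t),\xi\rangle_H-\langle X_0,\xi\rangle_H-\int_0^t\langle F(s,X(s)),\xi\rangle_H\,ds-\int_0^t\langle\widetilde{F}(s,X(s)),\xi\rangle_H\,dW(s)$, which shows that $X$ is a weak solution.

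\textbf{Main obstacle.} The rearrangement of the three groups of terms is routine once the exchange theorems are in place; the delicate points are (i) justifying the time-dependent It\^o formula in the first implication — one must approximate $\phi$ by $s$-step functions, apply \cref{eqItoWeakFor} piecewise, and pass to the limit using the continuity of $S^*(\cdot)\eta$ together with the integrability classes $F(\cdot,X(\cdot))\in L^1(0,T;H)$ a.s.\ and $\widetilde{F}(\cdot,X(\cdot))\in L^2_{\mathbb F}(0,T;H)$ — and (ii) verifying the hypotheses of the stochastic Fubini theorem against exactly these integrability classes. I expect (i), the careful passage to the limit in the time-dependent It\^o formula, to be the principal technical burden.
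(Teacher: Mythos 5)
The paper states this proposition without proof, deferring to standard references, so there is no in-text argument to compare against; your proposal reproduces the classical Da Prato--Zabczyk-style proof (test against $S^*(t-\cdot)\eta$ with $\eta\in D(A^*)$ and a time-dependent It\^o formula for weak $\Rightarrow$ mild, substitution of the mild formula plus the deterministic and stochastic Fubini theorems for the converse) and is sound. The only points worth making explicit are that separability of $H$ together with density of $D(A^*)$ is what upgrades the $\eta$-wise a.s.\ identity to a single null set in the first direction, and that moving the fixed vector $\eta$ across the It\^o integral requires only approximation by step processes rather than the full stochastic Fubini theorem.
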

From \cref{propWeM}, we do not distinguish the mild and the weak solution to \cref{eqSEE}.

It is obvious that a strong solution is also a weak and mild solution to the same equation.
Conversely, the following is a sufficient condition for a mild solution to be a strong solution to \cref{eqSEE}.

\begin{proposition}
     A mild solution $X(\cdot)$  is a strong solution  to \cref{eqSEE} if the following three conditions hold for all $x \in H$ and $0 \leq s \leq t \leq T$, a.s.,  
    \begin{enumerate}[(i)]
        \item  $X_0 \in D(A), S(t-s) F(s, x) \in D(A)$ and $S(t-s) \widetilde{F}(s, x) \in D(A)$;
        \item  $|A S(t-s) F(s, x)|_H \leq \alpha(t-s)|x|_H$ for some real-valued stochastic process $\alpha(\cdot) \in L_{\mathbb{F}}^{1}(0, T)$ a.s.; 
        \item  $|A S(t-s) \widetilde{F}(s, x)|_H \leq \beta(t-s)|x|_H$ for some real-valued stochastic process $\beta(\cdot) \in L_{\mathbb{F}}^{2}(0, T)$ a.s.
    \end{enumerate}
\end{proposition}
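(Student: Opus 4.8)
The plan is to start from the mild-solution representation and show directly that it satisfies the integral identity defining a strong solution. The two ingredients that make this work are the closedness of $A$ (automatic, since $A$ generates a $C_0$-semigroup) and the elementary calculus identity $\int_0^\tau AS(u)x\,du = S(\tau)x - x$, valid for every $x \in D(A)$ and $\tau\ge 0$ because $\frac{d}{du}S(u)x = AS(u)x$, together with a deterministic and a stochastic Fubini theorem.

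First I would dispose of integrability by localization. Since a mild solution $X(\cdot)$ is continuous and $\mathbf{F}$-adapted, $\sup_{t\in[0,T]}|X(t)|_H<\infty$ a.s.; introducing the stopping times $\tau_n=\inf\{t\in[0,T]:|X(t)|_H>n\}$ (with $\tau_n=T$ if the set is empty), it suffices to prove the strong-solution identity on each random interval $[0,\tau_n]$ and then let $n\to\infty$. On $[0,\tau_n]$ the process is bounded by $n$, so by (ii) the $A$-image of the deterministic convolution term is pathwise Bochner integrable, $\int_0^t|AS(t-s)F(s,X(s))|_H\,ds\le n\int_0^t\alpha(t-s)\,ds<\infty$, and by (iii) the $A$-image of the stochastic convolution term lies in $L^2_{\mathbb{F}}(0,T;H)$, $\mathbb{E}\int_0^{t\wedge\tau_n}\beta(t-s)^2|X(s)|_H^2\,ds\le n^2\,\mathbb{E}\int_0^t\beta(s)^2\,ds<\infty$. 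Combined with (i), the commutation of the closed operator $A$ with Bochner integrals and with It\^o integrals of $D(A)$-valued square-integrable adapted integrands then gives, for a.e. $(t,\omega)$, that $X(t)\in D(A)$ with
\[
AX(t)=S(t)AX_0+\int_0^t AS(t-s)F(s,X(s))\,ds+\int_0^t AS(t-s)\widetilde F(s,X(s))\,dW(s),
\]
and the same bounds yield $AX(\cdot)\in L^1(0,T;H)$ a.s.

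Next I would integrate this identity over $[0,t]$ and evaluate the three resulting terms. The first gives $\int_0^t S(s)AX_0\,ds=S(t)X_0-X_0$ by the semigroup identity applied to $X_0\in D(A)$. For the second, the classical Fubini theorem turns $\int_0^t\!\int_0^s AS(s-r)F(r,X(r))\,dr\,ds$ into $\int_0^t\!\int_0^{t-r}AS(u)F(r,X(r))\,du\,dr=\int_0^t\big[S(t-r)F(r,X(r))-F(r,X(r))\big]\,dr$, using $\int_0^{t-r}AS(u)x\,du=S(t-r)x-x$ with $x=F(r,X(r))\in D(A)$. The identical manipulation with the stochastic Fubini theorem handles the third term, giving $\int_0^t\big[S(t-r)\widetilde F(r,X(r))-\widetilde F(r,X(r))\big]\,dW(r)$. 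Summing the three contributions, the ``$S(t-\cdot)$'' pieces reassemble into the mild-solution formula for $X(t)$, so
\[
\int_0^t AX(s)\,ds=X(t)-X_0-\int_0^t F(s,X(s))\,ds-\int_0^t \widetilde F(s,X(s))\,dW(s),
\]
which is precisely the defining identity of a strong solution; letting $n\to\infty$ removes the localization. The main obstacle, needing the most care, is the rigorous justification of moving $A$ inside the stochastic convolution and of the stochastic Fubini interchange: one must check that $s\mapsto AS(t-s)\widetilde F(s,X(s))$ is a genuine element of $L^2_{\mathbb{F}}(0,T;H)$ (supplied by the localization and (iii)) and that the $D(A)$-valuedness in (i) is jointly measurable in the required sense, so that the theorem ``a closed operator commutes with the It\^o integral of a $D(A)$-valued square-integrable adapted integrand'' applies. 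Everything else is bookkeeping with Fubini and the elementary semigroup identity.
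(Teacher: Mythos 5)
Your argument is correct: it is the classical proof of this proposition (apply the closed operator $A$ to the mild formula using Hille's theorem and its It\^o-integral analogue, integrate in time, and reassemble via the deterministic and stochastic Fubini theorems together with $\int_0^\tau AS(u)x\,du = S(\tau)x - x$ for $x\in D(A)$), and the localization by stopping times correctly handles the integrability needed to commute $A$ with the stochastic convolution and to invoke stochastic Fubini. The paper itself states this proposition without proof, as a classical fact deferred to the references, so there is nothing to compare against; your route is the standard one found there.
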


In what follows, we assume that  both $F(\cdot, x)$ and $\widetilde{F}(\cdot, x)$ are $\mathbf{F}$ adapted for each $x \in H$, and there exist two nonnegative (real-valued) functions $L_1(\cdot), ~ L_2(\cdot) \in L^2(0, T)$ such that for a.e. $t \in[0, T]$ and all $y, z \in H$,
\begin{align*}
\left\{
    \begin{aligned}
        & |F(t, y)-F(t, z)|_H \leq L_1(t)|y-z|_H, \quad \text { a.s., } 
        \\
        &
|\widetilde{F}(t, y)-\widetilde{F}(t, z)|_H \leq L_2(t)|y-z|_H, \quad \text { a.s., } 
\\
&
F(\cdot, 0) \in L_{\mathbb{F}}^2 ( 0, T ; H) , \quad \widetilde{F}(\cdot, 0) \in L_{\mathbb{F}}^2 ( 0, T ; H) 
.
    \end{aligned}
\right.
\end{align*}





The following is the well-posedness of mild solutions for a class of stochastic evolution equations.

\begin{theorem}
    \label{eqWellposed}
    Assume that $A$ is a self-adjoint, negative definite   linear  operator on $H$. Then, the equation \cref{eqSEE} admits a unique mild solution $X(\cdot) \in L_{\mathbb{F}}^2(\Omega ; C([0, T] ; H)) \cap L_{\mathbb{F}}^2 (0, T ; D((-A)^{\frac{1}{2}} ) $. Moreover,
\begin{align*}
    & |X(\cdot)|_{L_{\mathbb{F}}^2(\Omega ; C([0, T] ; H))}+|X(\cdot)|_{L_{\mathbb{F}}^2 (0, T ; D((-A)^{\frac{1}{2}} )} \\
    & \leq \mathcal{C}\big(\left|X_0\right|_{L_{\mathcal{F}_0}^2(\Omega ; H)}+|F(\cdot, 0)|_{L_{\mathbb{F}}^2 ( 0, T ; H)}+|\widetilde{F}(\cdot, 0)|_{L_{\mathbb{F}}^2 ( 0, T ; H)}\big)
    .
\end{align*}
\end{theorem}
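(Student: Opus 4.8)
The final statement to prove is \cref{eqWellposed}, the well-posedness of mild solutions for \cref{eqSEE} when $A$ is self-adjoint and negative definite, together with the asserted a priori estimate.

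\medskip

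\textbf{Overall approach.} The plan is to set up a fixed-point (contraction mapping) argument in a suitably weighted space. First I would introduce the $C_0$-semigroup $\{S(t)\}_{t\ge 0}$ generated by $A$; since $A$ is self-adjoint and negative definite, this is an analytic semigroup of contractions, and $(-A)^{1/2}$ is a well-defined self-adjoint positive operator whose domain $D((-A)^{1/2})$ is the natural ``energy'' space, with $|S(t)(-A)^{1/2}x|_H \le \mathcal{C}t^{-1/2}|x|_H$ and $\int_0^t |(-A)^{1/2}S(s)|^2_{\mathcal L(H)}\,ds < \infty$ for each $t$ — these smoothing estimates are what make the stochastic convolution land in $D((-A)^{1/2})$. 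Then I would define the solution map
\begin{align*}
(\mathcal T X)(t) = S(t)X_0 + \int_0^t S(t-s)F(s,X(s))\,ds + \int_0^t S(t-s)\widetilde F(s,X(s))\,dW(s)
\end{align*}
on the Banach space $\mathcal B = L^2_{\mathbb F}(\Omega;C([0,T];H)) \cap L^2_{\mathbb F}(0,T;D((-A)^{1/2}))$, and show it is well-defined there and, after possibly passing to an equivalent norm with an exponential weight $e^{-\beta t}$ (or iterating on small time subintervals), a contraction.

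\medskip

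\textbf{Key steps in order.} (1) Verify $\mathcal T$ maps $\mathcal B$ into itself: bound the deterministic convolution using $|S(t-s)\phi|_H \le |\phi|_H$ and $|(-A)^{1/2}S(t-s)\phi|_H \le \mathcal{C}(t-s)^{-1/2}|\phi|_H$ with Young's inequality in time, and bound the stochastic convolution using the It\^o isometry from the It\^o integral section together with a stochastic Fubini / factorization estimate to control $\mathbb E\sup_{t}|\int_0^t S(t-s)\widetilde F\,dW|_H^2$ and $\mathbb E\int_0^T |(-A)^{1/2}\int_0^t S(t-s)\widetilde F\,dW|_H^2\,dt$; here the Lipschitz bounds on $F,\widetilde F$ and $F(\cdot,0),\widetilde F(\cdot,0)\in L^2_{\mathbb F}(0,T;H)$ give the needed control in terms of $|X|_{\mathcal B}$. (2) Estimate $|\mathcal T X - \mathcal T Y|_{\mathcal B}$: the same computations applied to the differences, using $|F(t,y)-F(t,z)|_H\le L_1(t)|y-z|_H$ and similarly for $\widetilde F$, yield $|\mathcal T X-\mathcal T Y|_{\mathcal B}^2 \le \kappa(T)\,|X-Y|_{\mathcal B}^2$ with $\kappa(T)\to 0$ as $T\to 0$ (the deterministic drift term contributes an $O(T)$ factor, the noise term an $O(\int_0^T(L_1^2+L_2^2))$ factor, each arbitrarily small on short intervals). (3) Apply the Banach fixed-point theorem on $[0,T_1]$ for $T_1$ small, then concatenate solutions over successive subintervals $[T_1,2T_1],\dots$ to reach $[0,T]$ — or, more cleanly, replace the sup-norm by the weighted norm $\|X\| = (\mathbb E\sup_{t}e^{-2\beta t}|X(t)|_H^2 + \mathbb E\int_0^T e^{-2\beta t}|(-A)^{1/2}X(t)|_H^2\,dt)^{1/2}$ and choose $\beta$ large so $\mathcal T$ is a global contraction. (4) Continuity of paths in $H$ and measurability follow from the properties of the stochastic integral (recorded earlier: $\int_0^\cdot f\,dW \in L^2_{\mathbb F}(\Omega;C([0,T];H))$) and the strong continuity of $S(\cdot)$. (5) For the a priori estimate, run the same inequalities on the fixed point $X$ itself (rather than on differences), absorb the term involving $|X|_{\mathcal B}$ into the left side using Gronwall's inequality (or the smallness already built into the weighted norm), to get $|X|_{\mathcal B} \le \mathcal{C}(|X_0|_{L^2_{\mathcal F_0}(\Omega;H)} + |F(\cdot,0)|_{L^2_{\mathbb F}(0,T;H)} + |\widetilde F(\cdot,0)|_{L^2_{\mathbb F}(0,T;H)})$.

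\medskip

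\textbf{Main obstacle.} The technically delicate point is the stochastic convolution estimate in the energy norm, i.e.\ showing $\mathbb E\int_0^T|(-A)^{1/2}\int_0^t S(t-s)\widetilde F(s)\,dW(s)|_H^2\,dt < \infty$ and controlling it by $\mathbb E\int_0^T|\widetilde F(s)|_H^2\,ds$. By the It\^o isometry the inner expectation equals $\int_0^t \mathbb E|(-A)^{1/2}S(t-s)\widetilde F(s)|_H^2\,ds$, and since $|(-A)^{1/2}S(r)|_{\mathcal L(H)}^2 \sim r^{-1}$ is only borderline integrable, one must interchange the time integral over $t$ with the one over $s$ (stochastic Fubini) and use $\int_s^T (t-s)^{-1}\,dt$... which diverges — so in fact one needs the finer spectral estimate $\int_0^t |(-A)^{1/2}S(r)x|_H^2\,dr = \tfrac12(|x|_H^2 - |S(t)x|_H^2) \le \tfrac12|x|_H^2$ coming directly from the spectral theorem for the self-adjoint operator $A$, which is exactly why self-adjointness and negative definiteness are hypothesized. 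Getting this bookkeeping right — and likewise the $\sup_t$ bound on the stochastic convolution in $H$, which needs a maximal inequality or the factorization method — is where the real work lies; the rest is a routine contraction argument.
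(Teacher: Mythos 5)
The paper states this theorem without proof, presenting it as classical background material (it points to \cite[Chapter 2]{Lue2021a} for details); your contraction-mapping argument on $L^2_{\mathbb F}(\Omega;C([0,T];H))\cap L^2_{\mathbb F}(0,T;D((-A)^{1/2}))$ --- with the spectral identity $\int_0^t|(-A)^{1/2}S(r)x|_H^2\,dr=\tfrac12\big(|x|_H^2-|S(t)x|_H^2\big)$ supplying the energy regularity of the stochastic convolution, a maximal inequality (or factorization) for the contraction semigroup supplying the $\sup_t$ bound, and Gronwall for the a priori estimate --- is exactly that standard proof and is essentially correct. The one blemish is the claim in your first paragraph that $\int_0^t|(-A)^{1/2}S(s)|^2_{\mathcal L(H)}\,ds<\infty$, which is false in operator norm (the integrand behaves like $s^{-1}$), but you correctly identify and repair this yourself in the final paragraph by replacing it with the pointwise spectral estimate.
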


Generally speaking, the stochastic convolution
$\int_0^t S(t-s)\widetilde F(s,X(s))dW(s)$ is no longer
a martingale. Hence, one cannot apply It\^{o}'s
formula directly to mild solutions to
\cref{eqSEE}. For example, when
establishing the pointwise identities (for
Carleman estimates) for stochastic partial
differentia equations of second order, we need
the function to be second differentiable in the
sense of weak derivative w.r.t the space
variables. Nevertheless, this sort of problems
can be solved by the following strategy:
\begin{enumerate}
	\item Introduce suitable approximating equations with
	strong solutions such that the limit of these
	strong solutions is the mild/weak solution to
	the original equation;
	\item  Obtain the desired
	properties for the above strong solutions;
	\item Utilize the density argument  to establish the desired
	properties for the mild/weak solutions.
\end{enumerate}
There are many methods to implement the above
three steps in the setting of deterministic
partial differential equations. Roughly
speaking, any of these methods, which do not
destroy the adaptedness of solutions, can be
applied to stochastic partial differential
equations.  Here we  present one.  To this end, we
introduce the following approximating equations of
\cref{eqSEE}:
\begin{equation}\label{c1-system3}
	\left\{\!\!
	\begin{array}{ll}\displaystyle
		dX_\lambda(t)\! =\! \big(AX_\lambda(t) + R(\lambda)
		F(t,X_\lambda(t))\big)dt\\ \displaystyle \qquad\qquad +
		R(\lambda)\widetilde F(t,X_\lambda(t))dW(t) &\mbox{ in } (0,T],\\
	 \displaystyle X_\lambda(0)=R(\lambda)X_0.
	\end{array}
	\right.
\end{equation}
Here, $\lambda$ belongs to \index{$\rho(A)$} $\rho(A)$, the resolvent
set of $A$, and $R(\lambda)\deq \lambda(\lambda I-A)^{-1}$.

We have the following result.

\begin{theorem}\label{ch-2-app1}
Assume that	$A$ generates a contraction semigroup,
	$F(\cdot, 0)\in L^2_{\mathbb{F}}(\Omega;L^1(0,T;H))$. Then, for
	each $X_0\in L^2_{\mathcal{F}_0}(\Omega;H)$ and
	$\lambda\in\rho(A)$, the equation \eqref{c1-system3}
	admits a unique strong solution  
	$X_\lambda(\cdot)\in L^2_{\mathbb{F}}(\Omega;C([0,T];D(A)))$.
	Moreover, as $\lambda\to\infty$, the solution
	$X_{\lambda}(\cdot)$ converges to $X(\cdot)$ in 
	$L^2_{\mathbb{F}}(\Omega;C([0,T];$ $H))$, where $X(\cdot)$
	solves \cref{eqSEE} in the sense of the
	mild solution.
\end{theorem}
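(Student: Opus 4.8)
\textbf{Proof proposal for \cref{ch-2-app1}.}

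The plan is to carry out the classical three-step approximation scheme adapted to the stochastic setting, where the crucial point is to preserve adaptedness of the solutions throughout. First I would establish existence and uniqueness of a strong solution $X_\lambda(\cdot)$ to the regularized equation \eqref{c1-system3}. The key observation is that $R(\lambda) = \lambda(\lambda I - A)^{-1}$ maps $H$ boundedly into $D(A)$, and in fact $AR(\lambda) = \lambda(R(\lambda) - I)$ is a bounded operator on $H$ (with norm $O(\lambda)$). Consequently, writing $A_\lambda \deq AR(\lambda) = \lambda R(\lambda) - \lambda I$ (the Yosida approximation of $A$), the term $AX_\lambda$ together with $R(\lambda)F(t,X_\lambda)$ can be absorbed: the equation becomes a stochastic evolution equation whose drift is $AX_\lambda + R(\lambda)F(t,X_\lambda)$ with $A$ still unbounded, but since the initial datum $R(\lambda)X_0 \in D(A)$ and the nonlinearities $R(\lambda)F$, $R(\lambda)\widetilde F$ take values in $D(A)$ with the required Lipschitz and integrability bounds (inherited from those of $F,\widetilde F$ since $R(\lambda)$ is bounded), one applies the standard well-posedness theory for mild solutions (in the spirit of \cref{eqWellposed}, but for contraction semigroups one uses the general Lipschitz fixed-point argument) to get a unique mild solution $X_\lambda \in L^2_{\mathbb{F}}(\Omega; C([0,T];H))$. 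Then I would upgrade this mild solution to a strong one: since $S(t-s)R(\lambda)F(s,x) \in D(A)$ with $|AS(t-s)R(\lambda)F(s,x)|_H \le \lambda\|S(t-s)\|\,\|(R(\lambda)-I)F(s,x)\|_H$ controlled by $L_1(s)$-type bounds (and similarly for $\widetilde F$), the sufficient conditions for a mild solution to be strong are met, giving $X_\lambda(\cdot) \in L^2_{\mathbb{F}}(\Omega;C([0,T];D(A)))$.

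Second, I would prove the convergence $X_\lambda \to X$ in $L^2_{\mathbb{F}}(\Omega; C([0,T];H))$ as $\lambda \to \infty$, where $X$ is the mild solution of \eqref{eqSEE} guaranteed to exist and be unique under the stated hypotheses. Write the mild formulations of both equations and subtract:
\begin{align*}
X_\lambda(t) - X(t) &= S(t)(R(\lambda)X_0 - X_0) + \int_0^t S(t-s)\big(R(\lambda)F(s,X_\lambda(s)) - F(s,X(s))\big)ds \\
&\quad + \int_0^t S(t-s)\big(R(\lambda)\widetilde F(s,X_\lambda(s)) - \widetilde F(s,X(s))\big)dW(s).
\end{align*}
Split each nonlinear difference as $R(\lambda)F(s,X_\lambda) - F(s,X) = R(\lambda)\big(F(s,X_\lambda) - F(s,X)\big) + (R(\lambda) - I)F(s,X)$, and likewise for $\widetilde F$. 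For the first pieces use $\|R(\lambda)\| \le 1$ (for a contraction semigroup $\lambda \in \rho(A)$ with $\|R(\lambda)\|\le 1$ when $\lambda > 0$), the Lipschitz bounds $L_1, L_2 \in L^2(0,T)$, the contraction property of $S$, the Burkholder--Davis--Gundy inequality for the stochastic convolution, and Gronwall's inequality. For the ``error'' pieces $(R(\lambda)-I)F(s,X(s))$ and $(R(\lambda)-I)\widetilde F(s,X(s))$, invoke the strong convergence $R(\lambda)h \to h$ in $H$ for every $h \in H$ (a standard property of the Yosida approximation), combined with $\|R(\lambda)-I\| \le 2$ and dominated convergence in $L^2_{\mathbb{F}}(0,T;H)$, using that $F(\cdot,X(\cdot)), \widetilde F(\cdot,X(\cdot)) \in L^2_{\mathbb{F}}(0,T;H)$ (the integrability of $F(\cdot,X(\cdot))$ follows from $F(\cdot,0)\in L^2_{\mathbb F}(\Omega;L^1(0,T;H))$, the Lipschitz bound, and $X \in L^2_{\mathbb F}(\Omega;C([0,T];H))$). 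Also $R(\lambda)X_0 \to X_0$ in $L^2_{\mathcal F_0}(\Omega;H)$ by dominated convergence. Assembling these estimates into $\mathbb{E}\sup_{t\le T}|X_\lambda(t)-X(t)|_H^2 \le \varepsilon_\lambda + C\int_0^T (L_1(s)^2 + L_2(s)^2)\,\mathbb{E}\sup_{\tau\le s}|X_\lambda(\tau)-X(\tau)|_H^2\,ds$ with $\varepsilon_\lambda \to 0$, Gronwall's lemma yields the claimed convergence.

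I expect the main obstacle to be the careful bookkeeping in the convergence step, specifically handling the stochastic convolution term uniformly in $\lambda$: one must ensure the BDG constant and the Gronwall constant do not degenerate as $\lambda \to \infty$, which is precisely why the bound $\|R(\lambda)\| \le 1$ (valid for contraction semigroups) is essential and why the hypothesis ``$A$ generates a contraction semigroup'' is invoked rather than the self-adjointness of \cref{eqWellposed}. A secondary subtlety is that the existence of the limiting mild solution $X$ of \eqref{eqSEE} itself is not literally covered by \cref{eqWellposed} (which assumes $A$ self-adjoint and negative definite), so either one cites the general Lipschitz-coefficient well-posedness theorem for contraction-semigroup generators, or — more elegantly — one shows directly that $\{X_\lambda\}$ is Cauchy in $L^2_{\mathbb F}(\Omega;C([0,T];H))$ (by the same subtraction argument applied to $X_\lambda - X_\mu$), defines $X$ as the limit, and verifies a posteriori that $X$ satisfies the mild formulation by passing to the limit under the integral signs. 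The rest — adaptedness of $X_\lambda$ (immediate, as each approximating equation is solved by Picard iteration preserving adaptedness) and the regularity claim $X_\lambda \in L^2_{\mathbb F}(\Omega;C([0,T];D(A)))$ — is routine once the strong-solution criterion is checked.
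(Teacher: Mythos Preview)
The paper does not supply a proof of this theorem; it is stated in the appendix as classical background material, with the reader referred to \cite[Chapter 2]{Lue2021a} for details. Your proposal --- obtaining the strong solution $X_\lambda$ from the smoothing property of $R(\lambda)$ together with the strong-solution criterion stated just above in the appendix, and then proving convergence via the splitting $R(\lambda)F(s,X_\lambda)-F(s,X)=R(\lambda)\bigl(F(s,X_\lambda)-F(s,X)\bigr)+(R(\lambda)-I)F(s,X)$, the uniform bound $\|R(\lambda)\|\le 1$ for contraction semigroups, BDG for the stochastic convolution, dominated convergence for the $(R(\lambda)-I)$-terms, and Gronwall --- is exactly the standard argument carried out in that reference, and it is correct. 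Your observation that the existence of the limiting mild solution $X$ is not literally furnished by \cref{eqWellposed} (which assumes $A$ self-adjoint negative definite) is also accurate; the Cauchy-sequence route you sketch is a clean way to close that gap without invoking an external well-posedness theorem.
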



\begin{thebibliography}{10}

    \bibitem{Alinhac1983}
    {\sc S.~Alinhac}, {\em Non-unicit{\'e} du probl{\`e}me de {Cauchy}}, Ann. Math. (2), 117 (1983), pp.~77--108.
    
    \bibitem{Beilina2012}
    {\sc L.~Beilina and M.~V. Klibanov}, {\em Approximate global convergence and adaptivity for coefficient inverse problems.}, New York: Springer, 2012.
    
    \bibitem{Bellassoued2017}
    {\sc M.~Bellassoued and M.~Yamamoto}, {\em Carleman estimates and applications to inverse problems for hyperbolic systems}, Springer Monogr. Math., Tokyo: Springer, 2017.
    
    \bibitem{Bryan1998}
    {\sc K.~Bryan and L.~F.~j. Caudill}, {\em Stability and reconstruction for an inverse problem for the heat equation}, Inverse Probl., 14 (1998), pp.~1429--1453.
    
    \bibitem{Bukhgeim1999}
    {\sc A.~L. Bukhgeim, J.~Cheng, and M.~Yamamoto}, {\em Stability for an inverse boundary problem of determining a part of a boundary}, Inverse Probl., 15 (1999), pp.~1021--1032.
    
    \bibitem{Canuto2002}
    {\sc B.~Canuto, E.~Rosset, and S.~Vessella}, {\em Quantitative estimates of unique continuation for parabolic equations and inverse initial-boundary value problems with unknown boundaries}, Trans. Am. Math. Soc., 354 (2002), pp.~491--535.
    
    \bibitem{Carmona1999}
    {\sc R.~A. Carmona and B.~Rozovskii}, eds., {\em Stochastic Partial Differential Equations: Six Perspectives}, Mathematical Surveys and Monographs, American Mathematical Society, Providence, RI, 1999.
    
    \bibitem{Choulli2016}
    {\sc M.~Choulli}, {\em Applications of elliptic {Carleman} inequalities to {Cauchy} and inverse problems}, SpringerBriefs Math., Cham: Springer; Bilbao: BCAM -- Basque Center for Applied Mathematics, 2016.
    
    \bibitem{DaPrato2014}
    {\sc G.~Da~Prato and J.~Zabczyk}, {\em Stochastic Equations in Infinite Dimensions}, Encyclopedia of Mathematics and its Applications, Cambridge University Press, Cambridge, second~ed., 2014.
    
    \bibitem{Dou2022}
    {\sc F.~Dou and W.~Du}, {\em Determination of the solution of a stochastic parabolic equation by the terminal value}, Inverse Probl., 38 (2022), p.~24.
    \newblock Id/No 075010.
    
    \bibitem{Dou2024a}
    {\sc F.~Dou and P.~Lü}, {\em An inverse cauchy problem of a stochastic hyperbolic equation}.
    \newblock arXiv, 2024.
    
    \bibitem{Dou2024}
    {\sc F.~Dou, P.~Lü, and Y.~Wang}, {\em Stability and regularization for ill-posed cauchy problem of a stochastic parabolic differential equation}, Inverse Probl., 40 (2024).
    \newblock Id/No 115005.
    
    \bibitem{Elschner2020}
    {\sc J.~Elschner, G.~Hu, and M.~Yamamoto}, {\em Single logarithmic conditional stability in determining unknown boundaries}, Appl. Anal., 99 (2020), pp.~725--746.
    
    \bibitem{Feng2022}
    {\sc X.~Feng, M.~Zhao, P.~Li, and X.~Wang}, {\em An inverse source problem for the stochastic wave equation}, Inverse Probl. Imaging, 16 (2022), pp.~397--415.
    
    \bibitem{Fu2017}
    {\sc X.~Fu and X.~Liu}, {\em A weighted identity for stochastic partial differential operators and its applications}, J. Differential Equations, 262 (2017), pp.~3551--3582.
    
    \bibitem{fu2019carleman}
    {\sc X.~Fu, Q.~L\"{u}, and X.~Zhang}, {\em Carleman estimates for second order partial differential operators and applications}, SpringerBriefs in Mathematics, Springer, Cham, 2019.
    
    \bibitem{Fursikov1996}
    {\sc A.~V. Fursikov and O.~Y. Imanuvilov}, {\em Controllability of Evolution Equations}, Lecture Notes Series, Seoul National University, Research Institute of Mathematics, Global Analysis Research Center, Seoul, 1996.
    
    \bibitem{Garnier2009}
    {\sc J.~Garnier and G.~Papanicolaou}, {\em Passive sensor imaging using cross correlations of noisy signals in a scattering medium}, SIAM J. Imaging Sci., 2 (2009), pp.~396--437.
    
    \bibitem{Gong2021}
    {\sc Y.~Gong, P.~Li, X.~Wang, and X.~Xu}, {\em Numerical solution of an inverse random source problem for the time fractional diffusion equation via {PhaseLift}}, Inverse Probl., 37 (2021), p.~23.
    \newblock Id/No 045001.
    
    \bibitem{HasanovHasanoglu2021}
    {\sc A.~Hasanov~Hasano{\u{g}}lu and V.~G. Romanov}, {\em Introduction to inverse problems for differential equations}, Springer, Cham, 2nd edition~ed., 2021.
    
    \bibitem{Helin2014}
    {\sc T.~Helin, M.~Lassas, and L.~Oksanen}, {\em Inverse problem for the wave equation with a white noise source}, Commun. Math. Phys., 332 (2014), pp.~933--953.
    
    \bibitem{Helin2018}
    {\sc T.~Helin, M.~Lassas, L.~Oksanen, and T.~Saksala}, {\em Correlation based passive imaging with a white noise source}, J. Math. Pures Appl. (9), 116 (2018), pp.~132--160.
    
    \bibitem{Holden2010}
    {\sc H.~Holden, B.~\O~ksendal, J.~Ub\o~e, and T.~Zhang}, {\em Stochastic Partial Differential Equations}, Universitext, Springer, New York, second~ed., 2010.
    \newblock A modeling, white noise functional approach.
    
    \bibitem{Hormander}
    {\sc L.~H\"{o}rmander}, {\em Linear partial differential operators}, Springer-Verlag, Berlin Heidelberg New-York, 1976.
    
    \bibitem{Klibanov2021}
    {\sc M.~V. Klibanov and J.~Li}, {\em Inverse problems and {Carleman} estimates. {Global} uniqueness, global convergence and experimental data}, vol.~63 of Inverse Ill-Posed Probl. Ser., Berlin: De Gruyter, 2021.
    
    \bibitem{Knopov2013}
    {\sc P.~S. Knopov and O.~N. Deriyeva}, {\em Estimation and control problems for stochastic partial differential equations}, vol.~83 of Springer Optim. Appl., New York: Springer, 2013.
    
    \bibitem{Kotelenez2008}
    {\sc P.~Kotelenez}, {\em Stochastic Ordinary and Stochastic Partial Differential Equations}, Stochastic Modelling and Applied Probability, Springer, New York, 2008.
    
    \bibitem{Lavrentiev2003}
    {\sc M.~M. Lavrentiev, A.~V. Avdeev, M.~M.~j. Lavrentiev, and V.~I. Priimenko}, {\em Inverse problems of mathematical physics.}, Inverse Ill-Posed Probl. Ser., VSP, Utrecht, 2003.
    
    \bibitem{Law2015}
    {\sc K.~Law, A.~Stuart, and K.~Zygalakis}, {\em Data assimilation. {A} mathematical introduction}, vol.~62 of Texts Appl. Math., Cham: Springer, 2015.
    
    \bibitem{Lesnic2022}
    {\sc D.~Lesnic}, {\em Inverse problems with applications in science and engineering}, CRC Press, Boca Raton, FL, 2022.
    
    \bibitem{Li2013}
    {\sc H.~Li and Q.~L\"{u}}, {\em A quantitative boundary unique continuation for stochastic parabolic equations}, J. Math. Anal. Appl., 402 (2013), pp.~518--526.
    
    \bibitem{Li2022}
    {\sc J.~Li, P.~Li, and X.~Wang}, {\em Inverse source problems for the stochastic wave equations: far-field patterns}, SIAM J. Appl. Math., 82 (2022), pp.~1113--1134.
    
    \bibitem{Li2021}
    {\sc J.~Li, H.~Liu, and S.~Ma}, {\em Determining a random {Schr{\"o}dinger} operator: both potential and source are random}, Commun. Math. Phys., 381 (2021), pp.~527--556.
    
    \bibitem{Liao2024a}
    {\sc Z.~Liao and Q.~L{\"u}}, {\em Stability estimate for an inverse stochastic parabolic problem of determining unknown time-varying boundary}, Inverse Probl., 40 (2024), p.~41.
    \newblock Id/No 045032.
    
    \bibitem{Lions1988}
    {\sc J.-L. Lions}, {\em Contr{\^o}labilit{\'e} exacte, perturbations et stabilisation de syst{\`e}mes distribu{\'e}s. {Tome} 1: {Contr{\^o}labilit{\'e}} exacte}, vol.~8 of Rech. Math. Appl., Paris etc.: Masson, 1988.
    
    \bibitem{Liu2014}
    {\sc X.~Liu}, {\em Global {C}arleman estimate for stochastic parabolic equations, and its application}, ESAIM Control Optim. Calc. Var., 20 (2014), pp.~823--839.
    
    \bibitem{Liu2013}
    {\sc Y.~Liu}, {\em Some sufficient conditions for the controllability of wave equations with variable coefficients}, Acta Appl. Math., 128 (2013), pp.~181--191.
    
    \bibitem{Lue2012}
    {\sc Q.~L\"{u}}, {\em Carleman estimate for stochastic parabolic equations and inverse stochastic parabolic problems}, Inverse Probl., 28 (2012), p.~18.
    \newblock Id/No 045008.
    
    \bibitem{Lue2013a}
    \leavevmode\vrule height 2pt depth -1.6pt width 23pt, {\em Observability estimate and state observation problems for stochastic hyperbolic equations}, Inverse Probl., 29 (2013), p.~22.
    \newblock Id/No 095011.
    
    \bibitem{Lue2013c}
    \leavevmode\vrule height 2pt depth -1.6pt width 23pt, {\em Observability estimate for stochastic {S}chr\"{o}dinger equations and its applications}, SIAM J. Control Optim., 51 (2013), pp.~121--144.
    
    \bibitem{Lue2022}
    {\sc Q.~L{\"u}, P.~Wang, Y.~Wang, and X.~Zhang}, {\em Numerics for stochastic distributed parameter control systems: a finite transposition method}, in Numerical control. Part A, Elsevier/North Holland, Amsterdam, 2022, pp.~201--232.
    
    \bibitem{Lue2015b}
    {\sc Q.~L{\"u} and Z.~Yin}, {\em Unique continuation for stochastic heat equations}, ESAIM, Control Optim. Calc. Var., 21 (2015), pp.~378--398.
    
    \bibitem{Lue2020a}
    \leavevmode\vrule height 2pt depth -1.6pt width 23pt, {\em Local state observation for stochastic hyperbolic equations}, ESAIM, Control Optim. Calc. Var., 26 (2020), p.~19.
    \newblock Id/No 79.
    
    \bibitem{Lue2015a}
    {\sc Q.~L\"{u} and X.~Zhang}, {\em Global uniqueness for an inverse stochastic hyperbolic problem with three unknowns}, Comm. Pure Appl. Math., 68 (2015), pp.~948--963.
    
    \bibitem{Lue2021a}
    {\sc Q.~L{\"{u}} and X.~Zhang}, {\em Mathematical Control Theory for Stochastic Partial Differential Equations}, Probability Theory and Stochastic Modelling, Springer Nature Switzerland AG., 2021.
    
    \bibitem{Ma2021}
    {\sc S.~Ma}, {\em On recent progress of single-realization recoveries of random {Schr{\"o}dinger} systems}, Electron. Res. Arch., 29 (2021), pp.~2391--2415.
    
    \bibitem{Majda2001}
    {\sc A.~J. Majda, I.~Timofeyev, and E.~Vanden~Eijnden}, {\em A mathematical framework for stochastic climate models}, Comm. Pure Appl. Math., 54 (2001), pp.~891--974.
    
    \bibitem{Murray2003}
    {\sc R.~M. Murray}, ed., {\em Control in an information rich world. {Report} of the panel on future directions in control, dynamics, and systems.}, Philadelphia, PA: SIAM Society for Industrial {and} Applied Mathematics, 2003.
    
    \bibitem{Niu2020}
    {\sc P.~Niu, T.~Helin, and Z.~Zhang}, {\em An inverse random source problem in a stochastic fractional diffusion equation}, Inverse Probl., 36 (2020), p.~23.
    \newblock Id/No 045002.
    
    \bibitem{Prilepko2000}
    {\sc A.~I. Prilepko, D.~G. Orlovsky, and I.~A. Vasin}, {\em Methods for solving inverse problems in mathematical physics}, vol.~231 of Monogr. Textbooks Pure Appl. Math., Marcel Dekker, Inc., New York, 2000.
    
    \bibitem{Prohl2021}
    {\sc A.~Prohl and Y.~Wang}, {\em Strong rates of convergence for a space-time discretization of the backward stochastic heat equation, and of a linear-quadratic control problem for the stochastic heat equation}, ESAIM, Control Optim. Calc. Var., 27 (2021), p.~30.
    \newblock Id/No 54.
    
    \bibitem{Tang2009}
    {\sc S.~Tang and X.~Zhang}, {\em Null controllability for forward and backward stochastic parabolic equations}, SIAM J. Control Optim., 48 (2009), pp.~2191--2216.
    
    \bibitem{Ucinski2005}
    {\sc D.~Uci{\'n}ski}, {\em Optimal measurement methods for distributed parameter system identification}, Boca Raton, FL: CRC Press, 2005.
    
    \bibitem{Vessella2008}
    {\sc S.~Vessella}, {\em Quantitative estimates of unique continuation for parabolic equations, determination of unknown time-varying boundaries and optimal stability estimates}, Inverse Probl., 24 (2008), p.~81.
    \newblock Id/No 023001.
    
    \bibitem{Wu2020}
    {\sc B.~Wu, Q.~Chen, and Z.~Wang}, {\em Carleman estimates for a stochastic degenerate parabolic equation and applications to null controllability and an inverse random source problem}, Inverse Probl., 36 (2020), p.~38.
    \newblock Id/No 075014.
    
    \bibitem{Wu2022}
    {\sc B.~Wu and J.~Liu}, {\em On the stability of recovering two sources and initial status in a stochastic hyperbolic-parabolic system}, Inverse Probl., 38 (2022), p.~29.
    \newblock Id/No 025010.
    
    \bibitem{Yamamoto2009}
    {\sc M.~Yamamoto}, {\em Carleman estimates for parabolic equations and applications}, Inverse Probl., 25 (2009), p.~75.
    \newblock Id/No 123013.
    
    \bibitem{Yuan2015}
    {\sc G.~Yuan}, {\em Determination of two kinds of sources simultaneously for a stochastic wave equation}, Inverse Probl., 31 (2015), p.~13.
    \newblock Id/No 085003.
    
    \bibitem{Yuan2021}
    \leavevmode\vrule height 2pt depth -1.6pt width 23pt, {\em Inverse problems for stochastic parabolic equations with additive noise}, J. Inverse Ill-Posed Probl., 29 (2021), pp.~93--108.
    
    \bibitem{Zhang2008}
    {\sc X.~Zhang}, {\em Carleman and observability estimates for stochastic wave equations}, SIAM J. Math. Anal., 40 (2008), pp.~851--868.
    
    \end{thebibliography}
\end{document}